\theoremstyle{plain}
\newtheorem{theorem}{Theorem}[section]
\newtheorem{lemma}[theorem]{Lemma}
\newtheorem{proposition}[theorem]{Proposition}
\newtheorem{corollary}[theorem]{Corollary}
\theoremstyle{remark}
\newtheorem{remark}{Remark}
\def\argmin{\mathop{\rm argmin}}
\def\argmax{\mathop{\rm argmax}}
\newcommand{\bbZ}{\mathbb{Z}}
\newcommand{\rme}{\mathrm{e}}
\newcommand{\eref}[1]{Eq.~\textup{(\ref{#1})}}
\newcommand{\lref}[1]{Lemma~\ref{#1}}
\newcommand{\coref}[1]{Corollary~\ref{#1}}
\newcommand{\lrefs}[2]{Lemmas~\ref{#1} and~\ref{#2}}
\newcommand{\tref}[1]{Theorem~\ref{#1}}
\newcommand{\pref}[1]{Proposition~\ref{#1}}
\newcommand{\eqsref}[2]{Eqs.~(\ref{#1}) and (\ref{#2})}
\newcommand{\iid}{\operatorname{iid}}
\def\<{\langle}  %% overriding the original command \<
\def\>{\rangle}  %% overriding the original command \>
\def\Label#1{\label{#1}\ [\ \text{#1}\ ]\ }
\def\Label{\label}
\newcommand{\red}[1]{{\leavevmode\color{red}#1}}
\def\suppl#1{Appendix \ref{#1}}
\def\supple#1#2{Appendices \ref{#1} and \ref{#2}}
\def\main{}
\def\intro{Appendices~\ref{app:Usefullemma} and~\ref{app:Propertyz*}}
\begin{document}

\begin{frontmatter}
\title{Significance improvement by randomized test in random sampling without replacement}
%\title{A sample article title with some additional note\thanksref{t1}}
\runtitle{Significance improvement by randomized test}
%\thankstext{T1}{A sample additional note to the title.}

\begin{aug}
%%%%%%%%%%%%%%%%%%%%%%%%%%%%%%%%%%%%%%%%%%%%%%%
%% Only one address is permitted per author. %%
%% Only division, organization and e-mail is %%
%% included in the address.                  %%
%% Additional information can be included in %%
%% the Acknowledgments section if necessary. %%
%% ORCID can be inserted by command:         %%
%% \orcid{0000-0000-0000-0000}               %%
%%%%%%%%%%%%%%%%%%%%%%%%%%%%%%%%%%%%%%%%%%%%%%%
\author[A]{\fnms{Zihao}~\snm{Li}\ead[label=e1]{zihaoli20@fudan.edu.cn}},
\author[A]{\fnms{Huangjun}~\snm{Zhu}\ead[label=e2]{zhuhuangjun@fudan.edu.cn}\orcid{0000-0001-7257-0764}}
\and
\author[B]{\fnms{Masahito}~\snm{Hayashi}\ead[label=e3]{hayashi@sustech.edu.cn}
\orcid{0000-0003-3104-1000}}
%%%%%%%%%%%%%%%%%%%%%%%%%%%%%%%%%%%%%%%%%%%%%%
%% Addresses                                %%
%%%%%%%%%%%%%%%%%%%%%%%%%%%%%%%%%%%%%%%%%%%%%%
\address[A]{Department of Physics and State Key Laboratory of Surface Physics, Fudan University\printead[presep={,\ }]{e1,e2}}

\address[B]{Shenzhen Institute for Quantum Science and Engineering, Southern University of Science and Technology\printead[presep={,\ }]{e3}}
\end{aug}

\begin{abstract}
This paper studies one-sided hypothesis testing
under random sampling without replacement.
That is, when $n+1$ binary random variables $X_1,\ldots, X_{n+1}$
are subject to a permutation invariant distribution and
$n$ binary random variables $X_1,\ldots, X_{n}$ are observed,
we have proposed randomized tests with a randomization parameter
for the upper confidence limit of the expectation of the
$(n+1)$th random variable $X_{n+1}$
under a given significance level $\delta>0$.
Our proposed randomized test significantly improves over
deterministic test unlike random sampling with replacement.
%The abstract should summarize the contents of the paper.
%It should be clear, descriptive, self-explanatory and not longer than 200 words.
%It should also be suitable for publication in abstracting services. Formulas should be used as sparingly as possible within the abstract. The abstract should not make
%reference to results, bibliography or formulas in the body of the paper---it should be self-contained.
\end{abstract}

\begin{keyword}[class=MSC]
\kwd[Primary ]{62F05}
\kwd{62D05}
%62F05 Asymptotic properties of parametric tests
%62D05 Sampling theory, sample surveys
\kwd[; secondary ]{62F03}
%62F03 Parametric hypothesis testing
\end{keyword}

\begin{keyword}
\kwd{Randomized test}
\kwd{Random sampling without replacement}
\end{keyword}

\end{frontmatter}
%%%%%%%%%%%%%%%%%%%%%%%%%%%%%%%%%%%%%%%%%%%%%%
%% Please use \tableofcontents for articles %%
%% with 50 pages and more                   %%
%%%%%%%%%%%%%%%%%%%%%%%%%%%%%%%%%%%%%%%%%%%%%%
%\tableofcontents

%\tableofcontents

%!TEX root = arxiv.tex

\section{Introduction}
This paper discusses one-sided hypothesis testing for random sampling without replacement.
Assume that $n+1$ binary random variables $X_1, \ldots, X_{n+1}$ are generated
according to random sampling without replacement.
The task of this paper is
to verify whether the expectation of the $(n+1)$th binary random variable $X_{n+1}$
is smaller than a threshold $\epsilon$
when $n$ observations $X_1, \ldots, X_{n}$ are given.
We investigate this problem under the framework of statistical hypothesis testing, i.e.,
we study the upper confidence limit for the above expectation
under a given significance level $\delta>0$.
In the case of random sampling with replacement, the optimal test has been well studied.
%Let $K$ be the number of observed outcome $0$ among $X_1, \ldots, X_{n}$.
In the optimal test, the decision should be made based on whether $K$
is larger or smaller than a certain threshold $k(n,\delta)$ \cite{Lehmann}, where $K$ denotes
the number of observed outcome $0$ among $X_1, \ldots, X_{n}$.
Only when $K$ is equal to the threshold $k(n,\delta)$, a randomized decision is needed.
That is, the improvement by a randomized decision is incremental.

When the random variables $X_1, X_2, \ldots, X_{n+1}$ are generated in a natural setting, we can assume that they are drawn with replacement.
Suppose we have $n+m$ random variables $X_1,\ldots, X_{n+m}$ and $n$ observations, and
we are interested in the expectation of the remaining $m$ variables.
If $n$ and $m$ goes to infinity,
we can use normal approximation, which allows us to handle our problem
in a similar way as the case with normal distribution.

However, when the random variables $X_1, \ldots, X_{n+1}$ are controlled by an adversarial player
and we have only one remaining variable,
we cannot assume random sampling with replacement,
and need to consider that they are generated according to random sampling without replacement.
The hypothesis testing in this case has not been studied sufficiently.
This case requires us to discuss the problem in a different way.
Such a problem setting occurs very frequently when we consider a cryptographic setting.
In particular, our companion paper \cite{q-paper} applies the results obtained in this paper
to the verification of blind quantum computation based on the relation pointed in \cite{HM}
although quantum key distribution is related to the case when $n$ and $m$ increase simultaneously \cite{HT}.
In addition, it is expected that it appears in adversarial machine learning \cite{OS}.

In this paper, to address this problem, we introduce a randomized test that
makes a randomized decision by using a randomization parameter
$0\leq \lambda <1$
when the outcome $K$ is greater than a certain threshold $l$.
That is, our randomized test makes deterministic decision
when the outcome $K$ is not greater than the certain threshold $l$.
This randomized test with parameter $0<\lambda<1$ significantly improves over the deterministic test.
For example, when the significance level $\delta>0$ is exponentially small in $n$,
the upper confidence limit for the expectation of $X_{n+1}$
is always the trivial value $1$, i.e.,
any deterministic test cannot provide a nontrivial value.
However, the above randomized test provides a nontrivial value for the upper confidence limit
even with an exponentially small significance level $\delta$ in $n$.
We study the asymptotic behavior of the performance of
this new randomized test under two regimes depending on the behavior of the threshold $l$.
One is the constant regime where the threshold $l$ is a constant.
The other is the linear regime where the threshold $l$ increases in proportion to $n$.
Under both regimes,
we show that our randomized tests significantly improve over deterministic tests.

The rest of this paper is organized as follows.
Section \ref{S2} formulates our problem setting and introduces
our new randomized test based on the randomization parameter $0\leq\lambda<1$.
It also presents preliminary results that illustrate the advantage of our randomized test,
while their proofs will be given in Section \ref{S8-2} and Appendix \ref{S3}.
%To make deeper analyses, Section \ref{notation} makes several preparations.
Section \ref{linearregime} presents the asymptotic characterizations
under the linear regime.
Section \ref{S7} considers the probability of detecting a given upper confidence limit
for the independent and identically distributed (iid) case
under this regime.
Section \ref{constregime} presents the asymptotic characterizations of our randomized test
under the constant regime.
To show our results, Section \ref{S8} introduces a new approach to
calculate the upper confidence limit for the expectation of $X_{n+1}$.
Section \ref{sec:ProofThms} presents brief proofs of theorems stated in Sections~\ref{linearregime} and~\ref{constregime}.
Section \ref{S12} gives the conclusion.
Several auxiliary lemmas are given in \intro.

\section{Problem setting}\Label{S2}

\subsection{Formulation}
We consider $n+1$ binary random variables $X_1, \ldots, X_{n+1}$ that take values in $\{0,1\}$
and are subject to a joint distribution $P_{X_1, \ldots, X_{n+1}}$.
By randomly drawing  $n$ samples
$Y_1, \ldots, Y_n$ without replacement from  $X_1, \ldots, X_{n+1}$,
we would like to estimate the marginal distribution of the remaining binary variable $Y_{n+1}$.
To characterize this problem,
we introduce the set $S_{n+1}$ of permutations on $[n+1]:=
\{1, \ldots, n+1\}$.
The joint distribution $P_{Y_1, \ldots, Y_{n+1}}$ of $Y_1, \ldots, Y_{n+1}$
is given as
\begin{align}
P_{Y_1, \ldots, Y_{n+1}}(y_1, \ldots, y_{n+1})=
\sum_{\sigma \in S_{n+1}}\frac{1}{(n+1) ! }P_{X_{\sigma(1)}, \ldots, X_{\sigma(n+1)}}
(y_1, \ldots, y_{n+1}).
\end{align}
Denote the set of permutation invariant distributions on
$[n+1]$ by ${\cal Q}_{n+1}$, then
the joint distribution $P_{Y_1, \ldots, Y_{n+1}}$ can be considered as an arbitrary element of
${\cal Q}_{n+1}$.

Our problem setting is formulated as follows.
Suppose $n$ observed variables $Y_1, \ldots, Y_n$ are $y_1, \ldots, y_n$,
our task is to estimate the conditional distribution
$P_{Y_{n+1}|  (Y_1, \ldots, Y_n) =(y_1, \ldots, y_n)}$.
For each observed variable $Y_i$, the observation outcome $Y_i=0$
is labeled as a ``success'' and the outcome $Y_i=1$ is labeled as a ``failure''.
Since $P_{Y_1, \ldots, Y_{n+1}}$ is permutation invariant,
the above conditional distribution $P_{Y_{n+1}|  Y_1, \ldots, Y_n}$ depends only on the
random variable $K$ that describes the number of failures among the $n$ observations $Y_1, \ldots, Y_n$.
Hence, we discuss only the conditional distribution $P_{Y_{n+1}|K}$.
Since $Y_{n+1}$ is a binary variable taking values in $\{0,1\}$,
it suffices to estimate the conditional probability
$P(Y_{n+1}=1|K \le k)$ for each realization $k$.

Now, we discuss this problem in the framework of one-sided
hypothesis testing with significance level $\delta$.
That is, we estimate the upper bound of $P(Y_{n+1}=1|K \le k)$
under the significance level $\delta$.
One simple method is
considering the maximum as
\begin{align}
\overline{\epsilon}_0(k,n,\delta):=
\max_{Q \in {\cal Q}_{n+1}}\{Q(Y_{n+1}=1|K \le k)|Q(K \le k) \ge \delta \},
\end{align}
which is called the upper confidence limit.
%Also, we consider $\overline{\epsilon}_s(k,n,\delta):=1-\eta_1(k,n,\delta)$.
That is, we can guarantee that the conditional probability
$P(Y_{n+1}=1 |K \le k)$ is not greater than the upper confidence limit $\overline{\epsilon}_0(k,n,\delta)$
under the significance level $\delta$.
Here the threshold $k$ denotes the number of allowed failures. % among the $n$ observations.

As shown in Section \ref{S8-2},
the upper confidence limit $\overline{\epsilon}_0(k,n,\delta)$ reads
\begin{align}
\overline{\epsilon}_0(k,n,\delta)
=
\left\{
\begin{array}{ll}
\frac{(k+1) (n+1-k) -\delta(n+1)}{\delta(n-k)(n+1)} &
\hbox{ when }
 \delta \ge \frac{k+1}{n+1}, \\
1&
\hbox{ when }
\delta < \frac{k+1}{n+1} .
\end{array}
\right.\Label{XMP9}
\end{align}
%when $\delta \ge \frac{k+1}{n+1}$.
Eq. \eqref{XMP9} shows that when the significance level $\delta$ is smaller than
$\frac{1}{n+1}= \min_{k \ge 0} \frac{k+1}{n+1}$,
we cannot make any nontrivial upper confidence limit for
the conditional probability $P(Y_{n+1}=1|K \le k)$
for any integer $k=0,1,\ldots, n$.
That is, it is impossible to realize an exponentially small
significance level with respect to $n$
for any nontrivial upper confidence limit.

Fortunately, this problem can be resolved by introducing a randomized test.
To be specific,
we consider the following artificial noise before our data processing.
In stead of $Y_1, \ldots, Y_{n}$, we focus on $n$ variables
$U_1, \ldots, U_{n}$ with the following conditional distribution $P_{U_i|Y_i}$;
\begin{align}\label{eq:deflambda}
P_{U_i|Y_i}(0|0)=1, \quad P_{U_i|Y_i}(1|0)=0 ,\quad
P_{U_i|Y_i}(0|1)=\lambda, \quad P_{U_i|Y_i}(1|1)=1-\lambda ,
\end{align}
where $0\leq\lambda<1$ is the randomization  parameter.
For convenience, we define $\nu:=1-\lambda$.
%Hence, $U_1, \ldots, U_{n}$ can be considered as the outcome of the $n$ randomized tests.
The outcome $U_i=0$ is labeled as a ``success'' and the outcome $U_i=1$ is labeled as a ``failure''.
Denote the number of failures among $U_1, \ldots, U_{n}$
by the random variable $L$, which satisfies
the following conditional distribution $P_{L|K}$;
\begin{align}\label{LK}
P_{L|K}(l|k)= 
\left\{
\begin{array}{ll}
0 &
\hbox{ when } l> k, \\
{k \choose l} \lambda^{k-l} (1-\lambda)^{l}
& \hbox{ when }
l \le k,
\end{array}
\right.
\end{align}
which implies $L \le K$.
Then, instead of $\overline{\epsilon}_0(k,n,\delta)$, we consider
\begin{align}\label{eq:epslamDef}
\overline{\epsilon}_\lambda(l,n,\delta):=
\max_{Q \in {\cal Q}_{n+1}}\{Q(Y_{n+1}=1|L \le l)|Q(L \le l) \ge \delta \},
\end{align}
where the threshold $l$ denotes the number of allowed failures. 
% among the $n$ variables $U_1, \ldots, U_n$. 
When $K \le l$, the relation $L\le K$ guarantees that 
the probability $P(Y_{n+1}=1)$ under the observed condition is not  greater than the upper limit.
Otherwise, we make a randomized decision. 
%and $\overline{\epsilon}_\lambda(l,n,\delta,\lambda):= 1-\eta_2(l,n,\delta,\lambda)$.
Note that $\overline{\epsilon}_0(l,n,\delta)$ is the special case of
$\overline{\epsilon}_\lambda(l,n,\delta)$ with $\lambda=0$.
This method guarantees that the conditional probability
$P(Y_{n+1}=1|L \le l)$ is not greater than $\overline{\epsilon}_\lambda(l,n,\delta)$
under the significance level $\delta$.

The one-sided hypothesis testing introduced above plays a crucial role in the
task of verifying quantum states in the adversarial scenario (QSVadv) \cite{ZhuEVQPSshort19,ZhuEVQPSlong19}.
In QSVadv, one aims to ensure that the quantum state on one system has
sufficiently high fidelity with the target state by performing $n$ measurements on other systems.
So far, only the verification protocol with $l=0$ has been studied in literature.
The performance with general $l>0$ is still not clear.
%more general protocols with $l>0$ is still not clear.

To see how our new method with $\lambda>0$
realizes exponentially small significance level,
we set %consider the case with 
$l=0$.
For $z\in\{0,1,\dots,n+1\}$, let $\delta=\delta_z:=
\frac{(n+1-z)\lambda^z+ z \lambda^{z-1}}{n+1}$, then
we have
\begin{align}
\overline{\epsilon}_\lambda(0,n,\delta_z)
%=\frac{(n-z)\lambda}{z+(n-z)\lambda}
=\frac{z}{z+(n-z+1)\lambda}.  \Label{XMY}
\end{align}
Eq. \eqref{XMY} follows from Eq.~(57) in Ref.~\cite{ZhuEVQPSlong19},
and a simple proof is also given in Appendix \ref{S3}.
Since $\delta_z$ has a similar behavior as $\lambda^z$,
this equation shows that
the above randomized test can realize nontrivial statement
even with exponentially small significance level.
So randomized tests can significantly outperform deterministic tests.
%Due to the above discussion, when our significance level is exponentially small,
%non-zero $\lambda$ realizes a better upper bound $\overline{\epsilon}_\lambda(0,n,\delta_z)$ than $\overline{\epsilon}_0(0,n,\delta_z)$.

By contrast, randomized tests are not necessary to achieve exponentially small significance level for iid variables.
%In the above discussion, we do not make additional assumptions on the joint distribution of $X_1, \ldots, X_{n+1}$.
%In fact, when these variables satisfy the independent and identically distributed  condition,
%we do not have the problem of not being able to achieve an exponentially small significance level.
To see this, we define the upper confidence limit
in the respective setting as follows;
\begin{align}\label{eq:epsiidDef}
\overline{\epsilon}_{{\rm iid}}(k,n,\delta):=&
\max_{Q \in {\cal Q}_{\iid,n+1}}\{Q(Y_{n+1}=1|K \le k)|Q(K\le k) \ge \delta \} ,
\end{align}
where
${\cal Q}_{{\rm iid},n+1} $ is the set of independent and identical distributions on $[n+1]$.
We denote the independent and identical distribution on $[n+1]$
where the probability of the event $Y_i=1$ is $\theta$,
by $P_{\theta}^{n+1}$.
Hence, ${\cal Q}_{{\rm iid},n+1} $ can be written as $\{P_{\theta}^{n+1}\}_{\theta \in [0,1]}$, which  implies that
\begin{align}
\overline{\epsilon}_{{\rm iid}}(k,n,\delta)=&
\max_{\theta \in [0,1]} \{\theta | P_{\theta}^{n+1}(K\le k) \ge \delta \} .\Label{XMR}
\end{align}
As shown in Sections \ref{linearregime} and \ref{constregime},
$\overline{\epsilon}_{{\rm iid}}(k,n,\delta)$ has nontrivial values even when the
significance level $\delta$ is exponentially small with respect to $n$.

\subsection{Frequently used notations}\Label{notation}
In preparation for further discussions, here we
introduce a few more notations, which will be frequently used throughout the paper.

Let %$\bbZ^{\geq 0}$ be the set of nonnegative integers and
$\bbZ^{\geq j}$ the set of integers larger than or equal to $j$.
For $0\leq p \leq 1$ and $z,l,j\in\bbZ^{\geq 0}$, let
\begin{align}\Label{eq:binomCFD}
	b_{z,j}(p):= {z \choose j} p^j (1-p)^{z-j},\qquad
	B_{z,l}(p):= \sum_{j=0}^l b_{z,j}(p)= \sum_{j=0}^l {z \choose j} p^j (1-p)^{z-j}.
\end{align}
Here it is understood that $b_{z,j}(p)=0$ whenever $j\geq z+1$, and $x^0=1$ even if $x=0$.
If there is no danger of confusions, we shall use
$b_{z,l}$ and $B_{z,l}$ as shorthand for $b_{z,l}(\nu)$ and $B_{z,l}(\nu)$, respectively.
Define
\begin{align}\Label{eq:Deltazkmain}
	\Delta_{z,l}:=B_{z,l}-B_{z+1,l},\quad z,l=0,1,2,\ldots
\end{align}

For $0<\delta\leq 1$, define
\begin{align}\Label{eq:definez}
	z^*(l,\delta,\lambda):=\min\{z\in\bbZ^{\geq l}|B_{z,l}\leq \delta\},\quad
	z_*(l,\delta,\lambda):=z^*(l,\delta,\lambda)-1.
\end{align}
Here $z^*(l,\delta,\lambda)$ and $z_*(l,\delta,\lambda)$ are well defined because  $B_{l,l}(\nu)=1$, $\lim_{z\to\infty} B_{z,l}(\nu)=0$,
and that $B_{z,l}(\nu)$ is nonincreasing in $z$ for $z\geq l$
according to Lemma \ref{lem:Bzkmono}. % in Appendix~\ref{app:Bzklambda}.
In this paper we shall frequently use
$z^*$ and $z_*$ as shorthand for $z^*(l,\delta,\lambda)$ and $z_*(l,\delta,\lambda)$, respectively.

The standard normal density function reads
\begin{align}\Label{eq:phix}
	\phi(x):=\frac{1}{\sqrt{2\pi}}\rme^{-x^2/2},
\end{align}
and its distribution function reads
\begin{align}\Label{eq:Phix}
	\Phi(x):=\int_{-\infty}^x \phi(t)  dt.
\end{align}
The function $\Phi^{-1}(x)$ is defined as the inverse function of $\Phi(x)$.
For $0\leq p,q\leq1$,
the relative entropy between two binary non-negative vectors $(p,1-p)$ and $(q,1-q)$ reads
\begin{align}\Label{eq:RelEntropy}
	D(p\|q):=p\ln\frac{p}{q}+(1-p)\ln\frac{1-p}{1-q}.
\end{align}

For $x\geq 0$ and $k\in\bbZ^{\geq 0}$, the Poisson cumulative distribution function reads
\begin{align}\Label{eq:PoisFunc}
	\mathrm{Pois}(k,x):=\rme^{-x} \sum_{j=0}^k \frac{x^j}{j!}.
\end{align}
For $0<\delta<1$ and $k\in\bbZ^{\geq 0}$, define $t_\mathrm{P}(k,\delta)$ as the unique solution of $t$
to the equation
\begin{align}\Label{eq:t1kdelta}
	\mathrm{Pois}(k,t) = \delta, \qquad t\geq 0.
\end{align}
Note that $t_\mathrm{P}(k,\delta)$ is well defined because $\mathrm{Pois}(k,0)=1$, $\lim_{t\to\infty}\mathrm{Pois}(k,t)=0$, and that
$\mathrm{Pois}(k,t)$ is monotonically decreasing in $t$ for $t\geq0$ according to Lemma \ref{lem:monotPois}.

For $\gamma\geq0$ and $0<x<1$,
define $t_\mathrm{D}(\gamma,x)$ as the unique solution of $t$
to the equation
\begin{align}\Label{eq:t2rnueps}
	t D( \frac{\gamma}{t}\|x)=1, \qquad t\geq  \frac{\gamma}{x}.
\end{align}
Note that $t_\mathrm{D}(\gamma,x)$ is well defined because $D(x\|x)=0$, $D(0\|x)=-\ln(1-x)$,
and that $D( \gamma/t\|x)$ is nondecreasing in $t$ for $t\geq \gamma/x$ thanks to
Lemma \ref{lem:DpqMonoton}.

For $0\leq s<1$ and $r>0$, define $\epsilon_D(s,r)$ as the unique solution of $\epsilon$
to the equation
\begin{align}\Label{CAO}
	r=D(s\|\epsilon ), \qquad 0\leq s \leq \epsilon<1 .
\end{align}
Similarly, for $0<\epsilon<1$ and $0<r\leq-\ln(1-\epsilon)$, define $s_D(\epsilon,r)$ 
as the unique solution of $s$ to \eqref{CAO}.
Both $\epsilon_D(s,r)$ and $s_D(\epsilon,r)$ are well defined according to \lref{lem:DpqMonoton}.

\subsection{Properties of the upper confidence limit}\label{sec:Property}
Here we summarize some basic properties of
the upper confidence limit.
In particular,
\lref{LLP} provides useful bounds for $\overline{\epsilon}_0(k,n,\delta)$, $\overline{\epsilon}_\lambda(l,n,\delta)$, and $\overline{\epsilon}_{{\rm iid}}(k,n,\delta)$; and \pref{prop:epsiidMonoton} clarifies the monotonicities of $\overline{\epsilon}_{\iid}(k,n,\delta)$.
The monotonicities of $\overline{\epsilon}_{\lambda}(k,n,\delta)$ will be clarified later in \pref{lem-2-2} after several preparations.
\pref{prop:epsiidMonoton} and \lref{LLP} are proved in 
\supple{PFepsiidMonoton}{PF-Th68}, respectively.

\begin{proposition}\Label{prop:epsiidMonoton}
	Suppose $0<\delta\leq1$, $n,k\in\bbZ^{\geq 0}$, and $n>k$.
	Then $\overline{\epsilon}_{\iid}(k,n,\delta)$ is
	nonincreasing in $\delta$ for $0<\delta\leq1$,
	nonincreasing in $n$ for $n\geq k+1$, and
	nondecreasing in $k$ for $0\leq k\leq n-1$.
\end{proposition}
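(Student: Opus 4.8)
The plan is to reduce the proposition to the reformulation \eqref{XMR} together with elementary pointwise comparisons of binomial tails. First I would observe that under $P_\theta^{n+1}$ the number $K$ of failures among $Y_1,\dots,Y_n$ is $\mathrm{Bin}(n,\theta)$, so that $P_\theta^{n+1}(K\le k)=B_{n,k}(\theta)$, while $Y_{n+1}$ is independent of $(Y_1,\dots,Y_n)$ with $P(Y_{n+1}=1)=\theta$; hence \eqref{XMR} becomes
\begin{align}
	\overline{\epsilon}_{\iid}(k,n,\delta)=\max\{\theta\in[0,1]\mid B_{n,k}(\theta)\ge\delta\}=:\max F(k,n,\delta).
\end{align}
Since $\theta\mapsto B_{n,k}(\theta)$ is continuous on $[0,1]$ and $B_{n,k}(0)=1\ge\delta$, the feasible set $F(k,n,\delta)$ is nonempty and compact, so this maximum is attained; I would stress this because it is exactly what lets an inclusion $F(k,n,\delta)\subseteq F(k',n',\delta')$ be promoted to the corresponding inequality between the upper confidence limits.

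Next I would record the trivial comparison principle: if $g_1\ge g_2$ pointwise on $[0,1]$ and both superlevel sets $\{g_i\ge\delta\}$ are nonempty, then $\{g_1\ge\delta\}\supseteq\{g_2\ge\delta\}$, whence $\max\{g_1\ge\delta\}\ge\max\{g_2\ge\delta\}$. Each of the three monotonicities then reduces to a pointwise comparison of binomial tails. \emph{Monotonicity in $\delta$} is the most direct: for $\delta_1\le\delta_2$ one has $F(k,n,\delta_2)\subseteq F(k,n,\delta_1)$. \emph{Monotonicity in $k$} follows from $B_{n,k'}(\theta)-B_{n,k}(\theta)=\sum_{j=k+1}^{k'}b_{n,j}(\theta)\ge 0$ for every $\theta$ whenever $0\le k\le k'\le n-1$, giving $F(k,n,\delta)\subseteq F(k',n,\delta)$. \emph{Monotonicity in $n$}: by telescoping it suffices to compare $n$ with $n+1$ for $n\ge k+1$, and Pascal's rule yields the closed form $B_{n,k}(\theta)-B_{n+1,k}(\theta)=\binom{n}{k}\theta^{k+1}(1-\theta)^{n-k}\ge 0$ for every $\theta\in[0,1]$; equivalently this is the monotonicity of $B_{z,k}(\cdot)$ in $z$ supplied by \lref{lem:Bzkmono} (with $\theta$ in place of $\nu$), or the stochastic domination $\mathrm{Bin}(n+1,\theta)\succeq\mathrm{Bin}(n,\theta)$. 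Hence $F(k,n+1,\delta)\subseteq F(k,n,\delta)$, which completes the argument.

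I do not expect a genuine obstacle: every ingredient except the last is immediate from the definitions, and the $n$-monotonicity of the binomial tail used in the last step is classical and is in any case already available as \lref{lem:Bzkmono}. The one point that does require care is verifying that the maximum defining $\overline{\epsilon}_{\iid}$ is actually attained, so that the set-inclusion arguments genuinely transfer to the maxima; this is why I would place the continuity/compactness remark at the very start and phrase all three comparisons in terms of the sets $F(k,n,\delta)$.
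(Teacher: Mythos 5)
Your proposal is correct and follows essentially the same route as the paper's proof: rewrite $\overline{\epsilon}_{\iid}(k,n,\delta)$ via \eqref{XMR} as $\max\{\epsilon\in[0,1]\mid B_{n,k}(\epsilon)\geq\delta\}$ and derive each monotonicity from a pointwise comparison of binomial tails (the paper cites \lref{lem:Bzkmono} for the $n$- and $k$-comparisons, which your explicit identity $B_{n,k}(\theta)-B_{n+1,k}(\theta)=\binom{n}{k}\theta^{k+1}(1-\theta)^{n-k}$ reproves). Your extra remark that the maximum is attained is a harmless (and correct) addition of rigor, not a change of method.
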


\begin{lemma}\Label{LLP}
	Suppose  $0< \delta \leq 1/2$, $k,l,n\in \bbZ^{\geq 0}$, $n>k$, $n>l$.
	In the iid case, we have
	\begin{align}\Label{XMA6T}
		\overline{\epsilon}_{{\rm iid}}(k,n,\delta) \ge
		\overline{\epsilon}_{{\rm iid}}(k,n,1/2) > \frac{k}{n}.
	\end{align}
	As its generalization, for $0\leq \lambda<1$
	we have
	\begin{align}\Label{XMA6C}
		\overline{\epsilon}_\lambda(l,n,\delta)\ge
		\overline{\epsilon}_\lambda(l,n,1/2)
		\left\{
		\begin{array}{ll}
			=1 &\quad
			\hbox{ when \ }
			l\geq \nu n ,\\
			> \frac{l}{\nu n}&\quad
			\hbox{ when \ }
			l<\nu n.
		\end{array}
		\right.
	\end{align}
	In addition, when $\lambda=0$, we have
	\begin{align}\Label{XMA6J}
		\overline{\epsilon}_0(k,n,\delta) > \frac{k}{n\delta}
	\end{align}
	for $k/n<\delta\leq 1/2$.
\end{lemma}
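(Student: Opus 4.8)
The plan is to prove \eqref{XMA6T}, \eqref{XMA6C}, and \eqref{XMA6J} in turn, reducing the first two to the single nontrivial estimate
\begin{align}\label{eq:sketchcrux}
	B_{n,k}(k/n)>1/2\qquad(1\le k\le n-1),
\end{align}
which is the strict form of the classical fact that $\mathrm{Bin}(n,k/n)$ has median $k$; I would either invoke it (e.g.\ via the Jogdeo--Samuels inequality) or record \eqref{eq:sketchcrux} as a self-contained auxiliary lemma, and I expect this to be the main obstacle. The remaining ingredients are routine: the leftmost inequality in \eqref{XMA6T} is precisely \pref{prop:epsiidMonoton} (monotonicity in $\delta$), while the leftmost inequality in \eqref{XMA6C} holds because raising $\delta$ only shrinks the feasible set $\{Q\in{\cal Q}_{n+1}\mid Q(L\le l)\ge\delta\}$ in \eqref{eq:epslamDef}; hence in both cases it suffices to work at $\delta=1/2$.

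For \eqref{XMA6T}: since $K\sim\mathrm{Bin}(n,\theta)$ under $P_{\theta}^{n+1}$, \eqref{XMR} reads $\overline{\epsilon}_{\iid}(k,n,1/2)=\max\{\theta\in[0,1]\mid B_{n,k}(\theta)\ge1/2\}$. The function $B_{n,k}$ is continuous with $B_{n,k}(0)=1$, $B_{n,k}(1)=0$, and $B_{n,k}'(\theta)=-n\binom{n-1}{k}\theta^{k}(1-\theta)^{n-1-k}<0$ on $(0,1)$ (using $k\le n-1$), so this maximum is the unique root $p^{*}$ of $B_{n,k}(\theta)=1/2$, and strict monotonicity makes $p^{*}>k/n$ equivalent to $B_{n,k}(k/n)>1/2$. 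For $k\ge1$ this is \eqref{eq:sketchcrux}; for $k=0$ it follows from $B_{n,0}(0)=1$. Hence $\overline{\epsilon}_{\iid}(k,n,1/2)>k/n$, which is the remaining part of \eqref{XMA6T}.

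For \eqref{XMA6C}: I would restrict the maximum in \eqref{eq:epslamDef} to the iid laws $Q=P_{\theta}^{n+1}$ (which lie in ${\cal Q}_{n+1}$). There $U_{1},\dots,U_{n}$ depend only on $Y_{1},\dots,Y_{n}$ and independent noise, so $Y_{n+1}$ is independent of $L$ and the objective equals $\theta$; moreover $U_{i}=1$ with probability $\theta\nu$ by \eqref{eq:deflambda}, so $L\sim\mathrm{Bin}(n,\theta\nu)$ and the constraint reads $B_{n,l}(\theta\nu)\ge1/2$, i.e.\ $\theta\nu\le p^{*}_{l}$, where $p^{*}_{l}:=\overline{\epsilon}_{\iid}(l,n,1/2)$ is (by the previous paragraph) the root of $B_{n,l}(p)=1/2$. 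The largest admissible $\theta\in[0,1]$ is $\min\{1,\,p^{*}_{l}/\nu\}$, so
\begin{align}\label{eq:sketchiid}
	\overline{\epsilon}_{\lambda}(l,n,1/2)\ \ge\ \min\{1,\,p^{*}_{l}/\nu\}.
\end{align}
If $l<\nu n$, then $l/(\nu n)<1$, while $p^{*}_{l}=\overline{\epsilon}_{\iid}(l,n,1/2)>l/n$ (the case $k=l$ of \eqref{XMA6T}, legitimate since $n>l$) gives $p^{*}_{l}/\nu>l/(\nu n)$; hence both branches of \eqref{eq:sketchiid} exceed $l/(\nu n)$. If $l\ge\nu n$, then (since $\nu>0$) $1\le l\le n-1$ and $\nu\le l/n$, so $B_{n,l}(\nu)\ge B_{n,l}(l/n)>1/2$ by \eqref{eq:sketchcrux}; thus $p^{*}_{l}>\nu$, the right-hand side of \eqref{eq:sketchiid} equals $1$, and together with $\overline{\epsilon}_{\lambda}(l,n,1/2)\le1$ we obtain $\overline{\epsilon}_{\lambda}(l,n,1/2)=1$.

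For \eqref{XMA6J}: if $k/n<\delta<(k+1)/(n+1)$, then \eqref{XMP9} gives $\overline{\epsilon}_{0}(k,n,\delta)=1>k/(n\delta)$ since $n\delta>k$. If $(k+1)/(n+1)\le\delta\le1/2$, substitute the first branch of \eqref{XMP9} into $\overline{\epsilon}_{0}(k,n,\delta)>k/(n\delta)$ and clear the positive factors $\delta$, $(n-k)(n+1)$, $n$; the inequality becomes
\begin{align}\label{eq:sketchfin}
	\delta<\frac{n(k+1)(n+1-k)-k(n-k)(n+1)}{n(n+1)}=\frac{n^{2}-nk+k^{2}+n}{n(n+1)},
\end{align}
whose right-hand side exceeds $1/2$ because $2(n^{2}-nk+k^{2}+n)-n(n+1)=(n-k)^{2}+k^{2}+n>0$; hence $\delta\le1/2$ suffices. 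Since $(k+1)/(n+1)>k/n$ whenever $n>k$, these two ranges of $\delta$ exhaust $k/n<\delta\le1/2$, completing the argument.
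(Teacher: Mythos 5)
Your proposal is correct and follows essentially the same route as the paper: both reduce \eqref{XMA6T} and \eqref{XMA6C} to the strict median inequality $B_{n,k}(k/n)>1/2$ (which the paper obtains by combining Corollary~1 of \cite{Kaas80} with the strictness result of \cite{Nowako21}, so be aware that Jogdeo--Samuels alone only gives $\geq 1/2$), both handle \eqref{XMA6C} by restricting the maximization to the iid subfamily so that $L\sim\mathrm{Bin}(n,\theta\nu)$, and both verify \eqref{XMA6J} by the same two-case algebra on \eqref{XMP9}. No gaps beyond the explicitly flagged need to supply a reference or proof for the crux inequality, which the paper itself resolves by citation.
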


\section{Asymptotic evaluations in the linear regime}\Label{linearregime}
We consider two types of asymptotic regimes.
In the first type, the number $l$ of allowed failures is proportional to
$n$, and we take the limit
$n \to \infty$.
That is, we set the number of allowed failures $l$ to be $\lceil s\nu  n\rceil $, where $0\leq s<1$.
In the iid setting, this regime leads to the central limit theorem.
We call it the linear regime, and assume it in this section.

\subsection{Preparations}

To discuss the linear regime,
we prepare the function $\psi(\delta)$ as
\begin{align}
\psi(\delta):= \frac{\phi(\Phi^{-1}(\delta)) }{\delta  \Phi^{-1}(\delta)},
\end{align}
which takes a central role for our analysis on $\overline{\epsilon}_\lambda(\lceil s \nu  n\rceil,n,\delta)$.
This quantity can be approximated as
\begin{align}
\psi(\delta)\cong -B(1-\rme^{c \Phi^{-1}(\delta)})^{-1},
\end{align}
%for $\delta \ll \frac{1}{2}$,
where $B=1.135$
and $c=\frac{1.98}{\sqrt{2}}$ \cite{KL,TR}.

When $x\ge 0$,
the function $\eta(x):=\frac{x \Phi(x)}{\phi(x)}$ satisfies the condition
$\frac{d \eta(x)}{dx}> 0$.
Hence, $\psi(\delta)$ is strictly decreasing in  $\delta > 1/2$.
Since
$ \lim_{\delta\to \frac{1}{2}+0}\psi(\delta)=+\infty$
and $ \lim_{\delta\to 1-0}\psi(\delta)=+0$,
we can define $\psi^{-1}$ on $\mathbb{R}^+$ with the range $(\frac{1}{2},1) $.
Then, we define the function
\begin{align}\Label{CSLY}
\begin{split}
C(\lambda,s,\delta):=&\frac{\sqrt{(1-\lambda)s}(1-s)}
{\sqrt{\lambda}}
\Phi^{-1}(\delta)
\bigg[
\psi(\delta)
-\frac{\lambda}{(1-\lambda)(1-s)} \bigg] \\
=&
\sqrt{s}(1-s)
\Phi^{-1}(\delta)
\bigg[
\sqrt{\frac{1-\lambda}{\lambda}}
\psi(\delta)
%\frac{\phi(\Phi^{-1}(\delta)) }{\delta  \Phi^{-1}(\delta)}
-\sqrt{\frac{\lambda}{1-\lambda}} \frac{1}{1-s}\bigg] ,
\end{split}
\end{align}
where $\Phi(x)$ and $\phi(x)$ are defined in \eqref{eq:phix} and \eqref{eq:Phix}, respectively.
When $s \le 1$,
the value $C(\lambda,s,\delta)$ is non-negative for
$ \delta \le
\psi^{-1}(\frac{\lambda}{(1-\lambda)(1-s)})$.
%As discussed below, smaller value $C(\lambda,s,\delta)$ is better.
Its minimization over $\lambda$ is given as
\begin{align}
\underline{C}(s,\delta):=
\inf_{\lambda \in (0,1)}C(\lambda,s,\delta)
=
\left\{
\begin{array}{ll}
2\sqrt{\frac{-s (1-s)\Phi^{-1}(\delta)\phi(\Phi^{-1}(\delta))}{\delta}}
&\hbox{ when } \delta \le \frac{1}{2} \\
-\infty &\hbox{ otherwise. }
\end{array}
\right.
\Label{AMPZ}
\end{align}
When $\delta \le \frac{1}{2}$, due to the relation of the arithmetic-geometric means,
the above minimum value is attained when
$\sqrt{\frac{1-\lambda}{\lambda}}
\psi(\delta)
=-\sqrt{\frac{\lambda}{1-\lambda}} \frac{1}{1-s} $, i.e.,
$\lambda=\frac{(1-s)\psi(\delta)}{(1-s)\psi(\delta)-1 }$
because the minimum is calculated as
\begin{align*}
& \min_{\lambda \in (0,1)}\sqrt{s}(1-s)
\Phi^{-1}(\delta)
\bigg[
\sqrt{\frac{1-\lambda}{\lambda}}
\psi(\delta)
%\frac{\phi(\Phi^{-1}(\delta)) }{\delta  \Phi^{-1}(\delta)}
-\sqrt{\frac{\lambda}{1-\lambda}} \frac{1}{1-s}\bigg] \\
=& -2
\sqrt{s}(1-s)
\Phi^{-1}(\delta)
\sqrt{\bigg[
-\sqrt{\frac{1-\lambda}{\lambda}}
\psi(\delta)
\cdot
\sqrt{\frac{\lambda}{1-\lambda}} \frac{1}{1-s}\bigg]} \\
=&-2\sqrt{s(1-s)}\Phi^{-1}(\delta)\sqrt{-\psi(\delta)}
=2\sqrt{\frac{-s(1-s) \Phi^{-1}(\delta)\phi(\Phi^{-1}(\delta))}{\delta}}.
\end{align*}
When $\delta >\frac{1}{2}$, the infimum is realized when $\lambda \to 1$.

\subsection{Upper confidence limit}
In the linear regime, according to \eqref{XMA6C} of Lemma \ref{LLP}, when $0<\delta\leq1/2$
and $0\leq s<1$,
we have the following relation;
\begin{align}\Label{XMA6CB}
	\overline{\epsilon}_\lambda(\lceil s\nu  n\rceil,n,\delta) >
	s.
\end{align}
Hence, our interest is the difference between
$\overline{\epsilon}_\lambda(\lceil s \nu  n\rceil,n,\delta)$ and $s$.

\begin{theorem}\Label{thm:zeta3Asympt}
We consider the linear regime.
\begin{enumerate}
	\item[(a)]
We fix $0\leq s<1$, $0<\delta<1$, and $0<\lambda<1$.
When $s \le \delta$ and $n$ goes to infinity,
    we have
    \begin{align}
&\overline{\epsilon}_0(\lceil s n\rceil,n,\delta)=
\frac{s}{\delta}
+\Big(\frac{1-s+s^2-\delta}{\delta(1-s)}
+\frac{\upsilon_n}{\delta}\Big)
\frac{1}{n}
+O\Big(\frac{1}{n^2}\Big),
\Label{eq:MMD}
    \end{align}
where $\upsilon_n:= \lceil s n\rceil- s n\le 1$.
When $s > \delta$ and $n$ is sufficiently large,
    we have
    \begin{align}
&\overline{\epsilon}_0(\lceil s n\rceil,n,\delta)=1.
\Label{eq:MMD2}
    \end{align}

When $n$ goes to infinity,
    we have
    \begin{align}
&\overline{\epsilon}_\lambda(\lceil \nu s n\rceil,n,\delta)=
s +C(\lambda,s,\delta)\sqrt{\frac{1}{n}}+O\Big(\frac{1}{n}\Big).
\Label{eq-zeta2krn}
\end{align}
The coefficient $C(\lambda,s,\delta)$ takes a non-negative value
when
$ \delta \le
\psi^{-1}(\frac{\lambda}{\nu (1-s)})$.
%$\delta$ is close to $1$.
That is, it is a positive number in a realistic situation.
    \item[(b)]
    Suppose $\delta=\rme^{- r n}$ for a certain constant $r>0$.
    We fix $0\leq s<1$, $r>0$, and $0<\lambda<1$.
    When $n$ goes to infinity
    we have
\begin{align}
&\overline{\epsilon}_0(\lceil s n\rceil,n,\delta=\rme^{-r n})
=1, \Label{eq:MGP}\\
&\overline{\epsilon}_\lambda(\lceil \nu s n\rceil,n,\delta=\rme^{-r n})
=\left\{
\begin{array}{ll}
1 & \quad r > D(s\nu\|\nu),\\
E_{\lambda, s}(r) +o(1)& \quad 0<r\leq D(s\nu\|\nu),
\end{array}
\right.
\Label{eq:zeta2deltarnN}
\end{align}
where
\begin{align}
E_{\lambda, s}(r):=
\frac{r t_\mathrm{D}(\frac{\nu s}{r},\nu)-\nu s}
{\lambda -\nu s + \nu r t_\mathrm{D}(\frac{\nu s}{r},\nu) }.
\end{align}
\end{enumerate}
\end{theorem}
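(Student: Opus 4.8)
\emph{Proof proposal.} The plan is to deduce \eqref{eq:MMD}--\eqref{eq:MGP} from the closed form \eqref{XMP9} for $\overline{\epsilon}_0$, and \eqref{eq-zeta2krn}--\eqref{eq:zeta2deltarnN} from the exact formula for $\overline{\epsilon}_\lambda(l,n,\delta)$ established in Section~\ref{S8}, combined with an asymptotic analysis of the threshold index $z^{*}=z^{*}(\lceil s\nu n\rceil,\delta,\lambda)$. The statements about $\overline{\epsilon}_0$ are bookkeeping: the threshold $\frac{\lceil sn\rceil+1}{n+1}$ appearing in \eqref{XMP9} converges to $s$, so in the regime $s\leq\delta$ the rational branch of \eqref{XMP9} eventually applies, and substituting $k=\lceil sn\rceil=sn+\upsilon_n$ into $\frac{(k+1)(n+1-k)-\delta(n+1)}{\delta(n-k)(n+1)}$ and expanding in powers of $1/n$ yields \eqref{eq:MMD}, whereas in the regime $s>\delta$, and a fortiori when $\delta=\rme^{-rn}$, the constant branch eventually applies and yields \eqref{eq:MMD2} and \eqref{eq:MGP}.

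For $\overline{\epsilon}_\lambda$ I would use the description of the worst-case permutation-invariant distribution from Section~\ref{S8}. Writing $Z$ for the number of failures among $X_1,\dots,X_{n+1}$, every $Q\in{\cal Q}_{n+1}$ is determined by $q_z:=Q(Z=z)$, and since $L\mid K\sim\mathrm{Bin}(K,\nu)$ by \eqref{LK} one has $Q(L\leq l)=\sum_z q_z f(z)$ and $Q(Y_{n+1}=1,L\leq l)=\sum_z q_z g(z)$, where $g(z):=\frac{z}{n+1}B_{z-1,l}$ and $f(z):=\frac{z}{n+1}B_{z-1,l}+\frac{n+1-z}{n+1}B_{z,l}$ (shorthand $B_{z,l}=B_{z,l}(\nu)$). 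Since maximizing $\sum_z q_z g(z)/\sum_z q_z f(z)$ subject to $\sum_z q_z f(z)\geq\delta$ is a linear-fractional program, the optimum is attained by a $Q$ whose $Z$-distribution is supported on at most two points with the $L\leq l$ constraint tight; as Section~\ref{S8} shows, for $l=\lceil s\nu n\rceil$ these points lie within $O(1)$ of $z^{*}$, so $\overline{\epsilon}_\lambda(l,n,\delta)$ becomes an explicit rational function of $z^{*}$, $B_{z^{*},l}$, $\Delta_{z^{*}-1,l}=\nu\,b_{z^{*}-1,l}(\nu)$, $\delta$ and $n$. Everything then reduces to the asymptotics of $z^{*}$, $B_{z^{*},l}$ and $b_{z^{*}-1,l}(\nu)$ as $n\to\infty$ with $l=\lceil s\nu n\rceil$.

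In part~(a) with $\delta$ fixed, the normal approximation to $B_{z,l}(\nu)=P(\mathrm{Bin}(z,\nu)\leq l)$ locates $z^{*}=sn-\Phi^{-1}(\delta)\sqrt{s\lambda n/\nu}+O(1)$, hence $z^{*}/(n+1)=s-\Phi^{-1}(\delta)\sqrt{s\lambda/\nu}\,n^{-1/2}+O(n^{-1})$, while the local limit theorem gives $\Delta_{z^{*}-1,l}=\phi(\Phi^{-1}(\delta))\sqrt{\nu/(s\lambda)}\,n^{-1/2}+o(n^{-1/2})$. Feeding these into the rational function, and using the tightness of the constraint (which cancels the fractional position of $\delta$ among the consecutive values $B_{z,l}$), the leading term is $s$ --- consistent with \eqref{XMA6CB} --- and the $n^{-1/2}$ coefficient equals $\frac{s(1-s)}{\delta}\phi(\Phi^{-1}(\delta))\sqrt{\nu/(s\lambda)}-\Phi^{-1}(\delta)\sqrt{s\lambda/\nu}$, which coincides with $C(\lambda,s,\delta)$ once $\psi(\delta)=\phi(\Phi^{-1}(\delta))/(\delta\Phi^{-1}(\delta))$ is substituted into \eqref{CSLY}; the stated sign condition is then read off from \eqref{CSLY}. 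In part~(b) with $\delta=\rme^{-rn}$, I would instead use Cram\'er's large-deviation estimate for the lower tail of $\mathrm{Bin}(\cdot,\nu)$ at level $\lceil s\nu n\rceil$: if $r>D(s\nu\|\nu)$ then $B_{n,l}(\nu)>\delta$ for all large $n$, so the law placing all mass at $Z=n+1$ gives $Q(L\leq l)=B_{n,l}(\nu)\geq\delta$ and $Q(Y_{n+1}=1\mid L\leq l)=g(n+1)/f(n+1)=1$, hence $\overline{\epsilon}_\lambda=1$; if $0<r\leq D(s\nu\|\nu)$, then $z^{*}/n\to t^{*}:=r\,t_\mathrm{D}(\nu s/r,\nu)$, the unique root of $tD(\nu s/t\|\nu)=r$ with $t\geq s$, and since $\delta$, $f(z^{*})$ and the relevant binomial distribution functions all vanish, the rational function reduces --- through the asymptotically geometric ratio $R:=\lim B_{z^{*}-1,l}/B_{z^{*},l}=(t^{*}-s\nu)/(t^{*}\lambda)$, obtained from $\Delta_{z-1,l}=\nu b_{z-1,l}(\nu)$ together with $B_{z,l}\sim b_{z,l}(\nu)(t^{*}-s\nu)/(t^{*}-s)$ --- to $t^{*}R/(t^{*}R+1-t^{*})=E_{\lambda,s}(r)$, and one checks $E_{\lambda,s}(D(s\nu\|\nu))=1$ so that the two ranges agree at the boundary.

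The step I expect to be the main obstacle is the precision demanded in part~(a): pinning down the $n^{-1/2}$ coefficient of $\overline{\epsilon}_\lambda$ with an $O(n^{-1})$ remainder calls for approximations to $B_{z,l}(\nu)$ and $b_{z,l}(\nu)$ that are uniform over the $\Theta(\sqrt n)$-wide band of indices around $z^{*}$ and accurate to relative order $n^{-1/2}$ --- in effect a one-term Edgeworth refinement, most conveniently obtained via Stirling-type estimates of binomial ratios --- together with the verifications that the integer part of $z^{*}$ perturbs $\overline{\epsilon}_\lambda$ only at order $n^{-1}$ and that the constraint-tightening removes the integrality fluctuation of $B_{z^{*},l}$ relative to $\delta$. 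Part~(b), by contrast, needs only logarithmic large-deviation asymptotics together with continuity of the reduced formula in $z^{*}/n$, so it is comparatively routine.
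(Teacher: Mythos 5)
Your proposal follows essentially the same route as the paper: the $\lambda=0$ claims by direct expansion of \eqref{XMP9}, and the $\lambda>0$ claims via the extreme-point/region characterization of Section~\ref{S8}, locating $z^*$ by a Berry--Esseen/Edgeworth analysis of $B_{z,l}(\nu)$ in part~(a) (your $n^{-1/2}$ coefficient agrees with $C(\lambda,s,\delta)$) and by Chernoff/reverse-Chernoff estimates giving $z^*/n\to r\,t_\mathrm{D}(\nu s/r,\nu)$ in part~(b), with the limiting ratio $B_{z^*-1,l}/B_{z^*,l}$ reproducing $E_{\lambda,s}(r)$. The technical burden you flag (uniform relative-order-$n^{-1/2}$ control of the binomial ratios, and insensitivity to the interpolation between consecutive extreme points) is exactly what the paper discharges via Lemmas~\ref{lem:z-kinfity}, \ref{lem:limBz/Bz}, and Corollary~\ref{cor:zeta2Bound1}/Lemma~\ref{cor:zeta2Bound2}.
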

The comparison between \eqref{eq-zeta2krn} and \eqref{eq:MMD}
leads to an advantage of our randomized test.
Eq. \eqref{eq-zeta2krn} shows that
the upper confidence limit attains $s$ for $\delta >0$ and $\lambda> 0$
while \eqref{eq:MMD} shows that it is impossible for $\delta >0$ and $\lambda= 0$.
That is, to achieve the upper confidence limit $s$,
we need to choose non-zero $\lambda$.

Eqs. \eqref{eq:MMD2} and \eqref{eq:MGP}
are shown by the substitution of $sn$ into $k$ at \eqref{XMP9}
with the asymptotic expansion for $n$.
The derivation of \eref{eq:MMD} is slightly complicated.
We choose $s_n$ as $ s_n= \lceil s n\rceil/n$.
Hence, $0 \le s_n - s=\upsilon_n/n \le 1/n$.
The substitution of $sn$ into $k$ at \eqref{XMP9}
with the asymptotic expansion for $n$ implies that
    \begin{align}
&\overline{\epsilon}_0(\lceil s n\rceil,n,\delta)=
\frac{s_n}{\delta}
+\frac{1-s_n+s_n^2-\delta}{\delta(1-s_n)}
%\red{+\frac{\upsilon_n}{\delta}\Big)}
\frac{1}{n}
+O\Big(\frac{1}{n^2}\Big),
\Label{eq:MMDY}
    \end{align}
Substituting $s_n=s +\upsilon_n/n $ in \eqref{eq:MMDY}, we obtain \eqref{eq:MMD}.
Eqs.~\eqref{eq-zeta2krn} and \eqref{eq:zeta2deltarnN} will be shown in Section \ref{PF-Th61} after several preparations.

\if0
\begin{figure}
\begin{center}
	\includegraphics[width=10cm]{lambda}
	\caption{\Label{lambda}
	This graph shows the numerical plot of $\lambda_{s,\delta}$ when $s=0.1$.
	}
\end{center}
\end{figure}
\fi

Eq. \eqref{eq-zeta2krn} shows that
the term $C(\lambda,s,\delta)n^{-1/2}$ is an extra value to guarantee
the significance level $\delta$.
Hence, a smaller coefficient $C(\lambda,s,\delta)$ is better.
That is, 
%for $\delta \le 1/2$
the minimizer
$\lambda_{s,\delta}:= \argmin_{\lambda}C(\lambda,s,\delta)
=\frac{(1-s)\psi(\delta)}{(1-s)\psi(\delta)-1 }$
gives
the asymptotically minimum upper confidence limit
%$\overline{\epsilon}_\lambda(\lceil (1-\lambda)s n\rceil,n,\delta,\lambda)$
when $s$ and $\delta\in(0, 1/2)$ are fixed.
%The following corollary holds with the following definition;
To see this characterization, we consider the minimum upper confidence limit:
    \begin{align}
\overline{\epsilon}_{\min}(s ,n,\delta):=
\min_{\lambda \in [0,1)}
\overline{\epsilon}_\lambda(\lceil \nu s n\rceil,n,\delta).
\end{align}
%Since $[0,1)$ is compact, 
Lemma \ref{LLP} implies that
    \begin{align}\Label{MML}
\overline{\epsilon}_{\min}(s ,n,\delta)\ge \frac{\lceil \nu s n\rceil}{\nu  n}
\ge s \quad \forall 0< \delta \le \frac{1}{2}.
\end{align}
\eref{eq-zeta2krn} yields the following corollary for 
the minimum upper confidence limit.

\begin{corollary}
We fix $0\leq s<1$, $0<\delta<1/2$, and $0<\lambda<1$. When $n$ goes to infinity, we have
    \begin{align}
\overline{\epsilon}_{\min}(s ,n,\delta)=
s +\underline{C}(s,\delta)\sqrt{\frac{1}{n}}+O\Big(\frac{1}{n}\Big).\Label{XMU}
\end{align}

\end{corollary}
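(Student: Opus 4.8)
The plan is to bracket $\overline{\epsilon}_{\min}(s,n,\delta)$ between two quantities that both equal $s+\underline{C}(s,\delta)n^{-1/2}+O(n^{-1})$. For the upper bound, note that $0<\delta<1/2$ forces $\psi(\delta)<0$, so the candidate minimizer $\lambda_{s,\delta}=\frac{(1-s)\psi(\delta)}{(1-s)\psi(\delta)-1}$ indeed lies in $(0,1)$, and substituting $\lambda=\lambda_{s,\delta}$ into \eqref{eq-zeta2krn} of \tref{thm:zeta3Asympt} gives
\[
\overline{\epsilon}_{\min}(s,n,\delta)\le\overline{\epsilon}_{\lambda_{s,\delta}}(\lceil\nu s n\rceil,n,\delta)=s+C(\lambda_{s,\delta},s,\delta)\sqrt{\tfrac1n}+O\Big(\tfrac1n\Big)=s+\underline{C}(s,\delta)\sqrt{\tfrac1n}+O\Big(\tfrac1n\Big),
\]
where the last step is exactly the arithmetic--geometric-mean computation carried out right after \eqref{AMPZ}, which identifies $C(\lambda_{s,\delta},s,\delta)$ with $\underline{C}(s,\delta)=\inf_{\lambda\in(0,1)}C(\lambda,s,\delta)$.

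For the matching lower bound I would establish $\overline{\epsilon}_\lambda(\lceil\nu s n\rceil,n,\delta)\ge s+\underline{C}(s,\delta)n^{-1/2}-O(n^{-1})$ \emph{uniformly} in $\lambda\in[0,1)$ and then take the infimum. Fix a compact interval $[\lambda_0,\lambda_1]\subset(0,1)$ with $\lambda_{s,\delta}$ in its interior. On $[\lambda_0,\lambda_1]$ the two-term expansion \eqref{eq-zeta2krn} holds with remainder uniform in $\lambda$: inspecting the derivation to be given in Section~\ref{PF-Th61}, the coefficient $C(\lambda,s,\delta)$ and the remainder are governed by the asymptotics of $z^*(l,\delta,\lambda)$ and the explicit rational formula for $\overline{\epsilon}_\lambda$, all depending continuously and locally uniformly on $\lambda$; hence on this interval $\overline{\epsilon}_\lambda(\lceil\nu s n\rceil,n,\delta)\ge s+C(\lambda,s,\delta)n^{-1/2}-O(n^{-1})\ge s+\underline{C}(s,\delta)n^{-1/2}-O(n^{-1})$ since $C(\lambda,s,\delta)\ge\underline{C}(s,\delta)$ pointwise. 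Outside $[\lambda_0,\lambda_1]$ I would avoid the expansion entirely: since $C(\lambda,s,\delta)\to+\infty$ as $\lambda\to0^+$ and as $\lambda\to1^-$, a separate argument using \lref{LLP} (in particular \eqref{XMA6C} and \eqref{MML}) near $\lambda=1$ and the $\lambda=0$ formula \eqref{eq:MMD} together with the monotonicity of $\overline{\epsilon}_\lambda$ in $\delta$ near $\lambda=0$ shows that, after possibly shrinking $[\lambda_0,\lambda_1]$, $\overline{\epsilon}_\lambda$ stays above $s$ by a margin beating $\underline{C}(s,\delta)n^{-1/2}$ for all large $n$, so the minimizer cannot sit outside $[\lambda_0,\lambda_1]$. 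Combining the three ranges yields the lower bound, which together with the upper bound proves \eqref{XMU}.

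The main obstacle is exactly this uniformity in $\lambda$: the remainder constant in \eqref{eq-zeta2krn} degenerates as $\lambda\to0^+$ or $\lambda\to1^-$ (there $C(\lambda,s,\delta)\to+\infty$, so the ``correction'' $C(\lambda,s,\delta)n^{-1/2}$ need not even be $o(1)$), so the expansion cannot be applied verbatim over all of $[0,1)$. The remedy is to demand the sharp two-term asymptotics only on a compact neighbourhood of the interior minimizer $\lambda_{s,\delta}$ — where local uniformity of the $z^*$-asymptotics is routine — and to replace it near the endpoints by the cruder bounds of \lref{LLP} and \eqref{eq:MMD}, exploiting the blow-up of $C(\lambda,s,\delta)$ at both ends to keep the minimum away from the boundary. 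A minor point to verify in passing is that the ceiling $\lceil\nu s n\rceil$ contributes only to the $O(n^{-1})$ term uniformly on $[\lambda_0,\lambda_1]$, which is immediate from $0\le\lceil\nu s n\rceil-\nu s n<1$.
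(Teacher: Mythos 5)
Your upper bound is exactly the paper's argument: the paper derives \eqref{XMU} by substituting the minimizer $\lambda_{s,\delta}$ into \eqref{eq-zeta2krn} and invoking the arithmetic--geometric-mean identification $C(\lambda_{s,\delta},s,\delta)=\underline{C}(s,\delta)$ from \eqref{AMPZ}, together with the crude floor \eqref{MML}. You go further and correctly observe that the matching lower bound $\min_{\lambda}\overline{\epsilon}_\lambda\geq s+\underline{C}(s,\delta)n^{-1/2}-O(n^{-1})$ is \emph{not} a pointwise consequence of \eqref{eq-zeta2krn}, since the remainder there is for fixed $\lambda$ and degenerates at the endpoints; this is a real subtlety that the paper's one-line derivation passes over.

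However, your resolution of that subtlety has concrete gaps at both endpoints. Near $\lambda=1$ you appeal to \lref{LLP} and \eqref{MML}, but these only give $\overline{\epsilon}_\lambda(\lceil\nu sn\rceil,n,\delta)>\lceil\nu sn\rceil/(\nu n)\geq s$, i.e.\ a margin of at most $O(1/n)$ above $s$; this does not ``beat $\underline{C}(s,\delta)n^{-1/2}$'' and so does not exclude a minimizer drifting toward $\lambda=1$ (where, for $\nu(1-s)n<1$, the constraint even forces $\overline{\epsilon}_\lambda$ through a regime change per \eqref{XMA6C}). Near $\lambda=0$ you invoke the $\lambda=0$ expansion \eqref{eq:MMD} ``together with the monotonicity of $\overline{\epsilon}_\lambda$ in $\delta$,'' but monotonicity in $\delta$ (\pref{lem-2-2}) relates different significance levels at the \emph{same} $\lambda$; the paper establishes no monotonicity or continuity of $\overline{\epsilon}_\lambda$ in $\lambda$ itself, so the $\lambda=0$ formula cannot be transferred to small positive $\lambda$ this way. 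To close the lower bound one needs $\lambda$-uniform estimates near the endpoints, e.g.\ the explicit bound $\overline{\epsilon}_\lambda\geq\frac{z_*-l}{\lambda(n-z_*+1)+z_*-l}$ from \eqref{eq:zeta2UB} of \lref{cor:zeta2Bound2} combined with uniform control of $z^*(l,\delta,\lambda)$, rather than the lemmas you cite.
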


Due to Eq. \eqref{eq:MGP},
when the significance level $\delta$ is exponentially small and $\lambda=0$,
the upper confidence level takes the trivial value $1$.
That is, to achieve an exponentially small significance level $\delta$, we need to choose non-zero $\lambda$, which is an advantage of our randomized test.
Indeed,
when the significance level $\delta$ is exponentially small value $\rme^{-nr}$
and $\lambda>0$, Eq. \eqref{eq:zeta2deltarnN} shows
the upper confidence limit.
Hence, when $s$ and $r$ are fixed, the asymptotically minimum upper confidence limit
equals the minimum
$\min_{\lambda}\min\{1,E_{\lambda, s}(r)\}$,
because the condition $E_{\lambda, s}(r)\geq 1$ is equivalent to the condition
$r \geq D(s\nu\|\nu)$.
Since we have $D(s\nu\|\nu)\to\infty$ when $\lambda\to 0^+$,
this value is a nontrivial value.

In fact,
when $r$ approaches $0^+$, $r t_\mathrm{D}(\frac{\nu s}{r},\nu)$ approaches $s+0$,
and $E_{\lambda, s}(r)$ approaches $s+0$.
Also, when $r$ approaches $+\infty$, we have
$\min\{1,E_{\lambda, s}(r)\}=1$.
%$r t_\mathrm{D}(\frac{(1-\lambda)s}{r},1-\lambda)$ approaches to $+\infty$.
%Hence, when $r$ approaches to $+\infty$,
%$E_{\lambda, s}(r)$ approaches to $1-0$.
This fact shows that
the quantity $\overline{\epsilon}_\lambda(\lceil \nu s n\rceil,n,\delta=\rme^{-r n})$ takes values in $(s,1]$.

In contrast, we have the following proposition under the
iid case.
Under this case with the linear regime, we set $k$ to be $\lceil s n\rceil$, where $0\leq s<1$.
According to \lref{LLP}, when $0<\delta\leq 1/2$
we have
\begin{align}\label{eq:epsiid>s}
\overline{\epsilon}_{{\rm iid}}(\lceil s n\rceil,n,\delta)>s.
\end{align}

\begin{proposition}\Label{P4-3}
We consider the linear regime.
\begin{enumerate}
	\item[(a)]  We fix $0\leq s<1$ and $0<\delta<1$.
When $n$ goes to infinity, we have
\begin{align}
\overline{\epsilon}_{{\rm iid}}(\lceil s n\rceil,n,\delta)
=s- \Phi^{-1}(\delta) \sqrt{s(1-s) } \sqrt{\frac{1}{ n}}
+O(\frac{1}{n}). \Label{ZMP}
\end{align}
    \item[(b)]  Suppose $\delta=\rme^{-r n}$ for a certain constant $r>0$.
    We fix $0\leq s<1$ and $r>0$.
    When $n$ goes to infinity, we have
\begin{align}\Label{eq:zeta2deltarnU}
&\overline{\epsilon}_{{\rm iid}}(\lceil s n\rceil,n,\delta=\rme^{-r n})
=\epsilon_D(s,r)+o(1).
\end{align}
\end{enumerate}
\end{proposition}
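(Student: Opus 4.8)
The plan is to reduce both statements of the proposition to the large-$n$ behavior of the binomial cumulative distribution function $B_{n,k}$ of \eqref{eq:binomCFD}. In the iid case $Y_{n+1}$ is independent of $(Y_1,\dots,Y_n)$, hence of $K$, so by \eqref{XMR} one has $\overline{\epsilon}_{{\rm iid}}(k,n,\delta)=\max\{\theta\in[0,1]:B_{n,k}(\theta)\ge\delta\}$, where $B_{n,k}(\theta)=P(\mathrm{Bin}(n,\theta)\le k)$. Since $\frac{d}{d\theta}B_{n,k}(\theta)=-n\binom{n-1}{k}\theta^{k}(1-\theta)^{n-1-k}<0$ on $(0,1)$ for $k\le n-1$, with $B_{n,k}(0)=1$, $B_{n,k}(1)=0$, the value $\epsilon_n:=\overline{\epsilon}_{{\rm iid}}(k_n,n,\delta)$, where $k_n:=\lceil sn\rceil=sn+\upsilon_n$ with $\upsilon_n\in[0,1)$, is the unique solution in $(0,1)$ of $B_{n,k_n}(\theta)=\delta$. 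I will treat $s\in(0,1)$ below; the case $s=0$ is immediate, since then $B_{n,0}(\theta)=(1-\theta)^n$, so $\epsilon_n=1-\delta^{1/n}=-\frac{\ln\delta}{n}+O(n^{-2})$ for fixed $\delta$, and $\epsilon_n=1-\rme^{-r}=\epsilon_D(0,r)$ exactly when $\delta=\rme^{-rn}$, matching both claims.

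For part (a), I would first localize $\epsilon_n$: for fixed small $\eta>0$, the law of large numbers gives $B_{n,k_n}(s+\eta)=P(\mathrm{Bin}(n,s+\eta)\le k_n)\to0$ and $B_{n,k_n}(s-\eta)\to1$, so $s-\eta<\epsilon_n<s+\eta$ eventually; thus $\epsilon_n\to s$ and $\epsilon_n$ lies in a fixed compact $J\subset(0,1)$ for large $n$. On $J$ the Berry--Esseen bound for $\mathrm{Bin}(n,\theta)$ holds uniformly, $|B_{n,k_n}(\theta)-\Phi(x_n(\theta))|\le C n^{-1/2}$ with $x_n(\theta):=\frac{k_n-n\theta}{\sqrt{n\theta(1-\theta)}}$. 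Taking $\theta=\epsilon_n$ and using $B_{n,k_n}(\epsilon_n)=\delta$ gives $\Phi(x_n(\epsilon_n))=\delta+O(n^{-1/2})$, hence (by local Lipschitz continuity of $\Phi^{-1}$ at $\delta\in(0,1)$) $x_n(\epsilon_n)=\Phi^{-1}(\delta)+O(n^{-1/2})$, in particular $x_n(\epsilon_n)=O(1)$. Then I would solve the definition of $x_n$ for $\epsilon_n$: from $n\epsilon_n=k_n-x_n(\epsilon_n)\sqrt{n\epsilon_n(1-\epsilon_n)}$ a first pass gives $\epsilon_n=\frac{k_n}{n}+O(n^{-1/2})=s+O(n^{-1/2})$, and reinserting this into the square root, together with $k_n=sn+O(1)$ and $\sqrt{\epsilon_n(1-\epsilon_n)}=\sqrt{s(1-s)}+O(n^{-1/2})$, yields $n\epsilon_n=sn-\Phi^{-1}(\delta)\sqrt{ns(1-s)}+O(1)$, which is \eqref{ZMP}. (For $\delta\le1/2$, $\Phi^{-1}(\delta)\le0$, so the correction is non-negative, consistent with \eqref{eq:epsiid>s}.)

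For part (b), set $\epsilon^*:=\epsilon_D(s,r)$, so $D(s\|\epsilon^*)=r$ with $s\le\epsilon^*<1$, and $\epsilon^*>s$ since $r>0$; fix $\eta\in(0,\min\{\epsilon^*-s,1-\epsilon^*\})$. I would squeeze $\epsilon_n$ into $(\epsilon^*-\eta,\epsilon^*+\eta)$ via the Chernoff bounds for the binomial lower tail: $B_{n,m}(\theta)\le\rme^{-nD(m/n\|\theta)}$ whenever $m/n\le\theta$, and $B_{n,m}(\theta)\ge b_{n,m}(\theta)=\binom{n}{m}\theta^m(1-\theta)^{n-m}\ge\frac{1}{n+1}\rme^{-nD(m/n\|\theta)}$ always. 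Since $k_n/n\to s$ and $q\mapsto D(s\|q)$ is strictly increasing on $(s,1)$ (by \lref{lem:DpqMonoton}, or directly since $\partial_q D(s\|q)=(q-s)/(q(1-q))>0$), continuity of $D$ in its first argument gives, for large $n$, $D(k_n/n\|\epsilon^*+\eta)\ge r+\gamma$, $D(k_n/n\|\epsilon^*-\eta)\le r-\gamma$ for some $\gamma>0$, and $k_n/n<\epsilon^*+\eta$. Hence $B_{n,k_n}(\epsilon^*+\eta)\le\rme^{-n(r+\gamma)}<\rme^{-rn}=\delta$, forcing $\epsilon_n<\epsilon^*+\eta$, while $B_{n,k_n}(\epsilon^*-\eta)\ge\frac{1}{n+1}\rme^{-n(r-\gamma)}>\rme^{-rn}=\delta$ for large $n$, forcing $\epsilon_n>\epsilon^*-\eta$. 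As $\eta>0$ is arbitrary, $\epsilon_n=\epsilon_D(s,r)+o(1)$, which is \eqref{eq:zeta2deltarnU}.

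The delicate point will be getting the remainder in \eqref{ZMP} down to $O(1/n)$ rather than $o(1/\sqrt n)$: this rests on the uniform normal approximation to the lattice law $\mathrm{Bin}(n,\theta)$ having error of order exactly $n^{-1/2}$, and on $B_{n,k_n}$ having derivative of order $\sqrt n$ near its level-$\delta$ set, so that an $O(n^{-1/2})$ error in probability turns into an $O(1/n)$ error in $\theta$; one must also carefully collect the $O(1)$ contributions of $\upsilon_n$ and of replacing $\epsilon_n$ by $s$ inside $\sqrt{n\theta(1-\theta)}$. Part (b) is comparatively routine once the two-sided Chernoff estimate and the strict monotonicity of $D(s\|\cdot)$ are in hand, the only care being the $\lceil\cdot\rceil$-rounding, absorbed by continuity of $D(\cdot\|\cdot)$ in its first argument.
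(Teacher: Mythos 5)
Your proof is correct and follows the same conceptual route as the paper's (Appendix \ref{PP4-3}): a normal approximation of the binomial lower tail for part (a) and a large-deviation/Chernoff estimate for part (b), applied to the characterization $\overline{\epsilon}_{\iid}(k,n,\delta)=\max\{\theta\,|\,B_{n,k}(\theta)\geq\delta\}$ from \eref{XMR}. The difference is one of rigor rather than of route: the paper's argument for (a) consists of the single qualitative limit $\lim_n P^{n+1}_{s+\tau/\sqrt{n}}(K\leq\lceil sn\rceil)=\Phi(-\tau/\sqrt{s(1-s)})$, which by itself only identifies the coefficient of $n^{-1/2}$ and does not justify the $O(1/n)$ remainder claimed in \eref{ZMP}; your replacement of the CLT by a uniform Berry--Esseen bound on a compact neighborhood of $s$, combined with the observation that $B_{n,k_n}$ has derivative of order $\sqrt{n}$ near its level-$\delta$ set (so an $O(n^{-1/2})$ error in probability converts to an $O(1/n)$ error in $\theta$), actually delivers the stated error term, and your explicit two-sided Chernoff bounds in (b) make precise what the paper leaves to the phrase ``large deviation principle.'' You also handle the $s=0$ boundary case and the $\lceil\cdot\rceil$-rounding explicitly, which the paper's sketch omits. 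In short, same approach, but your version fills in the quantitative steps needed to reach the remainder order the proposition asserts.
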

Items (a) and (b) immediately follow from
the central limit theorem and large deviation principle, respectively.
For readers' convenience, \suppl{PP4-3} gives their proof.
The numerical comparison among %\eqref{eq:MMD},
\eqref{eq-zeta2krn}, \eqref{XMU}, and \eqref{ZMP}
is presented in the right plot of Fig. \ref{fig-ab}.

\begin{figure}
\begin{center}
	\includegraphics[width=7cm]{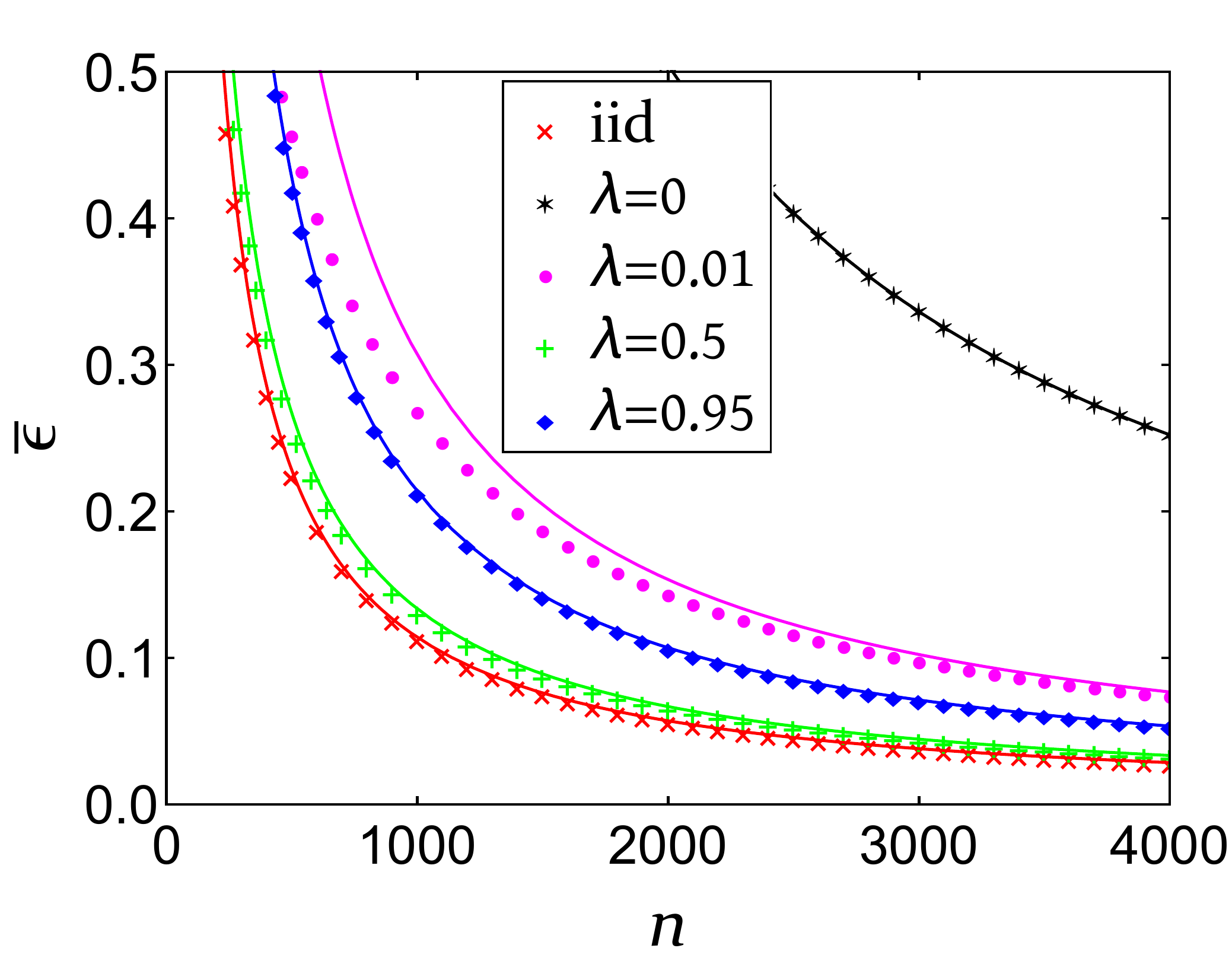}
	\includegraphics[width=7cm]{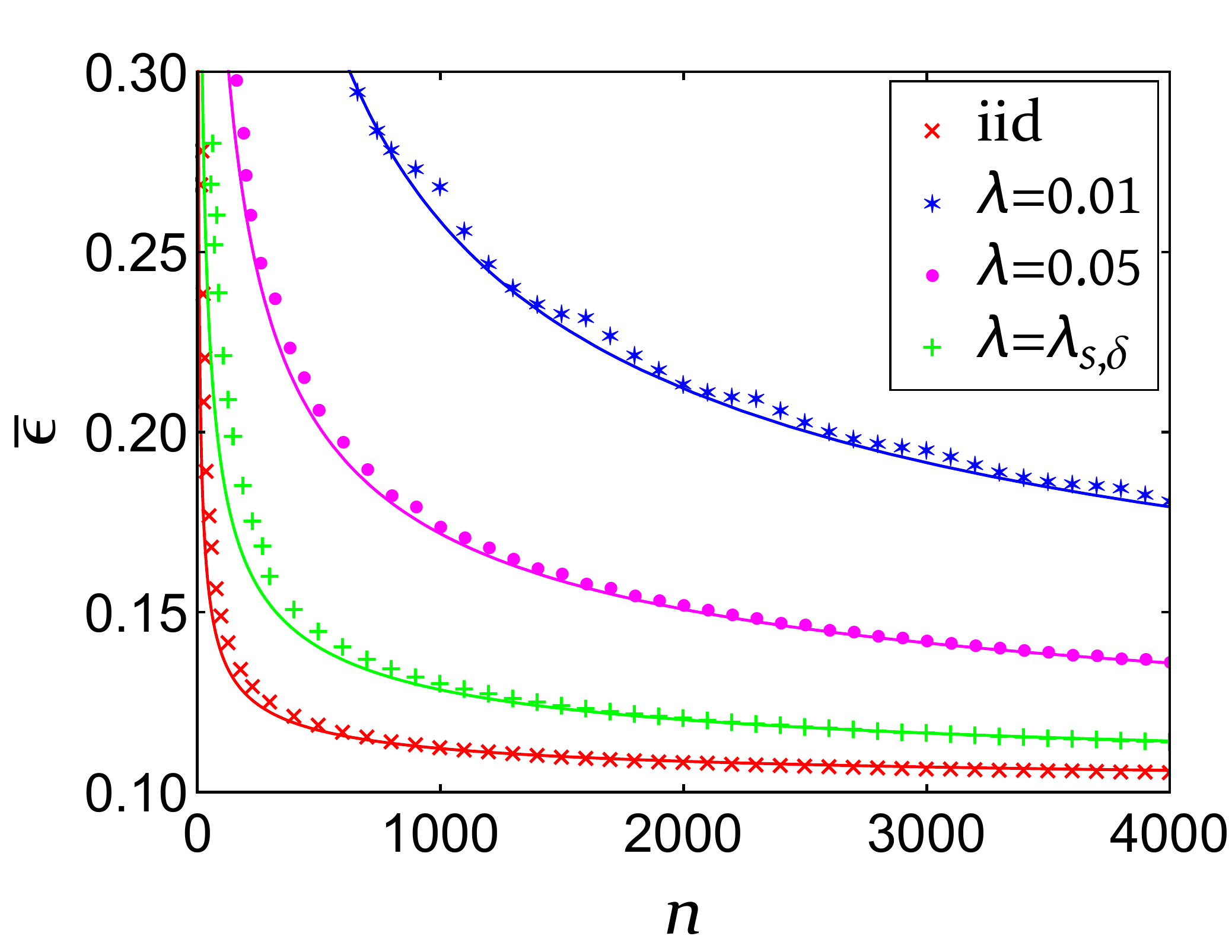}
	\caption{\Label{fig-ab}
The curves in the left graph show the numerical plots of \eqref{eq:zeta2ninftyG}, \eqref{eq:zeta2ninfty}, and \eqref{AOT} when $k_0=100$, $\delta=0.1$.
The curves in the right graph show the numerical plots of \eqref{eq-zeta2krn}, \eqref{XMU}, and \eqref{ZMP} when $s=0.1$, $\delta=0.1$.
The horizontal axis shows the number $n$, which runs from $0$ to $4000$.
The vertical axis shows the value $\epsilon$.
The discrete points represent the true value of LHS, and
the continuous plots presents the RHS by ignoring the term $O(\frac{1}{n^2})$ or
$O(\frac{1}{n})$.
These discrete points are calculated by using the formula given in \eqref{True-plot}.
In the right graph, \eqref{eq:MMD} is not plotted because
the values of \eqref{eq:MMD} are too large in comparison with other curves.
The detail of the left graph is the following.
	The red curve expresses \eqref{AOT}.
	The green curve expresses \eqref{eq:zeta2ninfty} with $\lambda=0.5$.
	The blue curve expresses \eqref{eq:zeta2ninfty} with $\lambda=0.95$.
	The pink curve expresses \eqref{eq:zeta2ninfty} with $\lambda=0.01$.
	The black curve expresses \eqref{eq:zeta2ninftyG}, which corresponds to the case  with $\lambda=0$.
The detail of the right graph is the following.
	The red curve expresses \eqref{ZMP}.
	The green curve expresses \eqref{XMU}.
	The pink curve expresses \eqref{eq-zeta2krn} with $\lambda=0.05$.
	The blue curve expresses \eqref{eq-zeta2krn} with $\lambda=0.01$.
}
\end{center}
\end{figure}

%This comparison shows that non-zero $\lambda$ improves the value $\epsilon$ extensively.

\subsection{Minimum number of required observations $n$}
Next, under the linear regime,
we consider the minimum number of observations $n$
required to guarantee
the upper confidence limit
%the conditional probability of the event $Y_{n+1}=1$ is smaller than a certain threshold
$\epsilon$
under the significance level $\delta$.
To this end, for $0\leq\lambda<1$, $0<\epsilon,\delta<1$ and $0\leq s<1$, we define the following values;
\begin{align}
N_{\lambda,{\rm l}}(s,\epsilon, \delta):=&
\min \{n \ge \lceil \nu s n\rceil+1 \,|\,  \overline{\epsilon}_\lambda(\lceil \nu s n\rceil,n,\delta) \le \epsilon \} \Label{MLA},
\\
\underline{N}_{{\rm l}}(s,\epsilon, \delta):=&
\min_{\lambda \in [0,1)} N_{\lambda,{\rm l}}(s,\epsilon, \delta)
\Label{MLA2},
\\
N_{{\rm iid},{\rm l}}(s,\epsilon, \delta):=&
\min \{n \ge \lceil s n\rceil+1 \,|\,  \overline{\epsilon}_{{\rm iid}}(\lceil s n\rceil,n,\delta) \le \epsilon \} .
\end{align}
In the above notation, the subscript l expresses the linear regime.
To make a fair comparison of the required number $n$, we need to choose
the number of allowed failures $l$ or $k$ depending on $\lambda$ and $s$ in the above way.

For the asymptotic evaluation of the above values, we have the following corollary.
\begin{corollary}\Label{thm:EpsT}
Suppose $0<\lambda,\delta<1$ and $0\leq s<\epsilon<1$.
\begin{enumerate}
	\item[(a)]
	We fix $s, \lambda$, and $\delta$.
	When $0< \delta \leq 1/2$
	and $\epsilon$ goes to $s+0$,
    we have
\begin{align}
N_{\lambda,{\rm l}}(s,\epsilon, \delta)
&=
\left(\frac{C(\lambda,s,\delta)}{\epsilon - s}\right)^2
+O(\frac{1}{\epsilon - s}),\Label{eq:UU}
\\
\underline{N}_{{\rm l}}(s,\epsilon, \delta)&=
\left(\frac{\underline{C}(s,\delta)}{\epsilon - s}\right)^2
+O(\frac{1}{\epsilon - s}),
\Label{eq:NsmallepsTB}\\
N_{{\rm iid},{\rm l}}(s,\epsilon,\delta)&
=
\frac{[\Phi^{-1}(\delta)]^2s(1-s) }{(\epsilon - s)^2} +O(\frac{1}{\epsilon - s}) .
\Label{eq:NsmallepsT}
\end{align}
where
$C(\lambda,s,\delta)$ and $\underline{C}(s,\delta)$ are defined in Eqs.~\eqref{CSLY} and \eqref{AMPZ}, respectively.

	We fix $s,\delta$ with the condition $s< \delta \leq \frac{1}{2}$.
%When $\epsilon -\red{\frac{s}{\delta}}>0 $ goes to zero,
%using a positive value $\upsilon(s,\epsilon,\delta)<1$, we have
We have
\begin{align}\Label{eq:NsmallepsTC}
%\begin{split}
%\frac{1-s+s^2-\delta}{\delta(1-s)}
%\le &
%\varliminf_{\epsilon\to \frac{s}{\delta}+0}
%(\epsilon -\frac{s}{\delta})N_{0,{\rm l}}(s,\epsilon,\delta) \\
%\le
\limsup_{\epsilon\to \frac{s}{\delta}+0}
\Big|(\epsilon -\frac{s}{\delta})N_{0,{\rm l}}(s,\epsilon,\delta)
-\frac{1-s+s^2-\delta}{\delta(1-s)}
-\frac{1}{2\delta}\Big|
< \frac{1}{2\delta}
%&
%\frac{1-s+s^2-\delta}{\delta(1-s)}+\frac{1}{\delta}.
%\end{split}
%\frac{1}{\epsilon \delta- s}
%\Big(
% \red{- \upsilon(s,\epsilon,\delta)}
%\Big)
%+O(\frac{1}{(\epsilon -\frac{s}{\delta})^2}).
\end{align}
    \item[(b)]
    	When $\epsilon,s$, and $\lambda$ are fixed and
    	$\delta$ goes to zero, we have
\begin{align}
N_{\lambda,{\rm l}}(s,\epsilon,\delta)
&=\frac{1}{E_{\lambda, s}^{-1}(\epsilon)} \ln\delta^{-1}
+o(\ln\delta^{-1}), \Label{eq:NsmalldeltaT7} \\
N_{{\rm iid},{\rm l}}(s,\epsilon,\delta)&
=\frac{1}{D(s\|\epsilon)}\ln \delta^{-1} +o(\ln \delta^{-1}).\Label{eq:NsmalldeltaT8}
\end{align}
\end{enumerate}
\end{corollary}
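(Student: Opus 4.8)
The plan is to obtain \coref{thm:EpsT} by \emph{inverting} the asymptotic expansions of the upper confidence limits already in hand: \eqref{eq-zeta2krn}, \eqref{eq:MMD}, \eqref{eq:zeta2deltarnN}, \eqref{XMU}, \eqref{ZMP}, and \eqref{eq:zeta2deltarnU}. In each case $N$ is the smallest $n$ at which the relevant upper confidence limit, viewed as a function of $n$, first reaches $\epsilon$, so the recipe is: substitute the expansion into $\overline{\epsilon}_\bullet\le\epsilon$, solve the resulting approximate equation for $n$, and control the inversion error using the \emph{explicit} remainder term of the expansion together with monotonicity of its leading part. The minima are well defined in the regimes considered because $\lceil\nu s n\rceil+1<n$ and $\lceil s n\rceil+1<n$ once $n>2/(1-s)$ (uniformly in $\lambda$), so the side constraints in \eqref{MLA}--\eqref{MLA2} are inactive, and because the target $\epsilon$ is taken strictly above the asymptotic floors $s$, $s/\delta$, $s$ of $\overline{\epsilon}_\lambda(\lceil\nu s n\rceil,n,\delta)$, $\overline{\epsilon}_0(\lceil s n\rceil,n,\delta)$, $\overline{\epsilon}_{\iid}(\lceil s n\rceil,n,\delta)$.

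For part (a), write $\overline{\epsilon}_\lambda(\lceil\nu s n\rceil,n,\delta)=s+C n^{-1/2}+g(n)$ from \eqref{eq-zeta2krn}, with $|g(n)|\le c/n$ for large $n$ and $C:=C(\lambda,s,\delta)>0$ for $0<\delta<1/2$ (in \eqref{CSLY} the prefactor is positive while $\Phi^{-1}(\delta)$ and the bracket are both negative). Put $t:=\epsilon-s$ and $h_\pm(n):=C n^{-1/2}\pm c/n$, so $h_-(n)\le\overline{\epsilon}_\lambda(\lceil\nu s n\rceil,n,\delta)-s\le h_+(n)$, with $h_+$ strictly decreasing and $h_-$ strictly decreasing for $n>(2c/C)^2$; since the finitely many $n\le(2c/C)^2$ fail the test once $\epsilon$ is near $s$ (as $\overline{\epsilon}_\lambda>s$ by \eqref{XMA6CB}), $N_{\lambda,{\rm l}}$ is squeezed between $\min\{n:h_-(n)\le t\}$ and $\min\{n:h_+(n)\le t\}$, and solving $C n^{-1/2}\pm c/n=t$ gives $(C/t)^2+O(1/t)$ either way, which is \eqref{eq:UU}. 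The identical squeeze on \eqref{XMU} gives \eqref{eq:NsmallepsTB} — using that $\overline{\epsilon}_{\min}(s,n,\delta)=\min_\lambda\overline{\epsilon}_\lambda(\lceil\nu s n\rceil,n,\delta)$ commutes with the outer minimum over $n$ once the side constraints are inactive, and $\underline C(s,\delta)>0$ for $\delta<1/2$ — and on \eqref{ZMP} gives \eqref{eq:NsmallepsT}, whose leading coefficient $-\Phi^{-1}(\delta)\sqrt{s(1-s)}$ is positive for $\delta<1/2$ and squares to $[\Phi^{-1}(\delta)]^2s(1-s)$; the degenerate $\delta=1/2$ (all leading coefficients vanish) is consistent since then $\overline{\epsilon}_\bullet-s=O(1/n)$ forces $N=O(1/(\epsilon-s))$. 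For \eqref{eq:NsmallepsTC} the relevant expansion is \eqref{eq:MMD}, with floor $s/\delta$ and correction $(A+\upsilon_n/\delta)/n+O(1/n^2)$, $A:=\frac{1-s+s^2-\delta}{\delta(1-s)}>0$ for $s<\delta\le1/2$, $\upsilon_n\in[0,1)$; as this correction is only of order $1/n$, inverting near $\epsilon=s/\delta$ yields $(\epsilon-\frac s\delta)N_{0,{\rm l}}=A+\upsilon_{N_{0,{\rm l}}}/\delta+o(1)$, and $A+\upsilon_n/\delta\in[A,A+\frac1\delta)$ stays within $\frac1{2\delta}$ of $A+\frac1{2\delta}$, giving \eqref{eq:NsmallepsTC}.

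For part (b) we invert the large-deviation expansions. As $r\mapsto E_{\lambda,s}(r)$ is strictly increasing from $s$ (at $r\to0^+$) to a value $\ge1$ at $r=D(s\nu\|\nu)$, the number $r^\ast:=E_{\lambda,s}^{-1}(\epsilon)\in(0,D(s\nu\|\nu))$ is well defined for $s<\epsilon<1$; fix small $\eta>0$ with $0<r^\ast-\eta$ and $r^\ast+\eta<D(s\nu\|\nu)$. Upper bound: $n_+:=\lceil\ln\delta^{-1}/(r^\ast-\eta)\rceil$ satisfies $\delta\ge\rme^{-(r^\ast-\eta)n_+}$, so by $\delta$-monotonicity of $\overline{\epsilon}_\lambda$ (\pref{lem-2-2}) and \eqref{eq:zeta2deltarnN} at the fixed exponent $r^\ast-\eta$, $\overline{\epsilon}_\lambda(\lceil\nu s n_+\rceil,n_+,\delta)\le\overline{\epsilon}_\lambda(\lceil\nu s n_+\rceil,n_+,\rme^{-(r^\ast-\eta)n_+})\to E_{\lambda,s}(r^\ast-\eta)<\epsilon$, hence $N_{\lambda,{\rm l}}\le n_+\le\ln\delta^{-1}/(r^\ast-\eta)+1$. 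Lower bound: every $n<\ln\delta^{-1}/(r^\ast+\eta)$ has $\delta<\rme^{-(r^\ast+\eta)n}$, so $\overline{\epsilon}_\lambda(\lceil\nu s n\rceil,n,\delta)\ge\overline{\epsilon}_\lambda(\lceil\nu s n\rceil,n,\rme^{-(r^\ast+\eta)n})$, which by \eqref{eq:zeta2deltarnN} exceeds $\epsilon$ for all $n\ge N_0$ (a constant depending only on $\eta,\lambda,s,\epsilon$, since $E_{\lambda,s}(r^\ast+\eta)>\epsilon$), while for the finitely many $n<N_0$ one uses $\overline{\epsilon}_\lambda(\lceil\nu s n\rceil,n,\delta)\to1$ as $\delta\to0$ (the constraint $Q(L\le l)\ge\delta$ becomes inactive); hence $N_{\lambda,{\rm l}}\ge\ln\delta^{-1}/(r^\ast+\eta)$ for $\delta$ small. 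Dividing by $\ln\delta^{-1}$ and letting $\eta\to0$ gives \eqref{eq:NsmalldeltaT7}, and \eqref{eq:NsmalldeltaT8} follows verbatim with $\epsilon_D(s,\cdot)$ in place of $E_{\lambda,s}$ (so $r^\ast=D(s\|\epsilon)$), using monotonicity of $D(s\|\cdot)$ (\lref{lem:DpqMonoton}), $\delta$-monotonicity of $\overline{\epsilon}_{\iid}$ (\pref{prop:epsiidMonoton}), \eqref{eq:zeta2deltarnU}, and $\overline{\epsilon}_{\iid}(\lceil s n\rceil,n,\delta)\to1$ as $\delta\to0$.

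The step I expect to be the main obstacle is the lack of exact monotonicity in $n$ of $\overline{\epsilon}_\lambda(\lceil\nu s n\rceil,n,\delta)$ and its iid and $\lambda=0$ analogues: the ceiling makes these maps oscillate, so one cannot simply solve for the crossing point of $\overline{\epsilon}_\bullet=\epsilon$. In part (a) the cure is the two-sided bracket $h_-\le\overline{\epsilon}_\bullet-s\le h_+$ built from the explicit remainder; the same oscillation produces the $\frac1{2\delta}$-wide window in \eqref{eq:NsmallepsTC}, and pinning that constant down needs the $\upsilon_n\in[0,1)$ bookkeeping. In part (b) the parallel difficulty is that in $N_{\lambda,{\rm l}}(s,\epsilon,\delta)$ the large-deviation exponent $\ln\delta^{-1}/n$ is coupled to $n$ whereas \eqref{eq:zeta2deltarnN} fixes the exponent; rather than proving a uniform-in-exponent version, the sandwich above invokes \eqref{eq:zeta2deltarnN} only at $r^\ast\pm\eta$ and relies on $\delta$-monotonicity (to compare with the actual $\delta$) plus the trivialization $\overline{\epsilon}_\lambda\to1$ (to mop up the bounded range of small $n$).
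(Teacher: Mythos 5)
Your proposal is correct and follows essentially the same route as the paper: invert the asymptotic expansions \eqref{eq-zeta2krn}, \eqref{XMU}, \eqref{ZMP}, \eqref{eq:MMD} for part (a) (using \lref{LLP} to dispose of the finitely many small $n$) and solve $E_{\lambda,s}(r)=\epsilon$, resp. $\epsilon_D(s,r)=\epsilon$, for part (b). The only difference is one of rigor, not of method: you make explicit the two-sided bracket handling the non-monotonicity in $n$ and, for part (b), the $r^*\pm\eta$ sandwich via $\delta$-monotonicity, both of which the paper's very terse proof leaves implicit.
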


\begin{proof}
The condition $0<\delta \leq 1/2$ guarantees that
$ C(\lambda,s,\delta)\geq 0$ and $\overline{\epsilon}_\lambda(\lceil \nu s n\rceil,n,\delta)>s$.
The definition \eqref{MLA} guarantees that $N_{\lambda,{\rm l}}(s,\epsilon, \delta)$ is nonincreasing in $\epsilon$.
Due to \eqref{eq-zeta2krn} and $ C(\lambda,s,\delta)\geq0$, the equation
$\epsilon= \overline{\epsilon}_\lambda(\lceil \nu s n\rceil,n,\delta)$
guarantees
$C(\lambda,s,\delta)\sqrt{\frac{1}{n}}
=\epsilon -s +O(n^{-1})$,
which implies that
$n
=\frac{C(\lambda,s,\delta)^2}{(\epsilon-s)^2} +O\big((\epsilon -s)^{-1}\big)$.
Hence, there exists an integer $n_0 > 0$ and some constant $C_0>0$ such that
\begin{align}
\Big|n
-\frac{C(\lambda,s,\delta)^2}{(\epsilon -s)^2}\Big|\le
\frac{ C_0}{\epsilon -s}
\qquad \forall n \ge n_0.
\end{align}

On the other hand, due to Lemma \ref{LLP}, there exits an $s<\epsilon_0<1 $ such that
\begin{align}
\overline{\epsilon}_\lambda(\lceil \nu s n\rceil,n,\delta) \ge \epsilon_0 \qquad \forall n \le n_0.
\end{align}
Therefore, for $\epsilon \in (s,\epsilon_0)$, we have
\begin{align}
N_{\lambda,{\rm l}}(s,\epsilon, \delta)
=\frac{C(\lambda,s,\delta)^2}{(\epsilon -s)^2} + O(\frac{1}{\epsilon - s}).
\end{align}
Hence, we obtain \eqref{eq:UU}.
In the same way, we can show
\eqref{eq:NsmallepsTB} from
\eqref{XMU} and \eqref{MML}, and show \eqref{eq:NsmallepsT} from
\eqref{ZMP} and \eqref{eq:epsiid>s}.
Also, \eqref{eq:NsmallepsTC} follows from \eqref{eq:MMD} and \eqref{XMA6J} of Lemma \ref{LLP}
in the same way
because of $1-s+s^2\ge 1/2$.
%In the same way as Item (b) of Corollary \ref{thm:EpsGo0},
By solving the equation $E_{\lambda, s}(r)=\epsilon$ for $r$,
we obtain \eqref{eq:NsmalldeltaT7} from
\eqref{eq:zeta2deltarnN}.
Similarly, by solving $\epsilon_D(s,r)=\epsilon$ for $r$,
we obtain \eqref{eq:NsmalldeltaT8} from
\eqref{eq:zeta2deltarnU}.
\end{proof}

\subsection{Maximum number of allowed failures $l$}
Also, we may consider the following quantities;
\begin{align}\label{eq:MaxAllowFail}
l_{\lambda}(n,\epsilon, \delta):=&
\max \{l \ge 0 \,|\,  \overline{\epsilon}_\lambda(l,n,\delta) \le \epsilon \}, \\
l_{{\rm iid}}(n,\epsilon, \delta):=&
\max \{l \ge 0 \,|\,  \overline{\epsilon}_{{\rm iid}}(l,n,\delta) \le \epsilon \},
\end{align}
which denote the maximum number of allowed failures $l$
allowed to guarantee the upper confidence limit $\epsilon$ under the significance level $\delta$.
%For these quantities, we have the following proposition.
% while its proof is given in Section \ref{P-thm:EpsT} in Supplement Material.

\begin{proposition}\Label{thm:EpsT}
Suppose $0<\lambda,\delta,\epsilon<1$.
\begin{enumerate}
	\item[(a)]
	When $\epsilon, \lambda,\delta$ are fixed and
	$n$ goes to infinity,
    we have
    \begin{align}
    &l_{0}(n,\epsilon, \delta)=
    \Big\lfloor\delta \epsilon n
    -\frac{1-\delta-\delta\epsilon+\delta^2\epsilon^2}{1-\delta\epsilon}
+O(\frac{1}{n}) \Big\rfloor, \Label{eq:UU-}
\end{align}
	When $\delta \le
	\psi^{-1}(\frac{\lambda}{(1-\lambda)(1-s)})$
	and $n$ goes to infinity, we have
    \begin{align}
    &l_{\lambda}(n,\epsilon, \delta)=\epsilon \nu  n
-C(\lambda,\epsilon,\delta)\nu \sqrt{n}+O(1), \Label{eq:UU+}
\end{align}
	When $\delta \leq 1/2$
	and $n$ goes to infinity, we have
    \begin{align}
&l_{{\rm iid}}(n,\epsilon, \delta)
=\epsilon n + \Phi^{-1}(\delta) \sqrt{\epsilon(1-\epsilon) } \sqrt{n}
+O(1),\Label{eq:UU+2}
\end{align}
where
$C(\lambda,s,\delta)$ is defined in \eqref{CSLY}.
	\item[(b)]
Suppose $\delta=\rme^{-rn}$ for a certain constant $r>0$.
    When $r$ is fixed and $n$ goes to infinity,
    we have
\begin{align}
l_{\lambda}(n,\epsilon, \rme^{-r n})
&=
    \nu  \bar{E}_{\lambda,r}^{-1}(\epsilon) n+o(n)
    \quad \text{for}\quad
%     r \le D( \nu\epsilon \|\nu)
    r \leq \frac{-\lambda\epsilon\ln\lambda}{1-\nu\epsilon}
, \Label{eq:UU+A}
\\
l_{{\rm iid}}(n,\epsilon, \rme^{-r n})
&=s_D(\epsilon,r) n +o(n) \quad \text{for}\quad  r\leq-\ln(1-\epsilon)
,\Label{eq:UU+2A}
\end{align}
where $s_D(\epsilon,r)$ is defined in \eref{CAO},
$\bar{E}_{\lambda,r}$ is defined as $\bar{E}_{\lambda,r}(s):=E_{\lambda,s}(r)$,
and $\bar{E}_{\lambda,r}^{-1}(\epsilon)$ denotes the inverse function of $\bar{E}_{\lambda,r}$.
\end{enumerate}
\end{proposition}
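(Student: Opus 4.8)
The plan is to derive every formula by \emph{inverting} the corresponding asymptotic expansion from \tref{thm:zeta3Asympt} and \pref{P4-3}, using that the upper confidence limit is monotone in the number of allowed failures. Indeed, $\overline{\epsilon}_0(k,n,\delta)$ is strictly increasing in $k$ on the range $k\le\delta(n+1)-1$: differentiating the first branch of \eqref{XMP9} gives a numerator $(k-n)^2+(n+1)(1-\delta)>0$. Moreover $\overline{\epsilon}_\lambda(l,n,\delta)$ is nondecreasing in $l$ by \pref{lem-2-2}, and $\overline{\epsilon}_{\iid}(k,n,\delta)$ is nondecreasing in $k$ by \pref{prop:epsiidMonoton}. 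Hence each solution set $\{l\in\bbZ^{\geq0}:\overline{\epsilon}_\bullet(l,n,\delta)\le\epsilon\}$ is an initial segment of $\bbZ^{\geq0}$, so $l_\lambda(n,\epsilon,\delta)$ and $l_{\iid}(n,\epsilon,\delta)$ are exactly the largest integers for which the relevant upper confidence limit does not exceed $\epsilon$; the task reduces to solving $\overline{\epsilon}_\bullet(\cdot)=\epsilon$ for a real argument and taking an integer part.

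For \eqref{eq:UU-} I would invert the exact rational formula \eqref{XMP9}. For $n$ large the anticipated leading term $\delta\epsilon n<\delta(n+1)-1$, so the first branch applies; the equation $\overline{\epsilon}_0(l,n,\delta)=\epsilon$ is then a quadratic in $l$, and selecting the root on the strictly increasing branch and expanding its square root in powers of $1/n$ yields $l^\ast=\delta\epsilon n-\frac{1-\delta-\delta\epsilon+\delta^2\epsilon^2}{1-\delta\epsilon}+O(1/n)$; since $l_0(n,\epsilon,\delta)=\lfloor l^\ast\rfloor$ by strict monotonicity and continuity in $l$, this is \eqref{eq:UU-}. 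For \eqref{eq:UU+} and \eqref{eq:UU+2} I would set $l=\lceil\nu s n\rceil$ (resp.\ $l=\lceil s n\rceil$) and feed in \eqref{eq-zeta2krn} (resp.\ \eqref{ZMP}). Under the stated hypothesis on $\delta$, the coefficient $C(\lambda,s,\delta)$ is nonnegative for $s$ in a neighbourhood of $\epsilon$ (and $-\Phi^{-1}(\delta)\geq0$ when $\delta\le1/2$), so the right-hand side $s+C(\lambda,s,\delta)n^{-1/2}+O(n^{-1})$ is increasing in $s$ there. Solving $s+C(\lambda,s,\delta)n^{-1/2}=\epsilon+O(n^{-1})$ by a fixed-point/implicit-function argument — using that $C(\lambda,\cdot,\delta)$ is Lipschitz in $s$, so $C(\lambda,s,\delta)=C(\lambda,\epsilon,\delta)+O(n^{-1/2})$ — gives $s^\ast=\epsilon-C(\lambda,\epsilon,\delta)n^{-1/2}+O(n^{-1})$, hence $l_\lambda=\lceil\nu s^\ast n\rceil+O(1)=\epsilon\nu n-C(\lambda,\epsilon,\delta)\nu\sqrt{n}+O(1)$; the iid case is identical with $C(\lambda,\epsilon,\delta)$ replaced by $-\Phi^{-1}(\delta)\sqrt{\epsilon(1-\epsilon)}$.

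For part (b) the argument is a simpler sandwich. Fix $r$ and put $s:=\bar E_{\lambda,r}^{-1}(\epsilon)$ (resp.\ $s:=s_D(\epsilon,r)$); the range conditions $r\le\frac{-\lambda\epsilon\ln\lambda}{1-\nu\epsilon}$ and $r\le-\ln(1-\epsilon)$ are exactly what make these well defined with $s\geq0$, and for the first they also place us on the nontrivial branch $r<D(s\nu\|\nu)$ of \eqref{eq:zeta2deltarnN} (one checks $E_{\lambda,0}(r)=\epsilon$ precisely at $r=\frac{-\lambda\epsilon\ln\lambda}{1-\nu\epsilon}$). For any fixed $s'<s$, \eqref{eq:zeta2deltarnN} (resp.\ \eqref{eq:zeta2deltarnU}) gives $\overline{\epsilon}_\lambda(\lceil\nu s'n\rceil,n,\rme^{-rn})=E_{\lambda,s'}(r)+o(1)<\epsilon$ for large $n$ because $E_{\lambda,\cdot}(r)$ (resp.\ $\epsilon_D(\cdot,r)$, by \lref{lem:DpqMonoton}) is increasing, while for fixed $s''>s$ the value exceeds $\epsilon$ for large $n$; combined with monotonicity in $l$ this traps $l_\lambda(n,\epsilon,\rme^{-rn})/n$ (resp.\ $l_{\iid}/n$) between $\nu s'$ and $\nu s''$ for all $s'<s<s''$, giving \eqref{eq:UU+A} and \eqref{eq:UU+2A}.

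The main obstacle is the $O(1)$ precision required in part (a) for $l_\lambda$ and $l_{\iid}$: this is more than a one-line inversion of \eqref{eq-zeta2krn} and \eqref{ZMP}, because the threshold $s^\ast$ drifts with $n$ toward $\epsilon$, so one needs the $O(n^{-1})$ remainders there to hold uniformly for $s$ in a fixed neighbourhood of $\epsilon$ (bounded away from $0$ and $1$), together with the quantitative Lipschitz control of $C(\lambda,\cdot,\delta)$ and of $\sqrt{s(1-s)}$ to run the fixed-point inversion with controlled error. For $l_0$ the analogous difficulty is purely the bookkeeping of the $1/n$-expansion of the quadratic root needed to pin down the explicit constant appearing inside the floor. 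Part (b) is comparatively routine once the two range conditions are matched to the domains of $\bar E_{\lambda,r}^{-1}$ and $s_D$.
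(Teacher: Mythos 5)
Your proposal is correct and follows essentially the same route as the paper: invert \eqref{XMP9} as a quadratic in $l$ for $l_0$, invert the expansions \eqref{eq-zeta2krn} and \eqref{ZMP} using $C(\lambda,\tfrac{l}{\nu n},\delta)=C(\lambda,\epsilon,\delta)+O(n^{-1/2})$ for $l_\lambda$ and $l_{\iid}$, and solve $\bar{E}_{\lambda,r}(s)=\epsilon$ (resp.\ $\epsilon_D(s,r)=\epsilon$) for part (b), with the range condition coming from $E_{\lambda,0}(r)\leq\epsilon$. Your added remarks on monotonicity in $l$ and on the uniformity of the $O(n^{-1})$ remainders make explicit points the paper's terse proof leaves implicit, but do not change the argument.
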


\begin{proof}
The condition
$\overline{\epsilon}_0(l,n,\delta) \le \epsilon$
is equivalent to the conditions
\begin{align}
l^2 -[ (1+\epsilon\delta) n  + \epsilon\delta]l
-(n+1) (1-\delta -\epsilon \delta n )\ge 0
\Label{BHA}
\end{align}
and $l \le \delta (n+1)-1$.
The condition \eqref{BHA} is equivalent to
\begin{align}
l \le    \delta \epsilon n
    -\frac{1-\delta-\delta\epsilon+\delta^2\epsilon^2}{1-\delta\epsilon}
+O(\frac{1}{n}) \quad \text{or} \quad
l \ge    n
    +\frac{1-\delta}{1-\delta\epsilon}
+O(\frac{1}{n}) \Label{MGN}.
\end{align}
However, the second case in \eqref{MGN} is not allowed due to
the condition $l \le \delta (n+1)-1$.
Hence, we have only the first case in \eqref{MGN}.
Since $l_{0}(n,\epsilon, \delta)$ is the maximum integer that satisfies
the first inequality in \eqref{MGN},
we have \eqref{eq:UU-}.

The relation \eqref{eq-zeta2krn} shows that
$\frac{l}{(1-\lambda) n}-\epsilon = - C(\lambda,\frac{l}{(1-\lambda)n},\delta) \sqrt{\frac{1}{n}}+O(\frac{1}{n})$.
Hence, we have
$C(\lambda,\frac{l}{(1-\lambda)n},\delta)
=C(\lambda,\epsilon,\delta)+ O(\sqrt{\frac{1}{n}})$.
Combining these two relations, we have \eqref{eq:UU+}.
In the same way, we can show \eqref{eq:UU+2}.

%In the same way as Item (b) of Corollary \ref{thm:EpsGo0},
\if0
Eq. \eqref{eq:UU+A} is obtained by solving the equation $E_{\lambda,s}(r)=\epsilon$ for $s\geq0$. 
(Note that $E_{\lambda,s}(r)$ is increasing in $s$.) 
To make sure that 
this equation has a solution for $s\geq0$, we need the constraint $E_{\lambda,0}(r)\leq\epsilon$, that is, $r \leq \frac{-\lambda\epsilon\ln\lambda}{1-\nu\epsilon}$. 
\fi
Solving the equation $\bar{E}_{\lambda, r}(s)=E_{\lambda, s}(r)=\epsilon$ for $s$,
we obtain $\frac{l}{(1-\lambda)n}=\bar{E}_{\lambda, r}^{-1}(\epsilon)$
from \eqref{eq:zeta2deltarnN}.
Note that $E_{\lambda,s}(r)$ is increasing in $s$ and 
this equation has a solution for $s\geq0$ when the constraint $E_{\lambda,0}(r)\leq\epsilon$, i.e., $r \leq \frac{-\lambda\epsilon\ln\lambda}{1-\nu\epsilon}$ holds.
Hence, we obtain \eqref{eq:UU+A}
under the condition $r \leq \frac{-\lambda\epsilon\ln\lambda}{1-\nu\epsilon}$.
In the same way, we obtain \eqref{eq:UU+2A} from \eqref{eq:zeta2deltarnU}.
\end{proof}

\if0
When we set $s=\frac{l}{n}$,
\eqref{eq:MMD} guarantees that
    \begin{align}
s- \delta\epsilon=-
\frac{1-s+s^2-\delta}{1-s}\frac{1}{n}
+O\Big(\frac{1}{n^2}\Big).
    \end{align}
    Since the difference $s- \delta\epsilon$ is $O(\frac{1}{n})$,
we obtain
    \begin{align}
s
= \delta\epsilon
    -\frac{1-\delta\epsilon+\delta^2\epsilon^2-\delta}{1-\delta\epsilon}
\frac{1}{n}+O(\frac{1}{n^2}),
    \end{align}
which implies \eqref{eq:UU-}.
\fi

\section{Detection probability under the iid distribution}\Label{S7}
Until now, we have discussed how to design our test with the randomization parameter $0\leq \lambda <1$.
By \eref{eq:MaxAllowFail}, if we choose the number of allowed failures $l=l_{\lambda}(n,\epsilon, \delta)$, then
we can guarantee that the conditional probability
$P(Y_{n+1}=1|L \le l)$ is not greater than the upper confidence limit $\epsilon$
under the significance level $\delta$.
However, we have not discussed how our test efficiently detects the event guaranteeing 
the above condition.
In the following,
we consider the detection probability $P(L \le l)$ of our test
under random sampling without replacement
when the true distribution belongs to the set ${\cal Q}_{{\rm iid},n+1}$.
%Here the detection probability denotes the probability $Q(L \le l)$ or $Q(K\le k)$
%(cf. Eqs.~\eqref{eq:epslamDef} and \eqref{eq:epsiidDef}), where $L$ ($K$) denotes
%the number of failures among the $n$ tests (observations), and $l$ ($k$) denotes the number of allowed failures.
We assume that the number $n$ of observations is sufficiently large for the normal approximation, and that
the true distribution is given as $P_{\theta_0+\frac{t}{\sqrt{n}}}^{n+1}$.
For $\epsilon>\theta_0+\frac{t}{\sqrt{n}}$, a higher detection probability is preferred.

When $\epsilon= \theta_0+\frac{e}{\sqrt{n}}$ and $0< \lambda < 1$,
the maximum number of allowed failures is
\begin{align}
l_{\lambda}(n,\theta_0+\frac{e}{\sqrt{n}}, \delta)
=
\theta_0 (1-\lambda) n+e (1-\lambda) \sqrt{n}
-C(\lambda,\theta_0,\delta)(1-\lambda)\sqrt{n}+O(1)
\end{align}
due to \eqref{eq:UU+}.
To make a fair comparison, we focus on the threshold detecting the desired event with respect to
a random variable asymptotically subject to the standard normal distribution under the normal approximation.

First, we fix $\lambda=0$. Then,
the maximum number of allowed failures is
\begin{align}
l_{0}(n,\theta_0+\frac{e}{\sqrt{n}}, \delta)
=\delta \theta_0 n +\delta e\sqrt{n}
+O(1)
%    -\frac{1-\delta \theta_0+\delta^2\theta_0^2-\delta}{1-\delta\theta_0}
%+O(\frac{1}{\sqrt{n}})
, \Label{eLLT8}
\end{align}
The random variable
$K^{(n)} := \frac{K-(\theta_0 +\frac{t}{\sqrt{n}}) n}{\sqrt{n \theta_0(1-\theta_0)  }}$
is subject to the standard normal distribution under the distribution
$P_{\theta_0+\frac{t}{\sqrt{n}}}^{n+1} $.
The threshold for the variable $K^{(n)}$ is given as
\begin{align}
\begin{split}
\frac{l_{0}(n,\theta_0+\frac{e}{\sqrt{n}}, \delta)
-(\theta_0 +\frac{t}{\sqrt{n}}) n}{\sqrt{n \theta_0(1-\theta_0)  }}
=
-\frac{(1-\delta) \sqrt{\theta_0 n}}{\sqrt{1-\theta_0}}
 +\frac{\delta e-t}{\sqrt{\theta_0(1-\theta_0)}}
+O(\frac{1}{\sqrt{n}})
 %    -\frac{1-\delta \theta_0+\delta^2\theta_0^2-\delta}
    % {(1-\delta\theta_0)\sqrt{n \theta_0(1-\theta_0)}}
%+O(\frac{1}{n})
. \Label{eLLT9}
\end{split}
\end{align}
Since the probability $\Phi(\eqref{eLLT9})$ goes to zero,
the detection probability converges to zero.
That is, it is almost impossible to %detect the event
guarantee that the conditional probability
$P(Y_{n+1}=1|L \le l)$ is not greater than the upper confidence limit $\epsilon
=\theta_0+\frac{e}{\sqrt{n}}$
under the significance level $\delta$
nevertheless of the values of $e$, $t$, and $\delta$ 
when $\lambda=0$ and the true distribution is $P_{\theta_0+\frac{t}{\sqrt{n}}}^{n+1} $.
This can be considered as a serious defect of 
random sampling without replacement.

However, our randomized test with non-zero $\lambda>0$
resolves this problem as follows.
In this case, the random variable
$L^{(n)}(\lambda) := \frac{L-(\theta_0 +\frac{t}{\sqrt{n}})(1-\lambda) n}
{\sqrt{n \theta_0 (1-\lambda)(1-\theta_0 (1-\lambda))  }}$
is subject to the standard normal distribution under the normal approximation
when the true distribution is $P_{\theta_0+\frac{t}{\sqrt{n}}}^{n+1} $.
The threshold for the random variable $L^{(n)}(\lambda)$ is given as
\begin{align}
\begin{split}
&\frac{l_{\lambda}(n,\theta_0+\frac{e}{\sqrt{n}}, \delta)-(\theta_0 +\frac{t}{\sqrt{n}}) (1-\lambda) n}
{\sqrt{n \theta_0 (1-\lambda)(1-\theta_0 (1-\lambda))  }} \\
=&\frac{(e-t) (1-\lambda)}{\sqrt{ \theta_0 (1-\lambda)(1-\theta_0 (1-\lambda))}  }
-\frac{C(\lambda,\theta_0,\delta)(1-\lambda)}{\sqrt{ \theta_0 (1-\lambda)(1-\theta_0 (1-\lambda))  }}+O(\frac{1}{\sqrt{n}}) \\
=&\frac{\sqrt{1-\lambda}}{\sqrt{ \theta_0 (1-\theta_0 (1-\lambda))}  }
((e-t) -C(\lambda,\theta_0,\delta))
+O(\frac{1}{\sqrt{n}}).
\end{split}\Label{AA2}
\end{align}
That is, the limiting value of the detection probability is
\begin{align}
\Phi\Big(\frac{\sqrt{1-\lambda}}{\sqrt{ \theta_0 (1-\theta_0 (1-\lambda))}  }
\big((e-t) -C(\lambda,\theta_0,\delta)\big)
\Big),
%+O(\frac{1}{\sqrt{n}})
\Label{Det1}
\end{align}
which is a strictly positive value.
In other words, with the above probability, our randomized test guarantees 
that the conditional probability
$P(Y_{n+1}=1|L \le l)$ is not greater than the upper confidence limit $\epsilon
=\theta_0+\frac{e}{\sqrt{n}}$
under the significance level $\delta$
when the true distribution is $P_{\theta_0+\frac{t}{\sqrt{n}}}^{n+1} $.
This can be considered as a significant advantage of our randomized test.

In addition, to achieve the detection probability $p_0$,
the parameters $e$, $t$, $\delta$, and $\lambda$
 need to satisfy the condition
\begin{align}
\frac{\sqrt{1-\lambda}}{\sqrt{ \theta_0 (1-\theta_0 (1-\lambda))}  }
\big((e-t) -C(\lambda,\theta_0,\delta)\big)
\ge \Phi^{-1}(p_0)
+O(\frac{1}{\sqrt{n}}),
\end{align}
which is equivalent to
\begin{align}
e-t \ge
C(\lambda,\theta_0,\delta)
+\frac{\sqrt{ \theta_0 (1-\theta_0 (1-\lambda))}  }{\sqrt{1-\lambda}}
\Phi^{-1}(p_0)
+O(\frac{1}{\sqrt{n}}).\Label{XIA}
\end{align}
To achieve the detection probability $p_0$,
the difference $e-t$ needs to satisfy the above condition.

In the following, %as a typical trade-off,
we maximize the detection probability \eqref{Det1} by changing $\lambda$
when the parameters $\theta_0$, $e$, $t$, and $\delta$ are fixed.
That is, we maximize
%minimize $\frac{C(\lambda,\theta_0,\delta)(1-\lambda)}{\sqrt{ \theta_0 (1-\lambda)(1-\theta_0 (1-\lambda))  }}$
$\frac{(e-t) \sqrt{1-\lambda}}{\sqrt{ \theta_0 (1-\theta_0 (1-\lambda))}  }
-\frac{C(\lambda,\theta_0,\delta)\sqrt{1-\lambda}}{\sqrt{ \theta_0 (1-\theta_0 (1-\lambda))  }}$
with respect to $\lambda$.
%Since $C(\lambda,\epsilon,\delta) \ge 0$, when the second term of RHS has a smaller absolute value, the detection probability is larger.
This term is written as
$\kappa(\lambda):=\frac{\alpha\sqrt{\lambda}+\frac{\beta(1-\lambda)
}{\sqrt{\lambda}}+\gamma\sqrt{1-\lambda}
}{(1-\theta_0(1-\lambda))^{1/2}}$
with
$\alpha=\Phi^{-1}(\delta)$, $\beta=-\Phi^{-1}(\delta) \psi(\delta)(1-\theta_0) $,
and $\gamma=\frac{e-t}{\sqrt{\theta_0}}$.
Then, the optimal choice of $\lambda$ is characterized by the following lemma (See \suppl{P-XMLA} for a proof)
while the optimal $\lambda$ is numerically given in the left graph of Fig. \ref{fig-de}.

\begin{lemma}\Label{L-XMLA}
For $\lambda \in (0,1)$,
the equation $\kappa'(\lambda)=0$ is equivalent to the following algebraic equation with degree 3;
\begin{align}
	(1-\lambda)(\Phi^{-1}(\delta)(1-\theta_0))^2
	\Big(
	(1 +(1+\theta_0)\psi(\delta) )\lambda+(1-\theta_0)\psi(\delta)\Big)^2
	=\frac{(e-t)^2}{\theta} \lambda^3.
	\Label{XMLA}
\end{align}
For $\delta<1/2$, the equation \eqref{XMLA} has but only one solution in $(0,1)$.
	If we denote the solution by
	$\lambda_0$, then we have
	\begin{align}
		\max_{\lambda \in (0,1)}
		\kappa(\lambda)=\kappa(\lambda_0).
		\Label{XMF}
	\end{align}
\end{lemma}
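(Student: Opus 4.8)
The plan is to differentiate $\kappa(\lambda)$ directly, clear denominators, and recognize the resulting expression as a cubic in $\lambda$; then argue uniqueness of the root in $(0,1)$ and finally check that this critical point is a maximizer rather than a minimizer. First I would write $\kappa(\lambda) = f(\lambda)/g(\lambda)$ with $f(\lambda):=\alpha\sqrt{\lambda}+\beta(1-\lambda)\lambda^{-1/2}+\gamma\sqrt{1-\lambda}$ and $g(\lambda):=(1-\theta_0(1-\lambda))^{1/2}$, and compute $\kappa'(\lambda)=\big(f'(\lambda)g(\lambda)-f(\lambda)g'(\lambda)\big)/g(\lambda)^2$. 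Since $g(\lambda)>0$ on $(0,1)$, the equation $\kappa'(\lambda)=0$ is equivalent to $f'(\lambda)g(\lambda)=f(\lambda)g'(\lambda)$. Computing $f'(\lambda)=\tfrac12\alpha\lambda^{-1/2}-\tfrac12\beta\lambda^{-1/2}-\tfrac12\beta\lambda^{-3/2}-\tfrac12\gamma(1-\lambda)^{-1/2}$ and $g'(\lambda)=\tfrac12\theta_0(1-\theta_0(1-\lambda))^{-1/2}$, I would substitute, multiply both sides by $2\lambda^{3/2}(1-\lambda)^{1/2}(1-\theta_0(1-\lambda))^{1/2}$ to clear all fractional powers of $\lambda$ and $1-\lambda$, and collect terms. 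The $\gamma$-terms on the left and the $\gamma$-terms coming from $f(\lambda)g'(\lambda)$ should be arranged so that the $\sqrt{1-\lambda}$ factors cancel, isolating $\gamma^2\lambda^3$ on one side after squaring, since $\gamma^2=(e-t)^2/\theta_0$; the remaining terms assemble into $(1-\lambda)(\Phi^{-1}(\delta)(1-\theta_0))^2$ times the square of a linear polynomial in $\lambda$, using $\alpha=\Phi^{-1}(\delta)$ and $\beta=-\Phi^{-1}(\delta)\psi(\delta)(1-\theta_0)$. This is the bookkeeping-heavy step, and it is the main obstacle: one must be careful that squaring does not introduce spurious roots, which is why the lemma separately asserts \eqref{XMLA} is merely \emph{equivalent} to $\kappa'(\lambda)=0$ on $(0,1)$ — so I would track signs to confirm that on $(0,1)$ the pre-squaring identity and \eqref{XMLA} have exactly the same solution set (the relevant quantities $\alpha,\beta,\gamma$ have fixed signs for $\delta<1/2$, namely $\alpha<0$, $\beta>0$, $\gamma>0$, which pins down the branch).

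Next, for uniqueness when $\delta<1/2$, I would rewrite \eqref{XMLA} as $h(\lambda):=\gamma^2\theta_0^{-1}\lambda^3-(1-\lambda)(\alpha(1-\theta_0))^2\big((1+(1+\theta_0)\psi(\delta))\lambda+(1-\theta_0)\psi(\delta)\big)^2=0$ and study $h$ on $[0,1]$. At $\lambda=0$, $h(0)=-(\alpha(1-\theta_0))^2((1-\theta_0)\psi(\delta))^2<0$; at $\lambda=1$, $h(1)=\gamma^2/\theta_0>0$; so there is at least one root in $(0,1)$. For uniqueness I would argue via the monotonicity structure of $\kappa$ itself rather than wrestling with the cubic: on $(0,1)$, $\kappa$ is continuous with $\kappa(\lambda)\to$ a finite positive value as $\lambda\to1^-$ and $\kappa(\lambda)\to+\infty$ as $\lambda\to0^+$ (because of the $\beta(1-\lambda)\lambda^{-1/2}$ term with $\beta>0$), wait — rather, I would instead show $\kappa$ has a unique interior critical point by checking that any critical point is a strict local maximum (e.g. via the sign of $\kappa''$ or, more cheaply, via the sign change of $\kappa'$ forced by the boundary behavior), so that two critical points would force an intermediate local minimum contradicting the established boundary limits. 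Alternatively, factor out the positive leading structure and reduce \eqref{XMLA} to showing a certain function is strictly monotone, using that $\psi(\delta)>0$ for $\delta<1/2$.

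Finally, \eqref{XMF} follows once uniqueness is in hand: $\kappa$ is differentiable on the open interval $(0,1)$, so its maximum over $(0,1)$ is attained either at an interior critical point or not at all (in the limit). The boundary analysis — $\lim_{\lambda\to0^+}\kappa(\lambda)=+\infty$? no: here I must check the actual sign, since $\gamma\sqrt{1-\lambda}\to\gamma>0$ but $\beta(1-\lambda)\lambda^{-1/2}\to+\infty$ since $\beta>0$ — so in fact $\kappa(\lambda)\to+\infty$ as $\lambda\to 0^+$ would \emph{contradict} the existence of a maximizer; therefore the correct reading must be that $\kappa$ is the quantity to be \emph{maximized} over a range where it is bounded, and I would instead verify directly that $\kappa'(\lambda)>0$ for $\lambda$ near $0$ and $\kappa'(\lambda)<0$ for $\lambda$ near $1$ is \emph{not} the pattern; rather $\kappa$ increases then... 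In any case, the clean route is: establish that \eqref{XMLA} has exactly one root $\lambda_0\in(0,1)$ (the computation above plus the sign analysis of the cubic $h$, noting $h(0)<0<h(1)$ and that $h'$ changes sign at most... ), conclude $\lambda_0$ is the unique interior critical point of $\kappa$, and then identify it as the global max on $(0,1)$ by comparing with the one-sided limits of $\kappa$ at the endpoints. I expect the endpoint-limit bookkeeping and the no-spurious-root check after squaring to be where all the care is needed; the differentiation and collection of terms, though lengthy, is mechanical.
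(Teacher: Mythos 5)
There is a genuine gap, and it starts with a sign error that you notice mid-proof but never resolve. For $\delta<1/2$ we have $\Phi^{-1}(\delta)<0$, hence $\psi(\delta)=\phi(\Phi^{-1}(\delta))/(\delta\Phi^{-1}(\delta))<0$ (not $>0$ as you assert near the end of your second paragraph), and $\beta=-\Phi^{-1}(\delta)\psi(\delta)(1-\theta_0)=-(1-\theta_0)\phi(\Phi^{-1}(\delta))/\delta<0$ (not $>0$). Consequently $\kappa(\lambda)\to-\infty$ as $\lambda\to0^+$, which dissolves the contradiction you worry about and is exactly what makes an interior maximizer possible; but because you carry the wrong signs, your endpoint analysis for \eqref{XMF} never closes, and the "branch-tracking" you invoke to rule out spurious roots after squaring is based on the wrong sign pattern for $\alpha,\beta,\gamma$.

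The second, more substantive gap is that the uniqueness argument is never actually executed. You correctly get existence from $h(0)<0<h(1)$, but then oscillate between two strategies (second-derivative test at critical points; monotonicity of some unspecified function) without carrying out either, and the second one leans on the false claim $\psi(\delta)>0$. The paper's proof of uniqueness hinges on a specific, nontrivial fact that is absent from your proposal: writing \eqref{XMLA} as $f_1(\lambda)+f_2(\lambda)=0$ with $f_2(x)=(x-1)\bigl[(1+(1+\theta_0)\psi(\delta))x+(1-\theta_0)\psi(\delta)\bigr]^2$, one must show that the double root $x_0=-(1-\theta_0)\psi(\delta)/(1+(1+\theta_0)\psi(\delta))$ is negative. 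This requires proving $1+(1+\theta_0)\psi(\delta)<0$, which in turn uses the Mills-ratio bound $\Phi(-y)\le\phi(y)/y$ for $y>0$ (Gordon's inequality). Once $x_0<0$, the cubic $f_1+f_2$ on $(0,\infty)$ either is monotone or decreases then increases, and combined with $f_1(0)+f_2(0)<0$ this yields at most one positive root. Without the $x_0<0$ step there is no control on where $f_2$ turns around, and "two critical points would force an intermediate local minimum" does not by itself exclude that configuration. So the skeleton of your argument matches the paper's, but the two load-bearing ingredients — the correct signs of $\psi(\delta)$ and $\beta$, and the Gordon-inequality step locating the double root — are missing.
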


\begin{figure}
\begin{center}
	\includegraphics[width=7cm]{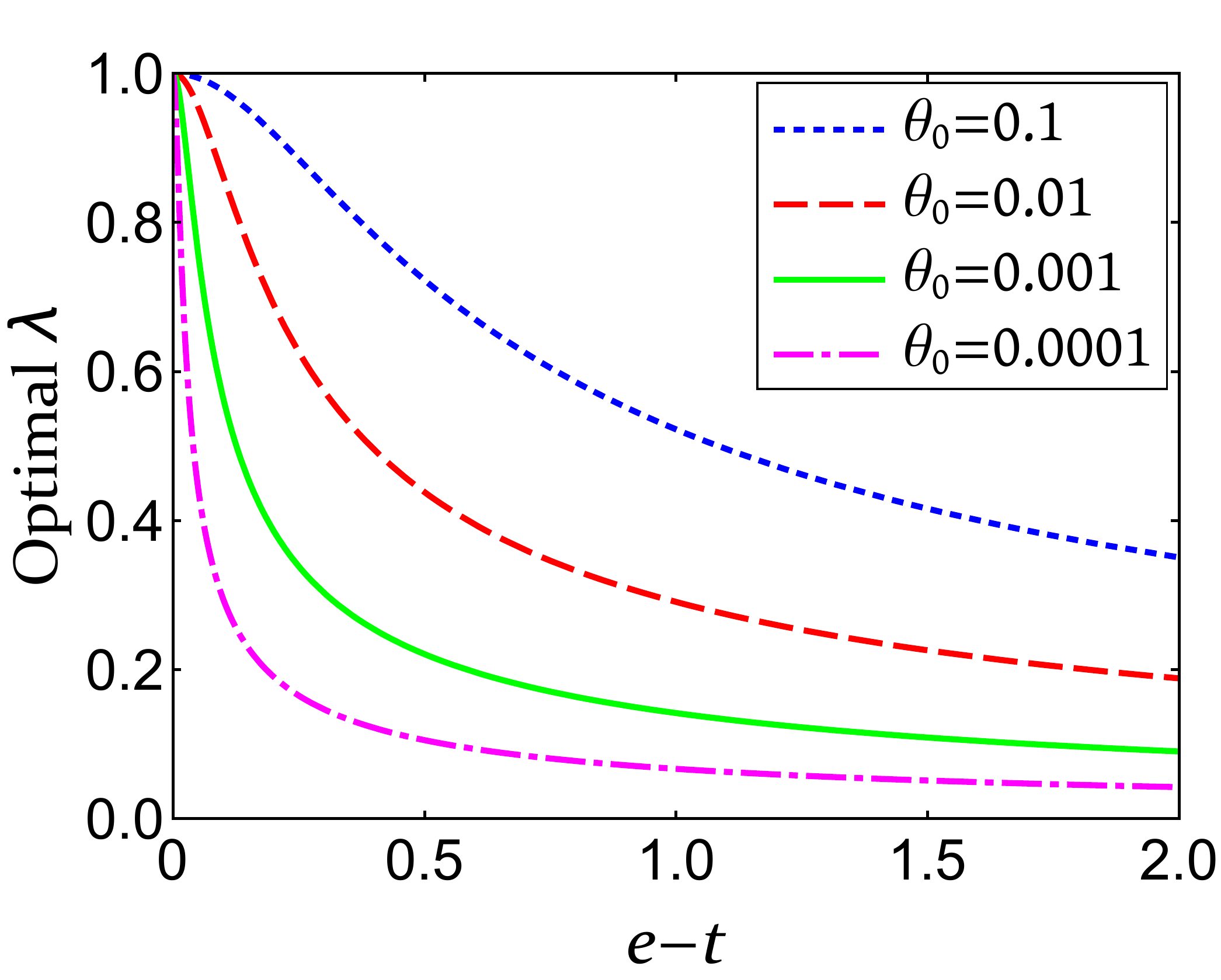}
	\includegraphics[width=7cm]{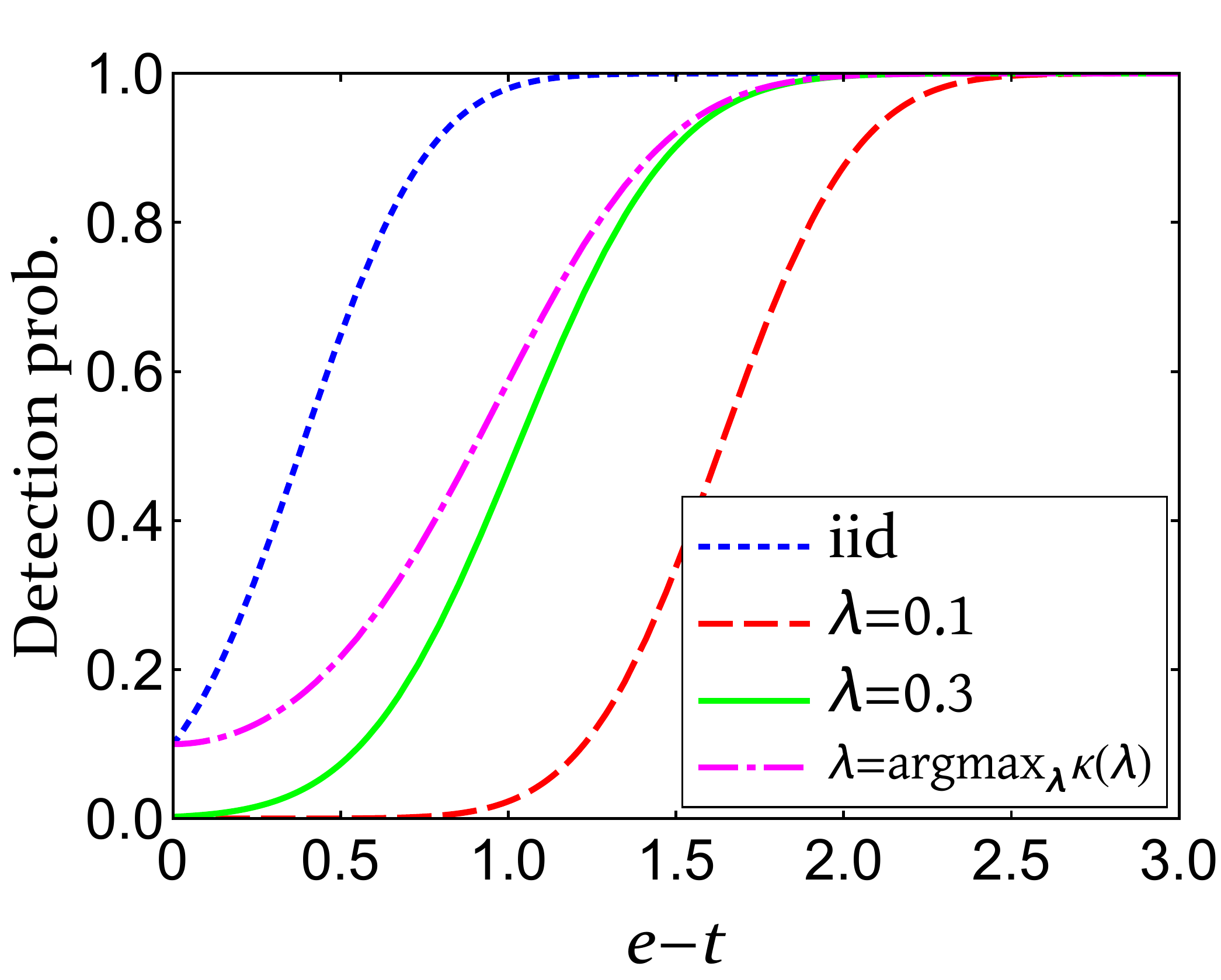}
	\caption{\Label{fig-de}
The curves in the left graph show the numerical plots of the optimal $\lambda$, i.e.,
$\argmax_{\lambda \in (0,1)}\kappa(\lambda) $ based on \eqref{XMF}
with various $\theta_0$ and $\delta=0.1$.
The curves in the right graph show the numerical plots of
the detection probabilities
\eqref{Det1}, \eqref{AA22},
%$\Phi( \eqref{AA})$, 
and $\eqref{Det1}$
with $\lambda=\argmax_{\lambda \in (0,1)}\kappa(\lambda) $
when $\theta_0=0.1$, $\delta=0.1$.
The horizontal axis shows the value $e-t$, which runs from $0$ to $3$.
The vertical axes in the left and right graph show the values of the optimal $\lambda$ 
and the approximated detection probability, respectively.
In the right graph, 
$\Phi(\eqref{eLLT9})$ 
is not plotted because
the detection probability $\Phi(\eqref{eLLT9})$ is too small in comparison with other curves.
The detail of the left graph is the following.
	The pink curve expresses the case with $\theta_0=0.0001$.
	The green curve expresses the case with $\theta_0=0.001$.
	The red curve expresses the case with $\theta_0=0.01$.
	The blue curve expresses the case with $\theta_0=0.1$.
The detail of the right graph is the following.
	The blue curve expresses \eqref{AA22}, %$\Phi( \eqref{AA})$, 
	which corresponds to the iid case.
	The pink curve expresses \eqref{Det1} with $\lambda=\argmax_{\lambda \in (0,1)}\kappa(\lambda) $.
	The green curve expresses \eqref{Det1} with $\lambda=0.3$.
	The red curve expresses \eqref{Det1} with $\lambda=0.1$.
}
\end{center}
\end{figure}

In contract, when we employ the above test under the iid assumption,
%i.e., random sampling with replacement,
the maximum number of allowed failures is
\begin{align}
l_{{\rm iid}}(n,\theta_0+\frac{e}{\sqrt{n}}, \delta)
=
\theta_0 n+e \sqrt{n}
+\Phi^{-1}(\delta)\sqrt{\theta_0(1-\theta_0)}\sqrt{n}+O(1).
\end{align}
The threshold for the variable $K^{(n)}$ is given as
\begin{align}
\frac{l_{{\rm iid}}(n,\theta_0+\frac{e}{\sqrt{n}}, \delta)-(\theta_0 +\frac{t}{\sqrt{n}}) n}
{\sqrt{n \theta_0 (1-\theta_0)  }}
=\frac{e -t}{\sqrt{ \theta_0 (1-\theta_0 )}  }
+\Phi^{-1}(\delta)+O(\frac{1}{\sqrt{n}}).
\Label{AA}
\end{align}
That is, the limiting value of the detection probability is
\begin{align}
\Phi \Big(\frac{e -t}{\sqrt{ \theta_0 (1-\theta_0 )}  }
+\Phi^{-1}(\delta)\Big). %+O(\frac{1}{\sqrt{n}}).
\Label{AA22}
\end{align}

We compare the detection probabilities \eqref{Det1} and \eqref{AA22} as follows
while their numerical comparison is given in the right graph of Fig. \ref{fig-de}.
Their magnitude relation follows from the magnitude relation of their input values.
The first and second terms of their input values are evaluated as
\begin{align}
&\frac{e -t}{\sqrt{ \theta_0 (1-\theta_0 )}  } \Label{M12}
\ge
\frac{\sqrt{1-\lambda}}{\sqrt{ \theta_0 (1-\theta_0 (1-\lambda))}  } (e-t) ,
\\
& \Phi^{-1}(\delta)
\ge
-\frac{\sqrt{1-\lambda}\,C(\lambda,\theta_0,\delta)}{\sqrt{ \theta_0 (1-\theta_0 (1-\lambda))}  }
=
\frac{\sqrt{\theta_0}(1-\theta_0)
\Phi^{-1}(\delta)
}{\sqrt{ \theta_0 (1-\theta_0 (1-\lambda))}  }
\bigg[
%\frac{\phi(\Phi^{-1}(\delta)) }{\delta  \Phi^{-1}(\delta)}
\frac{\sqrt{\lambda}}{1-\theta_0}
-\frac{1-\lambda}{\sqrt{\lambda}}
\psi(\delta)
\bigg] .\Label{M13}
\end{align}
Since $\Phi$ is monotonic increasing,
their combination yields the relation \eqref{Det1} $\le$ \eqref{AA22}, i.e., 
the iid assumption realizes a larger detection probability.
This inequality can not be saturated 
%The equality in this inequality does not hold 
because the equality in \eqref{M12} holds in the limit $\lambda\to 0$,
but the equality in \eqref{M13} holds in the limit $\lambda\to 1$.
%, which shows the impossibility of the equality in \eqref{Det1} $\le$ \eqref{AA22}.

%Although the inequality RHS of \eqref{AA2} $\le$ RHS of  \eqref{AA} holds with $e-t \ge 0$ and $\delta\le \frac{1}{2}$,
%this inequality can be decomposed as two parts;
%Hence, the optimal choice of $\lambda$ depends on the trade-off between these two terms.

\if0
Although the sign of the coefficient $
(1 +(1+\theta_0)\psi(\delta) )$ cannot be determined,
the term $\Big(
(1 +(1+\theta_0)\psi(\delta) )\lambda+(1-\theta_0)\psi(\delta)\Big)$
takes values between
$(1-\theta_0)\psi(\delta) $ and $1+ 2\psi(\delta) $.
Since $\psi(\delta)< -\frac{1}{2}$ under a certain condition $\delta>0$,
we find that $\kappa'(\lambda)<0$.
Hence, the minimum of $\kappa(\lambda)$ is realized with $\lambda=1$
and the minimum value is $\kappa(1) = -\Phi^{-1}(\delta)$.
\fi

\if0
Next, we consider the case when the true distribution is
$Q_{
\lceil(\theta_0+\frac{t}{\sqrt{n}}) n \rceil}$.
In this case, the random variable
$\frac{L -\frac{n(\lceil(\theta_0+\frac{t}{\sqrt{n}}) n \rceil)}{n+1}}
{}
$
subject to the standard normal distribution asymptotically.
\fi

\if0
\begin{remark}
When $\delta > \frac{1}{2}$, we have the following pathological phenomena.
In this case, there exists $\lambda \in (0,1)$ such that
$C(\lambda,s,\delta)< -\sqrt{s(1-s)}\Phi^{-1}(\delta)$ due to \eqref{AMPZ}.
This fact means that
\end{remark}
\fi

\section{Asymptotic evaluations in the constant regime}\Label{constregime}
In the second kind of regime, we fix the number of allowed failures $l$ to be a certain constant and take the limit
$n \to \infty$.
In the iid setting, this regime leads to the law of small numbers, i.e.,
Poisson limit theorem.
We call it the constant regime.
In this section, we assume the constant regime, i.e., the number of allowed failures $l$ is fixed to be $\lceil\nu k_0\rceil$
depending on $1>\lambda\ge 0$ and an integer $k_0\ge 0$
because the average of $L$ is proportional to $\nu$.

\subsection{Upper confidence limit}
For the asymptotic evaluation of $\overline{\epsilon}_\lambda(l,n,\delta)$,
we define the coefficient
\begin{align}
G(l,\delta,\lambda):=
\frac{(B_{z_*,l}-\delta)
z^*B_{z_*,l}
+(\delta-B_{z^*,l})
z_*B_{z_*-1,l}}
{ \delta\Delta_{z_*,l}},
\end{align}
where $z^*$ and $z_*$ are shorthand for
$z^*(l,\delta,\lambda)$ and $z_*(l,\delta,\lambda)$, respectively.

Then, we have the following theorem.
\begin{theorem}\Label{thm:zeta2Asympt}
We consider the constant regime.
We fix $0<\lambda<1$ and $k_0\in\bbZ^{\geq 0}$.
%\in\bbZ^{\geq0}$, and $n\in\bbZ^{\geq k+1}$.
Then the following statements hold:
\begin{enumerate}
    \item[(a)]  When $0<\delta<1$ is fixed and $n$ goes to infinity,
the upper confidence limit has the following behavior
\begin{align}
\overline{\epsilon}_0(k_0,n,\delta)
&=\frac{ k_0+1-\delta}{\delta }n^{-1} +o(n^{-1}) \Label{eq:zeta2ninftyG}\\
\overline{\epsilon}_\lambda(\lceil\nu k_0\rceil ,n,\delta)
&=G(\lceil\nu k_0\rceil,\delta,\lambda) n^{-1}
 +o(n^{-1}).\Label{eq:zeta2ninfty}
\end{align}
    \item[(b)]  Suppose $\delta=\rme^{- r n}$ for a certain constant $r>0$.
    When $r$ is fixed and $n$ goes to infinity,
the upper confidence limit has the following behavior
\begin{align}
\overline{\epsilon}_0(k_0,n,\delta=\rme^{-r n})
&=1 \Label{eq:MMP} \\
\overline{\epsilon}_\lambda(\lceil\nu k_0\rceil,n,\delta=\rme^{-r n})
&=
\begin{cases}
\frac{r}{r +\lambda(\ln\lambda^{-1}-r)}+o(1) &  0< r\leq\ln\lambda^{-1}, \\
1   &  r>\ln\lambda^{-1}.
\end{cases}\Label{eq:zeta2deltarn}
\end{align}
\end{enumerate}
\end{theorem}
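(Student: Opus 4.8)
Equations \eqref{eq:zeta2ninftyG} and \eqref{eq:MMP} follow directly from the closed form \eqref{XMP9}. With $0<\delta<1$ fixed, $\delta\geq\frac{k_0+1}{n+1}$ for all large $n$, so the first branch applies, and expanding $\frac{(k_0+1)(n+1-k_0)-\delta(n+1)}{\delta(n-k_0)(n+1)}$ in powers of $n^{-1}$ gives $\frac{k_0+1-\delta}{\delta}n^{-1}+O(n^{-2})$. With $\delta=\rme^{-rn}$ one has $\delta<\frac{k_0+1}{n+1}$ for all large $n$, so the second branch applies and gives the value $1$.

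\textbf{Reduction for the randomized quantity.}
For $\overline{\epsilon}_\lambda(\lceil\nu k_0\rceil,n,\delta)$ the plan is to start from the evaluation of the maximizer over ${\cal Q}_{n+1}$ developed in Section \ref{S8}. The extreme points of ${\cal Q}_{n+1}$ are the type distributions $Q_z$, $z=0,\dots,n+1$, placing ones on a uniformly random $z$-subset of $[n+1]$; for $Q_z$ the pre-noise failure count among the first $n$ variables takes only the values $z$ and $z-1$, the latter coinciding with $\{Y_{n+1}=1\}$, so with \eqref{LK} one obtains
\begin{align*}
Q_z(L\le l)=\frac{n+1-z}{n+1}B_{z,l}+\frac{z}{n+1}B_{z-1,l},\qquad Q_z(Y_{n+1}=1\mid L\le l)=\frac{zB_{z-1,l}}{(n+1-z)B_{z,l}+zB_{z-1,l}}.
\end{align*}
Since $Q_z(L\le l)$ is a convex combination of $B_{z,l}$ and $B_{z-1,l}$, it is nonincreasing in $z\geq l$ by \lref{lem:Bzkmono}, and log-concavity of $z\mapsto B_{z,l}$ makes the conditional probability nondecreasing in $z$; hence the maximizer is a two-point mixture $(1-p)Q_{z'}+pQ_{z'+1}$ at the threshold $z'=\max\{z:Q_z(L\le l)\geq\delta\}$, with $p$ fixed by $Q(L\le l)=\delta$. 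Because $Q_{z_*}(L\le l)\geq B_{z_*,l}>\delta$ and $Q_{z^*+1}(L\le l)\leq B_{z^*,l}\leq\delta$, one always has $z'\in\{z_*,z^*\}$.

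\textbf{Asymptotics.}
In part (a), $\delta$ and thus $z^*,z_*$ are $n$-independent; since $Q_z(L\le l)\to B_{z,l}$, one has $z'=z_*$ for all large $n$ (the lone exception $B_{z^*,l}=\delta$, where $z'=z^*$ and $p\to0$, is absorbed by the definition of $G$), so the mixture is of $Q_{z_*}$ and $Q_{z^*}$. Solving $(1-p)Q_{z_*}(L\le l)+pQ_{z^*}(L\le l)=\delta$ gives $p=\frac{B_{z_*,l}-\delta}{\Delta_{z_*,l}}+O(n^{-1})$, and inserting this into $\overline{\epsilon}_\lambda=\frac{(1-p)z_*B_{z_*-1,l}+pz^*B_{z_*,l}}{(n+1)\delta}$ (using $B_{z^*-1,l}=B_{z_*,l}$ and $\Delta_{z_*,l}=B_{z_*,l}-B_{z^*,l}$) and passing to the limit yields precisely $G(\lceil\nu k_0\rceil,\delta,\lambda)n^{-1}+o(n^{-1})$. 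In part (b), $\delta=\rme^{-rn}$ and $z^*$ grows; the key estimate is that, $l$ being fixed, the $j=l$ term dominates, $B_{z,l}(\nu)=\binom{z}{l}\nu^l\lambda^{z-l}\big(1+O(z^{-1})\big)$, whence $B_{z-1,l}/B_{z,l}\to\lambda^{-1}$ and $z^*=\frac{rn}{\ln\lambda^{-1}}+O(\ln n)$. If $r<\ln\lambda^{-1}$, then $z^*/n\to\zeta:=r/\ln\lambda^{-1}<1$, so $z^*\leq n$ eventually, the optimal mixture sits at types within $O(1)$ of $z^*$, and there $\frac{zB_{z-1,l}}{(n+1-z)B_{z,l}+zB_{z-1,l}}=\frac{z}{\lambda(n+1-z)+z}+o(1)$ uniformly; feeding $z/n\to\zeta$ gives the limit $\frac{\zeta}{\lambda(1-\zeta)+\zeta}=\frac{r}{r+\lambda(\ln\lambda^{-1}-r)}$. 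If $r>\ln\lambda^{-1}$, then $B_{n,l}(\nu)\sim\binom{n}{l}\nu^l\lambda^{n-l}>\rme^{-rn}=\delta$ for large $n$, so the all-ones type $Q_{n+1}$, of conditional probability $1$, is feasible and $\overline{\epsilon}_\lambda=1$; the two expressions agree at $r=\ln\lambda^{-1}$.

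\textbf{Main obstacle.}
The substantive step, imported from Section \ref{S8}, is the reduction of the maximizer to a mixture of the two \emph{consecutive} types $Q_{z_*},Q_{z^*}$, i.e.\ excluding distributions supported on higher types; this rests on the log-concavity of $z\mapsto B_{z,l}(\nu)$. Granting that, part (a) is a routine $n^{-1}$ expansion, while in part (b) the delicate points are the uniformity of the error term in $z^*=\frac{rn}{\ln\lambda^{-1}}+O(\ln n)$ and the integrality of $z^*$, which leaves $\rme^{-rn}$ at an a priori uncontrolled position in $(B_{z^*,l},B_{z_*,l}]$ and makes $z'$ oscillate between $z_*$ and $z^*$; this is harmless only because, once $z^*\to\infty$, the conditional probabilities of $Q_{z_*}$ and $Q_{z^*}$ — hence of every mixture of them — collapse to the single value $\frac{\zeta}{\lambda(1-\zeta)+\zeta}$.
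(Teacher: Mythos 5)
Your proposal is correct and rests on the same foundation as the paper's proof, namely the extreme-point characterization of Section \ref{S8}: the formulas you write for $Q_z(L\le l)$ and $Q_z(Y_{n+1}=1\mid L\le l)$ are exactly $h_z$ and $1-g_z/h_z$ of \eqsref{eq:hzHomo}{eq:gzHomo}, and your reduction to a mixture of two consecutive types at the threshold is \tref{thm:AdvFidelity}. Be aware that monotonicity of the conditional probability in $z$ is not by itself enough for that reduction — you need the points $(h_z,g_z)$ to lie in convex position on the lower boundary, i.e.\ monotonicity of the slopes (\lref{lem:SlopeMono}, proved via \lrefs{lem:Bzkz}{lem:Deltazk}); you correctly flag this as the imported substantive step, and "log-concavity of $z\mapsto B_{z,l}$" is indeed the right ingredient, just not quite the full statement. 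Where you genuinely diverge is in the execution of part (b): the paper sandwiches $\overline{\epsilon}^{\rm c}_\lambda$ between the two $z^*$-only bounds of \lref{cor:zeta2Bound2} and locates $z^*$ via the soft large-deviation limit of \lref{lem:z-kwithdelta}, whereas you work directly from the dominant-term asymptotics $B_{z,l}=\binom{z}{l}\nu^l\lambda^{z-l}(1+O(z^{-1}))$ for fixed $l$, which is more elementary, yields the sharper $z^*=\frac{rn}{\ln\lambda^{-1}}+O(\ln n)$, and replaces the reverse Chernoff bound in the $r>\ln\lambda^{-1}$ case by the trivial bound $B_{n,l}\ge b_{n,l}$; both routes are valid, and yours exploits that $l$ is constant in this regime (it would not survive in the linear regime, where the paper's Chernoff/Berry--Esseen machinery is needed). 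Two small loose ends: your $\zeta<1$ argument does not literally cover $r=\ln\lambda^{-1}$ — but there $b_{n,l}>\rme^{-n\ln\lambda^{-1}}=\delta$ for large $n$, so the value is exactly $1$, consistent with the first branch — and in part (a) the edge case $B_{z^*,l}=\delta$ deserves the one extra line the paper gives it (its Step 2), though your observation that $G$ absorbs it is correct.
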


Eqs. \eqref{eq:zeta2ninftyG} and \eqref{eq:MMP}
follow from \eref{XMP9} with the limit $n \to \infty$.
Other equations will be shown in
Section \ref{PF-Th51} after several preparations.

In contrast, we have the following proposition under the
independent and identically distributed case.
%Since we do not consider the noisy process characterized by $\lambda$, we set $k$ to be $k_0$.

\begin{proposition}\Label{P3-2}
We consider the constant regime.
We fix $k_0\in\bbZ^{\geq 0}$.
\begin{enumerate}
    \item[(a)]  When $0<\delta<1$ is fixed and $n$ goes to infinity,
    we have
\begin{align}
\overline{\epsilon}_{{\rm iid}}(k_0,n,\delta)
=t_\mathrm{P}(k_0,\delta)\frac{1}{ n}
+o(\frac{1}{n})\Label{AOT}.
\end{align}
    \item[(b)]  Suppose $\delta=\rme^{-r n}$ for a certain constant $r>0$.
    When $r$ is fixed and $n$ goes to infinity,
    we have
\begin{align}\Label{eq:zeta2deltarnY}
&\overline{\epsilon}_{{\rm iid}}(k_0,n,\delta=\rme^{-r n})
=1-\rme^{-r}+o(1).
\end{align}
\end{enumerate}
\end{proposition}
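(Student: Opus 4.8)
The plan is to prove Proposition~\ref{P3-2} by reducing the two limits to standard facts about the binomial distribution and its Poisson/large-deviation approximations, using the closed form \eqref{XMR} for $\overline{\epsilon}_{{\rm iid}}(k_0,n,\delta)$. Recall from \eqref{XMR} that $\overline{\epsilon}_{{\rm iid}}(k_0,n,\delta)=\max\{\theta\in[0,1]\mid B_{n+1,k_0}(\theta)\geq\delta\}$, where $B_{n+1,k_0}(\theta)=P_\theta^{n+1}(K\leq k_0)$ is the binomial cumulative distribution function. Since $B_{n+1,k_0}(\theta)$ is continuous and strictly decreasing in $\theta$ on $[0,1]$ (from $1$ at $\theta=0$ to $0$ at $\theta=1$), the maximizing $\theta$ is exactly the unique solution $\theta_n$ of $B_{n+1,k_0}(\theta_n)=\delta$. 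So in both parts the task is to pin down the asymptotics of $\theta_n$.

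For part (a), I would set $\theta_n = x_n/n$ and use the Poisson limit theorem: as $n\to\infty$ with $x_n\to x$ bounded, $B_{n+1,k_0}(x_n/n)\to\mathrm{Pois}(k_0,x)$, with the convergence uniform on compacts. Indeed $B_{n+1,k_0}((x/n)) = \sum_{j=0}^{k_0}\binom{n+1}{j}(x/n)^j(1-x/n)^{n+1-j}\to e^{-x}\sum_{j=0}^{k_0}x^j/j! = \mathrm{Pois}(k_0,x)$ termwise. Combined with strict monotonicity of $\mathrm{Pois}(k_0,\cdot)$ (\lref{lem:monotPois}) and the definition \eqref{eq:t1kdelta} of $t_\mathrm{P}(k_0,\delta)$, a standard continuity/monotonicity argument forces $x_n = n\theta_n \to t_\mathrm{P}(k_0,\delta)$, i.e.\ $\overline{\epsilon}_{{\rm iid}}(k_0,n,\delta) = t_\mathrm{P}(k_0,\delta)/n + o(1/n)$, which is \eqref{AOT}. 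To make the argument clean I would note that any subsequential limit $x^*$ of $n\theta_n$ satisfies $\mathrm{Pois}(k_0,x^*)=\delta$ by the uniform convergence, hence $x^*=t_\mathrm{P}(k_0,\delta)$ by uniqueness, so the whole sequence converges.

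For part (b), $\delta=e^{-rn}$ decays exponentially, so $\theta_n$ now stays bounded away from $0$; I expect $\theta_n\to\epsilon_\infty$ for some constant $\epsilon_\infty\in(0,1)$. The relevant estimate is the large-deviation (Chernoff) bound for the binomial lower tail: for fixed $\theta$ and $k_0/n\to 0$, $-\tfrac1n\ln B_{n+1,k_0}(\theta)\to -\ln(1-\theta) = D(0\|\theta)$, since $P_\theta^{n+1}(K\leq k_0)$ is dominated by the probability $P_\theta^{n+1}(K=0)=(1-\theta)^{n+1}$ of no failures (the remaining terms with $1\leq j\leq k_0$ contribute only polynomial factors, which vanish on the exponential scale). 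Hence $-\tfrac1n\ln\delta = r$ forces $-\ln(1-\theta_n)\to r$, i.e.\ $\theta_n\to 1-e^{-r}$, giving \eqref{eq:zeta2deltarnY}. One small point to check is that $r$ may be arbitrarily large here, but $1-e^{-r}<1$ for all $r>0$, so no truncation is needed — contrast with the $\lambda>0$ case in \tref{thm:zeta2Asympt}(b), where the analogous bound does have a threshold.

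The main obstacle is making the exponential-scale estimate in part (b) rigorous with the correct error control: one must show that including the terms $j=1,\dots,k_0$ in $B_{n+1,k_0}(\theta_n)$ does not shift the exponential rate, and simultaneously that $\theta_n$ does not drift (e.g.\ toward $1$) in a way that invalidates the fixed-$\theta$ heuristic. The clean way is a two-sided sandwich: for the lower bound on $B_{n+1,k_0}(\theta)$ keep just the $j=0$ term, $(1-\theta)^{n+1}$; for the upper bound use $B_{n+1,k_0}(\theta)\leq (k_0+1)\binom{n+1}{k_0}(1-\theta)^{n+1-k_0}$ (valid once $\theta\geq k_0/(n+1)$, which holds since $\theta_n\to 1-e^{-r}>0$). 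Both bounds have exponential rate $-\ln(1-\theta)$ in $n$, with only $\mathrm{poly}(n)$ prefactors, so taking $-\tfrac1n\ln$ of the equation $B_{n+1,k_0}(\theta_n)=e^{-rn}$ and sending $n\to\infty$ yields $-\ln(1-\epsilon_\infty)=r$ for every subsequential limit $\epsilon_\infty$; uniqueness then gives the claim. Since the referenced appendix \suppl{PP4-3} already carries out analogous central-limit and large-deviation arguments for the linear regime, the constant-regime versions here should follow the same template with the binomial replaced by its small-parameter (Poisson) or one-term (Chernoff) surrogate.
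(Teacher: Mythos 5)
Your proposal is correct and follows essentially the same route as the paper's proof in the appendix: the law of small numbers (Poisson limit theorem) combined with monotonicity of $\mathrm{Pois}(k_0,\cdot)$ for part (a), and the large-deviation rate $-\ln(1-\theta)=D(0\|\theta)$ for part (b). Your additional details (subsequential-limit argument, two-sided sandwich with polynomial prefactors) merely make rigorous what the paper leaves implicit; the only trivial slip is writing $B_{n+1,k_0}(\theta)$ where $K$ counts failures among the $n$ observed variables, so the relevant CDF is $B_{n,k_0}(\theta)$, which changes nothing asymptotically.
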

Items (a) and (b) immediately follow from
the law of small number and large deviation principle, respectively.
For readers' convenience, we give its proof in \suppl{PP3-2}.
The numerical comparison among %\eqref{eq:MMD},
\eqref{eq:zeta2ninftyG}, \eqref{eq:zeta2ninfty}, and \eqref{AOT}
is presented as the left plot of Fig. \ref{fig-ab}.

%To see usefulness of this method, we consider the following concrete requirements.

\subsection{Minimum number of required observations $n$}
Next, %under the constant regime,
we consider
the minimum number of observations $n$
required to guarantee
the upper confidence limit $\epsilon$
under the significance level $\delta$.
To this problem, for $0\leq\lambda<1$, $0<\epsilon,\delta<1$ and $k,l\in\bbZ^{\geq 0}$, we define the following values;
\begin{align}
N_{\lambda,{\rm c}}(l,\epsilon, \delta):=&
\min \{n \in\bbZ^{\geq l+1}|\,  \overline{\epsilon}_\lambda(l, n,\delta) \le \epsilon \} \\
N_{{\rm iid},{\rm c}}(k,\epsilon, \delta):=&
\min \{n \in\bbZ^{\geq k+1}|\,  \overline{\epsilon}_{{\rm iid}}(k,n,\delta) \le \epsilon \} .
\end{align}
In the above notation, the subscript c express the constant regime.

For the asymptotic evaluation of $N_{\lambda,{\rm c}}(l,\epsilon, \delta)$, we have the following theorem.
\begin{corollary}\Label{thm:EpsGo0}
Suppose $0<\lambda,\epsilon,\delta< 1$ and $k_0\in\bbZ^{\geq 0}$. Then the following statements hold:
\begin{enumerate}
	\item[(a)]
	When $k_0$, $\delta$, and $\lambda$ are fixed and $\epsilon$ goes to zero, we have
\begin{align}
N_{0,{\rm c}}(k_0,\epsilon,\delta)&=\frac{ k_0+1-\delta}{\delta \epsilon}
 +o(\epsilon^{-1}) \Label{eq:NsmallepsR}\\
N_{\lambda,{\rm c}}( \lceil\nu k_0\rceil,\epsilon,\delta)&=G(\lceil\nu k_0\rceil,\delta,\lambda) \epsilon^{-1}
 +o(\epsilon^{-1}) \Label{eq:Nsmalleps}\\
\Label{eq:Nsmalleps2}
N_{{\rm iid},{\rm c}}(k_0,\epsilon,\delta)&=
t_\mathrm{P}(k_0,\delta) \epsilon^{-1}
 +o(\epsilon^{-1}) .
\end{align}

    \item[(b)]
        	When $k_0$, $\epsilon$, and $\lambda$ are fixed and
    	$\delta$ goes to zero, we have
\begin{align}
N_{0,{\rm c}}(k_0,\epsilon,\delta)
&=\frac{k_0+1}{\delta\epsilon}
+o(\delta^{-1})\Label{eq:NMMK} \\
N_{\lambda,{\rm c}}(\lceil\nu k_0\rceil,\epsilon,\delta)
&=\frac{1-\nu\epsilon}{\lambda\epsilon\ln\lambda^{-1}}\ln\delta^{-1}
+o(\ln\delta^{-1})\Label{eq:NsmalldeltaT} \\
N_{{\rm iid},{\rm c}}(k_0,\epsilon,\delta)&
=\frac{\ln \delta^{-1}}{\ln (1-\epsilon)^{-1}} +o(\ln \delta^{-1}).\Label{eq:NsmalldeltaT2}
\end{align}
\end{enumerate}
\end{corollary}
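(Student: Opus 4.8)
The plan is to transcribe, with the obvious changes, the argument that deduces the corresponding linear-regime corollary from \tref{thm:zeta3Asympt}: every claim will be obtained by inverting, in $n$, an asymptotic formula for the upper confidence limit that is already available in the constant regime. For part~(a) I will invert \eref{eq:zeta2ninfty} of \tref{thm:zeta2Asympt} for $\overline{\epsilon}_\lambda$, \eref{eq:zeta2ninftyG} (equivalently, the first branch of \eref{XMP9}) for $\overline{\epsilon}_0$, and \eref{AOT} of \pref{P3-2} for $\overline{\epsilon}_{{\rm iid}}$. For part~(b), working along the curve $\delta=\rme^{-rn}$, I will solve the limiting equations furnished by \eref{eq:zeta2deltarn}, \eref{eq:zeta2deltarnY}, and \eref{XMP9} for the exponent $r$ and then read off $n=r^{-1}\ln\delta^{-1}$.

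For part~(a) set $a:=G(\lceil\nu k_0\rceil,\delta,\lambda)$, a fixed constant once $k_0,\delta,\lambda$ are fixed. By \eref{eq:zeta2ninfty}, for every $\eta>0$ there is $n_\eta$ with $(a-\eta)/n\le\overline{\epsilon}_\lambda(\lceil\nu k_0\rceil,n,\delta)\le(a+\eta)/n$ for all $n\ge n_\eta$, while for the finitely many $n$ with $\lceil\nu k_0\rceil<n<n_\eta$ the quantity $\overline{\epsilon}_\lambda(\lceil\nu k_0\rceil,n,\delta)$ is strictly positive (readily checked: a suitable two-point mixture of the all-ones and all-zeros configurations is feasible and yields a positive conditional probability). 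Just as in the proof of the linear-regime corollary, once $\epsilon$ is small enough the minimizer $N:=N_{\lambda,{\rm c}}(\lceil\nu k_0\rceil,\epsilon,\delta)$ must exceed $n_\eta$, where the displayed bounds squeeze it into $(a-\eta)/\epsilon\le N\le(a+\eta)/\epsilon+O(1)$; letting $\eta\to0$ gives $N=a\,\epsilon^{-1}+o(\epsilon^{-1})$, which is \eref{eq:Nsmalleps}. The identical computation with $a$ replaced by $(k_0+1-\delta)/\delta$ (from \eref{eq:zeta2ninftyG}; the first branch of \eref{XMP9} applies once $n$ is large, as $\delta$ is fixed) gives \eref{eq:NsmallepsR}, and with $a$ replaced by $t_\mathrm{P}(k_0,\delta)$ (from \eref{AOT}, together with the monotonicities in \pref{prop:epsiidMonoton}) gives \eref{eq:Nsmalleps2}.

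For part~(b) take first $0<\lambda<1$. Along $\delta=\rme^{-rn}$, \eref{eq:zeta2deltarn} gives, for each fixed $r$, $\overline{\epsilon}_\lambda(\lceil\nu k_0\rceil,n,\rme^{-rn})\to g(r):=r/(\nu r+\lambda\ln\lambda^{-1})$ when $0<r\le\ln\lambda^{-1}$ and $\to1$ when $r>\ln\lambda^{-1}$. Since $g'(r)=\lambda\ln\lambda^{-1}/(\nu r+\lambda\ln\lambda^{-1})^2>0$, $g$ is a strictly increasing bijection of $(0,\ln\lambda^{-1}]$ onto $(0,1]$, and the equation $g(r)=\epsilon$ has the solution $r^*:=\epsilon\lambda\ln\lambda^{-1}/(1-\nu\epsilon)$. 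Writing $r(n):=n^{-1}\ln\delta^{-1}$ and using that $\overline{\epsilon}_\lambda(\lceil\nu k_0\rceil,n,\delta)$ is nonincreasing in $\delta$ (\pref{lem-2-2}), for any $r''<r^*<r'$ in $(0,\ln\lambda^{-1})$ and all $\delta$ small enough one has $\overline{\epsilon}_\lambda(\lceil\nu k_0\rceil,n,\delta)>\epsilon$ for every $n\le(\ln\delta^{-1})/r'$ (the finitely many small $n$ where \tref{thm:zeta2Asympt}(b) does not yet apply give $\overline{\epsilon}_\lambda=1>\epsilon$ anyway, as $\epsilon<1$), while $\overline{\epsilon}_\lambda(\lceil\nu k_0\rceil,\lceil(\ln\delta^{-1})/r''\rceil,\delta)<\epsilon$; hence $N_{\lambda,{\rm c}}(\lceil\nu k_0\rceil,\epsilon,\delta)$ lies between $(\ln\delta^{-1})/r'$ and $(\ln\delta^{-1})/r''+1$, and letting $r',r''\to r^*$ yields \eref{eq:NsmalldeltaT}; in particular $N_{\lambda,{\rm c}}\to\infty$ as $\delta\to0$. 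Equation \eref{eq:NsmalldeltaT2} follows in the same way from \eref{eq:zeta2deltarnY}, solving $1-\rme^{-r}=\epsilon$, i.e.\ $r=\ln(1-\epsilon)^{-1}$. For $\lambda=0$ one uses \eref{XMP9} directly: for $n<(k_0+1)/\delta-1$ it equals $1>\epsilon$, and for $n\ge(k_0+1)/\delta-1$ its first branch equals $\frac{k_0+1-\delta}{\delta n}\bigl(1+O(n^{-1})\bigr)$, so solving $\overline{\epsilon}_0(k_0,n,\delta)=\epsilon$ gives $N_{0,{\rm c}}(k_0,\epsilon,\delta)=\frac{k_0+1-\delta}{\delta\epsilon}+O(1)=\frac{k_0+1}{\delta\epsilon}+o(\delta^{-1})$ since $\epsilon$ is fixed, which is \eref{eq:NMMK}.

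The underlying computations are routine; the one point that needs genuine care is the parameter coupling $\delta=\rme^{-rn}$ in part~(b), since \tref{thm:zeta2Asympt}(b) provides only a pointwise-in-$r$ limit, not one uniform in $r$. I expect this to be the main obstacle, and the bracketing device above (trapping $r(n)=n^{-1}\ln\delta^{-1}$ between two fixed exponents $r''<r^*<r'$, exploiting the monotonicity of $\overline{\epsilon}_\lambda$ in $\delta$, and letting the trivial branch $\overline{\epsilon}_\lambda=1$ absorb the finitely many small $n$) is what sidesteps the need for any uniform rate.
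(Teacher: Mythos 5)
Your proposal is correct and follows essentially the same route as the paper: each asymptotic formula for $N$ is obtained by inverting, in $n$, the corresponding expansion of the upper confidence limit (\eref{eq:zeta2ninftyG}, \eref{eq:zeta2ninfty}, \eref{AOT}, \eref{XMP9}, \eref{eq:zeta2deltarn}, \eref{eq:zeta2deltarnY}), and in part (b) by solving the limiting equation for the exponent $r$ and reading off $n=r^{-1}\ln\delta^{-1}$. The paper's proof is essentially a one-liner that leaves the inversion implicit; your bracketing device for part (b) — trapping $N$ between $\ln\delta^{-1}/r'$ and $\ln\delta^{-1}/r''+1$ via the monotonicity of $\overline{\epsilon}_\lambda$ in $\delta$ from \pref{lem-2-2} and letting the trivial branch absorb small $n$ — merely makes rigorous the uniformity-in-$r$ step that the paper glosses over.
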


In case (b), choosing non-zero $\lambda$ improves the required number
$N_{\lambda,{\rm c}}(k_0,\epsilon,\delta)$ exponentially.

\begin{proof}
Eqs. \eqref{eq:NsmallepsR}, \eqref{eq:Nsmalleps}, and \eqref{eq:Nsmalleps2} follow from
\eqref{eq:zeta2ninftyG}, \eqref{eq:zeta2ninfty}, and \eqref{AOT}, respectively.
When $\delta^{-1}$ and $n$ goes to infinity,
\eqref{eq:NMMK} follows from \eqref{XMP9}.

Eq. \eqref{eq:NsmalldeltaT} can be shown from \eqref{eq:zeta2deltarn} as follows.
We consider the following equation for $r$;
\begin{align}
\lim_{n\to \infty}\overline{\epsilon}_\lambda(\lceil\nu k_0\rceil,n,
\delta=\rme^{-r n})
=
\frac{r}{r +\lambda(\ln\lambda^{-1}-r)}=\epsilon.
\end{align}
The solution is $ r= \frac{\lambda\epsilon\ln\lambda^{-1}}{1-\nu\epsilon}$. Since $\delta=\rme^{-nr}$, we obtain \eqref{eq:NsmalldeltaT}.
In the same way,
solving the equation $1-\rme^{-r}=\epsilon$ for $r$,
we obtain \eqref{eq:NsmalldeltaT2} from
\eqref{eq:zeta2deltarnY}.
\end{proof}

\if0
In addition, using \eqref{eq:NsmalldeltaT}, we have
\begin{align}
\lim_{\epsilon \to 0}\lim_{\delta\to 0} \frac{\epsilon N_{\lambda,{\rm c}}(\lceil(1-\lambda)k_0\rceil,\epsilon,\delta)}{\ln \delta^{-1}}
=\frac{1}{\lambda \ln \lambda^{-1}},
\end{align}
where the limit does not depend on the order.
\fi
As shown in \suppl{PF-MLT} after several preparations,
we have the following lemma.
\begin{lemma}\Label{MLT}
	Suppose $0<\lambda,\epsilon,\delta< 1$ and $k_0\in\bbZ^{\geq 0}$. Then
\begin{align}
\lim_{\delta,\epsilon\to 0} \frac{\epsilon N_{\lambda,{\rm c}}(\lceil\nu k_0\rceil,\epsilon,\delta)}{\ln \delta^{-1}}
=\frac{1}{\lambda \ln \lambda^{-1}},\Label{XMP98}
\end{align}
where the order of the two limits $\epsilon\to 0$ and $\delta\to 0$ does not matter.
\end{lemma}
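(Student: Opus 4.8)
I read the double limit as the statement that both iterated limits of $\epsilon\,N_{\lambda,{\rm c}}(\lceil\nu k_0\rceil,\epsilon,\delta)/\ln\delta^{-1}$ exist and equal $1/(\lambda\ln\lambda^{-1})$. Write $l=\lceil\nu k_0\rceil$. The order ``$\delta\to0$ first'' is immediate from \eqref{eq:NsmalldeltaT}: for fixed $\epsilon,\lambda,k_0$ that formula gives $\lim_{\delta\to0}\epsilon\,N_{\lambda,{\rm c}}(l,\epsilon,\delta)/\ln\delta^{-1}=(1-\nu\epsilon)/(\lambda\ln\lambda^{-1})$, and then letting $\epsilon\to0$ (so $\nu\epsilon\to0$) gives $1/(\lambda\ln\lambda^{-1})$. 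So the real content is the other order, and by \eqref{eq:Nsmalleps} we have, for fixed $\delta,\lambda,k_0$, that $\lim_{\epsilon\to0}\epsilon\,N_{\lambda,{\rm c}}(l,\epsilon,\delta)=G(l,\delta,\lambda)$; hence it remains to prove $\lim_{\delta\to0}G(l,\delta,\lambda)/\ln\delta^{-1}=1/(\lambda\ln\lambda^{-1})$.

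\textbf{Reduction to the asymptotics of $G$.} The plan is to show $G(l,\delta,\lambda)\sim z_*/\lambda$ and then relate $z_*=z_*(l,\delta,\lambda)$ to $\ln\delta^{-1}$. First note $z^*\to\infty$ (hence $z_*=z^*-1\to\infty$) as $\delta\to0$, since $B_{z,l}(\nu)$ is a fixed positive number for each fixed $z$. The key analytic input is the ratio asymptotics: writing $B_{z,l}(\nu)=\sum_{j=0}^{l}\binom{z}{j}\nu^{j}\lambda^{z-j}$, the $j=l$ term asymptotically dominates as $z\to\infty$ (the terms with $j<l$ are $O(z^{-1})$ relative to it), so $B_{z,l}(\nu)=\binom{z}{l}\nu^{l}\lambda^{z-l}\bigl(1+O(z^{-1})\bigr)$, whence $B_{z+1,l}(\nu)/B_{z,l}(\nu)\to\lambda$ and $B_{z-1,l}(\nu)/B_{z,l}(\nu)\to1/\lambda$.

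\textbf{Carrying out the $G$-asymptotics.} Abbreviate $a:=B_{z_*,l}$, $b:=B_{z^*,l}=B_{z_*+1,l}$, $c:=B_{z_*-1,l}$, so $\Delta_{z_*,l}=a-b$, $b/a\to\lambda$, $c/a\to1/\lambda$. By definition of $z^*$ we have $b\le\delta<a$; set $\mu:=(a-\delta)/(a-b)\in(0,1]$, so $a-\delta=\mu(a-b)$, $\delta-b=(1-\mu)(a-b)$, and $\delta=b+(1-\mu)(a-b)$. Substituting into the definition of $G$ and cancelling $a-b$,
\begin{equation*}
G(l,\delta,\lambda)=\frac{\mu\,z^{*}a+(1-\mu)\,z_{*}c}{\delta}.
\end{equation*}
Dividing by $z_*a$ and inserting $z^{*}/z_{*}=1+o(1)$, $c/a=1/\lambda+o(1)$, $b/a=\lambda+o(1)$ (all uniform in $\mu$),
\begin{equation*}
\frac{G(l,\delta,\lambda)}{z_{*}}=\frac{\mu+(1-\mu)/\lambda+o(1)}{\lambda+(1-\mu)(1-\lambda)+o(1)}=\frac{\bigl(1-\mu(1-\lambda)\bigr)/\lambda+o(1)}{1-\mu(1-\lambda)+o(1)}\longrightarrow\frac{1}{\lambda},
\end{equation*}
the convergence being uniform in $\mu\in(0,1]$ since $1-\mu(1-\lambda)\in[\lambda,1]$ is bounded away from $0$. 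Finally, $b\le\delta<a$ and $b/a\to\lambda$ give $\ln\delta=\ln a+O(1)=\ln B_{z_*,l}+O(1)$; combined with $\ln B_{z_*,l}=z_*\ln\lambda+l\ln z_*+O(1)$ this yields $\ln\delta^{-1}=z_*\ln\lambda^{-1}\bigl(1+o(1)\bigr)$, i.e. $z_*=\dfrac{\ln\delta^{-1}}{\ln\lambda^{-1}}\bigl(1+o(1)\bigr)$. Combining the two displays gives $G(l,\delta,\lambda)=\dfrac{\ln\delta^{-1}}{\lambda\ln\lambda^{-1}}\bigl(1+o(1)\bigr)$, which closes the second order; note the answer does not involve $k_0$ because $l=\lceil\nu k_0\rceil$ only affects lower-order terms.

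\textbf{Main obstacle.} Everything except the evaluation of $G$ is bookkeeping from Corollary~\ref{thm:EpsGo0}. The delicate point is that $G$ is assembled from the three consecutive cumulative values $B_{z_*-1,l},B_{z_*,l},B_{z_*+1,l}$ together with the ``fractional position'' $\mu$ of $\delta$ between two successive levels, and $\mu$ does \emph{not} converge as $\delta\to0$ (it runs over $(0,1]$ each time $\delta$ sweeps past a value $B_{z,l}(\nu)$). One must therefore carry all estimates uniformly in $\mu$ and exploit the algebraic identity $\mu+(1-\mu)/\lambda=\bigl(1-\mu(1-\lambda)\bigr)/\lambda$ together with $\delta/a=1-\mu(1-\lambda)+o(1)$ so that the $\mu$-dependence cancels; the ratio asymptotics of $B_{z,l}(\nu)$ (or an equivalent statement about $\Delta_{z,l}$) is the other ingredient that needs to be proved cleanly.
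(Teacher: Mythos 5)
Your proposal is correct, but for the harder iterated limit ($\epsilon\to0$ first, then $\delta\to0$) it takes a genuinely different route from the paper's. The paper never touches $G(l,\delta,\lambda)$ here: it proves a separate non-asymptotic sandwich (\lref{lem:Nk}), namely $\bigl(\frac{1}{\epsilon}-1\bigr)\frac{z_*-l}{\lambda}+z_*-1 \le N_{\lambda,{\rm c}}(l,\epsilon,\delta) < \frac{z^*-l+1+\sqrt{\lambda l}}{\lambda\epsilon}$ for $\delta\le 1/2$, obtained from \coref{cor:zeta2Bound1} and the ratio bounds of \lref{lem:Bzkz}; dividing by $\ln\delta^{-1}$, letting $\epsilon\to0$, and invoking $\lim_{\delta\to0}z^*/\ln\delta=1/\ln\lambda$ (\lref{lem:z-kwithdelta}) finishes the argument. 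You instead cite \eqref{eq:Nsmalleps} to reduce everything to $\lim_{\delta\to0}G(l,\delta,\lambda)/\ln\delta^{-1}$ and evaluate $G$ asymptotically. What the paper's sandwich buys is that the fractional position $\mu$ of $\delta$ between $B_{z_*,l}$ and $B_{z^*,l}$ never appears, since the bounds of \lref{lem:Nk} depend only on $z_*$, $z^*$, and $l$; you must carry $\mu$ through and exploit the exact cancellation of $\mu+(1-\mu)/\lambda=\bigl(1-\mu(1-\lambda)\bigr)/\lambda$ against $\delta/B_{z_*,l}\to 1-\mu(1-\lambda)$, which you do correctly and uniformly in $\mu\in(0,1]$ because $1-\mu(1-\lambda)\ge\lambda>0$. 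What your route buys is a sharper byproduct the paper never states: $G(l,\delta,\lambda)=\ln\delta^{-1}/(\lambda\ln\lambda^{-1})+o(\ln\delta^{-1})$, i.e.\ the $\delta\to0$ asymptotics of the constant in \eqref{eq:Nsmalleps}. Two housekeeping remarks: for $G$ to be well defined one needs $\Delta_{z_*,l}>0$, i.e.\ $z_*\ge l$, which your observation $z_*\to\infty$ guarantees for small $\delta$; and the ratio asymptotics $B_{z\pm1,l}/B_{z,l}\to\lambda^{\mp1}$ could simply be quoted from \lref{lem:Bzkz} rather than re-derived from dominance of the $j=l$ term.
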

\if0
We have
\begin{align}
\lim_{\delta, \epsilon\to 0}\frac{\epsilon N_{\lambda,{\rm c}}(\lceil(1-\lambda)k_0\rceil,\epsilon,\delta)}{\ln \delta^{-1}}
=\frac{1}{\lambda \ln \lambda^{-1}}.
\end{align}
\fi
Since the value $\frac{1}{\lambda \ln \lambda^{-1}}$ takes the minimum value
$\rme$ when $\lambda= 1/\rme$.
That is, the choice $\lambda= 1/\rme$ is optimal under the constant regime.

\section{Computation of the upper confidence limit $\overline{\epsilon}_\lambda(l,n,\delta)$}\Label{S8}
\subsection{Region approach}\Label{S8-1}
In order to determine the upper confidence limit $\overline{\epsilon}_\lambda(l,n,\delta)$,
%and to prove the remaining statements ofTheorems \ref{thm:zeta2Asympt} and \ref{thm:zeta3Asympt},
we introduce the quantities $p_l (Q)$ and $f_l(Q)$
for $0\leq\lambda<1$, $l\in\bbZ^{\geq 0}$, $n\in\bbZ^{\geq l+1}$,
and $Q \in {\cal Q}_{n+1}$ as
\begin{align}
p_l (Q):=Q(L \le l), \quad
%\min_{Q \in {\cal Q}_{n+1}}\{Q(L \le l)|Q(L \le l) \ge \delta \} \\
f_l (Q):=Q(Y_{n+1}=0, L \le l).
%\min_{Q \in {\cal Q}_{n+1}}\{Q(Y_{n+1}=1|L \le l)|Q(L \le l) \ge \delta \}.
\end{align}
Then, instead of $\overline{\epsilon}_\lambda(l,n,\delta)$,
we discuss $\overline{\epsilon}_\lambda^{\rm c}(l,n,\delta)
:=1-\overline{\epsilon}_\lambda(l,n,\delta)$
and the function;
\begin{align}
\zeta_\lambda(l,n,\delta)=
\min_{Q \in {\cal Q}_{n+1}} \big\{f_l (Q)
\big|p_l (Q) \ge \delta \big\}.
\end{align}
To discuss $\overline{\epsilon}_\lambda^{\rm c}(l,n,\delta)$,
we introduce the two-dimensional region ${\cal R}_{l,n,\lambda}$ as
\begin{align}\Label{eq:calR}
{\cal R}_{l,n,\lambda}:=
\big\{\big(p_l (Q),f_l (Q)\big)\in \mathbb{R}_+^2\,\big|
Q \in {\cal Q}_{n+1}\big\}.
\end{align}
This region ${\cal R}_{l,n,\lambda}$ is convex because
$p_l (Q)$ and $f_l (Q)$ are both linear in $Q$,
 and the set ${\cal Q}_{n+1}$ is convex.
The geometric picture of ${\cal R}_{l,n,\lambda}$ is helpful to understanding and calculating $\overline{\epsilon}_\lambda(l,n,\delta)$.
In particular, $\overline{\epsilon}_\lambda^{\rm c}(l,n,\delta)$
can be determined for all $0<\delta\leq 1$ once the lower boundary of ${\cal R}_{l,n,\lambda}$ is known because it has the following form
\begin{align}
\begin{split}
\overline{\epsilon}_\lambda^{\rm c}(l,n,\delta)=&
\min_{Q \in {\cal Q}_{n+1}}\{Q(Y_{n+1}=0|L \le l)|Q(L \le l) \ge \delta \} \\
=&
\min_{Q \in {\cal Q}_{n+1}} \bigg\{\frac{f_l (Q)}{p_l (Q)}
\bigg|p_l (Q) \ge \delta \bigg\}.
\end{split}
\Label{AYU}
\end{align}
However, the computation of ${\cal R}_{l,n,\lambda}$ seems difficult since the region contains an infinite number of points.

To characterize it,
we define the random variable $Z:=\sum_{i=1}^{n+1}Y_i$.
Then any $Q \in {\cal Q}_{n+1}$ is completely determined by the $n+1$ probabilities $\{Q(Z=z)\}_{z=0}^n$,
and the joint distribution $P_{Y_1, \ldots, Y_{n+1}}$ of $Y_1, \ldots, Y_{n+1}$
is given as
\begin{align}
	P_{Y_1, \ldots, Y_{n+1}}(y_1, \ldots, y_{n+1})=
	{n+1 \choose \sum_{i=1}^{n+1}y_i}^{-1}
	Q\bigg(Z=\sum_{i=1}^{n+1}y_i \bigg) .
\end{align}
For integer $0\leq z\leq n+1$,
we define
\begin{align}
h_z(l,n,\lambda)&:=
Q(L \le l| Z=z)=
\begin{cases}
1 & 0\leq z \le l, \\
%\frac{k+1+ (n-k) B_{k+1,k}}{n+1} & \hbox{ when } z = k+1 \\
\frac{(n-z+1) B_{z,l} +z B_{z-1,l} }{n+1} & l+1 \leq z \leq n+1,
\end{cases}
\Label{eq:hzHomo} \\
g_z(l,n,\lambda)&:=
Q(Y_{n+1}=0, L \le l| Z=z)=
\begin{cases}
\frac{n-z+1}{n+1} &  0\leq z \le l, \\
\frac{(n-z+1)B_{z,l}}{n+1}   &  l+1 \leq z \leq n+1,
\end{cases}
\Label{eq:gzHomo}
\end{align}
where $B_{z,l}$ is shorthand for $B_{z,l}(\nu)$ as defined in \eref{eq:binomCFD}.
The second equality of \eref{eq:hzHomo} and the second equality of \eref{eq:gzHomo} are proved in 
\suppl{app:ProofhzgzHomo}.

Then, we have
\begin{align*}
&Q(L \le l)
=\sum_{z=0}^{n+1}Q(Z=z)Q(L \le l | Z=z)
=\sum_{z=0}^{n+1}Q(Z=z)h_z(l,n,\lambda) ,\\
&Q(Y_{n+1}=0,L \le l)
=\sum_{z=0}^{n+1}Q(Z=z)Q(Y_{n+1}=0,L \le l | Z=z)
=\sum_{z=0}^{n+1}Q(Z=z)g_z(l,n,\lambda) .
\end{align*}
Therefore, any element of the region ${\cal R}_{l,n,\lambda}$ can be written as
a convex combination of the $n+2$ points
$\left\{ (h_z(l,n,\lambda),g_z(l,n,\lambda))\right\}_{z=0}^{n+1}$.
That is, we obtain the following proposition.
\begin{proposition}\Label{prop:ConvexHull}
The two-dimensional region ${\cal R}_{l,n,\lambda}$ %defined in \eqref{eq:calR}
is the convex hull of the following set of points
\begin{align}\Label{eq:setpoints}
\left\{\big(h_z(l,n,\lambda),g_z(l,n,\lambda)\big)\right\}_{z=0}^{n+1}.
\end{align}
%where $h_z(l,n,\lambda)$ and $g_z(l,n,\lambda)$ are defined in \eqsref{eq:hzHomo}{eq:gzHomo}, respectively.
\end{proposition}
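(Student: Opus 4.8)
The plan is to recognize $\mathcal{R}_{l,n,\lambda}$ as the affine image of a finite-dimensional simplex and then invoke the elementary fact that the affine image of a simplex is the convex hull of the images of its vertices. All the ingredients have been assembled in the paragraphs preceding the statement; what remains is to put them together.

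First I would record that the map $Q\mapsto\big(Q(Z=0),\dots,Q(Z=n+1)\big)$ is a bijection from $\mathcal{Q}_{n+1}$ onto the probability simplex $\Delta^{n+1}:=\{(\mu_0,\dots,\mu_{n+1})\in\mathbb{R}_+^{n+2}\mid\sum_z\mu_z=1\}$: injectivity and surjectivity both follow from the identity $P_{Y_1,\dots,Y_{n+1}}(y)=\binom{n+1}{\sum_i y_i}^{-1}Q(Z=\sum_i y_i)$, since for an arbitrary $\mu\in\Delta^{n+1}$ the right-hand side with $Q(Z=z)$ replaced by $\mu_z$ defines a bona fide permutation-invariant distribution whose law of $Z$ is $\mu$. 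Next, because $p_l$ and $f_l$ are linear in $Q$, conditioning on $Z$ gives, with $h_z,g_z$ as in \eqref{eq:hzHomo}--\eqref{eq:gzHomo} (their closed forms established in \suppl{app:ProofhzgzHomo}),
\[
\big(p_l(Q),f_l(Q)\big)=\sum_{z=0}^{n+1}Q(Z=z)\,\big(h_z(l,n,\lambda),g_z(l,n,\lambda)\big).
\]
As $Q$ runs over $\mathcal{Q}_{n+1}$, the weight vector $(Q(Z=z))_z$ runs over all of $\Delta^{n+1}$, so the right-hand side runs over exactly the set of convex combinations of the $n+2$ points in \eqref{eq:setpoints}, which is by definition their convex hull. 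This simultaneously gives $\mathcal{R}_{l,n,\lambda}\subseteq\mathrm{conv}\{(h_z,g_z)\}_{z=0}^{n+1}$ and the reverse inclusion. If one prefers, the reverse inclusion can instead be obtained by noting that each vertex $(h_z,g_z)$ is realized as $(p_l(Q_z),f_l(Q_z))$ for $Q_z$ the uniform distribution on weight-$z$ strings, and then appealing to the already-established convexity of $\mathcal{R}_{l,n,\lambda}$.

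I do not expect a real obstacle here: given the parametrization of $\mathcal{Q}_{n+1}$ by the law of $Z$ and the $Q$-independent conditional probabilities $h_z,g_z$, the proposition is the textbook statement that the linear image of a simplex is the convex hull of the vertex images. The only point that deserves an explicit sentence is the surjectivity of $Q\mapsto(Q(Z=z))_z$ onto $\Delta^{n+1}$ — equivalently, that every vertex $(h_z,g_z)$ genuinely lies in $\mathcal{R}_{l,n,\lambda}$ — and this is precisely what the formula $P_{Y_1,\dots,Y_{n+1}}(y)=\binom{n+1}{\sum_i y_i}^{-1}Q(Z=\sum_i y_i)$ provides.
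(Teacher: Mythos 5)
Your proof is correct and follows essentially the same route as the paper: parametrize $\mathcal{Q}_{n+1}$ by the law of $Z$, use the $Q$-independent conditional probabilities $h_z,g_z$ to write $(p_l(Q),f_l(Q))$ as a convex combination of the $n+2$ points, and conclude that $\mathcal{R}_{l,n,\lambda}$ is their convex hull. The only difference is that you make the reverse inclusion (surjectivity onto the simplex, i.e.\ that each vertex is attained by the uniform distribution on weight-$z$ strings) explicit, which the paper leaves implicit.
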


\begin{figure}
\begin{center}
\includegraphics[width=3.942cm]{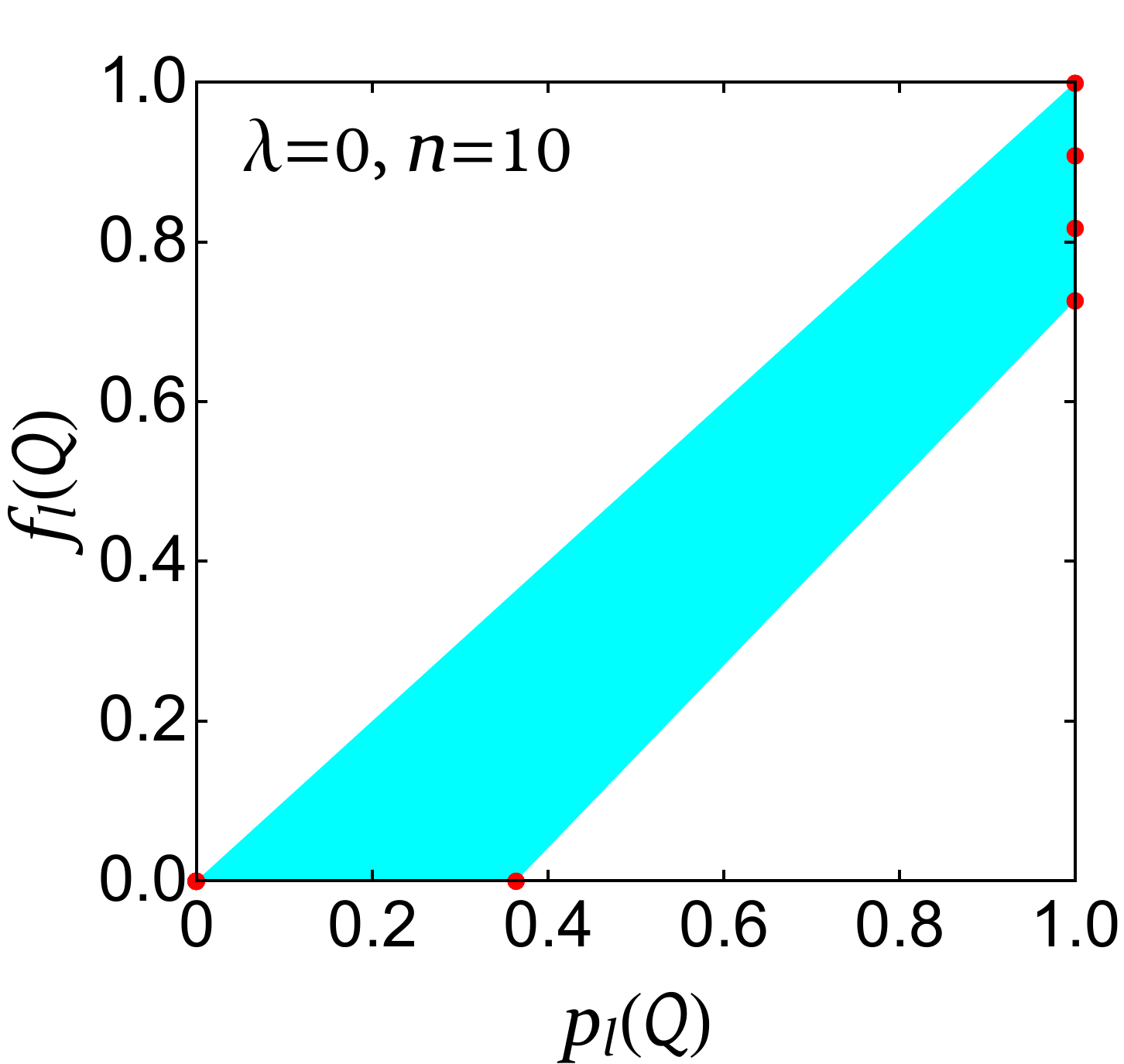}\!\!\!\!\!
\includegraphics[width=3.62cm]{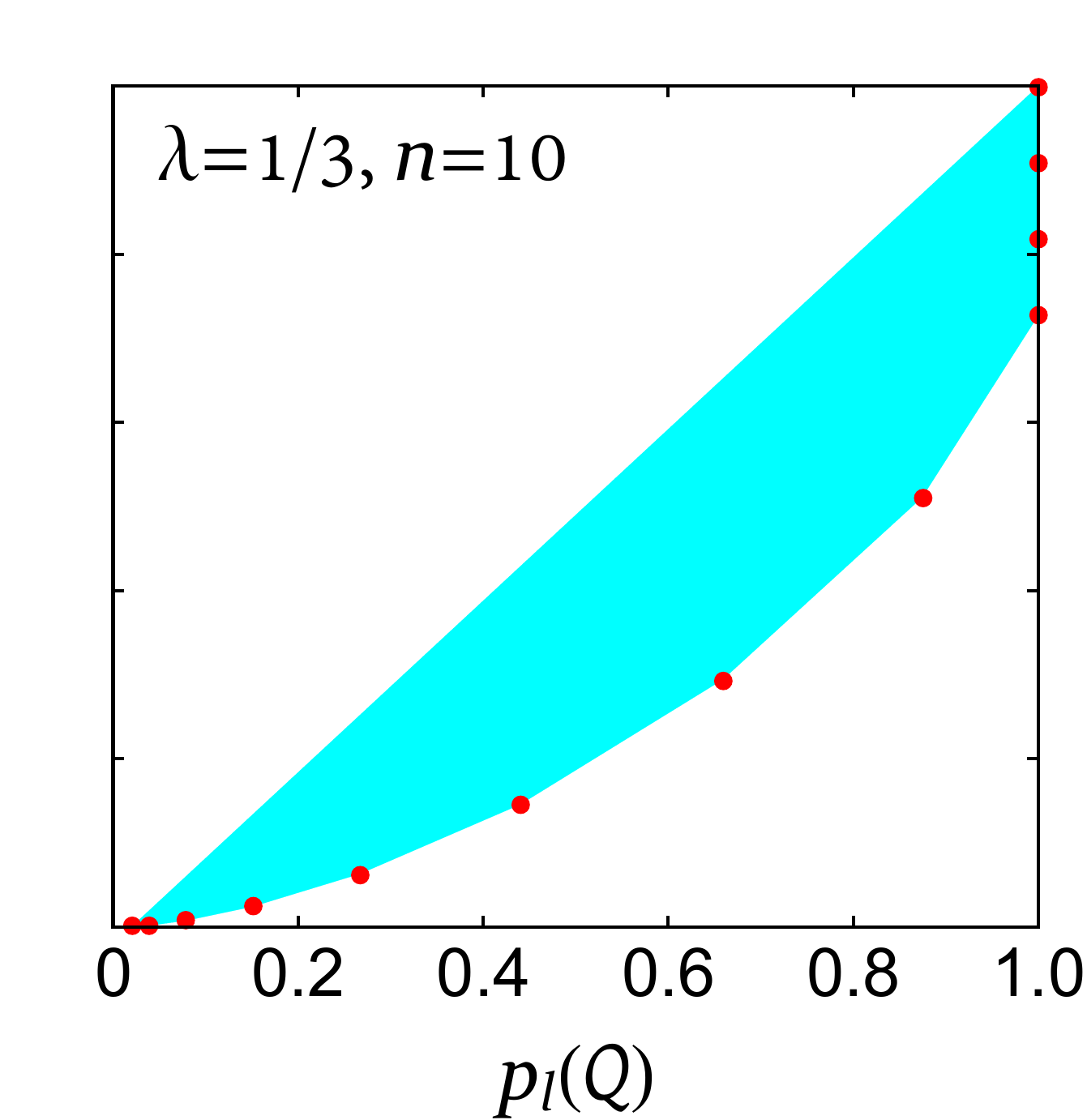}\!\!\!\!\!
\includegraphics[width=3.62cm]{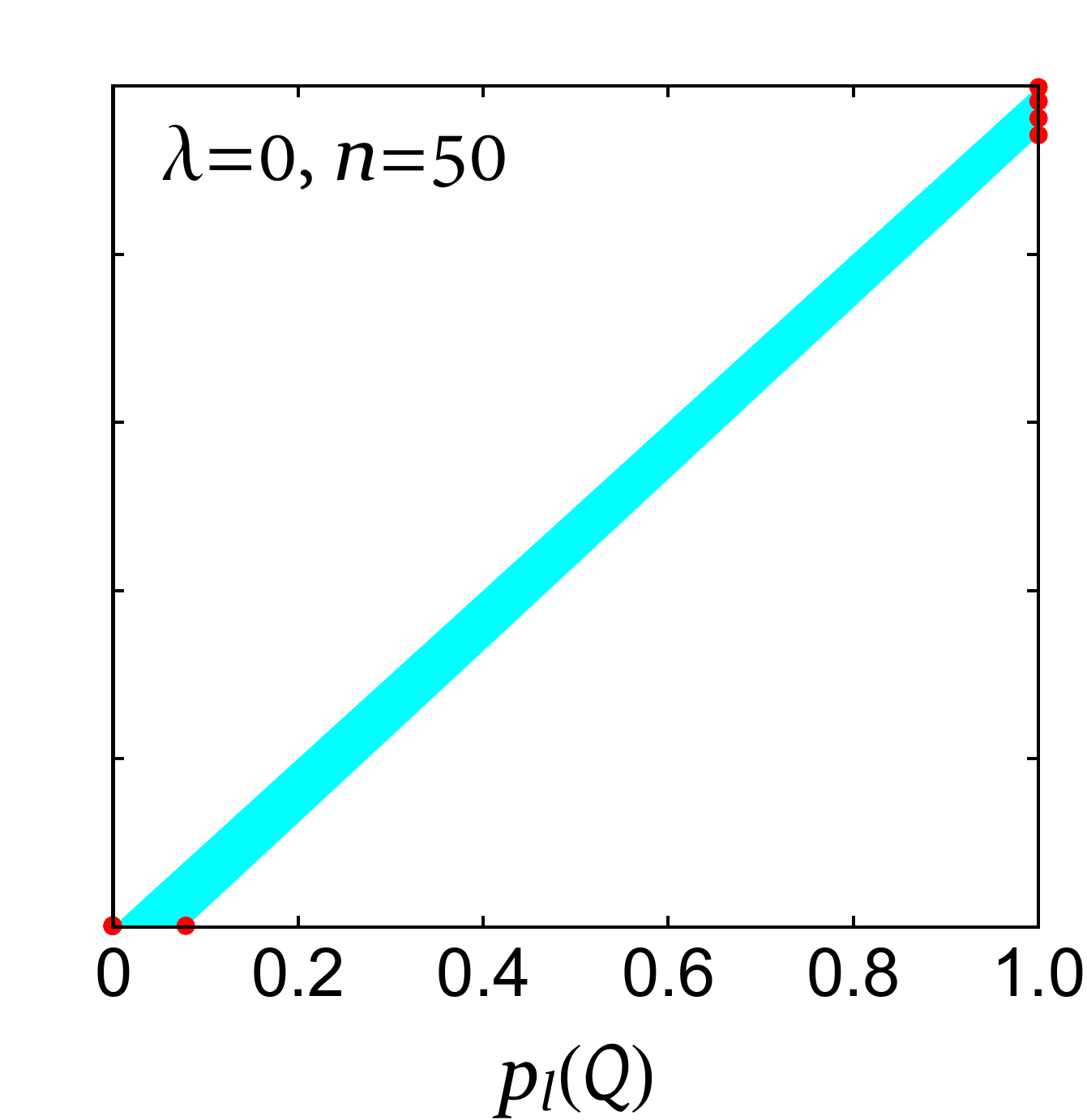}\!\!\!\!\!
\includegraphics[width=3.62cm]{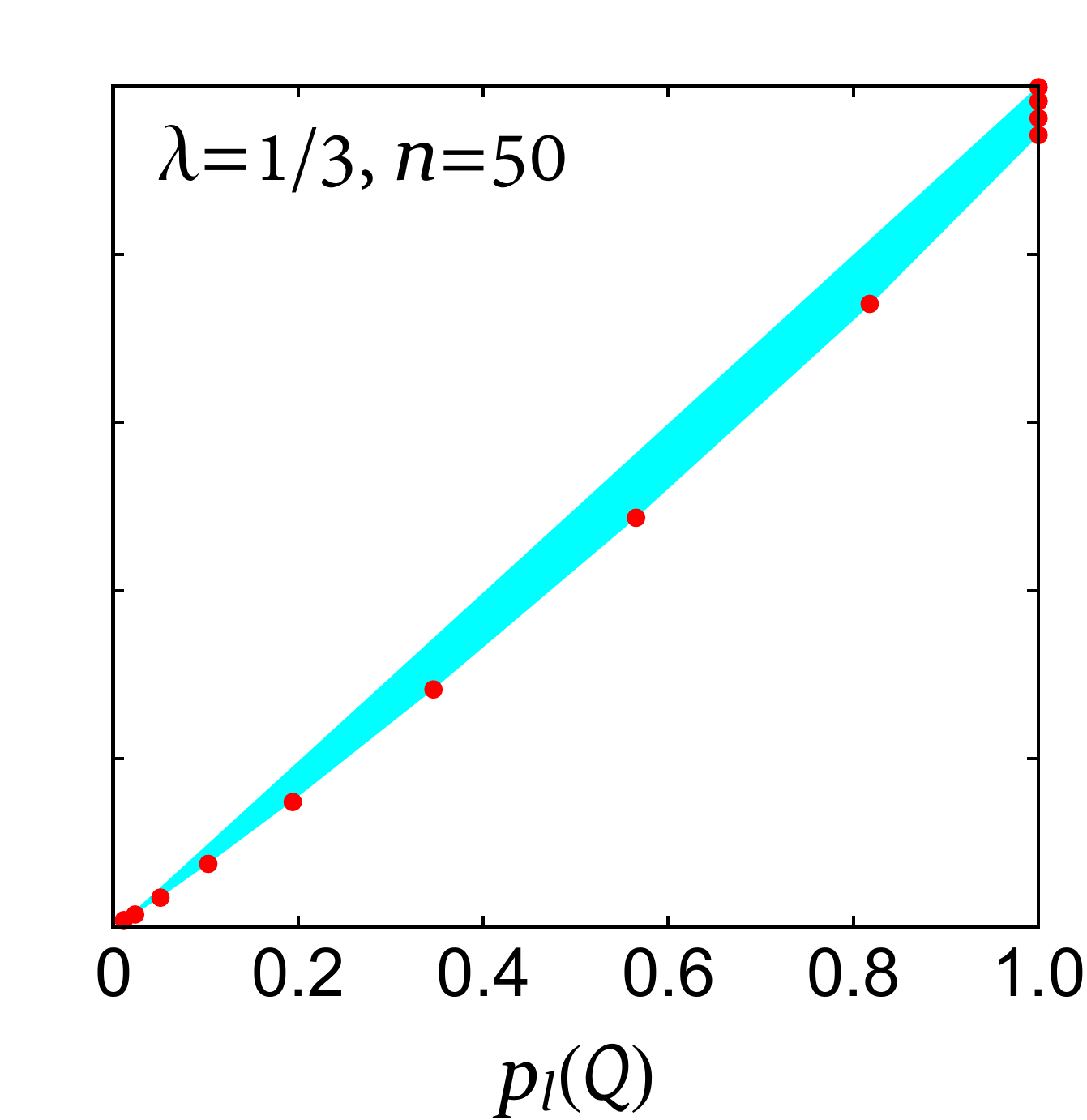}
	\caption{\label{fig:Rnklam} The region ${\cal R}_{l,n,\lambda}$ composed of points $\big(p_l (Q),f_l (Q)\big)$ as defined in \eref{eq:calR}.
The $n+2$ red points are given in \eref{eq:setpoints}.
Here we choose $l=3$ for all plots;
$n=10$        for the first  and second plots;
$n=50$        for the third  and forth  plots;
$\lambda=0$   for the first  and third plots;
$\lambda=1/3$ for the second and forth plots.
}
\end{center}
\end{figure}

According to this proposition, ${\cal R}_{l,n,\lambda}$ is a polygon, as illustrated in Fig.~\ref{fig:Rnklam}.
Now, the original complicated problem of determining the region ${\cal R}_{l,n,\lambda}$
has been transformed into the intuitive task of
studying the geometric relation between the $n+2$ points given in \eref{eq:setpoints}.

%because we have the relation
%$\overline{\epsilon}_\lambda^c(l,n,\delta)=
%\frac{\zeta_\lambda(l,n,\delta)}{\delta}$.

\subsection{\protect{Case with $\lambda=0$}}\Label{S8-2}
%\section{Case with}
We show \eref{XMP9} by considering the case with $\lambda=0$.
In this case, \eqsref{eq:hzHomo}{eq:gzHomo} reduce to that
\begin{align}
h_z(l,n,\lambda=0)=
\begin{cases}
1 & z \leq l, \\
\frac{l+1}{n+1} & z = l+1, \\
0 & z \geq l+2,
\end{cases}\quad
g_z(l,n,\lambda=0)=
\begin{cases}
\frac{n-z+1}{n+1} &  z \leq l, \\
0   &  z \geq l+1.
\end{cases}
\end{align}
Hence the convex polygon ${\cal R}_{l,n,\lambda=0}$ has the following four extremal points:
\begin{align}
(0,0), \  \left(\frac{l+1}{n+1},0\right), \  \left(1,\frac{n-l+1}{n+1}\right), \  (1,1).
\end{align}
As a consequence, for $0<\delta\leq1$, $\zeta_0 (l,n,\delta)$ corresponds to the intersection of the
vertical line $p_k(Q)=\delta$ and the lower boundary of ${\cal R}_{l,n,\lambda=0}$, that is,
\begin{align}\Label{eq:zeta1(lambda=0)}
\begin{split}
\zeta_0 (l,n,\delta)
&= \max\left\{0, \frac{n-l+1}{n-l}\left(\delta-\frac{l+1}{n+1}\right)\right\}
= \begin{cases}
0                                                    &0<\delta\leq\frac{l+1}{n+1}, \\
\frac{n-l+1}{n-l}\left(\delta-\frac{l+1}{n+1}\right) &\frac{l+1}{n+1}<\delta\leq1.
\end{cases}
%\nonumber
\end{split}
\end{align}
In addition, we have
\begin{align}\Label{eq:zeta2(lambda=0)}
\overline{\epsilon}_0^c(l,n,\delta)=
%\overline{\epsilon}_\lambda^c(l,n,\delta)=
\frac{ \zeta_0 (l,n,\delta)}{\delta}
=
\left\{
\begin{array}{ll}
0  &0<\delta\leq\frac{l+1}{n+1}, \\
1-\frac{(l+1) (n+1-l)-\delta(n+1) }{\delta(n-l)(n+1)} \quad &
\frac{l+1}{n+1}<\delta\leq1 .
\end{array}
\right.
\end{align}

\subsection{Case with $\lambda>0$}\Label{S9}
In this subsection we discuss $\overline{\epsilon}_\lambda^{\rm c}(l,n,\delta)$ in the case $0<\lambda<1$.

As the first step, we clarify that the lower boundary of the
region ${\cal R}_{l,n,\lambda}$ defined in \eqref{eq:calR} consists of $n-l+1$ line segments,
as shown in the following lemmas (See \suppl{app:ProofLemmaZeta2}
for proofs).

\begin{lemma}\Label{lem:gzhzMono}
Suppose $0<\lambda<1$, $l,z\in\bbZ^{\geq 0}$, $n\in\bbZ^{\geq l+1}$.
Then
$h_z(l,n,\lambda)>0$;
$g_z(l,n,\lambda)\geq0$ for all $z\leq n+1$;
$g_z(l,n,\lambda)<0$ for all $z\geq n+2$.
In addition,
$h_z(l,n,\lambda)$ strictly decreases with $z$ for $l\leq z\leq n+1$;
$g_z(l,n,\lambda)$ strictly decreases with $z$ for $0\leq z\leq n+1$.
\end{lemma}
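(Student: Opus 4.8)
The plan is to reduce both $h_z$ and $g_z$ to a single closed form valid on the whole relevant range, and then read off every assertion from two elementary facts about the binomial tail $B_{z,l}=B_{z,l}(\nu)$. First I would observe that, since $B_{z,l}(\nu)=B_{z-1,l}(\nu)=1$ whenever $z\le l$, the two branches of \eqref{eq:hzHomo}--\eqref{eq:gzHomo} combine into
\[
g_z(l,n,\lambda)=\frac{(n-z+1)B_{z,l}}{n+1},\qquad
h_z(l,n,\lambda)=\frac{(n-z+1)B_{z,l}+zB_{z-1,l}}{n+1},\qquad 0\le z\le n+1,
\]
and I would take the $g_z$ expression, with the same formula, as the extension to $z\ge n+2$. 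The two facts are: (i) $B_{z,l}(\nu)\ge b_{z,0}(\nu)=(1-\nu)^z>0$ for every $z\ge0$; and (ii) conditioning the binomial tail on the outcome of the last of the $z+1$ trials gives $B_{z+1,l}(\nu)=(1-\nu)B_{z,l}(\nu)+\nu B_{z,l-1}(\nu)$, whence
\[
\Delta_{z,l}=B_{z,l}-B_{z+1,l}=\nu\bigl(B_{z,l}-B_{z,l-1}\bigr)=\nu b_{z,l}(\nu)=\binom{z}{l}\nu^{l+1}(1-\nu)^{z-l},
\]
which is $>0$ for $z\ge l$ and $=0$ for $z<l$, consistent with the monotonicity of $B_{\cdot,l}$ in \lref{lem:Bzkmono}.

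The sign statements are then immediate. For $h_z$: when $z\le l$ we have $h_z=1>0$, and when $z\ge l+1$ we have $h_z\ge\frac{zB_{z-1,l}}{n+1}>0$ because $n-z+1\ge0$ and $B_{z-1,l}>0$ by (i). For $g_z$: since $B_{z,l}>0$, the sign of $g_z=\frac{(n-z+1)B_{z,l}}{n+1}$ is the sign of $n-z+1$, so $g_z\ge0$ for $z\le n+1$ (with $g_{n+1}=0$) and $g_z<0$ for $z\ge n+2$.

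For the monotonicities I would compute the one-step differences. Substituting $B_{z+1,l}=B_{z,l}-\Delta_{z,l}$ and $B_{z-1,l}=B_{z,l}+\Delta_{z-1,l}$ into the closed forms above, a short cancellation yields, for $0\le z\le n$,
\[
(n+1)\bigl(g_{z+1}-g_z\bigr)=-B_{z,l}-(n-z)\Delta_{z,l}\le-B_{z,l}<0,
\]
so $g_z$ is strictly decreasing for $0\le z\le n+1$; and, for $l\le z\le n$,
\[
(n+1)\bigl(h_{z+1}-h_z\bigr)=-(n-z)\Delta_{z,l}-z\,\Delta_{z-1,l}.
\]
Both summands are $\le0$ by (ii); moreover if $z<n$ then $(n-z)\Delta_{z,l}>0$ since $z\ge l$, while if $z=n$ then $z\,\Delta_{z-1,l}=n\Delta_{n-1,l}>0$ since the hypothesis $n\ge l+1$ gives $n-1\ge l$. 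In either case $h_{z+1}<h_z$, so $h_z$ is strictly decreasing for $l\le z\le n+1$.

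Everything past the two difference identities is routine; the points needing care are purely bookkeeping — checking that the combined formulas reproduce the ``$z\le l$'' branch at the junction $z=l$ so that no jump is introduced, and noting that $n\ge l+1$ is exactly what makes the $z=n$ step of the $h$-monotonicity strict. Thus the only genuinely non-trivial ingredient is the closed form for $\Delta_{z,l}$ together with its strict positivity for $z\ge l$, supplied by the binomial recurrence in (ii); no analytic estimates are involved.
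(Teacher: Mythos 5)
Your proof is correct. It takes a somewhat different route from the paper's on the monotonicity claims: the paper proves that $h_z(l,n,\lambda)$ strictly decreases by writing $h_z = B_{z,l} + \frac{z}{n+1}(B_{z-1,l}-B_{z,l})$, sandwiching $B_{z,l} < h_z < B_{z-1,l}$, and invoking the strict decrease of $B_{z,l}$ in $z$ from the externally cited \lref{lem:Bzkmono}, so that consecutive $h_z$ fall into disjoint ordered intervals; for $g_z$ it simply notes that $\frac{n-z+1}{n+1}$ and $B_{z,l}$ are both nonnegative and (non)decreasing. You instead compute the exact one-step differences $(n+1)(h_{z+1}-h_z)=-(n-z)\Delta_{z,l}-z\Delta_{z-1,l}$ and $(n+1)(g_{z+1}-g_z)=-B_{z,l}-(n-z)\Delta_{z,l}$, combined with the closed form $\Delta_{z,l}=\nu b_{z,l}(\nu)$ obtained from the binomial recurrence. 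These are precisely the identities the paper derives later, in the proofs of \lref{lem:SlopeMono} and \lref{lem:Deltazk}, so your argument unifies the computations and makes the present lemma self-contained: in particular you re-derive the strict $z$-monotonicity of $B_{z,l}$ rather than citing it. Your bookkeeping is also slightly more careful in two places the paper glosses over: you verify that the two branches of the definitions agree at the junction $z=l$ so a single formula is valid throughout, and you state explicitly that the formula for $g_z$ is being extended to $z\geq n+2$ (which the lemma's sign claim implicitly requires). The sign statements themselves are handled identically in both proofs, by reading them off the defining formulas together with $B_{z,l}\geq\lambda^z>0$.
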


\begin{lemma}\Label{lem:SlopeMono}
Suppose $0<\lambda<1$, $l,z\in\bbZ^{\geq 0}$, $n\in\bbZ^{\geq l+1}$.
Then for $l \leq z\leq n+1$,
the point $\big(h_z(l,n,\lambda),g_z(l,n,\lambda)\big)$
is an extremal point of ${\cal R}_{l,n,\lambda}$ and
belongs to the
lower boundary of ${\cal R}_{l,n,\lambda}$.
\end{lemma}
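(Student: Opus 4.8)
The plan is to combine Proposition~\ref{prop:ConvexHull} and Lemma~\ref{lem:gzhzMono} with one convexity estimate on the generating points. Write $P_z:=\big(h_z(l,n,\lambda),g_z(l,n,\lambda)\big)$. For $0\le z\le l$ we have $h_z=1$ and $g_z=\frac{n-z+1}{n+1}$, so $P_0,\dots,P_l$ lie on the line $h=1$; since $h_{z'}\le1$ for every $z'$, that line supports $\mathcal{R}_{l,n,\lambda}$, and hence $\mathcal{R}_{l,n,\lambda}\cap\{h=1\}$ is the segment from $P_0=(1,1)$ to $P_l=\big(1,\frac{n-l+1}{n+1}\big)$. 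By Lemma~\ref{lem:gzhzMono}, on the range $l\le z\le n+1$ both $h_z$ and $g_z$ strictly decrease, so $P_l,P_{l+1},\dots,P_{n+1}$ form a strictly left-and-down polygonal chain from $P_l$ to $P_{n+1}=\big(B_{n,l}(\nu),0\big)$; moreover $g_{z'}\ge0$ for all $z'$ and $g_{z'}=0$ only for $z'=n+1$, so the line $g=0$ supports $\mathcal{R}_{l,n,\lambda}$ and touches it only at $P_{n+1}$. I will show this chain is exactly the lower boundary of $\mathcal{R}_{l,n,\lambda}$ and that each of $P_l,\dots,P_{n+1}$ is a vertex.

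The core is to prove that the chain is strictly convex from below, i.e.\ that the slopes $m_z:=\frac{g_{z+1}-g_z}{h_{z+1}-h_z}$ (positive, since both coordinates decrease) satisfy $m_l>m_{l+1}>\dots>m_n$. Conditioning a $\mathrm{Bin}(z+1,\nu)$ variable on its last trial gives $B_{z+1,l}(\nu)=(1-\nu)B_{z,l}(\nu)+\nu B_{z,l-1}(\nu)$, hence $\Delta_{z,l}=\nu\,b_{z,l}(\nu)$ and the closed form $h_z=B_{z,l}+\frac{z\Delta_{z-1,l}}{n+1}$ (valid for all $l\le z\le n+1$, using $\Delta_{l-1,l}=0$). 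These yield
\[
-(n+1)(g_{z+1}-g_z)=(n-z)\Delta_{z,l}+B_{z,l}=:G_z>0,\qquad
-(n+1)(h_{z+1}-h_z)=(n-z)\Delta_{z,l}+z\Delta_{z-1,l}=:H_z>0,
\]
so that $m_z=G_z/H_z$ and $m_z>m_{z+1}$ becomes the determinant inequality $H_zG_{z+1}<G_zH_{z+1}$. Expanding $H_zG_{z+1}-G_zH_{z+1}$ and eliminating $B_{z+1,l}=B_{z,l}-\Delta_{z,l}$, $z\Delta_{z-1,l}=(l+1)b_{z,l+1}(\nu)$, and $\Delta_{z+1,l}/\Delta_{z,l}=\frac{(z+1)(1-\nu)}{z+1-l}$ reduces it to a polynomial inequality in $z,n,l$ and $\nu\in(0,1)$ whose only transcendental inputs are the positive numbers $B_{z,l}(\nu)$ and $b_{z,l+1}(\nu)$, and I expect it to come out manifestly negative for $l\le z\le n-1$. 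This computation is the main obstacle: it is elementary but genuinely lengthy, and one must treat the boundary case $z=l$ (where $\Delta_{l-1,l}=0$ and $B_{l,l}(\nu)=1$, so $m_l=1+\frac1{(n-l)\Delta_{l,l}}>1$) separately.

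Granting the slope monotonicity, the remainder is routine plane geometry. Because $h_z$ strictly decreases on $l\le z\le n+1$, the chain is the graph of a piecewise-linear function $\gamma\colon[B_{n,l}(\nu),1]\to\mathbb{R}_{\ge0}$ whose slope on $(h_{z+1},h_z)$ equals $m_z$; since $m_z$ increases as $z$ decreases, $\gamma$ is strictly convex. Every generator lies weakly above the graph of $\gamma$: the points $P_l,\dots,P_{n+1}$ lie on it, while each $P_{z'}$ with $z'<l$ has $h_{z'}=1$ and $g_{z'}=\frac{n-z'+1}{n+1}>\frac{n-l+1}{n+1}=\gamma(1)$. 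Since $[B_{n,l}(\nu),1]$ is exactly the range of the first coordinate over $\mathcal{R}_{l,n,\lambda}$ and the (convex) graph of $\gamma$ is contained in $\mathcal{R}_{l,n,\lambda}$, a Jensen argument shows every point of $\mathcal{R}_{l,n,\lambda}$ lies on or above that graph; hence the graph of $\gamma$ is the lower boundary. Finally, each interior breakpoint $P_z$ ($l+1\le z\le n$) is a vertex because $\gamma$ has a strict kink there ($m_{z-1}>m_z$); $P_l$ is a vertex because the vertical segment $\mathcal{R}_{l,n,\lambda}\cap\{h=1\}$ meets the lower boundary there at the finite slope $m_l$; and $P_{n+1}=\big(B_{n,l}(\nu),0\big)$ is a vertex because $g=0$ supports $\mathcal{R}_{l,n,\lambda}$ and touches it only at $P_{n+1}$. (When $l=0$ the right segment degenerates to the point $P_0=(1,1)$, still a vertex because $g=1$ supports $\mathcal{R}_{0,n,\lambda}$ and touches it only there.) This establishes the statement for all $z$ with $l\le z\le n+1$.
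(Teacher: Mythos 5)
Your overall strategy is the same as the paper's: by \lref{lem:gzhzMono} the points $P_z=(h_z,g_z)$ for $l\le z\le n+1$ form a strictly left-and-down chain, and everything reduces to showing that the chord slopes $m_z=(g_{z+1}-g_z)/(h_{z+1}-h_z)$ are strictly decreasing in $z$ (equivalently, that $(h_{z+1}-h_z)/(g_{z+1}-g_z)$ is strictly increasing, which is exactly what the paper proves). Your surrounding geometry (the vertical segment at $h=1$, the supporting line $g=0$, the Jensen argument identifying the chain with the lower boundary, and the vertex status of each $P_z$) is fine and in fact more explicit than the paper, which disposes of all of that with one sentence. The problem is that you do not prove the slope monotonicity: you write down the determinant inequality $H_zG_{z+1}<G_zH_{z+1}$, call it ``the main obstacle,'' and then only say you \emph{expect} the expansion to come out manifestly negative. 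That inequality \emph{is} the content of the lemma, so as it stands the proposal has a genuine gap at its core.

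Moreover, the expectation is almost certainly too optimistic. If you carry out the expansion and eliminate $B_{z+1,l}=B_{z,l}-\Delta_{z,l}$ together with the rational ratios $\Delta_{z+1,l}/\Delta_{z,l}=(z+1)\lambda/(z+1-l)$ and $\Delta_{z,l}/\Delta_{z-1,l}=z\lambda/(z-l)$, you are left with
$G_zH_{z+1}-H_zG_{z+1}=\frac{(z+1)(n-l)}{z+1-l}\Delta_{z,l}^2+B_{z,l}\bigl[(n-z-1)(\Delta_{z+1,l}-\Delta_{z,l})+z(\Delta_{z,l}-\Delta_{z-1,l})\bigr]+(n-z)\Delta_{z,l}^2+z\Delta_{z-1,l}\Delta_{z,l}$,
in which the bracket is typically negative (the $\Delta$'s decrease once $z\nu>l$) and is multiplied by the tail sum $B_{z,l}$, which is not polynomially related to $b_{z,l}$. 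So positivity is not ``manifest''; it hinges on genuine monotonicity properties of binomial tails, namely that $B_{z,l}/B_{z+1,l}$ is nondecreasing in $z$ (\lref{lem:Bzkz}, itself imported from Ref.~\cite{Binomial22}) and that $\Delta_{z,l}/\Delta_{z-1,l}$ is nonincreasing in $z$ (\lref{lem:Deltazk}). The paper sidesteps the brute-force expansion entirely by writing $\frac{h_{z+1}-h_z}{g_{z+1}-g_z}-1=\frac{z\Delta_{z-1,l}}{(n-z)\Delta_{z,l}+B_{z,l}}\bigl(1-\frac{B_{z,l}}{z\Delta_{z-1,l}}\bigr)$ and showing each factor is increasing in $z$ using exactly those two lemmas. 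To complete your proof you would need to either reproduce that factorization (or an equivalent use of \lref{lem:Bzkz} and \lref{lem:Deltazk}) or supply an independent proof of the tail-ratio monotonicity; a purely algebraic expansion will not close the gap.
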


Next, we shall determine the exact value of $\overline{\epsilon}_\lambda^{\rm c}(l,n,\delta)$.
By Lemma \ref{lem:gzhzMono}, we have $B_{n,l}=h_{n+1}(l,n,\lambda)\leq h_z(l,n,\lambda)\leq 1$ for $z\in\{0,\dots,n+1\}$.
%Hence, we have $\overline{\epsilon}^c_\lambda(l,n,\delta)=0$ when $0<\delta\leq B_{n,{\rm l}}$, and $\overline{\epsilon}^c_\lambda(l,n,\delta)>0$ when $B_{n,{\rm l}}<\delta\leq 1$.
For $B_{n,l}<\delta\leq 1$, let $\hat{z}$ be the largest integer $z$ such that
$h_z(l,n,\lambda)\geq\delta$.
%which equals either $z^*$ or $z_*$ defined in \eqref{eq:definez}.
For $z\in\{l,l+1,\dots,n\}$, define
\begin{align}
\kappa_z(l,n,\delta,\lambda)
             :=\;& \frac{\delta-h_{z+1}(l,n,\lambda)}{h_z(l,n,\lambda)-h_{z+1}(l,n,\lambda)}, \Label{eq:kappaz} \\
\tilde{\zeta}_\lambda(l,n,\delta,z)
             :=\;& [1-\kappa_z(l,n,\delta,\lambda)]g_{z+1}(l,n,\lambda)
                  +\kappa_z(l,n,\delta,\lambda)g_z(l,n,\lambda).  \Label{eq:tildezeta1}
\end{align}
The following theorem is an immediate consequence of \lrefs{lem:gzhzMono}{lem:SlopeMono} above.

\begin{theorem}\Label{thm:AdvFidelity}
Suppose $0<\lambda<1$, $0<\delta\leq1$,
$l\in\bbZ^{\geq 0}$, and $n\in\bbZ^{\geq l+1}$. Then we have
\begin{align}
&\overline{\epsilon}^{\rm c}_\lambda(l,n,\delta)
=\frac{\zeta_\lambda(l,n,\delta)}{\delta}
=
\begin{cases}
0                             & \delta \leq B_{n,l}, \\
\tilde{\zeta}_\lambda(l,n,\delta,\hat{z})/\delta >0 & \delta > B_{n,l}.
\end{cases}
\Label{True-plot}
\end{align}
\end{theorem}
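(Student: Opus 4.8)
The plan is to extract everything from the geometry of the polygon $\mathcal{R}_{l,n,\lambda}$, which by \pref{prop:ConvexHull} is the convex hull of the $n+2$ points $(h_z,g_z):=(h_z(l,n,\lambda),g_z(l,n,\lambda))$, $z=0,\dots,n+1$. First I would pin down its lower boundary. By \lref{lem:gzhzMono}, both coordinates $h_z$ and $g_z$ strictly decrease in $z$ on $l\le z\le n+1$, while for $z\le l$ one has $h_z=1$ and $g_z=(n-z+1)/(n+1)$, so among the points with $h$-coordinate $1$ only $z=l$ can lie on the lower boundary. Combined with \lref{lem:SlopeMono}, which states that $(h_z,g_z)$ for $l\le z\le n+1$ are extremal points on the lower boundary, this shows the lower boundary is precisely the polyline through $(h_{n+1},g_{n+1})=(B_{n,l},0)$, $(h_n,g_n)$, $\dots$, $(h_{l+1},g_{l+1})$, $(h_l,g_l)=(1,\tfrac{n-l+1}{n+1})$, taken in decreasing order of $z$. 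Let $f_{\mathrm{low}}\colon[B_{n,l},1]\to\mathbb{R}$ be the associated piecewise-linear function of $p$; it is convex (being the lower boundary of a convex set), strictly increasing (since both $h_z$ and $g_z$ strictly decrease in $z$), and satisfies $f_{\mathrm{low}}(B_{n,l})=g_{n+1}=0$.

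Next I would evaluate $\zeta_\lambda(l,n,\delta)=\min\{f\mid(p,f)\in\mathcal{R}_{l,n,\lambda},\ p\ge\delta\}$. Since $h_z\ge h_{n+1}=B_{n,l}$ for all $z$, every $Q$ has $p_l(Q)=\sum_z Q(Z=z)h_z\ge B_{n,l}$; hence when $\delta\le B_{n,l}$ the constraint is vacuous, and because $f_l(Q)\ge 0$ always while the vertex $(B_{n,l},0)$ (the point $z=n+1$) is feasible, $\zeta_\lambda=0$ and $\overline{\epsilon}^{\mathrm c}_\lambda=0$. When $\delta>B_{n,l}$, any feasible $(p,f)$ has $f\ge f_{\mathrm{low}}(p)\ge f_{\mathrm{low}}(\delta)$ by monotonicity, with equality at the feasible point $(\delta,f_{\mathrm{low}}(\delta))\in\mathcal{R}_{l,n,\lambda}$, so $\zeta_\lambda(l,n,\delta)=f_{\mathrm{low}}(\delta)$. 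To identify $f_{\mathrm{low}}(\delta)$, note that $\hat z$, the largest $z$ with $h_z\ge\delta$, lies in $\{l,\dots,n\}$ (as $h_l=1\ge\delta>B_{n,l}=h_{n+1}$), so $\delta\in(h_{\hat z+1},h_{\hat z}]$ and on the segment from $(h_{\hat z+1},g_{\hat z+1})$ to $(h_{\hat z},g_{\hat z})$ one has $\delta=\kappa_{\hat z}h_{\hat z}+(1-\kappa_{\hat z})h_{\hat z+1}$ with $\kappa_{\hat z}=\kappa_{\hat z}(l,n,\delta,\lambda)\in(0,1]$, whence $f_{\mathrm{low}}(\delta)=\kappa_{\hat z}g_{\hat z}+(1-\kappa_{\hat z})g_{\hat z+1}=\tilde\zeta_\lambda(l,n,\delta,\hat z)$; this is positive because $g_{\hat z}>0$ (as $\hat z\le n$), $g_{\hat z+1}\ge 0$, and $\kappa_{\hat z}>0$.

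It remains to pass from $\zeta_\lambda$ to $\overline{\epsilon}^{\mathrm c}_\lambda(l,n,\delta)=\min\{f/p\mid(p,f)\in\mathcal{R}_{l,n,\lambda},\ p\ge\delta\}$ of \eqref{AYU}. The point $(\delta,f_{\mathrm{low}}(\delta))$ gives $\overline{\epsilon}^{\mathrm c}_\lambda\le f_{\mathrm{low}}(\delta)/\delta$. For the matching lower bound, any feasible $(p,f)$ satisfies $f/p\ge f_{\mathrm{low}}(p)/p$, so it suffices to show $p\mapsto f_{\mathrm{low}}(p)/p$ is non-decreasing on $[B_{n,l},1]$; this is the one genuinely computational point, and it follows from convexity together with $f_{\mathrm{low}}(B_{n,l})=0$ and $B_{n,l}>0$: for $B_{n,l}\le\delta\le p$ write $\delta=t\,p+(1-t)B_{n,l}$ with $t=\frac{\delta-B_{n,l}}{p-B_{n,l}}$, so $f_{\mathrm{low}}(\delta)\le t\,f_{\mathrm{low}}(p)$, and then $f_{\mathrm{low}}(\delta)/\delta\le t\,f_{\mathrm{low}}(p)/\delta\le f_{\mathrm{low}}(p)/p$, the last step being $t\,p\le\delta\iff\delta B_{n,l}\le p B_{n,l}\iff\delta\le p$. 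Hence $\overline{\epsilon}^{\mathrm c}_\lambda(l,n,\delta)=f_{\mathrm{low}}(\delta)/\delta=\zeta_\lambda(l,n,\delta)/\delta$, which, combined with the case $\delta\le B_{n,l}$ above, is exactly the claimed case distinction. I expect the only real obstacles to be bookkeeping—verifying that the lower boundary is precisely the polyline over $z=l,\dots,n+1$ with no spurious vertices, and that $\hat z$ selects the correct active segment—while the analytic content collapses to the monotonicity of $f_{\mathrm{low}}(p)/p$ displayed above.
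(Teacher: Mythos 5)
Your proof is correct and follows exactly the route the paper intends: the paper dismisses this theorem as an ``immediate consequence'' of \lrefs{lem:gzhzMono}{lem:SlopeMono}, and your argument is precisely that geometric reasoning spelled out, including the one point the paper leaves implicit --- that $\overline{\epsilon}^{\rm c}_\lambda=\zeta_\lambda/\delta$ because $p\mapsto f_{\mathrm{low}}(p)/p$ is non-decreasing, which you correctly derive from convexity of the lower boundary together with $f_{\mathrm{low}}(B_{n,l})=0$.
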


In particular, when $\delta=h_z(l,n,\lambda)$ for $z\in\{l,l+1,\dots,n+1\}$, \tref{thm:AdvFidelity} implies that
\begin{align}\Label{eq:delta=Bzk}
\overline{\epsilon}^{\rm c}_\lambda(l,n,\delta=h_z(l,n,\lambda))  = \frac{g_z(l,n,\lambda)}{h_z(l,n,\lambda)},
\end{align}
which is nonincreasing in $z$.
This is because $h_z(l,n,\lambda)$ decreases monotonically with $z$ by \lref{lem:gzhzMono},
and $\overline{\epsilon}^{\rm c}_\lambda(l,n,\delta)$ is nondecreasing in $\delta$ for $0<\delta\leq1$ by \pref{lem-2-2} below.

%which is nonincreasing in $z$ by Lemma \ref{lem:SlopeMono}.
%In particular, we have
%\begin{align}
%\overline{\epsilon}_\lambda^c(k,n,\delta=1,\lambda) =1-\frac{k}{n+1},
%\end{align}

\if0
\begin{corollary}\Label{cor:zeta2monoto}
Suppose $0\leq\lambda<1$, $0<\delta\leq1$, $l\in\bbZ^{\geq0}$.
Then $\overline{\epsilon}^c_\lambda(l,n,\delta) $ is nondecreasing %in $\delta$ for $0<\delta\leq1$ and
for $n\in\bbZ^{\geq l+1}$.
\end{corollary}

When $\lambda=0$, \coref{cor:zeta2monoto} follows from \eqref{eq:zeta1(lambda=0)}.
When $\lambda>0$, \coref{cor:zeta2monoto} follows from \tref{thm:AdvFidelity} above and the following inequalities
\begin{align}
g_z(l,n,\lambda) \leq g_z(l,n+1,\lambda),\quad
h_z(l,n,\lambda) \geq h_z(l,n+1,\lambda)
\end{align}
for $z\in\{0,1,\dots,n+1\}$.
%This corollary is helpful to understanding the property of $\overline{\epsilon}^c_\lambda(k,n,\delta) $.
By this corollary, for a given significance level $\delta$ and
a given number $l$ of outcome $1$,
more tests we perform,
a smaller upper confidential limit we can guarantee for
the probability of the event $Y_{n+1}=1$ once we accept it.

\begin{corollary}\Label{cor:zeta1}
Suppose $0<\lambda<1$, $0<\delta<1$,
$l\in\bbZ^{\geq0}$, and $n\in\bbZ^{\geq l+1}$. Then
\begin{align}
\!\!\!\zeta_\lambda(l,n,\delta)
&= \max\big\{0, \tilde{\zeta}_\lambda(l,n,\delta,,z^*) ,
\tilde{\zeta}_\lambda(l,n,\delta,,z_*) \big\} \nonumber\\
&= \max\left\{0, \max_{z\in\{l,\dots,n\}}
\tilde{\zeta}_\lambda(l,n,\delta,z) \right\}. \Label{eq:zeta1bound}
\end{align}
\end{corollary}

This corollary follows from \tref{thm:AdvFidelity},  \lrefs{lem:gzhzMono}{lem:SlopeMono}.
It is useful for deriving \tref{thm:ExactN1} in Section~\ref{advNum}.
In addition, \eqref{eq:zeta1bound} provides a family of lower
bounds for $\overline{\epsilon}^c_\lambda(l,n,\delta) $, namely,
\begin{align}\Label{eq:LB3zeta2}
\overline{\epsilon}^c_\lambda(l,n,\delta)
\geq \tilde{\zeta}_\lambda(l,n,\delta,z)/\delta
\quad
\forall z\in\{l, \dots,n\}.
\end{align}
\fi

As a counterpart of \pref{prop:epsiidMonoton} in Section \ref{sec:Property},
the following proposition clarifies the monotonicities of $\overline{\epsilon}_\lambda(l,n,\delta)$.
See \suppl{PF-lem-2-2} for a proof.

\begin{proposition}\Label{lem-2-2}
	Suppose $0\leq\lambda<1$, $0<\delta\leq1$, $l\in\bbZ^{\geq 0}$, and $n\in\bbZ^{\geq l+1}$.
	Then, $\overline{\epsilon}_\lambda(l,n,\delta)$
	%$\overline{\epsilon}_\lambda^c(k,n,\delta,\lambda)$
	is nonincreasing in $\delta$ for $0<\delta\leq1$,
	nonincreasing in $n$ for $n\in\bbZ^{\geq l+1}$, and
	nondecreasing in $l$ for $0\leq l\leq n-1$.
\end{proposition}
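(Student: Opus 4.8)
It is convenient to argue with the complement $\overline{\epsilon}_\lambda^{\rm c}(l,n,\delta)=1-\overline{\epsilon}_\lambda(l,n,\delta)$, so the three assertions become: $\overline{\epsilon}_\lambda^{\rm c}$ is nondecreasing in $\delta$, nondecreasing in $n$, and nonincreasing in $l$. Monotonicity in $\delta$ is immediate from \eqref{eq:epslamDef}: the feasible set $\{Q\in{\cal Q}_{n+1}:Q(L\le l)\ge\delta\}$ shrinks as $\delta$ grows and is always nonempty (it contains the point mass at $Z=0$), so the maximum is nonincreasing in $\delta$. For the other two I use the region picture of Section~\ref{S8}: by \eqref{True-plot} (and \eqref{eq:zeta2(lambda=0)} when $\lambda=0$) one has $\overline{\epsilon}_\lambda^{\rm c}(l,n,\delta)=\zeta_\lambda(l,n,\delta)/\delta$ with $\zeta_\lambda(l,n,\delta)=\min\{f:(p,f)\in{\cal R}_{l,n,\lambda},\ p\ge\delta\}$, and by \pref{prop:ConvexHull} the polygon ${\cal R}_{l,n,\lambda}$ is the convex hull of the $n+2$ points $P_z(l,n):=\bigl(h_z(l,n,\lambda),g_z(l,n,\lambda)\bigr)$, $z=0,\dots,n+1$ (the parameter $\lambda$ is fixed throughout). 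Writing ${\cal C}:=\{(x,y)\in\mathbb{R}^2:x\le0,\ y\ge0\}$ for the north-west cone, the crucial elementary observation is: if $A\subseteq A'+{\cal C}$ for convex $A,A'\subseteq\mathbb{R}^2$, then $\min\{f:(p,f)\in A',\ p\ge x\}\le\min\{f:(p,f)\in A,\ p\ge x\}$ for every $x$, because a minimizer $(p^*,f^*)$ for $A$ is weakly south-east of some $(p',f')\in A'$, which is then feasible for $A'$ with value $f'\le f^*$. Since $A'+{\cal C}$ is again convex, $A\subseteq A'+{\cal C}$ follows once every vertex of $A$ is shown to lie weakly south-east of some point of $A'$.

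\textbf{Monotonicity in $n$.}
Apply the above with $A={\cal R}_{l,n+1,\lambda}$ and $A'={\cal R}_{l,n,\lambda}$. From \eqref{eq:hzHomo}--\eqref{eq:gzHomo} together with the fact that $B_{z,l}(\nu)$ is nonincreasing in $z$ for $z\ge l$ (\lref{lem:Bzkmono}), a short computation gives $h_z(l,n+1,\lambda)\le h_z(l,n,\lambda)$ and $g_z(l,n+1,\lambda)\ge g_z(l,n,\lambda)$ for $0\le z\le n+1$; hence $P_z(l,n+1)$ is weakly north-west of $P_z(l,n)\in{\cal R}_{l,n,\lambda}$, so $P_z(l,n+1)\in A'+{\cal C}$. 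The only remaining vertex of $A$ is $P_{n+2}(l,n+1)=\bigl(B_{n+1,l}(\nu),0\bigr)$, and since $B_{n+1,l}(\nu)\le B_{n,l}(\nu)$ while $P_{n+1}(l,n)=\bigl(B_{n,l}(\nu),0\bigr)\in{\cal R}_{l,n,\lambda}$, this vertex also lies in $A'+{\cal C}$. Therefore $\zeta_\lambda(l,n+1,\delta)\ge\zeta_\lambda(l,n,\delta)$, i.e. $\overline{\epsilon}_\lambda^{\rm c}$ is nondecreasing in $n$. (For $\lambda=0$ one may alternatively read this off the closed form \eqref{XMP9} by an explicit computation.)

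\textbf{Monotonicity in $l$.}
Apply the reduction with $A={\cal R}_{l,n,\lambda}$ and $A'={\cal R}_{l+1,n,\lambda}$; the goal is $A\subseteq A'+{\cal C}$. The vertices $P_z(l,n)$ with $z\le l$ equal $\bigl(1,\tfrac{n-z+1}{n+1}\bigr)$ and are dominated from the south-east by $P_{l+1}(l+1,n)=\bigl(1,\tfrac{n-l}{n+1}\bigr)\in{\cal R}_{l+1,n,\lambda}$, and $P_{n+1}(l,n)=\bigl(B_{n,l}(\nu),0\bigr)$ is dominated by $P_{n+1}(l+1,n)=\bigl(B_{n,l+1}(\nu),0\bigr)$ because $B_{n,l+1}(\nu)\ge B_{n,l}(\nu)$. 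The substance is in the vertices $P_z(l,n)$ with $l+1\le z\le n$. Here the naive vertex-to-vertex comparison runs the \emph{wrong} way: using $B_{z,l+1}(\nu)=B_{z,l}(\nu)+b_{z,l+1}(\nu)$ one finds both $h_z(l+1,n,\lambda)\ge h_z(l,n,\lambda)$ \emph{and} $g_z(l+1,n,\lambda)\ge g_z(l,n,\lambda)$, so $P_z(l+1,n)$ sits weakly \emph{north-east} of $P_z(l,n)$. One instead uses that, by \lrefs{lem:gzhzMono}{lem:SlopeMono}, the lower boundary of ${\cal R}_{l+1,n,\lambda}$ is the increasing convex polyline through $P_{l+1}(l+1,n),\dots,P_{n+1}(l+1,n)$ descending to $\bigl(B_{n,l+1}(\nu),0\bigr)$; since $0\le g_z(l,n,\lambda)\le\tfrac{n-l}{n+1}$, this polyline meets the horizontal line of height $g_z(l,n,\lambda)$ in a unique point, and $P_z(l,n)\in{\cal R}_{l+1,n,\lambda}+{\cal C}$ is precisely the statement that that point has abscissa $\ge h_z(l,n,\lambda)$. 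Using $\zeta_\lambda(l,n,h_z(l,n,\lambda))=g_z(l,n,\lambda)$ from \eqref{eq:delta=Bzk}, and that $\delta\mapsto\zeta_\lambda(j,n,\delta)$ is continuous, piecewise linear and convex by \tref{thm:AdvFidelity}, it therefore suffices to verify
\[
\zeta_\lambda(l+1,n,h_z(l,n,\lambda))\;\le\;g_z(l,n,\lambda)\qquad(z=l,l+1,\dots,n+1),
\]
which upgrades (a convex function lies below an affine function on an interval once it does so at the endpoints) to $\zeta_\lambda(l+1,n,\delta)\le\zeta_\lambda(l,n,\delta)$ for all $\delta$. By \tref{thm:AdvFidelity} the left-hand side is an explicit convex combination of two consecutive values $g_w(l+1,n,\lambda),g_{w+1}(l+1,n,\lambda)$, so this is a finite inequality among the binomial-tail quantities $B_{\bullet,\bullet}(\nu)$ and $b_{\bullet,\bullet}(\nu)$, which I would prove from the recursion $B_{z+1,j}(\nu)=\nu B_{z,j-1}(\nu)+(1-\nu)B_{z,j}(\nu)$ together with \lref{lem:Bzkmono}.

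\textbf{Main obstacle.}
The only non-routine step is the $l$-monotonicity, and within it the displayed binomial-tail inequality, which says that after incrementing $l$ the lower boundary of ${\cal R}_{l+1,n,\lambda}$ lies weakly below and to the right of that of ${\cal R}_{l,n,\lambda}$ even though the natural correspondence of their vertices moves points the opposite way, so one must control how far down the $(l+1)$-boundary one slides, uniformly in $z$. Two fallbacks if the direct algebra becomes unwieldy: (i) prove the displayed inequality only at the mesh abscissas $\delta=h_z(l,n,\lambda)$ and propagate to all $\delta$ via the convexity and piecewise-linearity of $\zeta_\lambda(l+1,n,\cdot)$ against the affine pieces of $\zeta_\lambda(l,n,\cdot)$; or (ii) give a direct construction, from a maximizer $Q$ for level $l$, of an admissible distribution $Q'$ for level $l+1$ — for instance a mixture of $Q$ with the point mass at $Z=n+1$, which leaves $\{Y_{n+1}=0\}$ untouched and whose feasibility reduces to comparing $\delta$ with $B_{n,l+1}(\nu)$.
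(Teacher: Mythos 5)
Your framework is the paper's own: the $\delta$-monotonicity from the definition, and the $n$- and $l$-monotonicities by comparing the lower boundaries of the convex polygons ${\cal R}_{l,n,\lambda}$ (the paper's proof is in \suppl{PF-lem-2-2}). The $\delta$-part and the $n$-part of your argument are complete and correct; your cone formulation ($A\subseteq A'+{\cal C}$ with ${\cal C}$ the north-west cone) is a clean packaging of exactly the vertex comparison $h_z(l,n+1,\lambda)\le h_z(l,n,\lambda)$, $g_z(l,n+1,\lambda)\ge g_z(l,n,\lambda)$ that the paper uses.

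The gap is in the $l$-monotonicity, and you have correctly located it: everything reduces to
$\tau_{n,l+1}(h_z(l,n,\lambda))\le g_z(l,n,\lambda)$ for $z=l,\dots,n+1$ (the paper's \eqref{AA1}), but you do not prove it. Your stated plan — write $\zeta_\lambda(l+1,n,h_z(l,n,\lambda))$ via \tref{thm:AdvFidelity} as a convex combination of two consecutive values $g_w(l+1,n,\lambda),g_{w+1}(l+1,n,\lambda)$ and verify a ``finite inequality among binomial tails'' from the recursion and \lref{lem:Bzkmono} — is not routine, because you do not know a priori which index $w$ satisfies $h_{w+1}(l+1,n,\lambda)\le h_z(l,n,\lambda)\le h_w(l+1,n,\lambda)$; the interlacing of the two meshes $\{h_z(l,n,\lambda)\}_z$ and $\{h_w(l+1,n,\lambda)\}_w$ is exactly what is unknown. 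The paper avoids this by a downward induction on $z$: the inductive step bounds $\tau_{n,l+1}$ at $h_{z_0}(l,n,\lambda)$ by the chord through $\bigl(h_{z_0+1}(l,n,\lambda),\tau_{n,l+1}(h_{z_0+1}(l,n,\lambda))\bigr)$ and $A_{z_0+1}(l+1,n)$, using convexity of $\tau_{n,l+1}$ and the bracketing $h_{z_0}(l,n,\lambda)\in[h_{z_0+1}(l,n,\lambda),h_{z_0+1}(l+1,n,\lambda)]$ (which needs the nontrivial inequality $h_{z+1}(l+1,n,\lambda)\ge h_z(l,n,\lambda)$, \eqref{eq:hz+1k+1geqhzk}, proved via $B_{z+1,j}=\nu B_{z,j-1}+\lambda B_{z,j}$). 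It then needs the slope comparison \eqref{eq:slopwithk} between the segment $[A_z(l,n),A_{z+1}(l,n)]$ and the segment $[A_{z+1}(l,n),A_{z+1}(l+1,n)]$, whose proof is an explicit computation showing both relevant ratios reduce to $\tfrac{z-l}{(n-z)\lambda}$, using \lref{lem:Bzkz} and the identity $\Delta_{z-1,l}/\Delta_{z,l}=(z-l)/(z\lambda)$. None of this is supplied by ``recursion plus monotonicity'' alone, so the hardest third of the proposition remains unproved. (Your fallback (ii), mixing a level-$l$ optimizer with the point mass at $Z=n+1$, does not work as stated either: that mixture changes both $Q(L\le l+1)$ and the conditional probability in a way that does not obviously preserve optimality, and the feasibility comparison with $B_{n,l+1}$ is not the issue — the issue is the value of the objective.)
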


In the following, in order to show the remaining unproved results of Theorems \ref{thm:zeta2Asympt} and \ref{thm:zeta3Asympt}, we prepare two useful statements, which
provide several tight bounds for $\overline{\epsilon}^{\rm c}_\lambda(l,n,\delta)$.
\coref{cor:zeta2Bound1} follows from \eqref{eq:delta=Bzk}, \tref{thm:AdvFidelity}, Lemma \ref{lem:gzhzMono}, and the fact that  $\overline{\epsilon}^{\rm c}_\lambda(l,n,\delta) $ is nondecreasing in $\delta$ by \pref{lem-2-2}.
\lref{cor:zeta2Bound2} is proved in 
\suppl{app:ProofZeta2Asympt}.

\begin{corollary}\Label{cor:zeta2Bound1}
Suppose $0<\lambda<1$, $0<\delta<1$, $l\in\bbZ^{\geq 0}$, and
$n\in\bbZ^{\geq l+1}$, then we have
\begin{align}
\frac{g_{z^*+1}(l,n,\lambda)}{h_{z^*+1}(l,n,\lambda)}
\leq \overline{\epsilon}^{\rm c}_\lambda(l,n,\delta)
\leq \max \left\{0, \frac{g_{z_*}(l,n,\lambda)}{h_{z_*}(l,n,\lambda)}\right\},
\end{align}
where $z^*$ and $z_*$ are shorthand for $z^*(l,\delta,\lambda)$ and $z_*(l,\delta,\lambda)$ defined in \eref{eq:definez}, respectively.
\end{corollary}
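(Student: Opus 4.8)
The plan is to read off the two bounds from the geometric description of the lower boundary of ${\cal R}_{l,n,\lambda}$ established in \tref{thm:AdvFidelity}, together with the monotonicity of $\overline{\epsilon}^{\rm c}_\lambda(l,n,\delta)$ in $\delta$ from \pref{lem-2-2}. Recall from \eref{eq:definez} that $z^*=z^*(l,\delta,\lambda)$ is the smallest $z\geq l$ with $B_{z,l}\leq\delta$ and $z_*=z^*-1$. Since $B_{z,l}(\nu)=h_{z+1}(l,n,\lambda)\cdot(n+1)/(n-z)+\cdots$—more precisely, since \lref{lem:gzhzMono} gives that $h_z(l,n,\lambda)$ is strictly decreasing in $z$ on $l\leq z\leq n+1$ and \eref{eq:delta=Bzk} records the value of $\overline{\epsilon}^{\rm c}_\lambda$ exactly at the abscissas $\delta=h_z(l,n,\lambda)$—the key observation is that the defining inequality $B_{z^*,l}\leq\delta<B_{z_*,l}$ translates into a sandwiching of $\delta$ between two consecutive breakpoints of the lower boundary.

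First I would make the translation between the $B_{z,l}$-thresholds in \eref{eq:definez} and the $h_z$-values precise. Using \eref{eq:hzHomo}, $h_z(l,n,\lambda)=\bigl[(n-z+1)B_{z,l}+zB_{z-1,l}\bigr]/(n+1)$ for $l+1\leq z\leq n+1$; one then checks that $h_z(l,n,\lambda)\geq B_{z,l}$ for all relevant $z$ (because $B_{z-1,l}\geq B_{z,l}$ by \lref{lem:Bzkmono}, so $h_z$ is a convex combination weighted toward the larger term) and similarly that $B_{z_*,l}$ lies between the breakpoints $h_{z^*}$ and $h_{z_*}$. The upshot will be that $h_{z^*+1}(l,n,\lambda)\leq\delta$ is not quite what is needed; rather, from $B_{z^*,l}\leq\delta$ one gets $\delta\geq h_{z^*+1}(l,n,\lambda)$ after comparing $\delta$ to the breakpoint just below it, and from $\delta<B_{z_*,l}\leq h_{z_*}(l,n,\lambda)$ one gets $\delta< h_{z_*}(l,n,\lambda)$.

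Given those two inclusions, the lower bound follows because $\overline{\epsilon}^{\rm c}_\lambda(l,n,\delta)$ is nondecreasing in $\delta$ (\pref{lem-2-2}) and $\overline{\epsilon}^{\rm c}_\lambda(l,n,h_{z^*+1}(l,n,\lambda))=g_{z^*+1}(l,n,\lambda)/h_{z^*+1}(l,n,\lambda)$ by \eref{eq:delta=Bzk}; hence $\overline{\epsilon}^{\rm c}_\lambda(l,n,\delta)\geq\overline{\epsilon}^{\rm c}_\lambda(l,n,h_{z^*+1})=g_{z^*+1}/h_{z^*+1}$ as soon as $\delta\geq h_{z^*+1}$. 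For the upper bound, if $\delta\leq B_{n,l}$ then \tref{thm:AdvFidelity} gives $\overline{\epsilon}^{\rm c}_\lambda=0$ and the bound is trivial; otherwise $\delta<h_{z_*}(l,n,\lambda)$ combined with the same monotonicity gives $\overline{\epsilon}^{\rm c}_\lambda(l,n,\delta)\leq\overline{\epsilon}^{\rm c}_\lambda(l,n,h_{z_*})=g_{z_*}/h_{z_*}$, and taking the maximum with $0$ covers the degenerate case $g_{z_*}<0$ (which can occur only when $z_*\geq n+1$).

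The main obstacle I anticipate is the bookkeeping in the first step: one must be careful that $z^*$ and $z_*$ are defined through the cumulative binomials $B_{z,l}$, whereas the boundary breakpoints of ${\cal R}_{l,n,\lambda}$ sit at the $h_z$-values, and these two families are interleaved but not equal. Getting the inequalities $h_{z^*+1}(l,n,\lambda)\leq\delta$ and $\delta< h_{z_*}(l,n,\lambda)$ to come out with exactly the right indices—rather than off by one—requires a careful comparison using $B_{z,l}\leq h_z(l,n,\lambda)\leq B_{z-1,l}$, which itself rests on the monotonicity $B_{z,l}\geq B_{z+1,l}$ from \lref{lem:Bzkmono}. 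Once the index alignment is pinned down, the rest is an immediate appeal to \eref{eq:delta=Bzk}, \tref{thm:AdvFidelity}, \lref{lem:gzhzMono}, and the $\delta$-monotonicity in \pref{lem-2-2}.
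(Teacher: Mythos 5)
Your proposal is correct and follows the same route the paper takes: the paper derives this corollary precisely from \eref{eq:delta=Bzk}, \tref{thm:AdvFidelity}, \lref{lem:gzhzMono}, and the $\delta$-monotonicity in \pref{lem-2-2}, and your index alignment via $B_{z,l}\leq h_z(l,n,\lambda)\leq B_{z-1,l}$ (giving $h_{z^*+1}\leq B_{z^*,l}\leq\delta<B_{z_*,l}\leq h_{z_*}$) is exactly the right bookkeeping. The only quibble is the parenthetical claim that $g_{z_*}<0$ can occur when $z_*\geq n+1$; by \lref{lem:gzhzMono} this requires $z_*\geq n+2$, but this does not affect the argument.
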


\begin{lemma}\Label{cor:zeta2Bound2}
Suppose $0<\lambda<1$, $0<\delta<1$, $l\in\bbZ^{\geq 0}$, and
$n\in\bbZ^{\geq l+1}$. Then
\begin{align}
\overline{\epsilon}^{\rm c}_\lambda(l,n,\delta)
&\leq \max \left\{0, \frac{\lambda(n-z_*+1)}{\lambda(n-z_*+1)+z_*-l }
\right\}, \Label{eq:zeta2UB}\\
\overline{\epsilon}^{\rm c}_\lambda(l,n,\delta)
&\geq \frac{\lambda(n-z^*)}{\lambda(n-z^*)+z^*+1 }. \Label{eq:zeta2LB1}
\end{align}
If $\delta\leq1/2$ in addition, then
\begin{align}\label{eq:zeta2LB}
\overline{\epsilon}^{\rm c}_\lambda(l,n,\delta)
&\geq \frac{\lambda(n-z^*)}{\lambda(n-z^*)+z^*-l+1+\sqrt{\lambda l} }
\geq 1-\frac{z^*-l+1+\sqrt{\lambda l}}{\lambda n }.
\end{align}
\end{lemma}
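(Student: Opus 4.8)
The plan is to reduce the whole lemma to elementary two–sided control of the ratio $B_{z-1,l}(\nu)/B_{z,l}(\nu)$, via the slopes $g_z(l,n,\lambda)/h_z(l,n,\lambda)$ that by \coref{cor:zeta2Bound1} trap $\overline{\epsilon}^{\rm c}_\lambda(l,n,\delta)$ between $g_{z^*+1}/h_{z^*+1}$ and $\max\{0,g_{z_*}/h_{z_*}\}$. Writing $m:=n+1-z$, one has $g_z=\tfrac{m B_{z,l}}{n+1}$ and $h_z=\tfrac{m B_{z,l}+z B_{z-1,l}}{n+1}$ for $l\le z\le n+1$, so $h_z/g_z=1+\tfrac{z}{m}\cdot\tfrac{B_{z-1,l}}{B_{z,l}}$. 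The two facts I would use are the identity $B_{z-1,l}(\nu)-B_{z,l}(\nu)=\Delta_{z-1,l}=\nu\,b_{z-1,l}(\nu)$ (immediate from \eqref{eq:Deltazkmain} and Pascal's rule) and the termwise relation $b_{z,j}(\nu)=\tfrac{z\lambda}{z-j}\,b_{z-1,j}(\nu)$; summing the latter over $0\le j\le l$ yields
\[
\lambda\,B_{z-1,l}(\nu)\ \le\ B_{z,l}(\nu)\ \le\ \tfrac{z\lambda}{z-l}\,B_{z-1,l}(\nu).
\]

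Given these, \eqref{eq:zeta2UB} follows by inserting the right-hand bound at $z=z_*$ into $\overline{\epsilon}^{\rm c}_\lambda\le\max\{0,g_{z_*}/h_{z_*}\}$, and \eqref{eq:zeta2LB1} by inserting the left-hand bound at $z=z^*+1$ into $\overline{\epsilon}^{\rm c}_\lambda\ge g_{z^*+1}/h_{z^*+1}$; each is a one-line rearrangement, the degenerate case $z_*=l$ giving only the trivial bound $1$. The second inequality of \eqref{eq:zeta2LB} is also quick: since $\delta\le1/2$ we have $B_{z^*,l}(\nu)=\Pr[\mathrm{Bin}(z^*,\nu)\le l]\le1/2$, so $l$ lies at or below a median of $\mathrm{Bin}(z^*,\nu)$; the standard bound that every median of $\mathrm{Bin}(z,p)$ is at most $\lceil zp\rceil$ then forces $z^*\nu>l-1$, hence $\lambda z^*<z^*-l+1\le z^*-l+1+\sqrt{\lambda l}$, so the denominator in \eqref{eq:zeta2LB} exceeds $\lambda n$ and the bound $\ge 1-\tfrac{z^*-l+1+\sqrt{\lambda l}}{\lambda n}$ drops out.

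The real work is the first inequality of \eqref{eq:zeta2LB}. Combining $\overline{\epsilon}^{\rm c}_\lambda\ge g_{z^*+1}/h_{z^*+1}$ with the identities above, the consequence $\lambda z\,b_{z-1,l}(\nu)=(z-l)\,b_{z,l}(\nu)$, and the summation identity $\nu(z-l)\,b_{z,l}(\nu)=(z\nu-l)\,B_{z,l}(\nu)+\sum_{j=0}^{l}(l-j)\,b_{z,j}(\nu)$, one checks after cancellation that it suffices to prove the single binomial estimate
\[
\sum_{j=0}^{l}(l-j)\,b_{z,j}(\nu)\ \le\ \sqrt{\lambda l}\;B_{z,l}(\nu)\qquad\text{at }z=z^*+1,
\]
i.e.\ $\mathbb{E}[(l-X)^{+}]\le\sqrt{\lambda l}\,\Pr[X\le l]$ for $X\sim\mathrm{Bin}(z^*+1,\nu)$, where the hypothesis $\delta\le1/2$ enters only as $\Pr[X\le l]=B_{z^*+1,l}(\nu)\le1/2$. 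I would attack this by Cauchy--Schwarz, $\mathbb{E}[(l-X)^{+}]^{2}\le\mathbb{E}\bigl[(l-X)^{2}\mathbf{1}\{X\le l\}\bigr]\cdot\Pr[X\le l]$, reducing it to the truncated second-moment bound $\mathbb{E}[(l-X)^{2}\mid X\le l]\le\lambda l$, and then establish that from the pmf ratios $b_{z,j}(\nu)/b_{z,j-1}(\nu)=\tfrac{j}{z-j+1}\cdot\tfrac{\lambda}{\nu}$ (which dominate the truncated left tail by a geometric series) together with the location bound $z\nu>l-1$, most likely after separating the cases of small and large $\lambda l$.

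The main obstacle is exactly this last estimate. Its constant is essentially optimal: when $l$ sits right at the median of $\mathrm{Bin}(z,\nu)$ one has $\mathbb{E}[(l-X)^{+}]\sim\sqrt{2/\pi}\,\sqrt{\lambda l}\,\Pr[X\le l]$, so the Cauchy--Schwarz step is tight and no wasteful bound is permitted; in particular a bare geometric-series or Chernoff-type estimate of the truncated moment will not close the inequality uniformly in $l$ and $\lambda$, which is why the precise binomial recursion and a case split seem unavoidable. Everything else in the lemma is routine algebra once the termwise comparison and the identity $\Delta_{z-1,l}=\nu\,b_{z-1,l}(\nu)$ are in place.
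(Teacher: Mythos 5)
Your overall routing is the same as the paper's: sandwich $\overline{\epsilon}^{\rm c}_\lambda(l,n,\delta)$ between $g_{z^*+1}/h_{z^*+1}$ and $\max\{0,g_{z_*}/h_{z_*}\}$ via \coref{cor:zeta2Bound1}, then convert each slope into a bound on the ratio $B_{z,l}/B_{z+1,l}$. Your treatment of \eqref{eq:zeta2UB} and \eqref{eq:zeta2LB1} is correct and even a bit more self-contained than the paper's, since you rederive the two-sided bound \eqref{eq:Bzkz11} from the termwise identity $b_{z,j}(\nu)=\tfrac{z\lambda}{z-j}b_{z-1,j}(\nu)$ rather than citing Lemma 3.4 of \cite{Binomial22} as \lref{lem:Bzkz} does; your reduction of the first inequality of \eqref{eq:zeta2LB} to the tail-moment estimate $\mathbb{E}[(l-X)^+]\le\sqrt{\lambda l}\,\Pr[X\le l]$ for $X\sim\mathrm{Bin}(z^*+1,\nu)$ also checks out algebraically, and your median argument for the last inequality of \eqref{eq:zeta2LB} is a legitimate substitute for \coref{cor:z*k/nu}.

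However, there is a genuine gap at exactly the point you flag as the ``main obstacle'': the estimate $\mathbb{E}[(l-X)^+]\le\sqrt{\lambda l}\,\Pr[X\le l]$, equivalently the sharp ratio bound \eqref{eq:Bzkz22}, is never actually proved. You outline a Cauchy--Schwarz reduction to the truncated second-moment claim $\mathbb{E}[(l-X)^{2}\mid X\le l]\le\lambda l$ and then defer its verification to ``the precise binomial recursion and a case split,'' while simultaneously observing that the constant is asymptotically tight when $l$ sits at the median (where $\lambda l\approx\mathrm{Var}(X)$ and the conditional second moment is of the same order). That tightness means no slack is available, and it is not at all clear that the Cauchy--Schwarz step followed by a geometric-ratio bound on the left tail closes uniformly in $l$, $z$, and $\lambda$; indeed you concede a crude estimate will not work. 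The paper avoids this entirely by importing \eqref{eq:Bzkz22} as Proposition 4.10 of Ref.~\cite{Binomial22}. So either cite that result (in which case your proof collapses to essentially the paper's) or supply a complete proof of the tail-moment inequality; as written, the first inequality of \eqref{eq:zeta2LB} --- the one actually used in the asymptotic theorems --- is unestablished.
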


\section{Proofs of Theorems \ref{thm:zeta3Asympt} and \ref{thm:zeta2Asympt}}\Label{sec:ProofThms}

\subsection{Proof of Theorem \ref{thm:zeta3Asympt}}\Label{PF-Th61}

We shall
first prove~\eqref{eq-zeta2krn} and then prove Eq.~\eqref{eq:zeta2deltarnN}.
%prove Eqs.~\eqref{eq-zeta2krn} and \eqref{eq:zeta2deltarnN}, respectively.

\begin{proof}[Proof of \eqref{eq-zeta2krn}]

Since the case  $s=0$ follows from \eqref{eq:zeta2ninfty},
we focus on the case $0<s<1$.
In the following, we let $l=\lceil\nu s n\rceil$.
%and use $z^*$ and $z_*$ as shorthand for $z^*(l,\delta,\lambda)$ and $z_*(l,\delta,\lambda)$, respectively.
The proof is composed of two steps.

\noindent \textbf{Step 1:} The aim of this step is to show that  \eref{eq:zetaBoundsklarge} holds when $0<s<1$.

When $0<s<1$, in the limit $n\to+\infty$ we have
\begin{align}\label{eq:limz*nlarge}
z^*
= \frac{l-\Phi^{-1}(\delta)\sqrt{\lambda l }}{\nu}+O(1)
= sn - \frac{\Phi^{-1}(\delta)\sqrt{\lambda\nu s n }}{\nu}+O(1)
<n ,
\end{align}
where the first equality follows from \lref{lem:z-kinfity}.
Hence, the relation $B_{n,l}< B_{z^*,l}\leq\delta$ holds as $n\to+\infty$,
and we have
$\overline{\epsilon}_\lambda^{\rm c}(l,n,\delta)>0$ according to \tref{thm:AdvFidelity}.
This fact together with \coref{cor:zeta2Bound1} implies that
\begin{align}\Label{eq:zetaBoundsklarge}
\frac{g_{z^*+1}(\lceil\nu s n\rceil,n,\lambda)}{h_{z^*+1}(\lceil\nu s n\rceil,n,\lambda)}
\leq \overline{\epsilon}_\lambda^{\rm c}(\lceil\nu s n\rceil,n,\delta)
\leq \frac{g_{z_*}(\lceil\nu s n\rceil,n,\lambda)}{h_{z_*}(\lceil\nu s n\rceil,n,\lambda)}
\end{align}
as $n\to+\infty$.

\noindent \textbf{Step 2:} The aim of this step is to prove \eref{eq-zeta2krn}
in the case $0<s<1$.

In the limit $n\to\infty$, we have
\begin{align}
\begin{split}
\frac{z_*}{n-z_*+1}
&\stackrel{(a)}{=}
 \frac{\nu s n-\Phi^{-1}(\delta)\sqrt{\lambda \nu s n}+O(1)}{\nu n-\nu s n+\Phi^{-1}(\delta)\sqrt{\lambda \nu s n}+O(1)}
 = \frac{\nu s -\Phi^{-1}(\delta)\sqrt{\lambda \nu s }\frac{1}{\sqrt{n}}+O(\frac{1}{n})}{\nu -\nu s +\Phi^{-1}(\delta)\sqrt{\lambda \nu s }\frac{1}{\sqrt{n}}+O(\frac{1}{n})} \\
&= \left[\nu s -\frac{\Phi^{-1}(\delta)\sqrt{\lambda \nu s }}{\sqrt{n}}+O(\frac{1}{n})\right] \!
   \left[\frac{1}{\nu -\nu s } - \frac{\Phi^{-1}(\delta)\sqrt{\lambda \nu s }}{(\nu -\nu s )^2\sqrt{n}} +O(\frac{1}{n}) \right] \\
&= \frac{s}{1 - s } -\frac{\Phi^{-1}(\delta)\sqrt{\lambda  s}}{(1 - s )^2 \sqrt{\nu n}}+O(\frac{1}{n}).
\end{split}\Label{XMO}
\end{align}
where $(a)$ follows from \eref{eq:limz*nlarge}.
Then, in the limit $n\to\infty$ we have
\begin{align}\Label{eq:RHSgz*/hz*}
	\begin{split}
		&\text{RHS of \eqref{eq:zetaBoundsklarge}}
		=\left(1 +\frac{z_* B_{z_*-1,\lceil \nu s n\rceil} }
			{(n-z_*+1) B_{z_*,\lceil \nu sn\rceil}  }\right)^{-1}\\
		&\stackrel{(a)}{=}
		\left( 1 + \left[\frac{ s}{1 - s} -\frac{\Phi^{-1}(\delta)\sqrt{\lambda  s}}{ (1 -s)^2 \sqrt{\nu n}}+O(\frac{1}{n})\right]\!
			\left[1+\frac{\phi(\Phi^{-1}(\delta)) \nu}{\delta \sqrt{\lambda \nu sn}} +O(\frac{1}{n})\right]\right)^{-1} \\
		&=
		     \left( \frac{ 1}{1 - s} + \frac{ s}{1 - s}\frac{\phi(\Phi^{-1}(\delta)) \nu}{\delta \sqrt{\lambda \nu sn}}-\frac{\Phi^{-1}(\delta)\sqrt{\lambda  s}}{ (1 -s)^2 \sqrt{\nu n}}+O(\frac{1}{n})\right) ^{-1} \\
        &=
		      (1-s)
		\left( 1- \frac{s\phi(\Phi^{-1}(\delta)) \nu}{\delta \sqrt{\lambda \nu sn}}+\frac{\Phi^{-1}(\delta)\sqrt{\lambda  s}}{ (1 -s) \sqrt{\nu n}}+O(\frac{1}{n})
		\right) \\
		&=
		      1-s+\frac{\Phi^{-1}(\delta)\sqrt{\lambda  s}}{\sqrt{\nu n}} - \frac{s(1-s)\phi(\Phi^{-1}(\delta)) \nu}{\delta \sqrt{\lambda \nu sn}} +O(\frac{1}{n})\\
		&=
		      1-s -C(\lambda,s,\delta)\sqrt{\frac{1}{n}}+O\Big(\frac{1}{n}\Big).
	\end{split}
\end{align}
where $(a)$ follows from \eref{eq:Bz*/Bz*withk2} in \lref{lem:limBz/Bz}
and \eref{XMO}.

Using a similar reasoning for deriving \eref{eq:RHSgz*/hz*}, we can deduce that
\begin{align}\Label{eq:LHSgz*/hz*}
\text{LHS of \eqref{eq:zetaBoundsklarge}}
=1-s -C(\lambda,s,\delta)\sqrt{\frac{1}{n}}+O\Big(\frac{1}{n}\Big).
\end{align}
Then Eqs. \eqref{eq:zetaBoundsklarge}, \eqref{eq:RHSgz*/hz*}, and \eqref{eq:LHSgz*/hz*} together confirm \eref{eq-zeta2krn}.
\end{proof}

\begin{proof}[Proof of \eqref{eq:zeta2deltarnN}]
	In the following, we let $l=\lceil\nu s n\rceil$ and $\delta=\rme^{-rn}$.
%	We also use $z^*$ and $z_*$ as shorthand for $z^*(l,\delta,\lambda)$ and $z_*(l,\delta,\lambda)$, respectively.
The proof is composed of two steps.

\noindent \textbf{Step 1:} In this step we consider the case with $r > D(s\nu\|\nu)$.
In this case we have
\begin{align}
	\lim_{n\to\infty} \frac{\ln\delta}{\ln B_{n,l}}
	\stackrel{(a)}{\geq} \lim_{n\to\infty} \frac{\ln(\rme^{-r n})}{\ln \!\Big[\frac{1}{\rme\sqrt{\lceil\nu s n\rceil}}\rme^{-D(\lceil\nu s n\rceil/n\|\nu)n}\Big]}
	= \frac{r}{D(\nu s\|\nu)}
	> 1,
\end{align}
where $(a)$ follows from the reverse Chernoff bound given in \lref{lem:ChernoffRev}.
This fact together with \tref{thm:AdvFidelity} implies that $\overline{\epsilon}_\lambda^{\rm c}(l,n,\delta=\rme^{-r n})=0$
as $n\to\infty$, which confirms \eref{eq:zeta2deltarnN}.

\noindent \textbf{Step 2:} In this step we consider the case with $0 < r \leq D(s\nu\|\nu)$.

\noindent \textbf{Step 2.1:} The aim of this step is to show that
\begin{align}\label{z*nlargeStep2.1}
z^*=r t_\mathrm{D}\Big(\frac{\nu s }{r} ,\nu\Big) n +o(n),
\end{align}
where $t_\mathrm{D}(\frac{\nu s }{r} ,\nu)$ is defined in \eref{eq:t2rnueps}.

Eqs. \eqref{eq:ChernoffB} and \eqref{eq:ChernoffRev} imply that
\begin{align}
	\frac{1}{\rme \sqrt{\lceil\nu s n\rceil}}  \rme^{-z^* D(\frac{\lceil\nu s n\rceil}{z^*}\| \nu)}
	\leq B_{z^*, \lceil\nu s n\rceil}
	\leq \delta=\rme^{-rn}
	\leq B_{z_*, \lceil\nu s n\rceil}
	\leq \rme^{-z_* D(\frac{\lceil\nu s n\rceil}{z_*}\| \nu)}.
\end{align}
It follows that
\begin{align}
r= \lim_{n\to\infty} \frac{z_*}{n} D\left(\frac{\nu s n}{z^*}\Big\| \nu\right)
= \left(\lim_{n\to\infty} \frac{z_*}{n}\right) D\left(\frac{\nu s }{\lim_{n\to\infty} (z^*/n)}\bigg\| \nu\right),
\end{align}
and thus $\lim_{n\to\infty} (z_*/n)=r t_\mathrm{D}(\frac{\nu s }{r} ,\nu)$, which confirms \eref{z*nlargeStep2.1}.

\noindent \textbf{Step 2.2:} The aim of this step is to show
\eref{eq:zeta2deltarnN} by using \eref{z*nlargeStep2.1} and \lref{cor:zeta2Bound2}.

According to \lref{cor:zeta2Bound2} we have
\begin{align}\Label{eq:zeta2Step2.2}
	\frac{\lambda(n-z^*)}{\lambda(n-z^*)+z^*-l+1+\sqrt{\lambda l} }
	\leq
	\overline{\epsilon}_\lambda^{\rm c}(l,n,\delta)
	\leq \max\left\{ 0, \frac{\lambda(n-z_*+1)}{\lambda(n-z_*+1)+z_*- l}\right\}.
\end{align}
Then we have
\begin{align}\Label{eq:limLHSzeta2Step2.2}
\begin{split}
	\lim_{n\to\infty} \text{LHS of \eqref{eq:zeta2Step2.2}}
	&=
	\lim_{n\to\infty} \frac{\lambda(1-\frac{z^*}{n})}{\lambda(1-\frac{z^*}{n})+\frac{z^*}{n}+ \frac{1-\lceil\nu s n\rceil+\sqrt{\lambda \lceil\nu s n\rceil}}{n} }\\
	&\stackrel{(a)}{=}
	\frac{\lambda(1-r t_\mathrm{D}(\frac{\nu s }{r} ,\nu))}{\lambda(1-r t_\mathrm{D}(\frac{\nu s }{r} ,\nu))+r t_\mathrm{D}(\frac{\nu s }{r} ,\nu)- \nu s }
	=1-E_{\lambda, s}(r),
\end{split}
\end{align}
where $(a)$ follows from \eref{z*nlargeStep2.1}.
By a similar reasoning we can deduce that
\begin{align}
	\begin{split}
		\lim_{n\to\infty} \frac{\lambda(n-z_*+1)}{\lambda(n-z_*+1)+z_*- l}
		=1-E_{\lambda, s}(r)\geq 0
		\quad \forall 0 < r \leq D(s\nu\|\nu).
	\end{split}
\end{align}
Hence we have $\lim_{n\to\infty} \text{RHS of \eqref{eq:zeta2Step2.2}}=1-E_{\lambda, s}(r)$. This fact together with \eqref{eq:limLHSzeta2Step2.2} and \eqref{eq:zeta2Step2.2} confirms \eref{eq:zeta2deltarnN} in the case $0 < r \leq D(s\nu\|\nu)$.
\end{proof}

\if0
\noindent \textbf{Step 1:} The aim of this step is to show the following
equation.
\begin{align}
n_*:=\min \left\{ n\geq \gamma \ln \delta^{-1}+1 | B_{n, \gamma \ln \delta^{-1}}(\nu )\leq \delta \right\}
=t_\mathrm{D}(\gamma,\nu)\, \ln \delta^{-1} +o(\ln \delta^{-1}).
\Label{XMP}
\end{align}
\red{Eqs. \eqref{eq:ChernoffB} and \eqref{eq:ChernoffRev} imply
that the number $n_*$ satisfies
\begin{align}
\lim_{\delta \to 0} D \Big(\frac{\gamma \ln \delta^{-1}}{n_*}\Big\|\nu\Big)
=
\lim_{\delta \to 0}
\frac{\ln \delta^{-1}}{n_*}.
\end{align}
Substituting $t =\frac{n_*}{\delta\delta}$ into the definition \eqref{eq:t2rnueps},}
we obtain $
\lim_{\delta \to 0}\frac{n_*}{\ln \delta^{-1}}=
t_\mathrm{D}(\gamma,\nu)$, which implies \eqref{XMP}.

\noindent \textbf{Step 2:} The aim of this step is to show
\eqref{eq:zeta2deltarnN} by using \eqref{XMP} and \lref{cor:zeta2Bound2}.

We choose $ \gamma:= \frac{s \nu}{r}$. Hence, we have
$l= \lceil\gamma \ln \delta^{-1}\rceil$.
Hence, in the limit $\delta \to 0$ we have
\begin{align}
\begin{split}
z^*:=& z^*(\lceil\gamma \ln \delta^{-1}\rceil,\delta,\lambda)
=\min \left\{ z\geq \gamma \ln \delta^{-1}+1 | B_{z,\gamma \ln \delta^{-1}}(\nu)\leq \delta \right\} \\
=&t_\mathrm{D}(\gamma,\nu)\, \ln \delta^{-1} +o(\ln \delta^{-1})
=r t_\mathrm{D}(\gamma,\nu) n  +o(n).
\end{split}\end{align}

Hence,
\begin{align}
\lim_{n\to \infty}\frac{\lambda(n-z_*+1)}{\lambda(n-z_*+1)+z_*-
\lceil s\nu n\rceil }
&=\frac{\lambda (1- r t_\mathrm{D}(\gamma,\nu)) }
{\lambda (1- r t_\mathrm{D}(\gamma,\nu))+
r t_\mathrm{D}(\gamma,\nu) - s\nu}
,\\
\lim_{n\to \infty}
\frac{\lambda(n-z^*)}{\lambda(n-z^*)+z^*-l+1+\sqrt{\lambda l} }
&=\frac{\lambda (1- r t_\mathrm{D}(\gamma,\nu)) }
{\lambda (1- r t_\mathrm{D}(\gamma,\nu))+
r t_\mathrm{D}(\gamma,\nu) - s\nu}.
\end{align}
Combining the above relations with \eqref{eq:zeta2UB} and
\eqref{eq:zeta2LB} of \lref{cor:zeta2Bound2},
we obtain
\begin{align}
\begin{split}
&\lim_{n\to \infty}\overline{\epsilon}_\lambda(\lceil rn\rceil,n,\delta)
=
1-\lim_{n\to \infty}
\overline{\epsilon}^c_\lambda(\lceil rn\rceil,n,\delta) \\
=& \red{1-
\max\left\{0,\frac{\lambda (1- r t_\mathrm{D}(\gamma,\nu)) }
{\lambda (1- r t_\mathrm{D}(\gamma,\nu))+
r t_\mathrm{D}(\gamma,\nu) - s\nu}\right\}.}
\end{split}
\end{align}
\red{The inequality $\frac{1}{r} \le t_\mathrm{D}(\gamma,\nu)$ is equivalent to
the inequality $r  \ge D(s\nu\|\nu) $.
That is, when the inequality $r  \ge D(s\nu\|\nu) $ holds, we have
$\lim_{n\to \infty}\overline{\epsilon}_\lambda(\lceil rn\rceil,n,\delta) =1$.
Otherwise, it equals
\begin{align}
=\frac{r t_\mathrm{D}(\gamma,\nu) - s\nu }
{\lambda (1- r t_\mathrm{D}(\gamma,\nu))+
r t_\mathrm{D}(\gamma,\nu) - s\nu}.
\end{align}}
Hence, we obtain \eqref{eq:zeta2deltarnN}.
\fi

\subsection{Proof of Theorem \ref{thm:zeta2Asympt}}\Label{PF-Th51}
%In Theorem \ref{thm:zeta2Asympt}, Eqs.~\eqref{eq:zeta2ninftyG} and \eqref{eq:MMP}
%follows from \eref{XMP9} with the limit $n \to \infty$.
We shall prove
first prove Eq. \eqref{eq:zeta2ninfty} and then prove Eq. \eqref{eq:zeta2deltarn}.
%Eqs.~\eqref{eq:zeta2ninfty} and \eqref{eq:zeta2deltarn}, respectively.

\begin{proof}[Proof of \eref{eq:zeta2ninfty}]
	In the following, we let $l=\lceil\nu k_0\rceil$.
%	and use $z^*$ and $z_*$ as shorthand for $z^*(l,\delta,\lambda)$ and $z_*(l,\delta,\lambda)$, respectively.
	The proof is composed of two steps.

%We shall prove \eref{eq:zeta2ninfty} by showing the following equation
%\begin{align}\label{eq:limnepslam}
%\lim_{n\to\infty} n \overline{\epsilon}_\lambda(l,n,\delta)
%&=D(l,\delta,\lambda)
%\end{align}
%for $0<\lambda,\delta<1$ and $l\in\bbZ^{\geq0}$.
%In the following, we use $z^*$ and $z_*$ as shorthand for %$z(l,\delta,\lambda)^*$ and $z(l,\delta,\lambda)_*$, respectively.

\noindent \textbf{Step 1:} The aim of this step is to prove \eref{eq:zeta2ninfty}
in the case  $B_{z^*,l}<\delta$.

In this case, when $n$ goes to infinity, we have $B_{n,l}<\delta$ and $h_z(l,n,\lambda)\to B_{z,l}$ for any finite $z\in\bbZ^{\geq l+1}$.
So in this limit we have $\hat{z}=z_*$,
where $\hat{z}$ denotes the largest integer $z$ such that
$h_z(l,n,\lambda)\geq\delta$.
This fact together with \tref{thm:AdvFidelity} implies that
\begin{align}
\begin{split}
\overline{\epsilon}^{\rm c}_\lambda(l,n,\delta)
&= \frac{\tilde{\zeta}_\lambda(l,n,\delta,z_*)}{\delta}
=\frac{1}{\delta}\left[(1-\kappa_{z_*}(l,n,\delta,\lambda))g_{z^*}(l,n,\lambda)
   +\kappa_{z_*}(l,n,\delta,\lambda)g_{z_*}(l,n,\lambda) \right].\\
%&=\frac{1}{\delta}\bigg( \left[\frac{\delta-h_{z^*}(l,n,\lambda)}{h_{z_*}(l,n,\lambda)-h_{z^*}(l,n,\lambda)}\right]g_{z_*}(l,n,\lambda)
%          +\left[\frac{h_{z_*}(l,n,\lambda)-\delta}{h_{z_*}(l,n,\lambda)-h_{z^*}(l,n,\lambda)}\right]g_{z^*}(l,n,\lambda)\bigg) \\
&=1-\frac{\delta z_*B_{z_*-1,l}- \delta z^*B_{z_*,l} + z^*B_{z_*,l}B_{z_*,l}-z_*B_{z_*-1,l}B_{z^*,l}}{\delta(n+1)[h_{z_*}(l,n,\lambda)-h_{z^*}(l,n,\lambda)]}.
\end{split}
\end{align}
It follows that
\begin{align}
\begin{split}
&\lim_{n\to\infty} n\overline{\epsilon}_\lambda(l,n,\delta)
=\lim_{n\to\infty} n[1-\overline{\epsilon}^{\rm c}_\lambda(l,n,\delta)]\\
&= \lim_{n\to\infty} \frac{\delta z_*B_{z_*-1,l}- \delta z^*B_{z_*,l} + z^*B_{z_*,l}B_{z_*,l}-z_*B_{z_*-1,l}B_{z^*,l}}{\delta[h_{z_*}(l,n,\lambda)-h_{z^*}(l,n,\lambda)]}\\
&= \frac{(\delta-B_{z^*,l}) z_*B_{z_*-1,l}+ (B_{z_*,l}-\delta) z^*B_{z_*,l} }{\delta(B_{z_*,l}-B_{z^*,l})}
=G(l,\delta,\lambda), \Label{eq:limn(1-zeta2)}
\end{split}\end{align}
which confirms \eref{eq:zeta2ninfty}.

\noindent \textbf{Step 2:} The aim of this step is to prove \eref{eq:zeta2ninfty}
in the case  $B_{z^*,l}=\delta$.

In this case, when $n\to\infty$, we have $B_{n,k}<\delta$ and $\hat{z}=z^*$.
Then by a similar reasoning that leads to \eref{eq:limn(1-zeta2)}, we can deduce that
\begin{align}
	\lim_{n\to\infty} n\overline{\epsilon}_\lambda(l,n,\delta)
    = \frac{(B_{z_*,l}-\delta) z^*B_{z_*,l} }{\delta(B_{z_*,l}-B_{z^*,l})}
	= G(l,\delta,\lambda),
\end{align}
which confirms \eref{eq:zeta2ninfty} again.
\end{proof}

\begin{proof}[Proof of \eref{eq:zeta2deltarn}]
In the following, we let $l=\lceil\nu k_0\rceil$, $\delta=\rme^{-r n}$.
%and use $z^*$ and $z_*$ as shorthand for $z^*(l,\delta,\lambda)$ and $z_*(l,\delta,\lambda)$, respectively.
The proof is composed of two steps.

\noindent \textbf{Step 1:} The aim of this step is to prove \eref{eq:zeta2deltarn}
in the case $r>\ln\lambda^{-1}$.

When $\delta=\rme^{-r n}$ with $r>\ln\lambda^{-1}$, we have
\begin{align}\label{eq:lndel/lnBnl}
\lim_{n\to\infty} \frac{\ln\delta}{\ln B_{n,l}}
\stackrel{(a)}{\geq} \lim_{n\to\infty} \frac{\ln(\rme^{-r n})}{\ln \!\Big[\frac{1}{\rme\sqrt{l}}\rme^{-D(l/n\|\nu)n}\Big]}
= \lim_{n\to\infty} \frac{r}{D(l/n\|\nu)+\frac{\ln(\rme\sqrt{l})}{n}}
\stackrel{(b)}{=} \frac{r}{\ln\lambda^{-1}}
> 1,
\end{align}
where $(a)$ follows from the reverse Chernoff bound given in \lref{lem:ChernoffRev},
and $(b)$ follows because $D(0\| \nu)=\ln\lambda^{-1}$.
\eref{eq:lndel/lnBnl} implies that $B_{n,l}>\delta$ when $n\to\infty$.
This fact together with \tref{thm:AdvFidelity} imply that $\overline{\epsilon}_\lambda^{\rm c}(l,n,\delta=\rme^{-r n})=0$
as $n\to\infty$, which confirms \eref{eq:zeta2deltarn}.

\noindent \textbf{Step 2:} The aim of this step is to prove \eref{eq:zeta2deltarn} in the case $0< r\leq \ln\lambda^{-1}$.

When $\delta=\rme^{-r n}$ with $0< r\leq\ln\lambda^{-1}$, we have
\begin{align}\Label{eq:zeta2thm233}
\frac{\lambda(n-z^*)}{\lambda(n-z^*)+z^*-l+1+\sqrt{\lambda l} }
\leq
\overline{\epsilon}_\lambda^{\rm c}(l,n,\delta)
\leq \max\left\{ 0, \frac{\lambda(n-z_*+1)}{\lambda(n-z_*+1)+z_*- l}\right\}
\end{align}
according to \lref{cor:zeta2Bound2}.
In the following we shall show \eref{eq:zeta2deltarn} by proving that
\begin{align}\Label{eq:LRHSzeta2thm233}
\lim_{n\to\infty} \text{LHS of \eqref{eq:zeta2thm233}}
=
\lim_{n\to\infty} \text{RHS of \eqref{eq:zeta2thm233}}
=\frac{\lambda\ln\lambda+\lambda r}{\lambda\ln\lambda-\nu r} .
\end{align}

Note that
\begin{align}\Label{eq:LHSzeta2thm233}
\begin{split}
\lim_{n\to\infty} \text{LHS of \eqref{eq:zeta2thm233}}
&= \lim_{\delta\to 0} \frac{\lambda(\frac{n}{\ln\delta}-\frac{z^*}{\ln\delta})}
                           {\lambda(\frac{n}{\ln\delta}-\frac{z^*}{\ln\delta})+\frac{z^*}{\ln\delta}+\frac{1-l+\sqrt{\lambda l} }{\ln\delta} } \\
&= \frac{\lambda(-\frac{1}{r}-\frac{1}{\ln\lambda})}{\lambda(-\frac{1}{r}-\frac{1}{\ln\lambda})+\frac{1}{\ln\lambda}}
=\frac{\lambda\ln\lambda+\lambda r}{\lambda\ln\lambda-\nu r} ,
\end{split}\end{align}
where the second equality follows from \lref{lem:z-kwithdelta} and the relation $\delta=\rme^{-rn}$.
Using a similar reasoning, we can deduce that
\begin{align}\Label{eq:RHSzeta2thm233}
\lim_{n\to\infty} \frac{\lambda(n-z_*+1)}{\lambda(n-z_*+1)+z_*- l}
=\frac{\lambda\ln\lambda+\lambda r}{\lambda\ln\lambda-\nu r},
\end{align}
which is positive when $0< r<\ln\lambda^{-1}$, and is equal to $0$ when $r=\ln\lambda^{-1}$.
These observations confirm \eref{eq:LRHSzeta2thm233} and complete the proof of \eref{eq:zeta2deltarn}.
\end{proof}

\section{Conclusion}\Label{S12}
We have studied one-sided hypothesis testing
under random sampling without replacement.
That is, when $n+1$ binary random variables $X_1,\ldots, X_{n+1}$
are subject to a permutation invariant distribution and
$n$ binary random variables $X_1,\ldots, X_{n}$ are observed,
we have proposed randomized tests
with a randomization parameter $\lambda>0$
for the upper confidence limit of the expectation of the
$n+1$-th random variable $X_{n+1}$
under a given significance level $\delta>0$.
We have shown that
our proposed randomized test significantly improves
deterministic test unlike random sampling with replacement.
For example, as shown in \eref{eq-zeta2krn} and \eref{eq:MMD}
of Theorem \ref{thm:zeta3Asympt},
the asymptotic upper confidence limit in our randomized test
is much better than 
that in the deterministic test in sampling without replacement.
This problem setting appears under a certain adversarial setting.
We can expect that our result will be applied to various areas
when the data generated in an adversarial way.

\begin{appendix}

\section{Useful lemmas}\Label{app:Usefullemma}
\subsection{Binomial probability}\Label{app:BinProb}
To show several important statements,
here we prepare several useful lemmas to characterize $B_{z,l}(p)$ and
$\Delta_{z,l}$, which are defined in \eqsref{eq:binomCFD}{eq:Deltazkmain} \main, respectively.
If there is no danger of confusions, we shall use $B_{z,l}$ as shorthand for $B_{z,l}(\nu)$, where
$\nu:=1-\lambda$ and $\lambda$ is the randomization parameter introduced in \eref{eq:deflambda} \main.

%First, from the basic probability of binomial distribution, we have the following lemma.
\begin{lemma}[Lemma 3.2 of Ref.~\cite{Binomial22}]\Label{lem:Bzkmono}
Suppose $0 \leq l \leq z$ and $0<p<1$. Then $B_{z, l}(p)$ is strictly increasing in $l$, strictly decreasing in $z$, and
nonincreasing in $p$.
\end{lemma}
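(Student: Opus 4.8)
The plan is to prove the three monotonicity statements separately, each reducing to an elementary binomial identity, and then to note that everything can alternatively be read off from the probabilistic picture $B_{z,l}(p)=\Pr[S_z\le l]$ with $S_z\sim\mathrm{Binomial}(z,p)$.

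For monotonicity in $l$, I would simply observe that $B_{z,l+1}(p)-B_{z,l}(p)=b_{z,l+1}(p)=\binom{z}{l+1}p^{l+1}(1-p)^{z-l-1}$, which is strictly positive whenever $l+1\le z$ and $0<p<1$; hence $B_{z,l}(p)$ strictly increases as $l$ runs through $0,1,\dots,z$. For monotonicity in $z$, I would establish the identity $B_{z,l}(p)-B_{z+1,l}(p)=p\,b_{z,l}(p)$ by applying Pascal's rule $\binom{z+1}{j}=\binom{z}{j}+\binom{z}{j-1}$ to each term of $B_{z+1,l}(p)$, splitting into two sums, and re-indexing the second one; this yields $B_{z+1,l}(p)=(1-p)B_{z,l}(p)+p\,B_{z,l-1}(p)=B_{z,l}(p)-p\,b_{z,l}(p)$, and since $b_{z,l}(p)=\binom{z}{l}p^l(1-p)^{z-l}>0$ for $l\le z$ and $0<p<1$, strict decrease in $z$ follows. (This identity is exactly $\Delta_{z,l}=\nu\,b_{z,l}(\nu)$ in the paper's notation, so it will also be reusable elsewhere.) For monotonicity in $p$, I would differentiate $B_{z,l}(p)$ term by term and use $\binom{z}{j}j=z\binom{z-1}{j-1}$ and $\binom{z}{j}(z-j)=z\binom{z-1}{j}$ to telescope, obtaining $\tfrac{d}{dp}B_{z,l}(p)=z\bigl[B_{z-1,l-1}(p)-B_{z-1,l}(p)\bigr]=-z\,b_{z-1,l}(p)\le0$, with equality only at $l=z$; since $B_{z,l}(\cdot)$ is a polynomial, being continuous with non-positive derivative on $(0,1)$ it is nonincreasing there.

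The main (and only) obstacle is the index bookkeeping in the Pascal-rule manipulation and in the term-by-term differentiation — keeping track of which sum shifts index and where the boundary terms $j=0$ and $j=l$ go — but once the two identities $B_{z,l}(p)-B_{z+1,l}(p)=p\,b_{z,l}(p)$ and $\tfrac{d}{dp}B_{z,l}(p)=-z\,b_{z-1,l}(p)$ are in hand, all conclusions are immediate. As a sanity check and alternative route, the probabilistic coupling argument gives the same facts with no computation: monotonicity in $l$ is monotonicity of the event $\{S_z\le l\}$; the coupling $S_{z+1}=S_z+Y$ with $Y$ an independent $\mathrm{Bernoulli}(p)$ gives $\Pr[S_{z+1}\le l]=\Pr[S_z\le l]-p\Pr[S_z=l]$; and the uniform coupling $S_z(p)=\sum_{i=1}^z\mathbf{1}[U_i\le p]$ is pathwise nondecreasing in $p$. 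Since the statement is quoted as Lemma 3.2 of Ref.~\cite{Binomial22}, it would also be legitimate simply to cite it.
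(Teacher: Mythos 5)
Your proof is correct. Note, however, that the paper itself does not prove this lemma at all: it is stated verbatim as Lemma~3.2 of Ref.~\cite{Binomial22} and used as an imported fact, so there is no internal argument to compare against. Your three computations all check out: the difference $B_{z,l+1}(p)-B_{z,l}(p)=b_{z,l+1}(p)>0$ for $l+1\leq z$; the Pascal-rule identity $B_{z+1,l}(p)=(1-p)B_{z,l}(p)+p\,B_{z,l-1}(p)$, hence $B_{z,l}(p)-B_{z+1,l}(p)=p\,b_{z,l}(p)>0$ for $l\leq z$ (and indeed this is exactly the identity the paper uses in the proof of Lemma~\ref{lem:Deltazk}, where $\Delta_{z,l}=(1-\lambda)b_{z,l}$); and the telescoping derivative $\frac{d}{dp}B_{z,l}(p)=-z\,b_{z-1,l}(p)\leq 0$, which vanishes identically only when $l=z$ (consistent with $B_{z,z}\equiv 1$). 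The probabilistic couplings you sketch as a sanity check are also valid. So your write-up is a legitimate self-contained replacement for the external citation; the only thing it "costs" relative to the paper's choice is length, while what it buys is that the manuscript would no longer depend on Ref.~\cite{Binomial22} for this particular fact, and the two auxiliary identities you derive are reusable elsewhere in the paper (e.g., in Appendix~\ref{app:BinProb}).
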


The Chernoff bound states that
\begin{align}
	B_{z,l}\leq \rme^{-z D(\frac{l}{z}\| \nu )}\quad \forall l\leq \nu z,  \Label{eq:ChernoffB}
\end{align}
where $	D(p\|q)$ is the  relative entropy defined in \eref{eq:RelEntropy}  \main.
The following lemma provides a reverse Chernoff bound for $B_{z,l}$.

\begin{lemma}[Proposition 5.4 of Ref.~\cite{Binomial22}]\Label{lem:ChernoffRev}
	Suppose $l,z$ are positive integers and satisfy $l\leq z-1$, then
	\begin{align}
		B_{z,l} \geq \frac{1}{\rme \sqrt{l}} \rme^{-zD(\frac{l}{z}\|\nu )}	.  \Label{eq:ChernoffRev}
	\end{align}
\end{lemma}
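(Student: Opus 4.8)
The plan is to bound $B_{z,l}$ from below by its single largest term and then estimate that term with a sufficiently sharp form of Stirling's formula. The degenerate value $\nu=1$ is disposed of first: since $l\le z-1$ we have $B_{z,l}(1)=0$ and $D(l/z\|1)=+\infty$, so both sides vanish. Assume henceforth $0<\nu<1$ and $1\le l\le z-1$, hence also $z-l\ge1$. First I would discard every summand but the top one,
\begin{align*}
B_{z,l}=\sum_{j=0}^{l}{z\choose j}\nu^{j}(1-\nu)^{z-j}\ \ge\ {z\choose l}\nu^{l}(1-\nu)^{z-l},
\end{align*}
so it suffices to bound this single binomial probability below by $\rme^{-zD(l/z\|\nu)}/(\rme\sqrt l)$.

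Next I would pass to the Chernoff normalisation. Expanding the relative entropy gives the exact identity $\rme^{-zD(l/z\|\nu)}=\nu^{l}(1-\nu)^{z-l}\,z^{z}\big/\big(l^{l}(z-l)^{z-l}\big)$, whence
\begin{align*}
{z\choose l}\nu^{l}(1-\nu)^{z-l}=C(z,l)\,\rme^{-zD(l/z\|\nu)},\qquad C(z,l):={z\choose l}\frac{l^{l}(z-l)^{z-l}}{z^{z}},
\end{align*}
and, crucially, $C(z,l)$ does not depend on $\nu$. The whole statement therefore reduces to the purely combinatorial inequality $C(z,l)>1/(\rme\sqrt l)$ for all integers $1\le l\le z-1$.

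To prove this I would feed the three factorials in $C(z,l)$ into Stirling bounds: the elementary two-sided estimate $\sqrt{2\pi n}\,(n/\rme)^{n}\le n!\le \rme\sqrt n\,(n/\rme)^{n}$ (valid for $n\ge1$) applied to $z!$ and to $l!$, and the Robbins refinement $(z-l)!<\sqrt{2\pi(z-l)}\,\big((z-l)/\rme\big)^{z-l}\rme^{1/(12(z-l))}$ applied to the remaining factorial. The powers of $\rme$ cancel, leaving
\begin{align*}
\rme\sqrt l\; C(z,l)\ >\ \sqrt{\tfrac{z}{z-l}}\;\rme^{-1/(12(z-l))}=\exp\!\Big(\tfrac12\ln\tfrac{z}{z-l}-\tfrac1{12(z-l)}\Big).
\end{align*}
With $m:=z-l\ge1$ the elementary bound $\ln(1+x)\ge 2x/(2+x)$ at $x=1/m$ gives $\tfrac12\ln\tfrac{z}{z-l}=\tfrac12\ln(1+\tfrac lm)\ge\tfrac12\ln(1+\tfrac1m)\ge\tfrac1{2m+1}>\tfrac1{12m}$, so the exponent is positive and $\rme\sqrt l\,C(z,l)>1$. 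Assembling the three displays yields $B_{z,l}\ge{z\choose l}\nu^{l}(1-\nu)^{z-l}=C(z,l)\rme^{-zD(l/z\|\nu)}>\rme^{-zD(l/z\|\nu)}/(\rme\sqrt l)$.

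The step I expect to be the main obstacle is the combinatorial inequality $C(z,l)>1/(\rme\sqrt l)$, which is asymptotically sharp: for $l=1$ one has $C(z,1)=(1-1/z)^{z-1}$, and this decreases to $1/\rme$ as $z\to\infty$. Because of this tightness the Stirling estimate for $l!$ must be used with its exact constant $\rme\sqrt l\,(l/\rme)^{l}$; substituting the Robbins bound $\sqrt{2\pi l}\,(l/\rme)^{l}\rme^{1/(12l)}$ instead already fails in the regime $l=1$, $z\to\infty$ (one has $\rme^{1/12}>\rme/\sqrt{2\pi}$). So care must be taken not to be wasteful with constants anywhere in the factorial estimates.
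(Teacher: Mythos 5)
Your proof is correct. The paper itself does not prove this lemma---it is imported verbatim as Proposition 5.4 of Ref.~\cite{Binomial22}---so yours is a self-contained replacement rather than a variant of an argument in the text. The reduction is clean: keeping only the top term $j=l$ and using the exact identity $\rme^{-zD(l/z\|\nu)}=\nu^l(1-\nu)^{z-l}z^z/\bigl(l^l(z-l)^{z-l}\bigr)$ correctly strips out all dependence on $\nu$, leaving the combinatorial claim $C(z,l)=\binom{z}{l}l^l(z-l)^{z-l}/z^z>1/(\rme\sqrt{l})$. Your three factorial estimates are each valid where you use them ($n!\le \rme\sqrt{n}(n/\rme)^n$ holds with equality at $n=1$ and follows from Robbins for $n\ge2$; the lower Stirling bound and the Robbins upper bound are standard), the powers of $\rme$ and the $(n/\rme)^n$ factors cancel as claimed, and the final positivity of the exponent via $\tfrac12\ln(1+1/m)\ge 1/(2m+1)>1/(12m)$ for $m=z-l\ge1$ is correct. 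Your diagnosis of where the argument is tight is also right: $C(z,1)=(1-1/z)^{z-1}\downarrow 1/\rme$, so the constant $\rme\sqrt{l}$ in the bound on $l!$ cannot be replaced by the Robbins constant $\sqrt{2\pi l}\,\rme^{1/(12l)}$ at $l=1$; allocating the sharp constant to $l!$ and the Robbins correction to $(z-l)!$ is exactly what makes the bookkeeping close. The degenerate case $\nu=1$ is handled correctly as well (both sides vanish), though it is not needed for the paper's applications, where $\lambda\in(0,1)$.
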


The following lemma is a simple corollary of the Berry-Esseen theorem \cite{Berry41}%,Esseen42}.
\begin{lemma}\Label{lem:BerryEsseen}
	Suppose $0<\lambda<1$, and integers $z\geq l\geq 1$. Then
	\begin{align}
		\left| B_{z,l} - \Phi\left(\frac{l - \nu z}{\sqrt{\nu\lambda z}}\right) \right|
		\leq \frac{0.5(1-2\nu\lambda)}{\sqrt{\nu\lambda z}}
		\leq \frac{0.5}{\sqrt{\nu\lambda l}} ,
	\end{align}
	where $\Phi(x)$ is the standard normal distribution function defined in \eref{eq:Phix} \main.
\end{lemma}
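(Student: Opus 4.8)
The plan is to recognize $B_{z,l}=B_{z,l}(\nu)$ as a binomial cumulative distribution function and then quote the Berry--Esseen theorem. Let $W_1,\dots,W_z$ be i.i.d.\ Bernoulli random variables with $P(W_i=1)=\nu$ and $P(W_i=0)=\lambda$, and set $S_z:=W_1+\dots+W_z$. By the definition in \eref{eq:binomCFD} we have $B_{z,l}=\sum_{j=0}^{l}\binom{z}{j}\nu^{j}(1-\nu)^{z-j}=P(S_z\le l)$. Since $EW_i=\nu$ and $\operatorname{Var}W_i=\sigma^{2}:=\nu\lambda$, the quantity $x:=\frac{l-\nu z}{\sqrt{\nu\lambda z}}$ is precisely the standardization of the value $l$, so
\[
\Big|B_{z,l}-\Phi\Big(\tfrac{l-\nu z}{\sqrt{\nu\lambda z}}\Big)\Big|
=\Big|P\Big(\tfrac{S_z-\nu z}{\sqrt{\nu\lambda z}}\le x\Big)-\Phi(x)\Big|
\le\sup_{y\in\mathbb{R}}\Big|P\Big(\tfrac{S_z-\nu z}{\sqrt{\nu\lambda z}}\le y\Big)-\Phi(y)\Big| .
\]
This reduces the assertion to a uniform Berry--Esseen estimate for the i.i.d.\ sum $S_z$.

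Next I would compute the third absolute central moment $\rho:=E|W_i-\nu|^{3}=\nu\lambda^{3}+\lambda\nu^{3}=\nu\lambda(\nu^{2}+\lambda^{2})=\nu\lambda(1-2\nu\lambda)$, where the last step uses $\nu^{2}+\lambda^{2}=(\nu+\lambda)^{2}-2\nu\lambda=1-2\nu\lambda$. Hence $\rho/\sigma^{3}=(1-2\nu\lambda)/\sqrt{\nu\lambda}$, and the Berry--Esseen theorem \cite{Berry41}, in a form whose absolute constant for sums of i.i.d.\ random variables may be taken to be at most $1/2$, yields
\[
\sup_{y\in\mathbb{R}}\Big|P\Big(\tfrac{S_z-\nu z}{\sqrt{\nu\lambda z}}\le y\Big)-\Phi(y)\Big|
\le\frac{1}{2}\cdot\frac{\rho}{\sigma^{3}\sqrt{z}}
=\frac{0.5(1-2\nu\lambda)}{\sqrt{\nu\lambda z}},
\]
which is the first inequality.

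Finally, the second inequality follows from two elementary bounds: $\nu,\lambda\ge 0$ gives $0\le 1-2\nu\lambda\le 1$, and $1\le l\le z$ gives $\sqrt{z}\ge\sqrt{l}$; multiplying these yields $\frac{1-2\nu\lambda}{\sqrt{z}}\le\frac{1}{\sqrt{l}}$ and hence $\frac{0.5(1-2\nu\lambda)}{\sqrt{\nu\lambda z}}\le\frac{0.5}{\sqrt{\nu\lambda l}}$. The only delicate point in the whole argument is the size of the Berry--Esseen constant: Berry's original constant is much larger than $1/2$, so one should invoke a subsequent refinement (the best known uniform constant for i.i.d.\ summands lies below $1/2$), or, since the summands are Bernoulli here, a Berry--Esseen bound sharpened for two-point or lattice distributions. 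All remaining steps are the routine moment computation displayed above.
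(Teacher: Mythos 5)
Your proof is correct and follows exactly the route the paper intends: the paper gives no written proof of this lemma, merely noting that it is ``a simple corollary of the Berry--Esseen theorem,'' and your moment computation ($\rho=\nu\lambda^{3}+\lambda\nu^{3}=\nu\lambda(1-2\nu\lambda)$, $\sigma^{3}=(\nu\lambda)^{3/2}$) together with the trivial bounds $1-2\nu\lambda\leq 1$ and $z\geq l$ supplies all the missing details. You are also right to flag that the constant $0.5$ requires a modern refinement of the Berry--Esseen constant (the best known uniform constant for i.i.d.\ summands is below $0.48$) rather than Berry's original estimate; that is the only caveat worth recording.
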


\begin{lemma}\Label{lem:Bzkz}
 Suppose $0<\lambda<1$ and integers $0\leq l\leq z$. Then $B_{z, l} / B_{z+1, l}$ is nondecreasing in $z$, and satisfies
\begin{align}
\frac{z-l+1}{(z+1)\lambda}
\leq \frac{B_{z,l}}{B_{z+1,l}} \leq \frac{1}{\lambda}.  \label{eq:Bzkz11}
\end{align}
In addition, if $l\leq \nu z$, then
\begin{align}
\frac{B_{z,l}}{B_{z+1,l}}
\leq \frac{z-l+1+\sqrt{\lambda l}}{(z+1)\lambda} . \label{eq:Bzkz22}
\end{align}
\end{lemma}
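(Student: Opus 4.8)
The plan is to reduce the entire statement to a single estimate. First I would establish the one-step recursion
\begin{align}
B_{z+1,l}=\lambda B_{z,l}+\nu B_{z,l-1}=B_{z,l}-\nu\,b_{z,l},
\end{align}
which follows from Pascal's rule $\binom{z+1}{j}=\binom{z}{j}+\binom{z}{j-1}$ together with $B_{z,l-1}=B_{z,l}-b_{z,l}$ (equivalently, from writing $\mathrm{Bin}(z+1,\nu)$ as $\mathrm{Bin}(z,\nu)$ plus an independent $\mathrm{Bernoulli}(\nu)$). Consequently
\begin{align}
\frac{B_{z,l}}{B_{z+1,l}}=\frac{1}{\lambda+\nu R_z}=\frac{1}{1-\nu\,(b_{z,l}/B_{z,l})},\qquad R_z:=\frac{B_{z,l-1}}{B_{z,l}}\in[0,1),
\end{align}
so that the upper bound, the monotonicity in $z$, and the two lower/refined bounds all become elementary assertions about $R_z$ (equivalently about $b_{z,l}/B_{z,l}$).

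The bound $B_{z,l}/B_{z+1,l}\le1/\lambda$ is then immediate from $R_z\ge0$. For monotonicity in $z$ it suffices, by the second display, to show $R_z=(1+b_{z,l}/B_{z,l-1})^{-1}$ is nonincreasing in $z$, i.e.\ that each $b_{z,j}/b_{z,l}=\binom{z}{j}\binom{z}{l}^{-1}(\lambda/\nu)^{l-j}$ with $j<l$ is nonincreasing in $z$; this is clear from $\binom{z}{j}\binom{z}{l}^{-1}=\prod_{i=1}^{l-j}\frac{j+i}{z-l+i}$, a product of factors each nonincreasing in $z$. For the lower bound in \eqref{eq:Bzkz11}, put $\rho:=\frac{l\lambda}{(z-l+1)\nu}$, so that $1-\rho=\frac{(z+1)\nu-l}{(z-l+1)\nu}$. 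If $\rho\ge1$ the claimed bound $\frac{z-l+1}{(z+1)\lambda}$ is at most $1$ while $B_{z,l}\ge B_{z+1,l}$ by \lref{lem:Bzkmono}, so it holds trivially; if $\rho<1$, a one-line cross-multiplication gives $\frac{b_{z,j}}{b_{z,j+1}}=\frac{j+1}{z-j}\frac{\lambda}{\nu}\le\rho$ for every $j\le l-1$, hence $b_{z,l-m}\le\rho^m b_{z,l}$, $B_{z,l-1}\le\frac{\rho}{1-\rho}b_{z,l}$, and therefore $R_z\le\rho$, which (using the formula for $1-\rho$) is exactly $B_{z,l}/B_{z+1,l}\ge\frac{z-l+1}{(z+1)\lambda}$.

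The refined bound \eqref{eq:Bzkz22} is the real work, and I expect it to be the main obstacle. Unwinding the definitions, it is equivalent to the lower bound $R_z\ge\frac{\lambda(l-\sqrt{\lambda l})}{\nu(z-l+1+\sqrt{\lambda l})}$, i.e.\ to showing that when $l$ is close to $\nu z$ the partial sum $B_{z,l-1}$ is at least a multiple of $b_{z,l}$ of order $\sqrt{\lambda l}$; the crude estimate $B_{z,l-1}\ge b_{z,l-1}$ and the geometric bound of the previous paragraph are \emph{not} strong enough for this. Since $l\le\nu z$ the binomial terms satisfy $b_{z,0}\le\cdots\le b_{z,l}$ (the mode is at $\ge\nu z\ge l$), so the point is to capture the $\Theta(\sqrt{\lambda l})$ near-top terms, which are mutually comparable. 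I would split into a near-mean regime $\nu z-l=O(\sqrt{\nu\lambda z})$, where the Berry--Esseen estimate \lref{lem:BerryEsseen} pins $B_{z,l-1}$ to within $O(1/\sqrt{\lambda l})$ of a value bounded away from $0$ while $b_{z,l}=O(1/\sqrt{z})$, and a deep-tail regime $l\ll\nu z$, where a direct comparison of the consecutive ratios $\frac{b_{z,l-m}}{b_{z,l-m+1}}=\rho\bigl(1-\tfrac{(m-1)(z+1)}{l(z-l+m)}\bigr)$ over a window of length $\asymp\sqrt{\lambda l}$ shows $R_z$ differs from $\rho$ by no more than the stated amount. The delicate part, where most of the effort goes, is making this window estimate uniform across the two regimes (and near their overlap), so that the $\sqrt{\lambda l}$ slack in \eqref{eq:Bzkz22} is precisely what comes out; one then also records that \eqref{eq:Bzkz22} refines the upper bound in \eqref{eq:Bzkz11} on the whole range $l\le\nu z$.
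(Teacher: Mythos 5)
The paper itself does not prove this lemma from first principles: it cites Lemma 3.4 of Ref.~\cite{Binomial22} for the monotonicity and for \eqref{eq:Bzkz11}, and Proposition 4.10 of Ref.~\cite{Binomial22} for \eqref{eq:Bzkz22}. Your reduction via the recursion $B_{z+1,l}=\lambda B_{z,l}+\nu B_{z,l-1}=B_{z,l}-\nu b_{z,l}$ is correct, and your self-contained arguments for the upper bound $1/\lambda$, for the monotonicity in $z$ (each $\binom{z}{j}\binom{z}{l}^{-1}=\prod_{i=1}^{l-j}\frac{j+i}{z-l+i}$ nonincreasing in $z$, hence $R_z$ nonincreasing), and for the lower bound in \eqref{eq:Bzkz11} (the geometric domination $B_{z,l-1}\le\frac{\rho}{1-\rho}b_{z,l}$ giving $R_z\le\rho$, with the case $\rho\ge1$ disposed of via \lref{lem:Bzkmono}) all check out; I verified the algebra, including that $\rho\ge1$ is exactly the condition under which the claimed bound drops below $1$. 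For that portion of the statement your route is genuinely different from the paper's and more elementary, since it replaces an external citation with a short direct argument.

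However, for \eqref{eq:Bzkz22} you have a plan, not a proof. You correctly reduce it to the lower bound $R_z\ge\frac{\lambda(l-\sqrt{\lambda l})}{\nu(z-l+1+\sqrt{\lambda l})}$ and correctly observe that the geometric bound of the previous step points the wrong way, but the two-regime argument (Berry--Esseen near the mean, consecutive-ratio comparison in the deep tail) is left entirely at the level of a sketch, and you yourself flag that the uniformity across the regimes is where the real work lies. That work is precisely the content of Proposition 4.10 of Ref.~\cite{Binomial22}, which the paper invokes, and nothing in your write-up substitutes for it. Concretely: \lref{lem:BerryEsseen} controls $B_{z,l-1}$ only up to an additive error $O(1/\sqrt{\nu\lambda l})$, and turning that into the multiplicative estimate $B_{z,l-1}/b_{z,l}\ge(\text{required quantity})$ with the constant $1$ in front of $\sqrt{\lambda l}$ (rather than some unspecified $C$) requires a matching pointwise lower bound on $b_{z,l}$ and a careful treatment of the overlap of the two regimes; none of this is carried out. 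So the monotonicity and \eqref{eq:Bzkz11} are fully proved, but \eqref{eq:Bzkz22} --- the bound the paper actually needs for \lref{cor:zeta2Bound2} and \lref{lem:Nk} --- is not.
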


\begin{proof}[Proof of \lref{lem:Bzkz}]
If $l=0$, then $B_{z, l} / B_{z+1, l}=1/\lambda$ is nondecreasing in $z$.
If $l\geq 1$, then the monotonicity of $B_{z, l} / B_{z+1, l}$ with $z$ follows from Lemma 3.4 of Ref.~\cite{Binomial22}.

In addition, \eref{eq:Bzkz11} follows from Lemma 3.4 of Ref.~\cite{Binomial22}, and
\eref{eq:Bzkz22} follows from Proposition 4.10 of Ref.~\cite{Binomial22}.
\end{proof}

\begin{lemma}\Label{lem:Deltazk}
Suppose $0<\lambda<1$ and $z\geq l+1$; then $\Delta_{z,l}/\Delta_{z-1,l}$ is  nonincreasing in $z$ and nondecreasing in $l$.
\end{lemma}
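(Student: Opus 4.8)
The plan is to reduce the statement to a closed-form expression for $\Delta_{z,l}$. I would first record the elementary recursion for the binomial cumulative distribution function,
\begin{align}\Label{eq:DeltaRecur}
B_{z+1,l}(\nu)=\lambda B_{z,l}(\nu)+\nu B_{z,l-1}(\nu),
\end{align}
which follows from Pascal's rule $\binom{z+1}{j}=\binom{z}{j}+\binom{z}{j-1}$, or probabilistically by writing a $\mathrm{Bin}(z+1,\nu)$ variable as the sum of a $\mathrm{Bin}(z,\nu)$ variable and an independent $\mathrm{Bern}(\nu)$ variable. Substituting \eqref{eq:DeltaRecur} into the definition $\Delta_{z,l}=B_{z,l}-B_{z+1,l}$ and using $B_{z,l}(\nu)-B_{z,l-1}(\nu)=b_{z,l}(\nu)$ gives
\begin{align}\Label{eq:DeltaClosed}
\Delta_{z,l}=\nu\big(B_{z,l}(\nu)-B_{z,l-1}(\nu)\big)=\nu\,b_{z,l}(\nu)=\binom{z}{l}\nu^{l+1}\lambda^{z-l}\qquad(z\geq l),
\end{align}
which is strictly positive for every $z\geq l$; hence every ratio $\Delta_{z,l}/\Delta_{z-1,l}$ with $z\geq l+1$ is well defined.

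The second step is a one-line division. From \eqref{eq:DeltaClosed}, for $z\geq l+1$,
\begin{align}\Label{eq:DeltaRatio}
\frac{\Delta_{z,l}}{\Delta_{z-1,l}}=\lambda\,\frac{\binom{z}{l}}{\binom{z-1}{l}}=\frac{\lambda z}{z-l}=\lambda\Big(1+\frac{l}{z-l}\Big).
\end{align}
Both monotonicities then follow by inspection: for fixed $l\geq0$ the map $z\mapsto 1+l/(z-l)$ is nonincreasing on $\{l+1,l+2,\dots\}$ (constant equal to $1$ when $l=0$, strictly decreasing when $l\geq1$), so $\Delta_{z,l}/\Delta_{z-1,l}$ is nonincreasing in $z$; and for fixed $z$ the map $l\mapsto 1+l/(z-l)$ is nondecreasing, in fact strictly increasing, on $\{0,1,\dots,z-1\}$, so $\Delta_{z,l}/\Delta_{z-1,l}$ is nondecreasing in $l$.

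There is essentially no obstacle in this argument; the only point deserving care is the closed form \eqref{eq:DeltaClosed} together with the convention $b_{z,j}(p)=0$ for $j\geq z+1$. I would verify the boundary case $z=l$, where $B_{l,l}=1$ and $B_{l+1,l}=1-\nu^{l+1}$ give $\Delta_{l,l}=\nu^{l+1}=\binom{l}{l}\nu^{l+1}$, in agreement with \eqref{eq:DeltaClosed}. Once \eqref{eq:DeltaClosed} is in hand the lemma reduces entirely to the elementary monotonicity properties of $z/(z-l)$.
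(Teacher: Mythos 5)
Your proof is correct and follows essentially the same route as the paper's: both derive the closed form $\Delta_{z,l}=\nu\,b_{z,l}(\nu)$ from the recursion $B_{z+1,l}=\lambda B_{z,l}+\nu B_{z,l-1}$ and then read off both monotonicities from the ratio $\lambda z/(z-l)$. The extra justification of the recursion and the boundary check at $z=l$ are fine but not needed beyond what the paper does.
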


\begin{proof}[Proof of Lemma \ref{lem:Deltazk}]
	The equality
	\begin{align}
		B_{z+1,l}=(1-\lambda)B_{z,l-1}+\lambda B_{z,l},
	\end{align}
	implies that
	\begin{align}\label{eq:Deltazkequal}
		\Delta_{z,l}=(1-\lambda)(B_{z,l}-B_{z,l-1})=(1-\lambda)b_{z,l}
		=\begin{cases}
			0 & l\geq z+1,\\
			(1-\lambda)\binom{z}{l}(1-\lambda)^l\lambda^{z-l} &l\leq z.
		\end{cases}
	\end{align}

	When $z\geq l+1$, \eref{eq:Deltazkequal} implies that
	\begin{align}
		\frac{ \Delta_{z,l}}{\Delta_{z-1,l}}=\frac{\binom{z}{l}\lambda}{\binom{z-1}{l}}=\frac{z\lambda}{z-l},
	\end{align}
	so $\Delta_{z,l}/\Delta_{z-1,l}$ is  nonincreasing in $z$ and nondecreasing in $l$.
\end{proof}

%and corollaries \ref{cor:zeta2Bound1} and \ref{cor:zeta2Bound2} provide several upper and lower
%bounds for $\overline{\epsilon}^c_\lambda(k,n,\delta) $ expressed in terms of $z^*$, which can be determined numerically.

\subsection{Relative entropy}\Label{app:RelEntropy}
To show that $t_\mathrm{D}(\gamma,x)$, $\epsilon_D(s,r)$ and $s_D(\epsilon,r)$ are well defined in Section \ref{notation} \main, we prepare the following lemma.

\begin{lemma}\Label{lem:DpqMonoton}
	The relative entropy $D(p\|q)$ (defined in \eref{eq:RelEntropy}) \main is nondecreasing in $q$ and nonincreasing in $p$ when $0\leq p\leq q\leq 1$.
\end{lemma}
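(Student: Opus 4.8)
The plan is to reduce the claim to two elementary one-variable derivative computations in the interior of the triangle $\mathcal{T}:=\{(p,q):0\le p\le q\le 1\}$ and then extend to its boundary by the obvious continuity of $D$ along the relevant segments. Recall from \eqref{eq:RelEntropy} that
\begin{align}
	D(p\|q)=p\ln\frac{p}{q}+(1-p)\ln\frac{1-p}{1-q},
\end{align}
with the usual conventions $0\ln 0=0$ and $0\ln\tfrac{0}{0}=0$; on $\mathcal{T}$ the function takes values in $[0,+\infty]$, the value $+\infty$ being attained exactly on the edge $q=1$ with $p<1$.

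First I would fix $p\in(0,1)$ and treat $D(p\|q)$ as a function of $q$ on $(p,1)$:
\begin{align}
	\frac{\partial}{\partial q}D(p\|q)=-\frac{p}{q}+\frac{1-p}{1-q}=\frac{q-p}{q(1-q)},
\end{align}
which is $\ge 0$ (and $>0$ for $q>p$); hence $D(p\|q)$ is nondecreasing in $q$ on $(p,1)$. Since $D(p\|q)\to D(p\|p)=0$ as $q\to p^+$ and $D(p\|q)\to+\infty$ as $q\to1^-$, the monotonicity extends to all of $[p,1]$. For the remaining values of $p$ I would use the explicit one-variable formula $D(0\|q)=-\ln(1-q)$, which is clearly nondecreasing, while $p=1$ forces $q=1$ and is vacuous.

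Next I would fix $q\in(0,1)$ and treat $D(p\|q)$ as a function of $p$ on $(0,q)$:
\begin{align}
	\frac{\partial}{\partial p}D(p\|q)=\ln\frac{p}{q}-\ln\frac{1-p}{1-q}=\ln\frac{p(1-q)}{q(1-p)},
\end{align}
and since $p\le q$ is equivalent to $p(1-q)\le q(1-p)$, the logarithm has argument $\le 1$, so the derivative is $\le 0$ and $D(p\|q)$ is nonincreasing in $p$ on $(0,q)$. Using $\lim_{p\to0^+}D(p\|q)=-\ln(1-q)=D(0\|q)$ and $D(q\|q)=0$, this extends to $[0,q]$; the cases $q=1$ (where $D(p\|1)=+\infty$ for $p<1$ and $D(1\|1)=0$) and $q=0$ (where necessarily $p=0$) are immediate.

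I do not anticipate any real difficulty: the two derivative identities are essentially the whole content, and the only point requiring a word of care is the behaviour on the boundary of $\mathcal{T}$, where a formal derivative would involve $\ln 0$ — this is handled by the closed forms $D(0\|q)=-\ln(1-q)$ and $D(p\|1)=+\infty$ for $p<1$, together with continuity of $D$ on the interior segments.
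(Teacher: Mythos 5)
Your proof is correct and follows essentially the same route as the paper's: computing $\partial D/\partial q=(q-p)/[q(1-q)]\geq 0$ and $\partial D/\partial p=\ln\frac{p(1-q)}{q(1-p)}\leq 0$ on the region $0\leq p\leq q\leq 1$. The extra care you take with the boundary cases $p=0$ and $q=1$ is a harmless refinement that the paper omits.
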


\begin{proof}[Proof of \lref{lem:DpqMonoton}]
	When $0\leq p\leq q\leq 1$ we have
	\begin{align}
		\frac{\partial D(p\|q)}{\partial q} &= \frac{q-p}{q(1-q)} \geq 0,  \\
		\frac{\partial D(p\|q)}{\partial p} &= \ln \frac{p}{q} + \ln \frac{1-q}{1-p} \leq 0 ,
	\end{align}
	which confirm the lemma.
\end{proof}

\subsection{Poissson distribution}\Label{app:Poissson}

To show that $t_\mathrm{P}(k,\delta)$  is well defined in \eref{eq:t1kdelta} \main, we prepare the following lemma.

\begin{lemma}\label{lem:monotPois}
	Suppose $k \in \bbZ^{\geq 0}$. Then $\mathrm{Pois}(k,x)$ (defined in \eref{eq:PoisFunc} \main) is monotonically decreasing in $x$ for $x\geq0$.
\end{lemma}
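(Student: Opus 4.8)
The plan is to differentiate $\mathrm{Pois}(k,x)$ with respect to $x$ and show the derivative is nonpositive on $[0,\infty)$. Writing $\mathrm{Pois}(k,x)=\rme^{-x}\sum_{j=0}^k x^j/j!$ as a product of $\rme^{-x}$ and a polynomial, the product rule produces two terms: one from differentiating $\rme^{-x}$ and one from differentiating $\sum_{j=0}^k x^j/j!$. The key observation is that these two contributions nearly cancel.

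First I would compute that differentiating the polynomial term by term gives $\sum_{j=1}^k x^{j-1}/(j-1)!=\sum_{j=0}^{k-1} x^j/j!$, so that
\begin{align}
\frac{d}{dx}\mathrm{Pois}(k,x)
= \rme^{-x}\left(\sum_{j=0}^{k-1}\frac{x^j}{j!}-\sum_{j=0}^{k}\frac{x^j}{j!}\right)
= -\rme^{-x}\frac{x^k}{k!}.
\end{align}
The two partial sums telescope, leaving only the top-degree term with a negative sign.

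The conclusion is then immediate: for $x\geq 0$ we have $\rme^{-x}>0$ and $x^k/k!\geq 0$, hence $\frac{d}{dx}\mathrm{Pois}(k,x)\leq 0$, so $\mathrm{Pois}(k,x)$ is nonincreasing in $x$ on $[0,\infty)$. In fact the derivative vanishes only at $x=0$ when $k\geq 1$, and nowhere when $k=0$, so $\mathrm{Pois}(k,x)$ is strictly decreasing on $(0,\infty)$, which is what is needed for $t_\mathrm{P}(k,\delta)$ to be well defined in \eref{eq:t1kdelta}.

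There is no real obstacle here; the only points requiring minor care are the telescoping of the two sums and the degenerate case $k=0$, where the polynomial is the constant $1$ with zero derivative so that the displayed identity still holds (with the convention that the empty sum $\sum_{j=0}^{-1}$ is $0$).
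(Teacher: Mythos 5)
Your proposal is correct and follows exactly the paper's own argument: differentiate via the product rule, telescope the two partial sums to obtain $\frac{d}{dx}\mathrm{Pois}(k,x)=-\rme^{-x}x^k/k!$, and conclude negativity for $x>0$. The extra remarks about the $k=0$ case and the empty-sum convention are fine but not needed.
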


\begin{proof}[Proof of \lref{lem:monotPois}]
	For $x>0$ we have
	\begin{align}
		\frac{\partial \,\mathrm{Pois}(k,x)}{\partial x}
		=-\rme^{-x} \sum_{i=0}^{k} \frac{x^{i}}{i !}+\rme^{-x} \sum_{i=1}^{k} i \frac{x^{i-1}}{i !}
		=-\rme^{-x} \sum_{i=0}^{k} \frac{x^{i}}{i !}+\rme^{-x} \sum_{i=0}^{k-1} \frac{x^{i}}{i !}
		=-\rme^{-x} \frac{x^{k}}{k !}
		<0,
	\end{align}
	which confirms the lemma.
\end{proof}

\section{Properties of $z^*$ and  $B_{z_*,l}/B_{z^*,l}$}\Label{app:Propertyz*}

Given $0<\delta\leq1$, $z^*(l,\delta,\lambda)$ is the smallest integer $z$ that satisfies $B_{z,l}\leq \delta$, as defined in \eref{eq:definez} \main.
If there is no danger of confusions, we shall use $z^*$ as shorthand for $z^*(l,\delta,\lambda)$, and use $z_*:=z^*-1$. This appendix provides some
properties of $z^*$ and  $B_{z_*,l}/B_{z^*,l}$, which are useful for proving several important statements.

\subsection{Lower bound for $z^*$}\Label{app:Boundsz*}
\begin{lemma}[Corollary 1, \cite{Kaas80}]\label{lem:BzklamOneHalf}
	Suppose $z\in\bbZ^{\geq 0}$ and $0\leq p\leq1$, then $B_{z,\lceil zp \rceil}(p)\geq1/2$.
\end{lemma}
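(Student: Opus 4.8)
The claim says that $\lceil zp\rceil$ is at least as large as a median of the binomial law $\mathrm{Bin}(z,p)$, i.e.\ $\Pr[\mathrm{Bin}(z,p)\le\lceil zp\rceil]=B_{z,\lceil zp\rceil}(p)\ge 1/2$. My plan is to first reduce to the case of an integer mean and then to settle that case via a standard fact about the Beta distribution. For the reduction I would note that the assertion is trivial when $z=0$ or $p=0$ (the value is $B_{z,0}(0)=1$), and that for each $k\in\{1,\dots,z\}$ one has $\lceil zp\rceil=k$ throughout the interval $p\in(\tfrac{k-1}{z},\tfrac{k}{z}]$, on which $p\mapsto B_{z,k}(p)$ is nonincreasing by \lref{lem:Bzkmono}. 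Hence $B_{z,\lceil zp\rceil}(p)=B_{z,k}(p)\ge B_{z,k}(k/z)$ on that interval, so that
\[
\min_{0\le p\le 1}B_{z,\lceil zp\rceil}(p)=\min_{1\le k\le z}B_{z,k}(k/z),
\]
and it would be enough to prove $B_{z,k}(k/z)\ge 1/2$ for every $k\in\{1,\dots,z\}$, i.e.\ that an integer-valued mean is always a median of a binomial distribution.

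For that integer-mean case I would set $n:=z$ and pass to order statistics. The cases $k=n$ and $k=n-1$ are immediate (the latter reduces to $(1-1/n)^n<\rme^{-1}<1/2$), so assume $1\le k\le n-2$. Let $U_1,\dots,U_n$ be i.i.d.\ uniform on $[0,1]$, with order statistics $U_{(1)}\le\cdots\le U_{(n)}$. Then $\mathrm{Bin}(n,k/n)$ has the law of $\#\{i:U_i\le k/n\}$, and the event that this count is at most $k$ is exactly $\{U_{(k+1)}>k/n\}$. Since $U_{(k+1)}\sim\mathrm{Beta}(k+1,n-k)$, it remains to show that the median of $\mathrm{Beta}(k+1,n-k)$ is at least $k/n$.

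In this range both shape parameters exceed $1$, so $\mathrm{Beta}(k+1,n-k)$ has mode $\frac{k}{n-1}$ and mean $\frac{k+1}{n+1}$, and both of these are $\ge k/n$ (the mean estimate is just $n(k+1)\ge k(n+1)$). I would then invoke the classical mode--median--mean sandwich for the Beta law: for $\mathrm{Beta}(\alpha,\beta)$ with $\alpha,\beta>1$ the median lies between the mode and the mean. Concretely, when $k+1\le n-k$ the median is at least the mode, hence at least $k/n$; when $k+1\ge n-k$ the median is at least the mean, hence again at least $k/n$. Either way the desired bound follows, completing the proof. The ``mode $\le$ median'' half admits an elementary reflection proof: with $M=\frac{\alpha-1}{\alpha+\beta-2}\le\frac12$ the mode of $\mathrm{Beta}(\alpha,\beta)$ for $1<\alpha\le\beta$, one checks the pointwise inequality $g(M-t)\le g(M+t)$ for the density $g$ and $0<t\le M$ --- which, after substituting $\alpha-1=M(\alpha+\beta-2)$ and $\beta-1=(1-M)(\alpha+\beta-2)$, reduces to the monotonicity in $a$ of $a\mapsto a\ln\frac{a+t}{a-t}$, itself a consequence of $\ln\frac{1+x}{1-x}\le\frac{2x}{1-x^2}$ --- and then integrates over $t\in[0,M]$ using $M\le 1-M$. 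The ``median $\le$ mean'' half (equivalently, the integer-mean-is-a-median statement itself) is the genuinely delicate ingredient; it is exactly the content of \cite{Kaas80}, which I would simply cite, and it is the step I expect to be the main obstacle to a fully self-contained argument.
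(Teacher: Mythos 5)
The paper offers no proof of this lemma at all: it is imported verbatim as Corollary~1 of \cite{Kaas80}, so the only ``paper's proof'' to compare against is the citation itself. Your proposal is, in the end, the same thing with extra scaffolding: your reduction from general $p$ to the integer-mean case via \lref{lem:Bzkmono} is correct (on $p\in(\frac{k-1}{z},\frac{k}{z}]$ one has $\lceil zp\rceil=k$ and $B_{z,k}(p)\geq B_{z,k}(k/z)$), the passage to $U_{(k+1)}\sim\mathrm{Beta}(k+1,n-k)$ is standard and correct, the mode and mean computations and the comparisons $\frac{k}{n-1}\geq\frac{k}{n}$, $\frac{k+1}{n+1}\geq\frac{k}{n}$ are right, and your reflection argument for ``mode $\leq$ median'' when $\alpha\leq\beta$ does go through (the function $a\mapsto a\ln\frac{a+t}{a-t}$ is \emph{nonincreasing}, which is the direction you need; you left the direction unstated). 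But the case $k+1\geq n-k$ rests on ``mean $\leq$ median'' for the Beta law, which you explicitly decline to prove and defer back to \cite{Kaas80}. Two remarks on that: first, this means the proposal is not an independent proof --- it terminates in the same citation the lemma already carries, which is acceptable here only because the paper does exactly the same; second, your parenthetical claim that this half is ``equivalently the integer-mean-is-a-median statement itself'' is not quite accurate --- via the order-statistics correspondence, ``median of $\mathrm{Beta}(k+1,n-k)\geq\frac{k+1}{n+1}$'' translates to a median bound for $\mathrm{Bin}\bigl(n,\frac{k+1}{n+1}\bigr)$, whose mean $\frac{n(k+1)}{n+1}$ is \emph{not} an integer, so it is a different (non-integer-mean) instance of the Kaas--Buhrman theorem rather than the integer-mean case you reduced to. If you want a self-contained argument, the cleaner route is to prove $B_{n,k}(k/n)\geq 1/2$ directly by the pairing/reflection argument of \cite{Kaas80} on the binomial probabilities themselves rather than detouring through the Beta mean--median--mode ordering.
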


\begin{lemma}[Theorem 4.1, \cite{Nowako21}]\label{lem:irrational}
	Suppose $l\in\bbZ^{\geq 0}$ and $z\in\bbZ^{\geq l+1}$, then $B_{z,l}(l/z)\ne1/2$.
\end{lemma}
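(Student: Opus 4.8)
The plan is to turn the identity $B_{z,l}(l/z)=1/2$ into an integer equation and then kill it by a short number-theoretic case split. Clearing denominators, $B_{z,l}(l/z)=1/2$ is equivalent to $2N=z^{z}$ with $N:=\sum_{j=0}^{l}\binom{z}{j}l^{j}(z-l)^{z-j}\in\bbZ^{\geq 1}$. The case $l=0$ is immediate ($N=z^{z}$, so $2N\neq z^{z}$), so I would assume $1\le l\le z-1$, whence $z-l\ge1$. My intended route has three steps: first use the rational root theorem to force $l\mid z$; then, writing $z=lm$, dispose of $m\ge3$ by a congruence mod $m-1$; finally handle the residual case $z=2l$ by a symmetry identity.

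For the first step I would work with the polynomial $P(x):=2\sum_{j=0}^{l}\binom{z}{j}x^{j}(1-x)^{z-j}-1\in\bbZ[x]$. Its constant term is $P(0)=2B_{z,l}(0)-1=1$, and, using the partial alternating-sum identity $\sum_{j=0}^{l}(-1)^{j}\binom{z}{j}=(-1)^{l}\binom{z-1}{l}$, its $x^{z}$–coefficient is $2(-1)^{z-l}\binom{z-1}{l}\neq0$, so $\deg P=z$. By the rational root theorem every rational root of $P$ is of the form $\pm1/q$, so if the positive rational $l/z$ is a root then its reduced numerator equals $1$, i.e. $l\mid z$; contrapositively $B_{z,l}(l/z)\neq1/2$ whenever $l\nmid z$. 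For the second step, write $z=lm$ with $m\ge2$, so $l/z=1/m$ and $B_{z,l}(1/m)=m^{-z}\widetilde N$ with $\widetilde N:=\sum_{j=0}^{l}\binom{z}{j}(m-1)^{z-j}$; thus $B_{z,l}(1/m)=1/2$ amounts to $2\widetilde N=m^{z}$. Since $z-j\ge z-l=l(m-1)\ge1$ for $j\le l$, every summand of $\widetilde N$ is divisible by $m-1$, so $2\widetilde N\equiv0\pmod{m-1}$, while $m^{z}\equiv1\pmod{m-1}$; this is a contradiction as soon as $m-1\ge2$, i.e. $m\ge3$. The only surviving case is then $z=2l$ (so $l\ge1$ and $p=1/2$): here the substitution $j\mapsto2l-j$ gives $\sum_{j=l}^{2l}\binom{2l}{j}=\sum_{j=0}^{l}\binom{2l}{j}=2^{2l}B_{2l,l}(1/2)$, while $\sum_{j=0}^{l}\binom{2l}{j}+\sum_{j=l}^{2l}\binom{2l}{j}=2^{2l}+\binom{2l}{l}$, so $2B_{2l,l}(1/2)=1+\binom{2l}{l}2^{-2l}$ and hence $B_{2l,l}(1/2)=1/2+\binom{2l}{l}2^{-2l-1}>1/2$. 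Together the three steps give $B_{z,l}(l/z)\neq1/2$ for all $z\ge l+1$.

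The one step requiring a genuine (but short) computation — and hence the place I expect to have to be careful — is establishing that $P$ has degree exactly $z$ with constant term $1$: this is precisely the evaluation of the leading coefficient of a truncated binomial expansion, i.e. the partial alternating binomial sum above, and without it the rational root theorem gives nothing. The remaining pieces are routine: the $m\ge3$ case is a one-line congruence, and the $z=2l$ case is the classical fact $B_{2l,l}(1/2)=\tfrac12\bigl(1+\binom{2l}{l}2^{-2l}\bigr)$ written combinatorially. As a bonus, for $z=2l$ this argument already yields the strict inequality $B_{z,l}(l/z)>1/2$ which, combined with \lref{lem:BzklamOneHalf} (note $\lceil z\cdot(l/z)\rceil=l$), is what is actually used later; in the other cases $B_{z,l}(l/z)>1/2$ follows from \lref{lem:BzklamOneHalf} once one knows it is $\neq1/2$.
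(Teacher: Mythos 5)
Your proof is correct, and it is necessarily a different route from the paper's: the paper does not prove this lemma at all but imports it verbatim as Theorem~4.1 of the cited reference \cite{Nowako21}, so you have supplied a self-contained elementary argument where the authors rely on an external result. Your three-way case split is sound and complete: clearing denominators reduces $B_{z,l}(l/z)=1/2$ to $2N=z^z$; the rational root theorem applied to $P(x)=2\sum_{j=0}^l\binom{z}{j}x^j(1-x)^{z-j}-1$, whose constant term is $1$, forces the reduced numerator of any rational root to be $1$ and hence disposes of all $l\nmid z$; the congruence modulo $m-1$ (each summand of $\widetilde N$ carries a factor $(m-1)^{z-j}$ with $z-j\geq l(m-1)\geq 1$, while $m^z\equiv 1$) kills $z=lm$ with $m\geq 3$; and the symmetry identity $B_{2l,l}(1/2)=\tfrac12+\binom{2l}{l}2^{-2l-1}$ settles $z=2l$, the only case where the modulus degenerates to $1$. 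I verified the partial alternating sum $\sum_{j=0}^l(-1)^j\binom{z}{j}=(-1)^l\binom{z-1}{l}$ and the resulting leading coefficient $2(-1)^{z-l}\binom{z-1}{l}\neq0$; one small remark is that this degree computation is not actually needed for the half of the rational root theorem you invoke (the divisibility $p\mid a_0$ follows from integrality of the coefficients and $a_0=1$ alone, by multiplying $P(p/q)=0$ through by $q^n$ for any $n\geq\deg P$), so your worry that ``without it the rational root theorem gives nothing'' is overstated, though including it does no harm. Your closing observation is also apt: for the paper's actual use (via \lref{lem:BzklamOneHalf} and \coref{cor:Bnk1-k/n>1/2}) only the strict inequality $B_{z,l}(l/z)>1/2$ matters, and your $z=2l$ case already yields it directly, while the remaining cases obtain it by combining $\neq 1/2$ with $\geq 1/2$. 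The net effect of your approach is to make the paper self-contained at the cost of about a page; citing \cite{Nowako21} buys brevity but leaves a key combinatorial fact unproved in situ.
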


\begin{corollary}\label{cor:Bnk1-k/n>1/2}
	Suppose $l\in\bbZ^{\geq1}$ and $z\in\bbZ^{\geq l}$, then $B_{z,l}(l/z)>1/2$.
\end{corollary}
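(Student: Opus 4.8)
The plan is to combine the two cited results, Lemmas~\ref{lem:BzklamOneHalf} and~\ref{lem:irrational}, after first disposing of the boundary case $z=l$, which is the only place where Lemma~\ref{lem:irrational} does not apply.

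First I would treat $z=l$ by direct evaluation. Here $l/z=1$, and by the conventions fixed in \eqref{eq:binomCFD} (in particular $x^0=1$ even when $x=0$) every summand $b_{l,j}(1)=\binom{l}{j}1^{j}0^{l-j}$ vanishes except the one with $j=l$, which equals $1$. Hence $B_{l,l}(1)=1>1/2$, so the claim holds in this case.

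Next, for $z\geq l+1$ I would set $p:=l/z$, which lies in $(0,1)$. Since $zp=l$ is an integer, $\lceil zp\rceil=l$, so Lemma~\ref{lem:BzklamOneHalf} gives $B_{z,l}(l/z)=B_{z,\lceil zp\rceil}(p)\geq 1/2$. On the other hand, because $z\geq l+1$, Lemma~\ref{lem:irrational} applies and rules out equality: $B_{z,l}(l/z)\neq 1/2$. Combining these two facts yields the strict inequality $B_{z,l}(l/z)>1/2$, which together with the case $z=l$ proves the corollary.

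No step here is a genuine obstacle: all the analytic content is already packaged in the two quoted lemmas. The only point requiring care is that Lemma~\ref{lem:irrational} is stated only for $z\geq l+1$, so the case $z=l$ must be handled separately, as done above; everything else is immediate.
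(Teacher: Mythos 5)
Your proof is correct and follows essentially the same route as the paper: handle $z=l$ directly via $B_{l,l}(1)=1$, then for $z\geq l+1$ combine Lemma~\ref{lem:BzklamOneHalf} (giving $\geq 1/2$, since $\lceil z\cdot l/z\rceil=l$) with Lemma~\ref{lem:irrational} (ruling out equality). Your write-up is in fact slightly more explicit than the paper's, which simply states that the second case ``immediately follows'' from the two lemmas.
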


\begin{proof}
	First, in the case $z=l$, we have $B_{z,l}(l/z)=B_{z,z}(1)=1\geq1/2$.
	Second, in the case $z\geq l+1$, the corollary immediately follows from \lrefs{lem:BzklamOneHalf}{lem:irrational}.
\end{proof}

\begin{corollary}\label{cor:z*k/nu}
	Suppose $0<\lambda<1$ and $0<\delta\leq 1/2$. Then $z^*\geq l/\nu $.
\end{corollary}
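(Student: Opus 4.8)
The plan is to argue by contradiction, reducing the statement to the two facts already available: that $B_{z,l}(p)$ is nonincreasing in $p$ (\lref{lem:Bzkmono}), and that $B_{z,l}(l/z)>1/2$ whenever $l\in\bbZ^{\geq1}$ and $z\in\bbZ^{\geq l}$ (\coref{cor:Bnk1-k/n>1/2}).

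First I would dispose of the degenerate case $l=0$: here the claim $z^*\geq l/\nu=0$ is immediate, since $z^*\in\bbZ^{\geq l}=\bbZ^{\geq0}$ by the definition in \eref{eq:definez}. Next, assume $l\geq1$ and suppose, toward a contradiction, that $z^*<l/\nu$, i.e.\ $\nu<l/z^*$. Since $z^*\geq l\geq1$, \coref{cor:Bnk1-k/n>1/2} applies at $z=z^*$ and gives $B_{z^*,l}(l/z^*)>1/2$. Combining this with the monotonicity of $B_{z^*,l}(\cdot)$ from \lref{lem:Bzkmono} and the inequality $\nu<l/z^*$ yields
\[
B_{z^*,l}(\nu)\;\geq\; B_{z^*,l}(l/z^*)\;>\;\tfrac12 .
\]
On the other hand, by the definition of $z^*$ we have $B_{z^*,l}(\nu)=B_{z^*,l}\leq\delta\leq1/2$, a contradiction. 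Hence $z^*\geq l/\nu$, which is the assertion.

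I do not anticipate any real obstacle here; the argument is a direct combination of the preceding lemmas. The only points requiring a moment's care are checking that $z^*\geq l\geq1$ so that \coref{cor:Bnk1-k/n>1/2} is legitimately invoked, and keeping the inequalities lined up correctly — one only needs $\nu\leq l/z^*$ together with the strict bound $B_{z^*,l}(l/z^*)>1/2$ to force $B_{z^*,l}(\nu)>1/2$, so the strictness of $z^*<l/\nu$ is not even essential.
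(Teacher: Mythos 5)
Your proof is correct and uses the same two ingredients as the paper's proof (the monotonicity of $B_{z,l}(p)$ in $p$ from \lref{lem:Bzkmono} and the bound $B_{z,l}(l/z)>1/2$ from \coref{cor:Bnk1-k/n>1/2}); the only difference is that you argue by contradiction at $z=z^*$ itself, whereas the paper evaluates at $\lfloor l/\nu\rfloor$ and concludes $z^*\geq\lfloor l/\nu\rfloor+1$ directly, which is essentially the same argument in contrapositive form.
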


\begin{proof}
	When $l=0$, the lemma holds clearly.
	
	Next, we shall consider $l\in\bbZ^{\geq1}$.
	In this case we have $\lfloor l/\nu \rfloor\geq l$
	and
	\begin{align}\label{eq:Coro10B2}
		B_{\lfloor l/\nu \rfloor, l}(\nu)
		\stackrel{(a)}{\geq} B_{\lfloor l/\nu \rfloor, l}\left(\frac{l}{\lfloor l/\nu \rfloor} \right)
		\stackrel{(b)}{>} 1/2,
	\end{align}
	where $(a)$ follows from \lref{lem:Bzkmono} and $\nu \leq l/\lfloor l/\nu \rfloor$, and
	$(b)$ follows from \coref{cor:Bnk1-k/n>1/2} above.
	Therefore,
	\begin{align}
		z^*
		=\min\{z\in\bbZ^{\geq l}|B_{z,l}(\nu)\leq \delta\}
		\stackrel{(a)}{\geq} \min\{z\in\bbZ^{\geq l}|B_{z,l}(\nu)\leq 1/2\}
		\stackrel{(b)}{\geq} \lfloor l/\nu \rfloor+1
		\geq l/\nu,
	\end{align}
	which confirms the lemma.
	Here $(a)$ follows from the assumption $\delta\leq 1/2$, and
	$(b)$ follows from \eref{eq:Coro10B2}.
\end{proof}

\subsection{Limits of $z^*$}
\begin{lemma}\Label{lem:z-kwithdelta}
	Suppose $l\in\bbZ^{\geq 0}$, $0<\lambda<1$ and $0<\delta<1$, then
	\begin{align}
		\lim_{\delta\to 0}\, \frac{z^*}{\ln\delta} &=\frac{1}{\ln \lambda}. \Label{eq:z*highpresion}
	\end{align}
\end{lemma}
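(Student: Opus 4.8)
The plan is to pin down the asymptotic size of $z^*=z^*(l,\delta,\lambda)$ as $\delta\to 0$ with $l$ and $\lambda$ held fixed, using the exponential decay rate of $B_{z,l}(\nu)$ in $z$. First I would observe that, since $l$ is a fixed constant, $B_{z,l}(\nu)=\sum_{j=0}^{l}\binom{z}{j}\nu^j\lambda^{z-j}$ is a polynomial in $z$ (of degree $l$) times $\lambda^{z}$, up to the factor $\nu^j\lambda^{-j}$; more precisely $\lambda^{-z}B_{z,l}(\nu)=\sum_{j=0}^l\binom{z}{j}(\nu/\lambda)^j$, which grows only polynomially in $z$. Hence $\ln B_{z,l}(\nu)=z\ln\lambda+O(\ln z)$ as $z\to\infty$. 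This is the one quantitative input the whole argument rests on, and it is elementary; alternatively one can invoke the Chernoff bound \eqref{eq:ChernoffB} and the reverse Chernoff bound \eqref{eq:ChernoffRev} of Lemmas \ref{lem:ChernoffRev} with $l/z\to 0$, noting $D(l/z\|\nu)\to D(0\|\nu)=\ln\lambda^{-1}$, to get the same $\ln B_{z,l}(\nu)=-z\ln\lambda^{-1}+o(z)$.

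Next I would translate the defining property of $z^*$ into an inequality on $\ln\delta$. By \eqref{eq:definez} we have $B_{z^*,l}\le\delta<B_{z_*,l}$ with $z_*=z^*-1$, and since $\delta\to 0$ forces $z^*\to\infty$ (because $B_{z,l}(\nu)$ is bounded below for each fixed $z$), we may apply the estimate $\ln B_{z,l}(\nu)=z\ln\lambda+O(\ln z)$ at $z=z^*$ and at $z=z_*=z^*-1$. This yields
\begin{align}
z^*\ln\lambda+O(\ln z^*)\le \ln\delta< (z^*-1)\ln\lambda+O(\ln z^*).
\end{align}
Dividing through by $\ln\delta$ (which is negative and tends to $-\infty$), and using that $\ln z^*=o(z^*)=o(\ln\delta)$ since $z^*\to\infty$ and $z^*\asymp|\ln\delta|$, both bounds give $z^*/\ln\delta\to 1/\ln\lambda$. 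Equivalently, write $\ln\delta=z^*\ln\lambda\,(1+o(1))$, hence $z^*/\ln\delta=(1/\ln\lambda)(1+o(1))\to 1/\ln\lambda$, which is \eqref{eq:z*highpresion}.

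I do not expect a serious obstacle here; the only point requiring a little care is justifying that the polynomial prefactor in $B_{z,l}(\nu)$ contributes only $o(z)$ to the logarithm and that $z^*\to\infty$ as $\delta\to0$, so that the error terms $O(\ln z^*)$ are genuinely negligible against $\ln\delta\sim z^*\ln\lambda$. Both are immediate: the former from $\ln\binom{z}{l}\le l\ln z$, the latter from monotonicity of $B_{z,l}(\nu)$ in $z$ (Lemma \ref{lem:Bzkmono}) together with $B_{z,l}(\nu)>0$ for every fixed $z$. With these in hand the squeeze between the bounds at $z^*$ and $z^*-1$ closes the argument.
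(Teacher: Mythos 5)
Your proposal is correct and follows essentially the same route as the paper: the paper obtains $\lim_{z\to\infty}\frac{1}{z}\ln B_{z,l}=\ln\lambda$ from the large deviation principle and combines it with $z^*\to\infty$ and the defining inequalities $B_{z^*,l}\leq\delta<B_{z_*,l}$, exactly as you do. Your version merely derives the rate elementarily via the polynomial prefactor $\lambda^{-z}B_{z,l}(\nu)=\sum_{j=0}^l\binom{z}{j}(\nu/\lambda)^j$ and spells out the two-sided sandwich more explicitly, which is a mild tightening of the paper's argument rather than a different approach.
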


\begin{proof}[Proof of Lemma \ref{lem:z-kwithdelta}]
	By large deviation principle, we have
	\begin{align}
		\lim_{z \to \infty}\frac{1}{z}\ln (B_{z,l}) = -D( 0\|1-\lambda)
		=\ln \lambda.
	\end{align}
	Since we have $z^*\to \infty$ when $\delta\to 0$, the above relation implies that
	\begin{align}
		\lim_{\delta\to 0}\, \frac{\ln\delta}{z^*}
		=\lim_{\delta \to 0}\frac{\ln (B_{z^*,l})}{z^*}
		=\lim_{z^* \to \infty}\frac{\ln (B_{z^*,l})}{z^*}
		=\ln \lambda,
	\end{align}
	which confirms the lemma.
\end{proof}

\begin{lemma}\Label{lem:z-kinfity}
	Suppose $l\in\bbZ^{\geq 0}$, $0<\lambda<1$ and $0<\delta<1$. Then in the limit $l\to+\infty$ we have
	\begin{align}
		z^* &= \frac{l-\Phi^{-1}(\delta)\sqrt{\lambda l}}{\nu}+O(1),
		\Label{eq:z=withkinfity}
	\end{align}
	where $z^*$ is the smallest integer $z$ such that $B_{z,l}\leq \delta$, and $\Phi(x)$ is defined in \eref{eq:Phix} \main.
\end{lemma}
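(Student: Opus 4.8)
The plan is to sandwich $z^*$ between two explicit integers, both lying within a bounded distance of $z^0:=(l-\Phi^{-1}(\delta)\sqrt{\lambda l})/\nu$, by combining the Berry--Esseen estimate of \lref{lem:BerryEsseen} with the strict monotonicity of $B_{z,l}$ in $z$ from \lref{lem:Bzkmono}. Write $a:=\Phi^{-1}(\delta)$; since $0<\delta<1$ this is finite and $\phi(a)>0$. For integers $z\geq l$ set $\xi_z:=(l-\nu z)/\sqrt{\nu\lambda z}$, so that $|B_{z,l}-\Phi(\xi_z)|\leq 0.5/\sqrt{\nu\lambda l}$ by \lref{lem:BerryEsseen}. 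First I would record the elementary asymptotics: if $z=z^0+O(1)$ then $\nu z=l-a\sqrt{\lambda l}+O(1)$, hence $\sqrt{\nu\lambda z}=\sqrt{\lambda l}+O(1)$, and therefore
\[
\xi_z=\frac{a\sqrt{\lambda l}-\nu(z-z^0)+O(1)}{\sqrt{\lambda l}+O(1)}
     =a-\frac{\nu(z-z^0)}{\sqrt{\lambda l}}+O\!\Big(\tfrac{1}{\sqrt l}\Big).
\]
In particular, a shift of $z$ by a constant moves $\xi_z$ by an amount of order $1/\sqrt l$, and, since $\Phi$ has derivative bounded below by $\phi(a)/2$ on a fixed neighbourhood of $a$, it moves $\Phi(\xi_z)$ by an amount of the same order $1/\sqrt l$.

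Next, fix a constant $C>0$ to be chosen and put $z_+:=\lceil z^0+C\rceil$ and $z_-:=\lfloor z^0-C\rfloor$. Because $\nu<1$ we have $z^0=l/\nu-O(\sqrt l)$, so $z^0-l=l\lambda/\nu-O(\sqrt l)\to\infty$; hence $z_\pm\geq l$ once $l$ is large, and both the Berry--Esseen bound and the monotonicity of $B_{\cdot,l}$ apply at $z_\pm$. The displayed asymptotics give $\xi_{z_+}\leq a-\nu C/\sqrt{\lambda l}+O(1/\sqrt l)$ and $\xi_{z_-}\geq a+\nu C/\sqrt{\lambda l}+O(1/\sqrt l)$, so by the mean value theorem applied to $\Phi$,
\begin{align*}
B_{z_+,l}&\leq \Phi(\xi_{z_+})+\frac{0.5}{\sqrt{\nu\lambda l}}
          \leq \delta-\frac{1}{\sqrt l}\Big(\tfrac{\phi(a)}{2}\cdot\tfrac{\nu C}{\sqrt{\lambda}}-O(1)\Big),\\
B_{z_-,l}&\geq \Phi(\xi_{z_-})-\frac{0.5}{\sqrt{\nu\lambda l}}
          \geq \delta+\frac{1}{\sqrt l}\Big(\tfrac{\phi(a)}{2}\cdot\tfrac{\nu C}{\sqrt{\lambda}}-O(1)\Big).
\end{align*}
Choosing $C$ large enough (depending only on $\delta$ and $\lambda$) that $\tfrac{\phi(a)}{2}\cdot\tfrac{\nu C}{\sqrt{\lambda}}$ exceeds the fixed $O(1)$ constants appearing here, we obtain $B_{z_+,l}<\delta$ and $B_{z_-,l}>\delta$ for all sufficiently large $l$.

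Finally, since $B_{z,l}$ is strictly decreasing in $z$ for $z\geq l$ by \lref{lem:Bzkmono}, and $z^*$ is by definition the smallest integer $z\geq l$ with $B_{z,l}\leq\delta$, the inequalities $B_{z_+,l}<\delta$ and $B_{z_-,l}>\delta$ force $z_-<z^*\leq z_+$. As $z_\pm=z^0+O(1)=\frac{l-\Phi^{-1}(\delta)\sqrt{\lambda l}}{\nu}+O(1)$, this yields $z^*=\frac{l-\Phi^{-1}(\delta)\sqrt{\lambda l}}{\nu}+O(1)$ as $l\to+\infty$, which is the claim. The only real obstacle is the bookkeeping in the second paragraph: one must verify that perturbing $z$ by $O(1)$ near $z^0$ changes $\Phi(\xi_z)$ by a quantity of order \emph{exactly} $1/\sqrt l$, the same order as the Berry--Esseen error, so that enlarging $C$ genuinely dominates that error; the rest is a routine Taylor expansion in $1/\sqrt l$.
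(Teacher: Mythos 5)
Your proof is correct and follows essentially the same route as the paper's: both hinge on the Berry--Esseen bound of \lref{lem:BerryEsseen} together with the monotonicity of $B_{z,l}$ in $z$ to sandwich $z^*$ between two integers equal to $(l-\Phi^{-1}(\delta)\sqrt{\lambda l})/\nu+O(1)$. The only difference is cosmetic: the paper inverts the perturbed Gaussian approximation exactly by solving a quadratic in $\sqrt{z}$, whereas you perturb the candidate value by a large constant $C$ and Taylor-expand $\Phi$, with the choice of $C$ absorbing both the Berry--Esseen error and the $O(1/\sqrt{l})$ error in $\xi_{z^0}$; both yield the same $O(1)$ control.
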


\begin{proof}[Proof of Lemma \ref{lem:z-kinfity}]
	By \lref{lem:BerryEsseen}, we have
	\begin{align}\label{eq:B24}
		\Phi\bigg(\frac{l - \nu z}{\sqrt{\nu \lambda z}}\bigg) - \frac{0.5}{\sqrt{\nu \lambda l}}
		\leq B_{z,l}
		\leq \Phi\bigg(\frac{l - \nu z}{\sqrt{\nu \lambda z}}\bigg) + \frac{0.5}{\sqrt{\nu \lambda l}}.
	\end{align}
	When $l\to \infty$, we have $\delta-\frac{0.5}{\sqrt{\nu \lambda l}}>0$, and
	\begin{align}\label{eq:B25}
		z^*
		&=\min\{z\in\bbZ^{\geq l}|B_{z,l}\leq \delta\}
		\leq
		\bigg\lceil \bigg\{z \in \mathbb{R}^+  \,\bigg|\,
		\Phi\bigg(\frac{l - \nu z}{\sqrt{\nu \lambda z}}\bigg) + \frac{0.5}{\sqrt{\nu \lambda l}} =\delta  \bigg\}\bigg\rceil
		\nonumber \\
		%&= \left\lceil \bigg( \frac{-\Phi^{-1}(\delta)\sqrt{1-\nu\epsilon}+\sqrt{4k+(1-\nu\epsilon)[\Phi^{-1}(\delta)]^2}}{2\sqrt{\nu\epsilon}}  \bigg)^2 \right\rceil
		%\\
		&= \bigg\lceil \bigg\{z \in \mathbb{R}^+  \,\bigg|\,
		\frac{l - \nu z}{\sqrt{\nu \lambda z}}=  \Phi^{-1}\bigg( \delta-\frac{0.5}{\sqrt{\nu \lambda l}}\bigg)  \bigg\}\bigg\rceil
		\nonumber \\
		&= \left\lceil \left(
		- \sqrt{\frac{\lambda}{4\nu}} \, \Phi^{-1}\bigg(\delta-\frac{0.5}{\sqrt{\nu \lambda l}}\bigg)
		+\sqrt{\frac{l}{\nu}+\frac{\lambda}{4\nu}\bigg[\Phi^{-1}\bigg(\delta-\frac{0.5}{\sqrt{\nu \lambda l}}\bigg)\bigg]^2}
		\right)^2 \right\rceil
		\nonumber \\
		&= \left\lceil \left(
		- \sqrt{\frac{\lambda}{4\nu}} \, \Phi^{-1}(\delta)+O\Big(\frac{1}{\sqrt{l}}\Big)
		+\sqrt{\frac{l}{\nu}+O(1)}
		\right)^2 \right\rceil
		=\frac{l-\Phi^{-1}(\delta)\sqrt{\lambda l}}{\nu}+O(1),
	\end{align}
	where the inequality follows from the upper bound in \eref{eq:B24}.
	
	In addition, when $l\to \infty$, we have $\delta+\frac{0.5}{\sqrt{\nu \lambda l}}<1$, and
	\begin{align}\label{eq:B26}
		z^*
		&=\min\{z\in\bbZ^{\geq l}|B_{z,l}\leq \delta\}
		\geq
		\bigg\{z \in \mathbb{R}^+  \,\bigg|\,
		\Phi\bigg(\frac{l - \nu z}{\sqrt{\nu \lambda z}}\bigg) -\frac{0.5}{\sqrt{\nu \lambda l}} =\delta  \bigg\}
		\nonumber \\
		%&= \left\lceil \bigg( \frac{-\Phi^{-1}(\delta)\sqrt{1-\nu\epsilon}+\sqrt{4l+(1-\nu\epsilon)[\Phi^{-1}(\delta)]^2}}{2\sqrt{\nu\epsilon}}  \bigg)^2 \right\rceil
		%\\
		&= \bigg\{z \in \mathbb{R}^+  \,\bigg|\,
		\frac{l - \nu z}{\sqrt{\nu \lambda z}}=  \Phi^{-1}\bigg( \delta+\frac{0.5}{\sqrt{\nu \lambda l}}\bigg)  \bigg\}
		\nonumber \\
		&= \left(
		- \sqrt{\frac{\lambda}{4\nu}} \, \Phi^{-1}\bigg(\delta+\frac{0.5}{\sqrt{\nu \lambda l}}\bigg)
		+\sqrt{\frac{l}{\nu}+\frac{\lambda}{4\nu}\bigg[\Phi^{-1}\bigg(\delta+\frac{0.5}{\sqrt{\nu \lambda l}}\bigg)\bigg]^2}
		\right)^2
		\nonumber \\
		&= \left(
		- \sqrt{\frac{\lambda}{4\nu}} \, \Phi^{-1}(\delta)+O\Big(\frac{1}{\sqrt{l}}\Big)
		+\sqrt{\frac{l}{\nu}+O(1)}
		\right)^2
		=\frac{l-\Phi^{-1}(\delta)\sqrt{\lambda l}}{\nu}+O(1),
	\end{align}
	where the inequality follows from the lower bound in \eref{eq:B24}.
	\eqsref{eq:B25}{eq:B26} together confirms the lemma.
\end{proof}

\subsection{Limit of $B_{z_*,l}/B_{z^*,l}$}

\begin{lemma}\Label{lem:limBz/Bz}
	Suppose $l\in\bbZ^{\geq 0}$, $0<\lambda<1$ and $0<\delta< 1$. Then in the limit
	$l\to+\infty$ we have
	\begin{align}
		\frac{ B_{z_*,l}}{B_{z^*,l}}
		&= 1+\frac{\phi(\Phi^{-1}(\delta)) \nu}{\delta \sqrt{\lambda l}} +O(l^{-1}), \Label{eq:Bz*/Bz*withk1} \\
		\frac{ B_{z_*-1,l}}{B_{z_*,l}}
		&= 1+\frac{\phi(\Phi^{-1}(\delta)) \nu}{\delta \sqrt{\lambda l}} +O(l^{-1}), \Label{eq:Bz*/Bz*withk2} \\
		\frac{ B_{z^*,l}}{B_{z^*+1,l}}
		&= 1+\frac{\phi(\Phi^{-1}(\delta)) \nu}{\delta \sqrt{\lambda l}} +O(l^{-1}). \Label{eq:Bz*/Bz*withk3}
	\end{align}
\end{lemma}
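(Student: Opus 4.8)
The plan is to reduce all three identities \eqssref{eq:Bz*/Bz*withk1}{eq:Bz*/Bz*withk2}{eq:Bz*/Bz*withk3} to a single asymptotic estimate for the ratio $B_{z,l}/B_{z+1,l}$ that holds whenever the integer $z$ lies within $O(1)$ of $z^*$, since the three stated ratios are exactly this one with $z=z_*$, $z=z_*-1$, and $z=z^*$ respectively. (By \lref{lem:z-kinfity} we have $z^*=l/\nu+O(\sqrt l)$, hence $z^*-l\to\infty$, so for each relevant $z$ all the $B_{\cdot,l}$'s involved genuinely exceed the index $l$ once $l$ is large.) The starting point is the decomposition $B_{z,l}=B_{z+1,l}+\Delta_{z,l}$ together with the identity $\Delta_{z,l}=\nu\,b_{z,l}$, where $b_{z,l}=b_{z,l}(\nu)=\binom{z}{l}\nu^l\lambda^{z-l}$, established in \eref{eq:Deltazkequal}; this gives
\begin{align*}
\frac{B_{z,l}}{B_{z+1,l}}=1+\frac{\nu\,b_{z,l}}{B_{z+1,l}}.
\end{align*}
It therefore suffices to prove, for every integer $z=z^*+O(1)$, the two estimates
\begin{align*}
B_{z+1,l}=\delta+O(l^{-1/2}),\qquad b_{z,l}=\frac{\phi(\Phi^{-1}(\delta))}{\sqrt{\lambda l}}\bigl(1+O(l^{-1/2})\bigr),
\end{align*}
because their quotient then equals $\tfrac{\nu\phi(\Phi^{-1}(\delta))}{\delta\sqrt{\lambda l}}+O(l^{-1})$, exactly as the lemma asserts.

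For the denominator I would feed \lref{lem:z-kinfity} into the Berry--Esseen bound of \lref{lem:BerryEsseen}. Since $z=z^*+O(1)$, \lref{lem:z-kinfity} gives $l-\nu z=\Phi^{-1}(\delta)\sqrt{\lambda l}+O(1)$ and $\nu\lambda z=\lambda l+O(\sqrt l)$, hence $\frac{l-\nu z}{\sqrt{\nu\lambda z}}=\Phi^{-1}(\delta)+O(l^{-1/2})$; as $\delta\in(0,1)$ is fixed, continuity of $\Phi$ yields $\Phi\!\bigl(\tfrac{l-\nu z}{\sqrt{\nu\lambda z}}\bigr)=\delta+O(l^{-1/2})$, and combining with the $0.5/\sqrt{\nu\lambda l}$ error term of \lref{lem:BerryEsseen} gives $B_{z,l}=\delta+O(l^{-1/2})$.

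The crux is the local estimate of the point mass $b_{z,l}$; the Berry--Esseen bound is too crude here because the leading term of $b_{z,l}$ is itself only of order $l^{-1/2}$. I would instead apply Stirling's formula, which is legitimate since $l\to\infty$ and $z-l\asymp l\to\infty$, to obtain
\begin{align*}
\ln b_{z,l}=-z\,D\!\Bigl(\tfrac{l}{z}\Big\|\nu\Bigr)-\tfrac12\ln\!\Bigl(2\pi\tfrac{l(z-l)}{z}\Bigr)+O(l^{-1}),
\end{align*}
where the entropy terms produced by Stirling combine with $l\ln\nu+(z-l)\ln\lambda$ into $-z D(l/z\|\nu)$. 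From \lref{lem:z-kinfity}, for $z=z^*+O(1)$ one has $\tfrac{l}{z}-\nu=\tfrac{\nu\Phi^{-1}(\delta)\sqrt\lambda}{\sqrt l}+O(l^{-1})$; inserting this into the Taylor expansion $D(\nu+\varepsilon\|\nu)=\tfrac{\varepsilon^2}{2\nu\lambda}+O(\varepsilon^3)$ (which follows from $\partial_pD(p\|\nu)|_{p=\nu}=0$ and $\partial_p^2D(p\|\nu)|_{p=\nu}=1/(\nu\lambda)$) and using $z=l/\nu+O(\sqrt l)$ gives $z\,D(l/z\|\nu)=\tfrac12\bigl(\Phi^{-1}(\delta)\bigr)^2+O(l^{-1/2})$, while $\tfrac{l(z-l)}{z}=\lambda l\bigl(1+O(l^{-1/2})\bigr)$. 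Hence $\ln b_{z,l}=-\tfrac12(\Phi^{-1}(\delta))^2-\tfrac12\ln(2\pi\lambda l)+O(l^{-1/2})$, and exponentiating, with $\phi(x)=\tfrac{1}{\sqrt{2\pi}}\rme^{-x^2/2}$, produces $b_{z,l}=\tfrac{\phi(\Phi^{-1}(\delta))}{\sqrt{\lambda l}}\bigl(1+O(l^{-1/2})\bigr)$.

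Substituting the numerator and denominator estimates into $1+\nu b_{z,l}/B_{z+1,l}$ then proves the claimed estimate for $B_{z,l}/B_{z+1,l}$, and specializing $z$ to $z_*$, $z_*-1$, and $z^*$ yields \eqssref{eq:Bz*/Bz*withk1}{eq:Bz*/Bz*withk2}{eq:Bz*/Bz*withk3}. I expect the main obstacle to be this Stirling step: one must control $z\,D(l/z\|\nu)$ to additive error $o(l^{-1/2})$ even though $l/z$ is only pinned down to $O(l^{-1})$ and $z$ only to $O(\sqrt l)$ (so the $O(1)$ ambiguity of $z^*$ must be absorbed without damage), and one must verify that the Stirling remainder for $\binom{z}{l}$ is genuinely $O(l^{-1})$, which uses $z-l\asymp l$. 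The denominator estimate via Berry--Esseen and the final assembly are routine.
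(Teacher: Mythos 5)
Your proof is correct, but it takes a genuinely different route from the paper's. The paper works entirely at the CDF level: it sets $x_1=\frac{l-z^*\nu}{\sqrt{z^*\lambda\nu}}$, $x_2=\frac{l-z_*\nu}{\sqrt{z_*\lambda\nu}}$, shows $x_2-x_1=\frac{\nu}{\sqrt{\lambda l}}+O(l^{-1})$, invokes the Edgeworth series to assert $\frac{B_{z_*,l}}{B_{z^*,l}}=\frac{\Phi(x_2)}{\Phi(x_1)}+O(l^{-1})$, applies the mean value theorem to $\Phi(x_2)-\Phi(x_1)$, and finally pins down $\Phi(x_1)=\delta+O(l^{-1/2})$ by a self-consistency argument from $B_{z^*,l}\leq\delta<B_{z_*,l}$. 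You instead exploit the exact identity $B_{z,l}-B_{z+1,l}=\Delta_{z,l}=\nu b_{z,l}$ from \eref{eq:Deltazkequal}, so that the ratio becomes $1+\nu b_{z,l}/B_{z+1,l}$, and then you only ever need $O(l^{-1/2})$ accuracy from the analytic approximations: Berry--Esseen for the denominator $B_{z+1,l}=\delta+O(l^{-1/2})$ and a Stirling/local-CLT estimate $b_{z,l}=\frac{\phi(\Phi^{-1}(\delta))}{\sqrt{\lambda l}}\bigl(1+O(l^{-1/2})\bigr)$ for the numerator; since the leading term is itself $O(l^{-1/2})$, the product of errors lands at $O(l^{-1})$ automatically. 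This is a real structural advantage: the delicate step in the paper is precisely getting the CDF \emph{difference} to accuracy $O(l^{-1})$ for a lattice distribution, which is why the Edgeworth citation is needed, and your decomposition sidesteps it entirely at the cost of a routine Stirling computation (for which your observation that $z-l\asymp\lambda l/\nu$ is the needed hypothesis). Your treatment of the $O(1)$ ambiguity in $z^*$ and the Taylor expansion $zD(l/z\|\nu)=\frac12(\Phi^{-1}(\delta))^2+O(l^{-1/2})$ both check out, and handling all three ratios as instances of $B_{z,l}/B_{z+1,l}$ with $z=z^*+O(1)$ is cleaner than the paper's case-by-case treatment.
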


\begin{proof}[Proof of Lemma \ref{lem:limBz/Bz}]
	To consider the normal approximation, we define
	\begin{align}\Label{eq:x1}
		x_1:=\frac{l-z^*\nu}{\sqrt{z^*\lambda\nu} },\qquad
		x_2:=\frac{l-z_*\nu}{\sqrt{z_*\lambda\nu} }.
	\end{align}
	By \lref{lem:z-kinfity}, in the limit $l\to+\infty$ we have $z^*=l/\nu+O(\sqrt{l})$, and thus
	\begin{align}
		x_1
		= \frac{l-z^*\nu}{\sqrt{z^*\lambda\nu} }
		= \frac{l-\nu [l/\nu+O(\sqrt{l})] }{\sqrt{\lambda\nu[l/\nu+O(\sqrt{l})]} }
		= \frac{O(\sqrt{l}) }{\sqrt{\lambda l+O(\sqrt{l})} }
		= O(1).
	\end{align}
	In addition, we have
	\begin{align}\Label{eq:x1-x2}
		x_2-x_1
		&= \frac{l-z_*\nu}{\sqrt{z_*\lambda\nu} } -x_1
		= \frac{1}{\sqrt{z_*}}\sqrt{\frac{\nu}{\lambda}}
		+ \frac{\sqrt{z^*}}{\sqrt{z_*}} \left(\frac{l-z^*\nu}{\sqrt{z^*\lambda\nu} } \right) -x_1 \nonumber\\
		&= \frac{1}{\sqrt{z_*}}\sqrt{\frac{\nu}{\lambda}} + \left( \sqrt{1+\frac{1}{z_*}} -1 \right) x_1 \nonumber\\
		&\stackrel{(a)}{=} \frac{1}{\sqrt{\frac{l}{\nu}+O(\sqrt{l})}}\sqrt{\frac{\nu}{\lambda}}+ \left( \sqrt{1+O(\frac{1}{l})} -1 \right) x_1 \nonumber\\
		%	&\stackrel{(b)}{=} \frac{1}{\sqrt{1+O(\frac{1}{\sqrt{l}})}} \frac{\nu}{\sqrt{\lambda l}} + x_1 O(l^{-1}) \\
		&\stackrel{(b)}{=} \left[1+O\Big(\frac{1}{\sqrt{l}}\Big)\right] \frac{\nu}{\sqrt{\lambda l}} + O(l^{-1})
		= \frac{\nu}{\sqrt{\lambda l}}+O(l^{-1}),
	\end{align}
	where $(a)$ follows because $z_*=l/\nu+O(\sqrt{l})$ and $1/z_*=O(1/l)$ (see \lref{lem:z-kinfity}),
	and $(b)$ follows because $x_1=O(1)$.
	
	According to \lref{lem:BerryEsseen} we have
	\begin{align}
		B_{z^*,l} &=
		\Phi\bigg(\frac{l-z^*\nu}{\sqrt{z^*\lambda\nu} }\bigg)+O\Big(\frac{1}{\sqrt{l}}\Big) =\Phi(x_1)+O\Big(\frac{1}{\sqrt{l}}\Big).  \Label{eq:EdgeSeries}
	\end{align}
	In addition, considering the Edgeworth series \cite[(2.1)]{Edgeworth}, we have
	\begin{align}
		\frac{ B_{z_*,l}}{B_{z^*,l}}
		&= \frac{\Phi(x_2)}{\Phi(x_1)}+ O(l^{-1})
		= \frac{\Phi(x_1)+ \phi(x_3) (x_2-x_1) }{\Phi(x_1)}+ O(l^{-1})\nonumber\\
		&= 1+\frac{\phi(x_1) \Big[\frac{\nu}{\sqrt{\lambda l}}+O(l^{-1})\Big] }{\Phi(x_1)}+ O(l^{-1})
		= 1+\frac{\phi(x_1) \nu }{\Phi(x_1)\sqrt{\lambda l}} +O(l^{-1}),\Label{eq:NJ2}
	\end{align}
	where $x_3\in(\min(x_1,x_2),\max(x_1,x_2))$, and the third equality follows from \eref{eq:x1-x2}.

	Since $B_{z^*,l}\leq \delta<B_{z_*,l}$,
	\eref{eq:NJ2} implies that $B_{z^*,l}=\delta+O(\frac{1}{\sqrt{l}})$.
	This fact together with \eref{eq:EdgeSeries} imply that
	\begin{align}
		\Phi(x_1)=\delta+O\Big(\frac{1}{\sqrt{l}}\Big), \qquad
		x_1  =\Phi^{-1}(\delta)+O\Big(\frac{1}{\sqrt{l}}\Big). \Label{eq:x1=Phidelta}
	\end{align}
	Then by plugging the above two equations into \eref{eq:NJ2} we get \eref{eq:Bz*/Bz*withk1}.
	
	Eqs.~\eqref{eq:Bz*/Bz*withk2} and \eqref{eq:Bz*/Bz*withk3} follow from \eqref{eq:Bz*/Bz*withk1} and $B_{z^*,l}=\delta+O(\frac{1}{\sqrt{l}})$, $B_{z_*,l}=\delta+O(\frac{1}{\sqrt{l}})$.
\end{proof}

\section{Derivation of \eref{XMY}}\Label{S3}
To show \eref{XMY},
we discuss $\overline{\epsilon}_\lambda^{\rm c}(l,n,\delta):=
1-\overline{\epsilon}_\lambda(l,n,\delta)$
instead of $\overline{\epsilon}_\lambda(l,n,\delta)$.
That is, we shall show
\begin{align}
	\overline{\epsilon}_\lambda^{\rm c}(0,n,\delta_z)
	=\frac{(n-z+1)\lambda}{z+(n-z+1)\lambda}
	\quad \forall z\in\{0,1,\dots,n+1\}.
	\Label{XMP3}
\end{align}

Let $Q_{z}\in {\cal Q}_{n+1}$ be the permutation invariant distribution on
$[n+1]$ such that $\sum_{i=1}^{n+1}Y_i=z$.
Then by simple calculation we have
\begin{align}
	Q_z(L=0)&=
	\left\{
	\begin{array}{ll}
		\lambda^{n} & \hbox{ when } z =n+1, \\
		\frac{(n+1-z)\lambda^z+ z \lambda^{z-1}}{n+1} =\delta_z
		& \hbox{ when } 1\le z \le n, \\
		1  & \hbox{ when } z =0,
	\end{array}
	\right.\\
	Q_z(L=0,Y_{n+1}=0)&=
	\left\{
	\begin{array}{ll}
		0 & \hbox{ when } z =n+1, \\
		\frac{(n+1-z)\lambda^z}{n+1} & \hbox{ when } 1\le z \le n, \\
		1  & \hbox{ when } z =0,
	\end{array}
	\right.
\end{align}
and thus
\begin{align}\Label{E13}
	\frac{Q_z(L=0,Y_{n+1}=0)}{Q_z(L=0)}
	&=
	\left\{
	\begin{array}{ll}
		0 &  z =n+1, \\
		\frac{(n+1-z)\lambda}{(n+1-z)\lambda+ z }\quad
		%=\frac{1}{1+\frac{z}{(n+1-z)\lambda}}
		&  1\le z \le n, \\
		1  &  z =0,
	\end{array} \right\}
	=
	\frac{(n-z+1)\lambda}{z+(n-z+1)\lambda} .
\end{align}

In addition, we have
\begin{align}
	\begin{split}
		\overline{\epsilon}_\lambda^{\rm c}(0,n,\delta)
		=&
		\min_{ \{p_j\}_{j=0}^{n+1}}
		\Bigg\{
		\frac{\sum_{j=0}^{n+1} p_j Q_j(L=0,Y_{n+1}=0)
		}{\sum_{j=0}^{n+1} p_j Q_j(L=0)}
		\Bigg|
		\sum_{j=0}^{n+1} p_j Q_j(L=0)
		\ge \delta \Bigg\},
	\end{split}
\end{align}
where $p_j$ form a probability distribution on $[n+1]$.

\if0
\begin{align}
	\begin{split}
		\overline{\epsilon}_\lambda^c(0,n,\delta)
		=&
		\min_{ \{p_z\}_{z=0}^{n+1}}
		\bigg\{
		\frac{
			p_0+
			\sum_{z=1}^{n} p_z
			\frac{(n+1-j)\lambda^z}{n+1}
		}{p_0+p_{n+1}\lambda^{n}+
			\sum_{z=1}^{n} p_z
			\frac{(n+1-j)\lambda^j+j \lambda^{j-1}}{n+1}} \\
		&\hspace{9ex}
		\bigg|
		p_0+p_{n+1}\lambda^{n}+
		\sum_{j=1}^{n} p_j
		\frac{(n+1-j)\lambda^j+j \lambda^{j-1}}{n+1}
		\ge \delta \bigg\},
	\end{split}
\end{align}
\fi

Note that
$Q_z(L=0)$, $Q_z(L=0,Y_{n+1}=0)$, and $\frac{Q_z(L=0,Y_{n+1}=0)}{Q_z(L=0)}$ all
strictly decrease with $z$ for $z\in\{0,1,\dots,n+1\}$.
Therefore, when $\delta=\delta_z$,
the above minimum is realized with
\begin{align}
	p_j= \left\{
	\begin{array}{ll}
		1 & \quad\hbox{ when } j=z,  \\
		0 & \quad\hbox{ otherwise. }
	\end{array}
	\right.
\end{align}
This fact together with \eref{E13} confirms \eref{XMP3}, and completes the proof of \eref{XMY}.

%\newpage
\section{Proof of \pref{prop:epsiidMonoton}  \main}\Label{PFepsiidMonoton}
According to \eref{XMR} \main we have
\begin{align}
	\overline{\epsilon}_{\iid}(k,n,\delta)
	=
	\max \{0\leq\epsilon\leq1 \,| B_{n,k}(\epsilon)\geq \delta \},
\end{align}
where $B_{n,k}(\epsilon)$ is defined in \eref{eq:binomCFD} \main.

\noindent
{\bf Monotonicity for $\delta$:}
The monotonicity of $\overline{\epsilon}_{\iid}(k,n,\delta)$ with $\delta$ follows immediately from the definition \eref{eq:epsiidDef} \main.

\noindent
{\bf Monotonicity for $n$:} We have
\begin{align}
	\begin{split}
		\overline{\epsilon}_{\iid}(k,n,\delta)
		&=
		\max \{0\leq\epsilon\leq1 \,| B_{n,k}(\epsilon)\geq \delta \} \\
		&\geq
		\max \{0\leq\epsilon\leq1 \,| B_{n+1,k}(\epsilon)\geq \delta \}
		=	
		\overline{\epsilon}_{\iid}(k,n+1,\delta),
	\end{split}
\end{align}
where the inequality follows because $B_{n,k}(\epsilon)\geq B_{n+1,k}(\epsilon)$ (according to \lref{lem:Bzkmono}).
Therefore, $\overline{\epsilon}_{\iid}(k,n,\delta)$ is
nonincreasing in $n$ for $n\geq k+1$.

\noindent
{\bf Monotonicity for $k$:} We have
\begin{align}
	\begin{split}
		\overline{\epsilon}_{\iid}(k,n,\delta)
		&=
		\max \{0\leq\epsilon\leq1 \,| B_{n,k}(\epsilon)\geq \delta \} \\
		&\leq
		\max \{0\leq\epsilon\leq1 \,| B_{n,k+1}(\epsilon)\geq \delta \}
		=	
		\overline{\epsilon}_{\iid}(k+1,n,\delta),
	\end{split}
\end{align}
where the inequality follows because $B_{n,k}(\epsilon)\leq B_{n,k+1}(\epsilon)$ (according to \lref{lem:Bzkmono}).
Therefore, $\overline{\epsilon}_{\iid}(k,n,\delta)$ is
nondecreasing in $k$ for $0\leq k\leq n-1$.

\section{Proof of Lemma \ref{LLP} \main}\Label{PF-Th68}
\begin{proof}[Proof of \eqref{XMA6T} \main]
	First, we show \eqref{XMA6T} \main by showing the relation
	\begin{align}
		\max\{0\leq\epsilon\leq1 \,| B_{n,k}(\epsilon)\geq 1/2\}
		>   \frac{k}{n} \Label{eLLO}
	\end{align}
	because \eqref{XMR} \main guarantees the equation
	$\max \{0\leq\epsilon\leq1 \,| B_{n,k}(\epsilon)\geq \delta \}
	=\overline{\epsilon}_{\iid}(k,n,\delta)$
	and
	the relation $\max\{0\leq\epsilon\leq1 \,| B_{n,k}(\epsilon)\geq \delta \}
	\ge
	\max\{0\leq\epsilon\leq1 \,| B_{n,k}(\epsilon)\geq \frac{1}{2} \}$
	holds for
	$0 < \delta \le \frac{1}{2}$.
	For this aim, we shall consider two cases depending on the value of $k$.
	
	\begin{enumerate}
		\item[1)] $k=0$.
		In this case we have
		\begin{align}
			\max\{0\leq\epsilon\leq1 \,| B_{n,k}(\epsilon)\geq 1/2\}
			=\max\{0\leq\epsilon\leq1 \,| (1-\epsilon)^n\geq 1/2\}
			>0 ,
		\end{align}
		which confirms \eref{eLLO}.
		
		\item[2)] $k\in\bbZ^{\geq 1}$.
		In this case we have $B_{n,k}(k/n)> 1/2$ according to
		\coref{cor:Bnk1-k/n>1/2}.
		Since $B_{n,k}(\epsilon)$ is continuous in $\epsilon$,
		there exists a
		positive number $\epsilon_0> \frac{k}{n}$ such that $B_{n,k}(\epsilon_0) \ge 1/2$.
		Hence, we find that
		\begin{align}
			\max\{0\leq\epsilon\leq1 \,| B_{n,k}(\epsilon)\geq 1/2\}
			\ge \epsilon_0
			>   \frac{k}{n} ,
		\end{align}
		which implies \eref{eLLO}.
	\end{enumerate}
	In conclusion, \eref{eLLO} holds for all $k\in\bbZ^{\geq 0}$.
\end{proof}
\begin{proof}[Proof of \eqref{XMA6C} \main]
	We show \eqref{XMA6C} \main by using \eqref{XMA6T} \main.
	When $0 < \delta \leq 1/2$ we have
	\begin{align}\Label{CAY}
		\begin{split}
			\bar{\epsilon}_{\lambda}(l, n, \delta)
			\geq \bar{\epsilon}_{\lambda}(l, n, 1/2)
			&\stackrel{(a)}{\geq}
			\max _{Q \in \mathcal{Q}_{\iid,n+1}}\left\{Q\left(Y_{n+1}=1 | L \leq l\right) |\, Q(L \leq l) \geq 1/2\right\} \\
			& \stackrel{(b)}{=} \max\{0\leq\epsilon\leq1 \,| B_{n,l}(\nu\epsilon)\geq1/2\} .
		\end{split}
	\end{align}
	Here $(a)$ follows because the optimization constraints in the RHS
	are
	stronger than those in $\bar{\epsilon}_{\lambda}(l, n, 1/2)$;
	$(b)$ follows from the fact that $\mathcal{Q}_{\iid,n+1}=\{P_{\theta}^{n+1}\}_{\theta \in [0,1]}$.
	
	In the following, we consider two cases depending on the relation between $\nu n$ and $l$.
	First, when $\nu n\leq l$, we have
	\begin{align}
		B_{n,l}(\nu) \stackrel{(a)}{\geq} B_{n,l}(l/n) \stackrel{(b)}{\geq} 1/2,
	\end{align}
	where $(a)$ follows from $l/n\geq \nu$ and \lref{lem:Bzkmono}; and  $(b)$ follows from \lref{lem:BzklamOneHalf}.
	This together with \eref{CAY} imply that $\bar{\epsilon}_{\lambda}(l, n, 1/2)=1$, which confirms \eqref{XMA6C} \main.

	Second, when $\nu n> l$, we have
	\begin{align}
		\text{RHS of \eqref{CAY}} & = \frac{1}{\nu}\max\{0\leq\epsilon\leq \nu \,| B_{n,l}(\epsilon)\geq1/2\}  \stackrel{(a)}{>}    \frac{l}{\nu n},
	\end{align}
	which confirms \eqref{XMA6C} \main again. Here
	$(a)$ follows from \eqref{XMA6T} \main, i.e., \eqref{eLLO}.
\end{proof}

\begin{proof}[Proof of \eqref{XMA6J} \main]
	When $\frac{k}{n}< \delta < \frac{k+1}{n+1}$, \eref{XMP9} \main implies \eqref{XMA6J} \main.
	When $\frac{k+1}{n+1}\leq \delta\leq 1/2$, we have
	\begin{align*}
		\overline{\epsilon}_0(k,n,\delta)
		&=
		\frac{(k+1) (n+1-k) -\delta(n+1)}{\delta(n-k)(n+1)}
		\geq
		\frac{(k+1) (n+1-k) -\frac{1}{2}(n+1)}{\delta(n-k)(n+1)}  \\
		&=
		\frac{k(n-k) +\frac{1}{2}(n+1)}{\delta(n-k)(n+1)}
		>
		\frac{k }{\delta n} ,
	\end{align*}
	which confirms \eqref{XMA6J} \main again. Here the last inequality follows because the following conditions are equivalent,
	\begin{align}
		\frac{k(n-k) +\frac{1}{2}(n+1)}{\delta(n-k)(n+1)}
		>\frac{k }{\delta n}
		\quad&\Leftrightarrow\quad
		nk(n-k) +\frac{n}{2}(n+1) > k(n-k)(n+1)
		\nonumber\\
		\quad&\Leftrightarrow\quad
		\frac{n}{2}(n+1) > k(n-k), \label{eq:11condition}
	\end{align}
	and that \eref{eq:11condition} holds for $\frac{k+1}{n+1}\leq 1/2$.
\end{proof}

\section{Proof of Proposition \ref{P4-3} \main}\Label{PP4-3}
%\begin{proof}
Remember that
${\cal Q}_{{\rm iid},n+1} =\{P_{\theta}^{n+1}\}_{\theta \in [0,1]}$.
The central limit theorem guarantees that
\begin{align}
\lim_{n \to \infty}P_{s +\frac{\tau}{\sqrt{n}}}^{n+1}(K \le \lceil s n\rceil)=
\Phi(-\frac{\tau}{\sqrt{s(1-s)}}).
\end{align}
Hence, the condition
$\lim_{n \to \infty}P_{s +\frac{\tau}{\sqrt{n}}}^{n+1}(K \le \lceil s n\rceil) \le \delta$
is equivalent to the condition $\tau \le - \Phi^{-1}(\delta) \sqrt{s(1-s) } $.
The above relation shows Item (a) for $\overline{\epsilon}_{{\rm iid}}(\lceil s n\rceil,n,\delta)$.

We have
\begin{align}
\lim_{n \to \infty}\frac{-1}{n}\ln P_{\tau}^{n+1}(K \le \lceil s n\rceil)
=D( s\|\tau).
\end{align}
Solving the equation $D( s\|\tau)=r $ for $\tau$, we have $\tau=
\epsilon_D(s,r)$.
The above relation shows Item (b) for
$\overline{\epsilon}_{{\rm iid}}(\lceil s n\rceil,n,\delta)$.
%\end{proof}

\section{Proof of Lemma \ref{L-XMLA} \main}	\Label{P-XMLA}
First, we notice that the derivative of $\kappa(\lambda)$ is calculated as
\begin{align*}
\begin{split}
&\kappa'(\lambda) %\\
%&=\frac{((1-\theta_0)\alpha -(1+\theta_0))\beta \lambda^{-1/2}-(1-\theta_0)\beta\lambda^{-3/2}
%-\gamma (1-\lambda)^{-1/2}(1-\theta_0(1-\lambda))
%-\gamma \theta_0(1-\lambda)^{1/2}
%}{2(1-\theta_0(1-\lambda))^{3/2}} \\
=\frac{((1-\theta_0)\alpha -(1+\theta_0))\beta \lambda^{-1/2}-(1-\theta_0)\beta\lambda^{-3/2}
-\gamma (1-\lambda)^{-1/2}
}{2(1-\theta_0(1-\lambda))^{3/2}} \\
&=\frac{\Phi^{-1}(\delta)(1-\theta_0)
\Big(
(1 +(1+\theta_0)\psi(\delta) )\lambda^{-1/2}+(1-\theta_0)\psi(\delta)\lambda^{-3/2}\Big)
-\frac{e-t}{\sqrt{\theta_0}}(1-\lambda)^{-1/2}
}{2(1-\theta_0(1-\lambda))^{3/2}} ,
\end{split}
\end{align*}
which confirms the equivalence between $\kappa'(\lambda)=0$ and 
Eq. \eqref{XMLA}\main. 
We notice that
\eref{XMLA} \main 
is equivalent to the cubic equation  $f_1(\lambda)+f_2(\lambda)=0$, where
		the functions $f_1(x)$ and $f_2(x)$ are defined as follows,
		\begin{align}
			f_1(x)&:=\frac{(e-t)^2}{\theta(\Phi^{-1}(\delta)(1-\theta_0))^2} x^3, \\
			f_2(x)&:=(x-1)
			\Big[(1 +(1+\theta_0)\psi(\delta) )x+(1-\theta_0)\psi(\delta)\Big]^2.
		\end{align}
		The cubic equation $f_2(x)=0$ has two real roots $x_0=\frac{-(1-\theta_0)\psi(\delta)}{1 +(1+\theta_0)\psi(\delta)}$ and $x_1=1$, where $x_0$ is a double root.
		
		The remaining proof is composed of three steps.
		The results of Step 1 and Step 3 will together confirm the fact that the equation \eqref{XMLA} \main has but only one solution in $(0,1)$.
		
		\noindent \textbf{Step 1:} The aim of this step is to show that the equation $f_1(x)+f_2(x)=0$ has at least one solution in $(0,1)$.
		
		We have
		\begin{align}
			f_1(0)+f_2(0)&=-\Big[(1-\theta_0)\psi(\delta)\Big]^2<0, \Label{eq:f1+f2<0} \\
			f_1(1)+f_2(1)&=\frac{(e-t)^2}{\theta(\Phi^{-1}(\delta)(1-\theta_0))^2}>0.
		\end{align}
		Since $f_1(x)+f_2(x)$ is continuous in $x$, the above inequalities implies that the equation $f_1(x)+f_2(x)=0$ has at least one solution in $(0,1)$.
		
		\noindent \textbf{Step 2:} The aim of this step is to show the double root $x_0<0$.
		
		First note that the numerator $-(1-\theta_0)\psi(\delta)>0$ for $\delta<1/2$.
		Second, the denominator  satisfies
		\begin{align}
			1 +(1+\theta_0)\psi(\delta)
			\stackrel{(a)}{<} 1 +\psi(\delta)
			=1+\frac{\phi(\Phi^{-1}(\delta)) }{\delta  \Phi^{-1}(\delta)}
			\stackrel{(b)}{=}
			1-\frac{\phi(y) }{y\Phi(-y)}
			\stackrel{(c)}{\leq} 0,
		\end{align}
		which confirms that $x_0<0$.
		Here $(a)$ follows because $\psi(\delta)<0$ when $\delta<1/2$;
		in $(b)$ we change the variable by letting $y=-\Phi^{-1}(\delta)$; and $(c)$ follows because $\Phi(-y)\leq\phi(y)/y$ for $y>0$, witch was first proved by Gordon \cite{Gordon41}.
		
		\noindent \textbf{Step 3:} The aim of this step is to show that the equation $f_1(x)+f_2(x)=0$ has at most one solution in $(0,\infty)$.
		
		Since the double root $x_0<0$, there are only two cases for the monotonicity of $f_2(x)$ in $(0,\infty)$:
		
		\textbf{Case A:} $f_2(x)$ increases monotonically with $x$ for $x\in(0,\infty)$. In this case, $f_1(x)+f_2(x)$ is also nondecreasing in $x$ for $x\in(0,\infty)$. Hence, the equation $f_1(x)+f_2(x)=0$ has at most one solution in $(0,\infty)$.
		
		\textbf{Case B:} In $(0,\infty)$,  $f_2(x)$ first monotonically decreases and then monotonically increases.
		In this case, in the interval $(0,\infty)$, the cubic function $f_1(x)+f_2(x)$ also first monotonically decreases and then monotonically increases.
		This fact together with \eref{eq:f1+f2<0} implies that
		the equation $f_1(x)+f_2(x)=0$ has at most one solution in $(0,\infty)$.

\section{Proof of Proposition \ref{P3-2} \main}\Label{PP3-2}
%\begin{proof}
Remember that
${\cal Q}_{{\rm iid},n+1} =\{P_{\theta}^{n+1}\}_{\theta \in [0,1]}$.
The law of small number guarantees \cite{PP} that
\begin{align}
\lim_{n \to \infty}P_{\frac{\tau}{n}}^{n+1}(K \le k_0)=
\mathrm{Pois}(k_0,\tau).
\end{align}
Hence, the condition
$\mathrm{Pois}(k_0,\tau)=\lim_{n \to \infty}P_{\frac{\tau}{n}}^{n+1}(K \le k_0) \le \delta$
is equivalent to the condition $\tau \le t_\mathrm{P}(k_0,\delta)$.
The above relation shows Item (a) for $\overline{\epsilon}_{{\rm iid}}(k_0,n,\delta)$.

The large deviation principle \cite{DembZ10} guarantees that
\begin{align}
\lim_{n \to \infty}\frac{-1}{n}\ln P_{\tau}^{n+1}(K \le k_0)=D( 0\|\tau)
=-\ln (1-\tau).
\end{align}
Solving the equation $-\ln (1-\tau)=r $ for $\tau$, we have $\tau=1-\rme^{-r}$.
The above relation shows Item (b) for $\overline{\epsilon}_{{\rm iid}}(k_0,n,\delta)$.
%\end{proof}

\section{Proof of \lref{MLT} \main}\Label{PF-MLT}
\begin{proof}[Proof of \lref{MLT} \main]
	First, by \eqref{eq:NsmalldeltaT} \main we have
	\begin{align}\Label{eq:Nklimepsdel}
		\lim_{\epsilon \to 0}\lim_{\delta\to 0} \frac{\epsilon N_{\lambda,{\rm c}}(\lceil \nu k_0\rceil,\epsilon,\delta)}{\ln \delta^{-1}}
		=\frac{1}{\lambda \ln \lambda^{-1}}.
	\end{align}

	In the following, we shall prove that
	\begin{align}\Label{eq:Nkhighpresion2}
		\lim_{\delta\rightarrow 0}\,\lim_{\epsilon\rightarrow 0} \frac{\epsilon N_{\lambda,{\rm c}}(\lceil \nu k_0\rceil,\epsilon,\delta)}{\ln\delta^{-1}}
		=\frac{1}{\lambda \ln\lambda^{-1}},
	\end{align}
	which together with \eref{eq:Nklimepsdel} confirm the lemma.
	
	For convenience, we use $l=\lceil \nu k_0\rceil$.
	By \lref{lem:Nk} below, when $\delta\leq1/2$ we have
	\begin{align}\label{eq:boundepsN/lndel}
		\frac{(1-\epsilon)(z_*-l)}{\lambda\ln\delta^{-1}}+\frac{\epsilon(z_*-1)}{\ln\delta^{-1}}
		\leq \frac{\epsilon N_{\lambda,{\rm c}}(l,\epsilon,\delta)}{\ln\delta^{-1}}
		\leq \frac{z^*-l+1+\sqrt{\lambda l}}{\lambda\ln\delta^{-1}},
	\end{align}
	where the upper bound follows from \eref{eq:NLB2}, and the lower bound follows from \eref{eq:NkLB1}.
	
	Then \lref{lem:z-kwithdelta} implies that
	\begin{align}
		&\lim_{\delta\to 0}\,\lim_{\epsilon\to 0}\, \text {LHS of \eqref{eq:boundepsN/lndel}}
		= \lim_{\delta\to 0} \frac{z^*-l-1}{\lambda \ln\delta^{-1}}
		= \frac{1}{\lambda \ln\lambda^{-1}},
		\\
		&\lim_{\delta\to 0}\,\lim_{\epsilon\to 0}\, \text {RHS of \eqref{eq:boundepsN/lndel}}
		= \lim_{\delta\to 0} \frac{z^*-l+1+\sqrt{\lambda l}}{\lambda \ln\delta^{-1}}
		= \frac{1}{\lambda \ln\lambda^{-1}},
	\end{align}
	which confirm \eref{eq:Nkhighpresion2} and completes the proof.
\end{proof}

\begin{lemma}\Label{lem:Nk}
	Suppose $0<\lambda,\epsilon,\delta< 1$ and $l\in\bbZ^{\geq 0}$. Then
	\begin{align}
		N_{\lambda,{\rm c}}(l,\epsilon,\delta)
		&\geq \Bigl(\frac{1}{\epsilon}-1\Bigr) \frac{z_*B_{z_*-1,l}}{B_{z_*,l}} +z_*-1 %\Label{eq:NkLB2} \\
		\geq \Bigl(\frac{1}{\epsilon}-1\Bigr) \frac{z_*-l}{\lambda} +z_*-1. \Label{eq:NkLB1}
	\end{align}	
	If $\delta\leq1/2$ in addition, then
	\begin{align}
		N_{\lambda,{\rm c}}(l,\epsilon,\delta)
		&\leq \biggl\lceil\frac{z^*-l+1+\sqrt{\lambda l}}{\lambda}\Bigl(\frac{1}{\epsilon}-1\Bigr)+z^*\biggr\rceil
		%&=\frac{(1-\nu\epsilon)z^*-(1-\epsilon)(k-\rme\sqrt{k})}{\lambda\epsilon}\\
		< \frac{z^*-l+1+\sqrt{\lambda l}}{\lambda \epsilon}.\Label{eq:NLB2}
	\end{align}	
\end{lemma}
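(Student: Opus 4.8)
The plan is to rewrite the defining condition of $N_{\lambda,{\rm c}}(l,\epsilon,\delta)$ in complementary form: since $\epsilon<1$, the inequality $\overline{\epsilon}_\lambda(l,n,\delta)\le\epsilon$ is equivalent to $\overline{\epsilon}^{\rm c}_\lambda(l,n,\delta)=1-\overline{\epsilon}_\lambda(l,n,\delta)\ge 1-\epsilon>0$, so $N_{\lambda,{\rm c}}(l,\epsilon,\delta)$ is the least integer $n\ge l+1$ with $\overline{\epsilon}^{\rm c}_\lambda(l,n,\delta)\ge 1-\epsilon$. I would then play off the two sided bounds on $\overline{\epsilon}^{\rm c}_\lambda(l,n,\delta)$ already proved: the \emph{upper} bound on $\overline{\epsilon}^{\rm c}_\lambda$ from \coref{cor:zeta2Bound1} yields the \emph{lower} bound \eqref{eq:NkLB1} on $N$, while the \emph{lower} bound on $\overline{\epsilon}^{\rm c}_\lambda$ from \lref{cor:zeta2Bound2} yields the \emph{upper} bound \eqref{eq:NLB2} on $N$.

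For \eqref{eq:NkLB1}, I would fix any integer $n\ge l+1$ with $\overline{\epsilon}_\lambda(l,n,\delta)\le\epsilon$ and show it satisfies the claimed inequality; since $N_{\lambda,{\rm c}}(l,\epsilon,\delta)$ is the least such $n$, this proves the bound. Because $\overline{\epsilon}^{\rm c}_\lambda(l,n,\delta)>0$, \tref{thm:AdvFidelity} forces $\delta>B_{n,l}$; combined with the monotonicity of $B_{z,l}$ in $z$ from \lref{lem:Bzkmono} and with $B_{z_*,l}>\delta$, this gives $n\ge z^*$, hence $l\le z_*\le n-1$, so $g_{z_*}(l,n,\lambda)$ and $h_{z_*}(l,n,\lambda)$ are given by the closed forms \eqref{eq:hzHomo} and \eqref{eq:gzHomo}, with the boundary case $z_*=l$ also covered once one notes $B_{l,l}(\nu)=B_{l-1,l}(\nu)=1$. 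Now \coref{cor:zeta2Bound1} gives $1-\epsilon\le\overline{\epsilon}^{\rm c}_\lambda(l,n,\delta)\le g_{z_*}/h_{z_*}$ (the $\max$ with $0$ being vacuous since the middle term is positive); substituting $h_{z_*}-g_{z_*}=z_*B_{z_*-1,l}/(n+1)$ and $h_{z_*}=[(n-z_*+1)B_{z_*,l}+z_*B_{z_*-1,l}]/(n+1)$ and clearing the positive denominators converts $1-g_{z_*}/h_{z_*}\le\epsilon$ into $(1-\epsilon)z_*B_{z_*-1,l}\le\epsilon(n-z_*+1)B_{z_*,l}$, which rearranges to the first inequality of \eqref{eq:NkLB1}. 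The second inequality is then immediate from $B_{z_*-1,l}/B_{z_*,l}\ge(z_*-l)/(z_*\lambda)$, i.e.\ the left half of \eqref{eq:Bzkz11} in \lref{lem:Bzkz} applied with $z=z_*-1$ (and trivial when $z_*=l$).

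For \eqref{eq:NLB2}, I would set $m:=\bigl\lceil\frac{z^*-l+1+\sqrt{\lambda l}}{\lambda}(\frac{1}{\epsilon}-1)+z^*\bigr\rceil$. Since $z^*\ge l+1$ and $\frac{1}{\epsilon}-1>0$, we have $m\ge z^*\ge l+1$, so \lref{cor:zeta2Bound2} applies at $n=m$ and, using $\delta\le 1/2$, its bound \eqref{eq:zeta2LB} gives $\overline{\epsilon}^{\rm c}_\lambda(l,m,\delta)\ge\frac{\lambda(m-z^*)}{\lambda(m-z^*)+z^*-l+1+\sqrt{\lambda l}}$. The choice of $m$ guarantees $m-z^*\ge\frac{z^*-l+1+\sqrt{\lambda l}}{\lambda}(\frac{1}{\epsilon}-1)$, which is exactly the inequality making this rational expression $\ge 1-\epsilon$; hence $\overline{\epsilon}_\lambda(l,m,\delta)\le\epsilon$ and $N_{\lambda,{\rm c}}(l,\epsilon,\delta)\le m$, the first inequality of \eqref{eq:NLB2}. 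For the strict inequality, note $m<\frac{z^*-l+1+\sqrt{\lambda l}}{\lambda}(\frac{1}{\epsilon}-1)+z^*+1=\frac{z^*-l+1+\sqrt{\lambda l}}{\lambda\epsilon}+\bigl(z^*+1-\frac{z^*-l+1+\sqrt{\lambda l}}{\lambda}\bigr)$, and the parenthesized quantity is $\le 0$ exactly when $\nu z^*\ge l-\nu-\sqrt{\lambda l}$, which holds with room to spare because $z^*\ge l/\nu$ by \coref{cor:z*k/nu} (this is the second place where $\delta\le 1/2$ is used).

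The computations involved are routine; the step needing the most care is the index bookkeeping — making sure $z_*$ and $z^*$ fall in the range where the closed forms \eqref{eq:hzHomo} and \eqref{eq:gzHomo} for $g_z,h_z$ are valid, and checking that the degenerate cases $z_*=l$ and $l=0$ (where $B_{z_*-1,l}$ degenerates) do not break the argument. The conceptual points are that the lower bound on $N$ must be read off from the \emph{upper} bound on $\overline{\epsilon}^{\rm c}_\lambda$ and conversely, and that the sharp ceiling in \eqref{eq:NLB2} comes from the rational lower bound in \lref{cor:zeta2Bound2}, not from its cruder linear consequence $1-\frac{z^*-l+1+\sqrt{\lambda l}}{\lambda n}$.
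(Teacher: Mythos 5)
Your proposal is correct and follows essentially the paper's own route: the lower bound \eqref{eq:NkLB1} is obtained exactly as in the paper from the upper estimate in \coref{cor:zeta2Bound1} together with \lref{lem:Bzkz}, and your index bookkeeping (including the degenerate case $z_*=l$) is sound. The only cosmetic difference is in \eqref{eq:NLB2}: you invoke the packaged bound \eqref{eq:zeta2LB} of \lref{cor:zeta2Bound2} at the explicit candidate $n=m$, whereas the paper applies \coref{cor:zeta2Bound1} with \eqref{eq:Bzkz22} of \lref{lem:Bzkz} and \coref{cor:z*k/nu} directly; since \eqref{eq:zeta2LB} is derived from precisely these ingredients, the two arguments coincide in substance.
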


%\section{Proof of \lref{lem:Nk}}
\begin{proof}[Proof of \lref{lem:Nk}]
	The proof is composed of two steps.
	%eq:NkLB1
	%eq:NkUB2
	
	\noindent \textbf{Step 1:} Proof of \eqref{eq:NkLB1}.
	When $0<\delta<1$ we have
	\begin{align}
		\begin{split}
			&N_{\lambda,{\rm c}}(l,\epsilon,\delta)
			= \min\{ n \geq l+1 \,|\, \overline{\epsilon}_\lambda^{\rm c}(l,n,\delta) \geq 1-\epsilon \} \\
			&\geq  \min\bigg\{ n \,\bigg| \max \left\{0, \frac{g_{z_*}(l,n,\lambda)}{h_{z_*}(l,n,\lambda)}\right\} \geq 1-\epsilon \bigg\}\\
			&=\biggl\lceil \frac{z_*B_{z_*-1,l}}{B_{z_*,l}}\Bigl(\frac{1}{\epsilon}-1\Bigr)+z_*-1\biggr\rceil
			\geq \frac{z_*-l}{\lambda}\Bigl(\frac{1}{\epsilon}-1\Bigr)+z_*-1,
		\end{split}
	\end{align}
	which confirms \eqref{eq:NkLB1}.
	Here the first inequality follows from \coref{cor:zeta2Bound1} in the main text, and the second equality follows from \lref{lem:Bzkz}. % in Appendix~\ref{app:Bzklambda}.

	\noindent \textbf{Step 2:} Proof of \eref{eq:NLB2}.
	When $0<\delta\leq1/2$ we have
	\begin{align}
		\begin{split}
			&N_{\lambda,{\rm c}}(l,\epsilon,\delta)
			= \min\{ n \geq l+1 \,|\, \overline{\epsilon}_\lambda^{\rm c}(l,n,\delta) \geq 1-\epsilon \} \\
			&\leq  \min\bigg\{ n \geq l+1 \,\bigg|\, \frac{g_{z^*+1}(l,n,\lambda)}{h_{z^*+1}(l,n,\lambda)} \geq 1-\epsilon \bigg\}
			=  \biggl\lceil\frac{(z^*+1)B_{z^*,l}}{B_{z^*+1,l}}\Bigl(\frac{1}{\epsilon}-1\Bigr)+z^*\biggr\rceil \\
			& \leq \biggl\lceil\frac{z^*-l+1+\sqrt{\lambda l}}{\lambda}\Bigl(\frac{1}{\epsilon}-1\Bigr)+z^*\biggr\rceil
			%&=\frac{(1-\nu\epsilon)z^*-(1-\epsilon)(k-\rme\sqrt{k})}{\lambda\epsilon}\\
			< \frac{z^*-l+1+\sqrt{\lambda l}}{\lambda \epsilon},
		\end{split}
	\end{align}
	which confirms \eref{eq:NLB2}.
	Here the first inequality follows from \coref{cor:zeta2Bound1} in the main text, and
	the second inequality follows from \lref{lem:Bzkz} and the fact that $z^*\geq l/\nu$ when $0<\delta\leq1/2$ (see \coref{cor:z*k/nu}).
\end{proof}

\section{Proofs of Equations~(\ref{eq:hzHomo}) and (\ref{eq:gzHomo}) \main}\Label{app:ProofhzgzHomo}
For $z\in\{0,1,\dots,n+1\}$, we have
\begin{align}
	Q(K=z,Y_{n+1}=0|Z=z) &= \frac{n -z+1}{n+1}, \\
	Q(K=z-1,Y_{n+1}=1|Z=z) &= \frac{z}{n+1}.
\end{align}
In the following, we shall show Eqs.~\eqref{eq:hzHomo} and \eqref{eq:gzHomo} \main, respectively.

\noindent \textbf{Step 1:} Proof of \eref{eq:hzHomo} \main.
For $0\leq v\leq z-1$, we have
\begin{align}
	Q(L=v|Z=z) =\,& Q(K=z,Y_{n+1}=0|Z=z)  {z \choose v} (1-\lambda)^v \lambda^{z-v} \nonumber \\
	&+Q(K=z-1,Y_{n+1}=1|Z=z)  {z-1 \choose v} (1-\lambda)^v \lambda^{z-1-v} \nonumber \\
	=\,& \frac{n -z+1}{n+1}  b_{z,v}(\nu)
	+\frac{z}{n+1}  b_{z-1,v}(\nu),
\end{align}
where $b_{z-1,v}(\nu)$ is defined in \eref{eq:binomCFD} \main.
Hence, for $0\leq l \leq z-1$, we have
\begin{align}
	Q(L\leq l |Z=z) &= \sum_{v=0}^l Q(L=v|Z=z)
	= \frac{n -z+1}{n+1}  \sum_{v=0}^l b_{z,v}(\nu)
	+\frac{z}{n+1}  \sum_{v=0}^l b_{z-1,v}(\nu) \nonumber \\
	&= \frac{n -z+1}{n+1}B_{z,l} +\frac{z}{n+1} B_{z-1,l}.
\end{align}
In addition, when $l \geq z$, it is obvious that $P(L \le l|Z=z)=1$.
This completes the proof of \eref{eq:hzHomo} \main.

\noindent \textbf{Step 2:} Proof of \eref{eq:gzHomo} \main.
For $0\leq v\leq z-1$, we have
\begin{align}
	Q(L=v,Y_{n+1}=0|Z=z)
	&= Q(K=z,Y_{n+1}=0|Z=z)  {z \choose v} (1-\lambda)^v \lambda^{z-v} \nonumber \\
	&=\frac{n -z+1}{n+1}  b_{z,v}(\nu),
\end{align}
Hence, when $0\leq l \leq z-1$, we have
\begin{align}
	Q(L\leq l ,Y_{n+1}=0 |Z=z)
	&= \sum_{v=0}^l Q(L=v,Y_{n+1}=0|Z=z)  \nonumber \\
	&= \frac{n -z+1}{n+1}  \sum_{v=0}^l b_{z,v}(\nu)
	= \frac{n -z+1}{n+1} B_{z,l}.
\end{align}
In addition, when $l \geq z$, we have
\begin{align}
	Q(L\leq l ,Y_{n+1}=0 |Z=z)
	=	Q(K=z,Y_{n+1}=0|Z=z)
	= \frac{n -z+1}{n+1}.
\end{align}
This completes the proof of \eref{eq:gzHomo} \main.

\section{Proofs of \lrefs{lem:gzhzMono}{lem:SlopeMono} \main}\Label{app:ProofLemmaZeta2}
\begin{proof}[Proof of \lref{lem:gzhzMono}]
	According to \lref{lem:Bzkmono}, $B_{z,l}$ decreases monotonically in $z$ for $z\geq l$.
	
	For $z\in\{l+1,\dots,n\}$, we have
	\begin{align}
		h_n(l,n,\lambda)=B_{n,l}
		\leq B_{z,l}< h_z(l,n,\lambda)
		< B_{z-1,l}\leq B_{l,l}= h_l(l,n,\lambda),
	\end{align}
	where the second and third inequality follows because
	\begin{align}
		h_z(l,n,\lambda)= B_{z,l} + \frac{z}{n+1} (B_{z-1,l}-B_{z,l}).
	\end{align}
	Hence $h_z(l,n,\lambda)$ decreases monotonically in $z$ for $l\leq z\leq n+1$.
	
	The monotonicity of $g_z(l,n,\lambda)$ with $z$ follows from that
	$\frac{n-z+1}{n+1}$ decreases monotonically in $z$ for $0\leq z\leq n+1$, and that
	$B_{z,l}$ decreases monotonically  in $z$ for $l\leq z\leq n+1$.
	
	By \eref{eq:gzHomo} \main,
	$g_z(l,n,\lambda)\geq0$ for $z\leq n+1$, and
	$g_z(l,n,\lambda)<0$ for $z\geq n+2$.
	In addition, By \eref{eq:hzHomo} \main we have
	\begin{align}
		h_z(l,n,\lambda)=B_{z,l}+
		\frac{z }{n+1}(B_{z-1,l}-B_{z,l})>0,
	\end{align}
	which completes the proof.
\end{proof}

\begin{proof}[Proof of \lref{lem:SlopeMono} \main]
	Due to \lref{lem:gzhzMono} \main, it suffices to prove that $[h_{z+1}(l,n,\lambda)-h_z(l,n,\lambda)]/[g_{z+1}(l,n,\lambda)-g_z(l,n,\lambda)]$ is strictly increasing in $z$ for $l\leq z\leq n$. 	
	We have
	\begin{align}
		(n+1)[h_{z+1}(l,n,\lambda)-h_z(l,n,\lambda)]&=(n-z)B_{z+1,l}+(2z-n)B_{z,l}-zB_{z-1,l},\\
		(n+1)[g_{z+1}(l,n,\lambda)-g_z(l,n,\lambda)]&=(n-z)B_{z+1,l}-(n-z+1)B_{z,l},
	\end{align}
	which imply that
	\begin{align}
		\begin{split}
			&\frac{h_{z+1}(l,n,\lambda)-h_z(l,n,\lambda)}{g_{z+1}(l,n,\lambda)-g_z(l,n,\lambda)}-1
			=\frac{z(B_{z-1,l} -B_{z,l})-B_{z,l}}{(n-z)(B_{z,l} -B_{z+1,l}) +B_{z,l}}\\
			&=\frac{z\Delta_{z-1,l}-B_{z,l}}{(n-z)\Delta_{z,l} +B_{z,l}}
			=\frac{z\Delta_{z-1,l}}{(n-z)\Delta_{z,l} +B_{z,l}}\biggl(1-\frac{B_{z,l}}{z\Delta_{z-1,l}}\biggr).
		\end{split}
	\end{align}
	Note that
	\begin{equation}
		\frac{\Delta_{z-1,l}}{B_{z,l}}=\frac{B_{z-1,l}}{B_{z,l}}-1
	\end{equation}
	is nondecreasing in $z$ according to \lref{lem:Bzkz}, so  the function $\bigl(1-\frac{B_{z,l}}{z\Delta_{z-1,l}}\bigr)$ is strictly increasing in $z$.
	In addition, $\Delta_{z-1,l}/\Delta_{z,l}$ is nondecreasing in $z$ according to
	\lref{lem:Deltazk},
	so  the function $z\Delta_{z-1,l}/[(n-z)\Delta_{z,l} +B_{z,l}]$ is strictly increasing in $z$, because
	\begin{equation}
		\frac{(n-z)\Delta_{z,l} +B_{z,l}}{z\Delta_{z-1,l}}
		=\left(\frac{n}{z}-1\right)\frac{\Delta_{z,l}}{\Delta_{z-1,l}}+\frac{B_{z,l}}{z\Delta_{z-1,l}}
	\end{equation}
	is strictly decreasing in $z$.
	Therefore, $[h_{z+1}(l,n,\lambda)-h_z(l,n,\lambda)]/[g_{z+1}(l,n,\lambda)-g_z(l,n,\lambda)]$ is strictly increasing in $z$ for $l\leq z\leq n$, which completes the proof.
\end{proof}

\section{Proof of \pref{lem-2-2} \main}\Label{PF-lem-2-2}
We show \pref{lem-2-2} \main by showing the following lemma.
\begin{lemma}\label{lem:zeta2monoto}
	Suppose $0\leq\lambda<1$, $0<\delta\leq1$, $l\in\bbZ^{\geq 0}$, and $n\in\bbZ^{\geq l+1}$.
	Then $\overline{\epsilon}_\lambda^{\rm c}(l,n,\delta)$ is
	nondecreasing in $\delta$ for $0<\delta\leq1$,
	nondecreasing in $n$ for $n\in\bbZ^{\geq l+1}$, and
	nonincreasing in $l$ for $0\leq l\leq n-1$.
\end{lemma}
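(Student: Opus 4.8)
The plan is to establish the three monotonicities of $\overline{\epsilon}^{\rm c}_\lambda(l,n,\delta)=1-\overline{\epsilon}_\lambda(l,n,\delta)$ directly; \pref{lem-2-2} then follows at once. The case $\lambda=0$ is handled from the closed form \eqref{eq:zeta2(lambda=0)}, where each monotonicity reduces to an elementary inequality (for instance, nonincreasing in $l$ reduces, on the common nontrivial branch, to $(n-l)(n-l-1)+(n+1)(1-\delta)\ge0$, which is clear). So from now on assume $0<\lambda<1$ and use the representation $\overline{\epsilon}^{\rm c}_\lambda(l,n,\delta)=\min_Q\{f_l(Q)/p_l(Q)\mid p_l(Q)\ge\delta\}$ from \eqref{AYU}, together with \pref{prop:ConvexHull} and \tref{thm:AdvFidelity}.

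Monotonicity in $\delta$ is immediate: as $\delta$ increases the feasible set $\{Q\mid p_l(Q)\ge\delta\}$ shrinks (and stays nonempty for $\delta\le1$, since $p_l(Q)=h_0(l,n,\lambda)=1$ when $Q$ is concentrated on $Z=0$), so the minimum cannot decrease. For monotonicity in $n$ I would show that every generating point of ${\cal R}_{l,n+1,\lambda}$ is dominated by a point of ${\cal R}_{l,n,\lambda}$ having a larger first and a smaller second coordinate. Indeed, for $0\le z\le n+1$ one has $h_z(l,n,\lambda)\ge h_z(l,n+1,\lambda)$ and $g_z(l,n,\lambda)\le g_z(l,n+1,\lambda)$; via \eqsref{eq:hzHomo}{eq:gzHomo} these reduce respectively to $B_{z-1,l}\ge B_{z,l}$ (\lref{lem:Bzkmono}) and to $\frac{n-z+1}{n+1}\le\frac{n-z+2}{n+2}$, and the one remaining point $(h_{n+2}(l,n+1,\lambda),g_{n+2}(l,n+1,\lambda))=(B_{n+1,l},0)$ is dominated by $(B_{n,l},0)=(h_{n+1}(l,n,\lambda),g_{n+1}(l,n,\lambda))$ because $B_{n,l}\ge B_{n+1,l}$. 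Taking convex combinations (\pref{prop:ConvexHull}), every $Q'$ with $p_l(Q')\ge\delta$ produces a $Q$ with $p_l(Q)\ge p_l(Q')\ge\delta$ and $f_l(Q)\le f_l(Q')$, hence $f_l(Q)/p_l(Q)\le f_l(Q')/p_l(Q')$, so $\overline{\epsilon}^{\rm c}_\lambda(l,n,\delta)\le\overline{\epsilon}^{\rm c}_\lambda(l,n+1,\delta)$.

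Monotonicity in $l$ is the hard part, which I expect to be the main obstacle. By \tref{thm:AdvFidelity} it is equivalent to $\zeta_\lambda(l+1,n,\delta)\le\zeta_\lambda(l,n,\delta)$ for all $0<\delta\le1$; since $\zeta_\lambda(l,n,\delta)$ equals the value at abscissa $\delta$ of the (convex, nondecreasing) lower boundary of ${\cal R}_{l,n,\lambda}$ when $\delta\ge B_{n,l}$ and is $0$ otherwise, and $B_{n,l+1}>B_{n,l}$ (\lref{lem:Bzkmono}), this amounts to showing that the lower boundary of ${\cal R}_{l+1,n,\lambda}$ lies weakly below that of ${\cal R}_{l,n,\lambda}$ on the common abscissa range $[B_{n,l+1},1]$. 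As both are convex polylines, a chord argument reduces this to a check at the two endpoints (at $x=1$ one has $g_{l+1}(l+1,n,\lambda)=\frac{n-l}{n+1}<\frac{n-l+1}{n+1}=g_l(l,n,\lambda)$; at $x=B_{n,l+1}$ the value $0$ of the ${\cal R}_{l+1,n,\lambda}$ boundary is minimal) and at each vertex $(h_z(l,n,\lambda),g_z(l,n,\lambda))$ of the ${\cal R}_{l,n,\lambda}$ boundary lying in that range, i.e.\ to proving each such vertex is on or above the lower boundary of ${\cal R}_{l+1,n,\lambda}$. Using the recursion $B_{z+1,l+1}(\nu)=\nu B_{z,l}(\nu)+\lambda B_{z,l+1}(\nu)$ and the identities $b_{z,l+1}(\nu)=\frac{z\nu}{l+1}b_{z-1,l}(\nu)$ and $\Delta_{z-1,l}=\nu b_{z-1,l}(\nu)$ (the latter from \eqref{eq:Deltazkequal}), one checks readily that $h_{z+1}(l+1,n,\lambda)\ge h_z(l,n,\lambda)$ (it collapses to $(n-z)\lambda+z\ge l$); however the natural companion $g_{z+1}(l+1,n,\lambda)\le g_z(l,n,\lambda)$ can fail, so the vertex must be compared not to a single vertex of ${\cal R}_{l+1,n,\lambda}$ but to a point on the segment of its lower boundary joining $(h_{z+1}(l+1,n,\lambda),g_{z+1}(l+1,n,\lambda))$ to $(h_{z+2}(l+1,n,\lambda),g_{z+2}(l+1,n,\lambda))$, and the drop along that segment must be bounded below via its slope. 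Controlling that slope using the monotonicity of $B_{z,l}/B_{z+1,l}$ in $z$ (\lref{lem:Bzkz}) and the formula for $\Delta_{z,l}$ is where the real work lies, and I would isolate the resulting family of inequalities as a separate technical lemma before assembling the three parts.
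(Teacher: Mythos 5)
Your treatment of the $\delta$- and $n$-monotonicities is correct and essentially the paper's own argument (the paper phrases the $n$-case as a comparison of lower boundaries of ${\cal R}_{l,n,\lambda}$ and ${\cal R}_{l,n+1,\lambda}$, while you phrase it as pointwise dominance of generating points followed by convex combination; these are equivalent, and your verification of $h_z(l,n,\lambda)\geq h_z(l,n+1,\lambda)$, $g_z(l,n,\lambda)\leq g_z(l,n+1,\lambda)$ and the extra point $(B_{n+1,l},0)$ is right). The reduction of the $l$-monotonicity to showing that each vertex of the lower boundary of ${\cal R}_{l,n,\lambda}$ lies weakly above the (convex) lower boundary of ${\cal R}_{l+1,n,\lambda}$ is also the right move, and you correctly observe both that $h_{z+1}(l+1,n,\lambda)\geq h_z(l,n,\lambda)$ holds and that the naive companion $g_{z+1}(l+1,n,\lambda)\leq g_z(l,n,\lambda)$ can fail.

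The genuine gap is that the argument stops exactly at its crux: the slope bound that compensates for the failure of $g_{z+1}(l+1,n,\lambda)\leq g_z(l,n,\lambda)$ is announced as ``a separate technical lemma'' but never stated, let alone proved, so the $l$-monotonicity is not established. The inequality you need is the paper's \eqref{eq:slopwithk},
\begin{align*}
\frac{g_{z+1}(l,n,\lambda)-g_z(l,n,\lambda)}{h_{z+1}(l,n,\lambda)-h_z(l,n,\lambda)}
\;\geq\;
\frac{g_{z+1}(l,n,\lambda)-g_{z+1}(l+1,n,\lambda)}{h_{z+1}(l,n,\lambda)-h_{z+1}(l+1,n,\lambda)},
\end{align*}
which, combined with \eqref{eq:ghzwithk} and \eqref{eq:hz+1k+1geqhzk}, lets one run a downward induction in $z$ comparing the vertex $(h_z(l,n,\lambda),g_z(l,n,\lambda))$ with the chord through $(h_{z+1}(l,n,\lambda),\tau_{n,l+1}(h_{z+1}(l,n,\lambda)))$ and $(h_{z+1}(l+1,n,\lambda),g_{z+1}(l+1,n,\lambda))$ — note the paper's interpolation interval is $[h_{z+1}(l,n,\lambda),h_{z+1}(l+1,n,\lambda)]$, not the segment of the $(l+1)$-boundary between its own vertices $z+1$ and $z+2$ that you propose; the containment $h_{z+2}(l+1,n,\lambda)\leq h_z(l,n,\lambda)$ needed for your version is not justified. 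The missing inequality does close cleanly, but by an exact computation rather than a soft monotonicity argument: rewriting it as \eqref{eq:invslopwithk}, the identity $\Delta_{z,l}=(1-\lambda)b_{z,l}$ from \eqref{eq:Deltazkequal} gives $\Delta_{z-1,l}/\Delta_{z,l}=(z-l)/(z\lambda)$, whence the left-hand side is at most $\frac{z-l}{(n-z)\lambda}$ and the right-hand side equals $\frac{z-l}{(n-z)\lambda}$ exactly (Eqs.~\eqref{eq:LHSF14} and \eqref{eq:RHSF14} in \suppl{PF-lem-2-2}); \lref{lem:Bzkz} alone, which you cite as the intended tool, does not produce this. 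Until that family of inequalities is written down and verified, the $l$-monotonicity — which you yourself flag as the hard part — remains unproved.
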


\noindent
{\bf Monotonicity for $\delta$:}\quad
The monotonicity of $\overline{\epsilon}_\lambda^{\rm c}(l,n,\delta)$ with $\delta$ follows immediately from the formula \eqref{AYU} \main for $\overline{\epsilon}_\lambda^{\rm c}$.

\noindent
{\bf Monotonicity for $n$:}\quad
To prove the monotonicity of $\overline{\epsilon}_\lambda^{\rm c}(l,n,\delta)$ with $n$,
we consider two cases depending on the value of $\lambda$.
First, when $\lambda=0$, the monotonicity follows from \eqsref{eq:zeta1(lambda=0)}{eq:zeta2(lambda=0)} \main.
Second, when $\lambda>0$, we show the monotonicity by showing that the lower boundary of ${\cal R}_{n,l,\lambda}$ is not higher than that of ${\cal R}_{n+1,l,\lambda}$ for $n\geq l+1$. This fact can be shown as follows.

Since $g_z(l,n,\lambda)$ and $h_z(l,n,\lambda)$ are defined in \eqsref{eq:gzHomo}{eq:hzHomo} \main, respectively, we have the relations;
\begin{align}
	g_z(l,n,\lambda) \leq g_z(l,n+1,\lambda), \qquad
	h_z(l,n,\lambda) \geq h_z(l,n+1,\lambda)
\end{align}
for $z=0,1,\dots,n+1$.
Let $\tau_{n,l}(x) $ be the graph to show the lower boundary of ${\cal R}_{n,l,\lambda}$.
For $x \le h_{n+1}(l,n,\lambda)$, we define
$\tau_{n,l}(x) $ to be $0$.
We denote the line $\kappa[A_1,A_2]$ to pass through two points $A_1$ and $A_2$.
We denote the point $(h_{z}(l,n,\lambda),g_{z}(l,n,\lambda))$
by $A_z(l,n)$.
Since Lemma \ref{lem:gzhzMono} \main guarantees that $h_z(l,n,\lambda)$
is monotonically decreasing for $z$, for $x \le h_{z}(l,n,\lambda)$,
there exists an integer $z'\ge z+1$ such that $x \in [h_{z'}(l,n,\lambda), h_{z'-1}(l,n,\lambda)]$.
Then, we have
\begin{align}
	\tau_{n,l}(x) \le
	\kappa[A_{z}(l,n),A_{z'}(l,n)](x).
	\Label{aa}
\end{align}
Since Lemma \ref{lem:gzhzMono} \main guarantees that  $g_z(l,n,\lambda)$
is monotonically decreasing for $z$,
we have
\begin{align}
	g_z(l,n+1,\lambda) \ge g_z(l,n,\lambda)  \ge g_{z'}(l,n,\lambda) .
	\Label{ab}
\end{align}
Combining \eqref{aa} and \eqref{ab}, we have
\begin{align}
	\tau_{n,l}(h_{z}(l,n+1,\lambda)) \le g_z(l,n+1,\lambda).
\end{align}
Therefore, the lower boundary of ${\cal R}_{n,l,\lambda}$ is not higher than that of ${\cal R}_{n+1,l,\lambda}$ for $n\geq l+1$.
(see the left plot of Fig.~\ref{fig:Monoton_nk} for an illustration),

\begin{figure}
	\begin{center}
		\includegraphics[width=6.5cm]{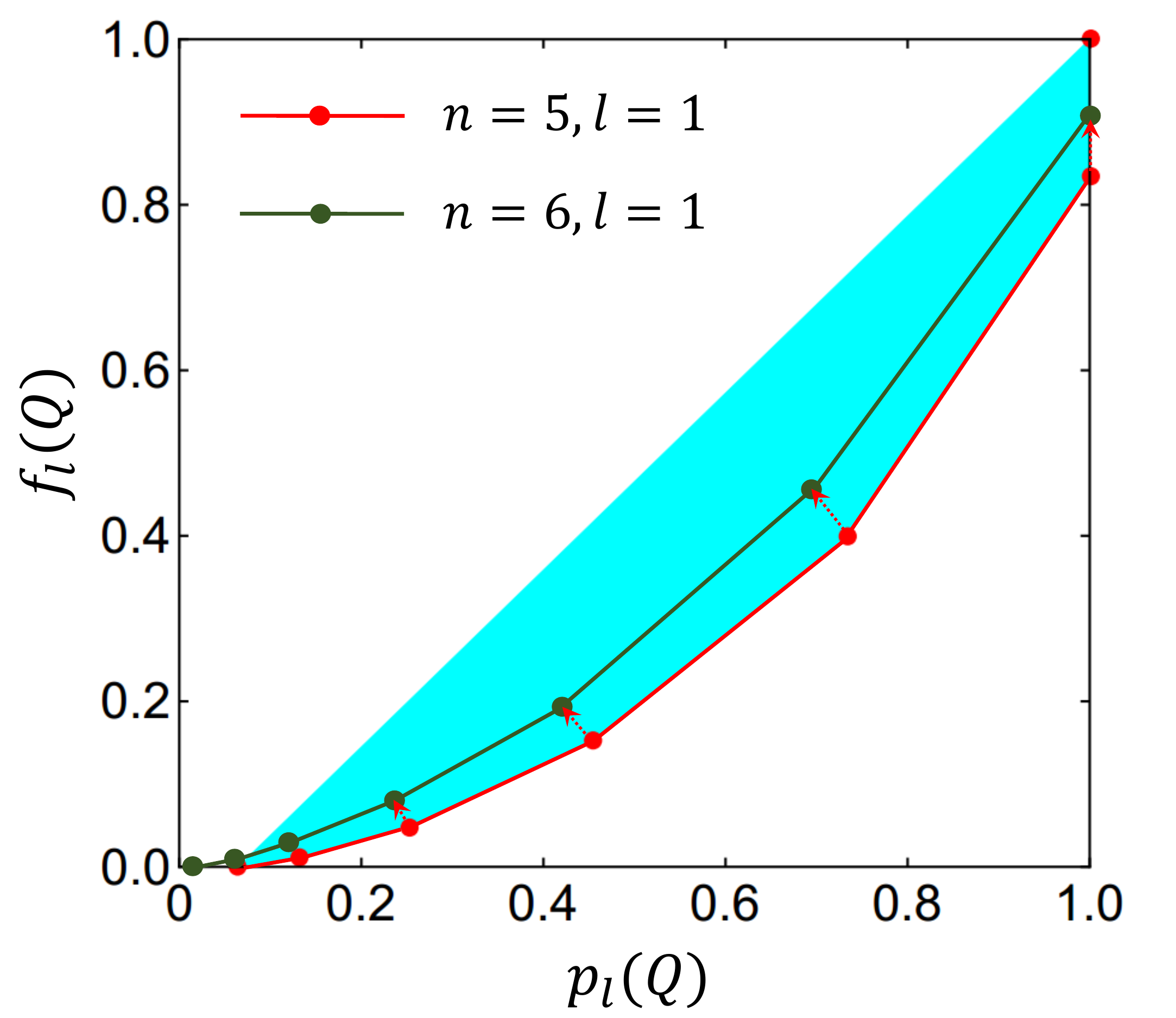}\quad
		\includegraphics[width=6.5cm]{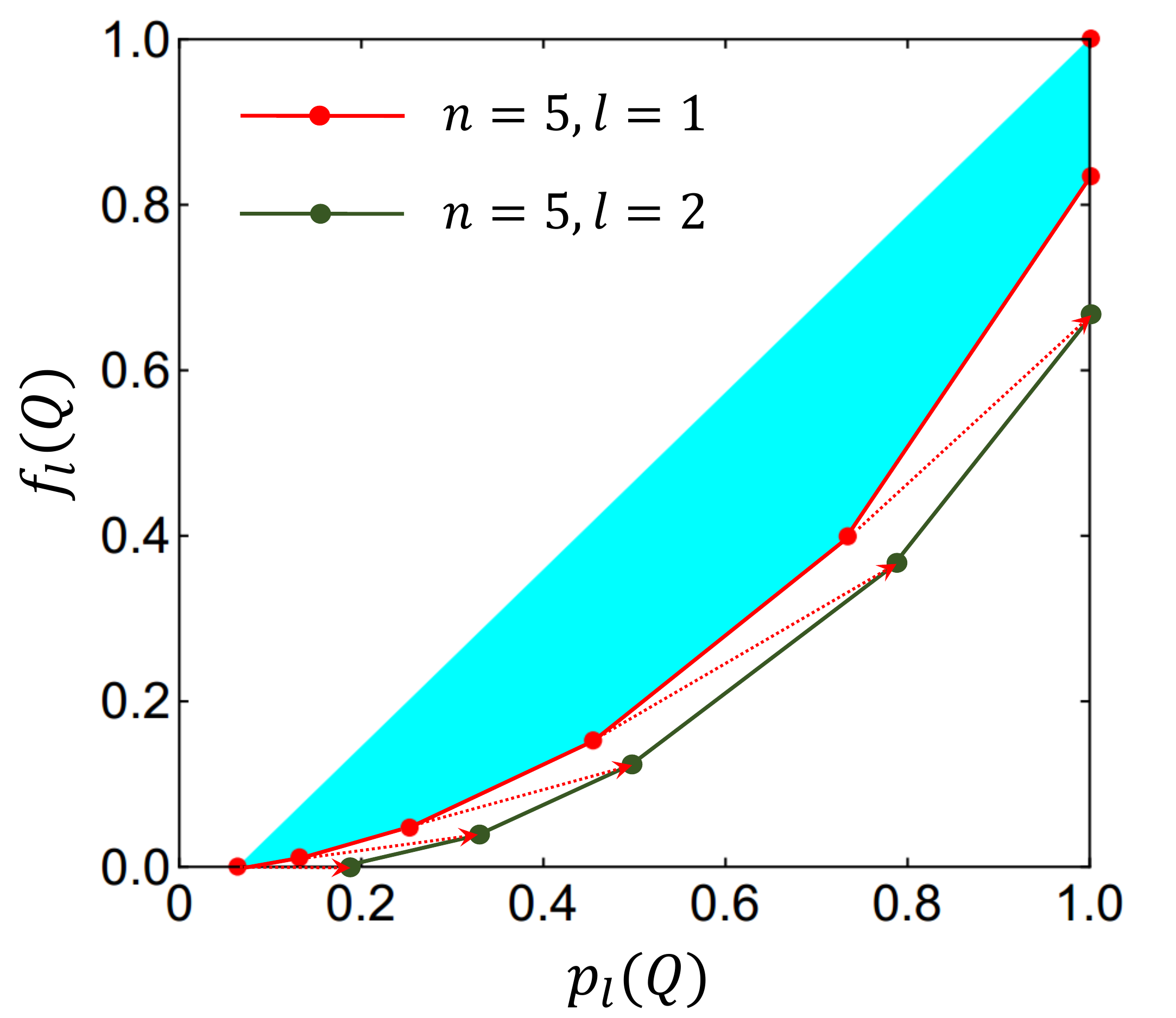}
		\caption{\label{fig:Monoton_nk}
			(a) Left plot: the lower boundary of ${\cal R}_{l,n,\lambda}$ is not higher than that of ${\cal R}_{l,n+1,\lambda}$ for $n\geq l+1$.
			Here the red solid lines are the lower boundary of ${\cal R}_{l=1,n=5,\lambda=1/\rme}$;
			the red points are the extremal points of ${\cal R}_{l=1,n=5,\lambda=1/\rme}$;
			the green solid lines are the lower boundary of ${\cal R}_{l=1,n=6,\lambda=1/\rme}$;
			the green points are the extremal points of ${\cal R}_{l=1,n=6,\lambda=1/\rme}$;
			(b) Right plot: the lower boundary of ${\cal R}_{l+1,n,\lambda}$ is not higher than that of ${\cal R}_{l,n,\lambda}$ for $0\leq l \leq n-2$.
			Here the red solid lines are the lower boundary of ${\cal R}_{l=1,n=5,\lambda=1/\rme}$;
			the red points are the extremal points of ${\cal R}_{l=1,n=5,\lambda=1/\rme}$;
			the green solid lines are the lower boundary of ${\cal R}_{l=2,n=5,\lambda=1/\rme}$;
			the green points are the extremal points of ${\cal R}_{l=2,n=5,\lambda=1/\rme}$;
		}
	\end{center}
\end{figure}

\noindent
{\bf Monotonicity for $l$:}\par
\noindent \textbf{Step 1:} \quad
To prove the monotonicity of $\overline{\epsilon}_\lambda^{\rm c}(l,n,\delta)$ with $l$, we still consider two cases depending on the value of $\lambda$.
First, when $\lambda=0$, the monotonicity follows from \eqsref{eq:zeta1(lambda=0)}{eq:zeta2(lambda=0)} \main.
Second, when $\lambda>0$, we show the monotonicity by showing that
the lower boundary of ${\cal R}_{n,l+1,\lambda}$
is not higher than that of ${\cal R}_{n,l,\lambda}$ for $0\leq l\leq n-2$.
This fact can be shown as follows.

As shown later, we have the following relations.
\begin{align}
	& g_{z_1}(l,n,\lambda) \leq g_{z_1}(l+1,n,\lambda), \quad
	h_{z_1}(l,n,\lambda) \leq h_{z_1}(l+1,n,\lambda),
	\Label{eq:ghzwithk}\\
	& h_{z+1}(l+1,n,\lambda) \geq h_z(l,n,\lambda),
	\Label{eq:hz+1k+1geqhzk}\\
	&\frac{g_{z+1}(l,n,\lambda)-g_z(l,n,\lambda)}{h_{z+1}(l,n,\lambda)-h_z(l,n,\lambda)}
	\geq
	\frac{g_{z+1}(l,n,\lambda)-g_{z+1}(l+1,n,\lambda)}{h_{z+1}(l,n,\lambda)-h_{z+1}(l+1,n,\lambda)}
	\Label{eq:slopwithk}
\end{align}
for $ z_1=0,1,\dots,n+1$, and
$z=l,l+1,\dots,n$.

\noindent \textbf{Step 2:} \quad
In the following, using \eqref{eq:ghzwithk}, \eqref{eq:hz+1k+1geqhzk}, and \eqref{eq:slopwithk}, we show the following inequality by induction;
\begin{align}
	\tau_{n,l+1}(h_{z}(l,n,\lambda)) \le g_z(l,n,\lambda).
	\Label{AA1}
\end{align}

First, we show \eqref{AA1} with $z=n+1$.
Since \eqref{eq:hz+1k+1geqhzk} guarantees that $h_{n+1}(l,n,\lambda) \le h_{n+1}(l+1,n,\lambda)$,
we have
\begin{align}
	\tau_{n,l+1}(h_{n+1}(l,n,\lambda)) =0
	\stackrel{(a)}{=} g_{n+1}(l,n,\lambda),
\end{align}
where $(a)$ follows from the definition of $g_{z}(l,n,\lambda)$.
Hence, \eqref{AA1} hold with $z=n+1$.

Next, we show \eqref{AA1} with $z=z_0$
by assuming \eqref{AA1} hold with $z=z_0+1$.
The relations \eqref{eq:ghzwithk}, \eqref{eq:hz+1k+1geqhzk}, and \eqref{eq:slopwithk} guarantee that
\begin{align}
	\kappa[A_{z_0+1}(l,n), A_{z_0+1}(l+1,n)](h_{z_0}(l,n,\lambda))
	\le
	g_{z_0}(l,n,\lambda) \Label{AML2}\\
	h_{z_0}(l,n,\lambda) \in [h_{z_0+1}(l,n,\lambda),h_{z_0+1}(l+1,n,\lambda)]
	\Label{AML3}.
\end{align}
Hence, we have
\begin{align}
	\begin{split}
		& \tau_{n,l+1} (h_{z_0}(l,n,\lambda)) \\
		\stackrel{(a)}{\le} &
		\kappa[(h_{z_0+1}(l,n,\lambda) ,\tau_{n,l+1}(h_{z_0+1}(l,n,\lambda)) )
		, A_{z_0+1}(l+1,n)](h_{z_0}(l,n,\lambda)) \\
		\stackrel{(b)}{\le} &
		\kappa[A_{z_0+1}(l,n), A_{z_0+1}(l+1,n)](h_{z_0}(l,n,\lambda)) \Label{AML}
	\end{split}
\end{align}
where
$(a)$ follows from \eqref{AML3},
and $(b)$ follows from \eqref{AA1} hold with $z=z_0+1$.
The combination of
\eqref{AML} and \eqref{AML2} implies \eqref{AA1} with $z=z_0$.

Hence, we obtain \eqref{AA1} for $z=1, \ldots, n+1$, i.e.,
the lower boundary of ${\cal R}_{n,l+1,\lambda}$
is not higher than that of ${\cal R}_{n,l,\lambda}$ for $0\leq l\leq n-2$.
(see the right plot of Fig.~\ref{fig:Monoton_nk} for an illustration).

\noindent \textbf{Step 3:} \quad
In the following, we shall prove Eqs.~\eqref{eq:ghzwithk},~\eqref{eq:hz+1k+1geqhzk}, and \eqref{eq:slopwithk} in three steps, respectively.

%to show the monotonicity with $k$, it suffices to show that the region defined in \eref{eq:calR} satisfies
%${\cal R}_{n,k,\lambda}\subset {\cal R}_{n,k+1,\lambda}$ for $0\leq k\leq n-2$.

\noindent \textbf{Step 3-1:} Proof of \eref{eq:ghzwithk}.
\eref{eq:ghzwithk} follows from Eqs.~\eqref{eq:hzHomo}, \eqref{eq:gzHomo} \main, and \lref{lem:Bzkmono}.

\noindent \textbf{Step 3-2:} Proof of \eref{eq:hz+1k+1geqhzk}.
For $z=l$, we have $h_{z+1}(l+1,n,\lambda)=h_z(l,n,\lambda)=1$ according to \eref{eq:hzHomo} \main.
For $z=l+1,\dots,n$, we have
\begin{align}
	\begin{split}
		&(n+1)[h_{z+1}(l+1,n,\lambda)-h_z(l,n,\lambda)] \\
		=& [(n-z) B_{z+1,l+1}+(z+1) B_{z,l+1}]-[(n-z+1) B_{z,l}+z B_{z-1,l}] \\
		=& (n-z) (B_{z+1,l+1}-B_{z,l}) +z(B_{z,l+1}-B_{z-1,l})+(B_{z,l+1}-B_{z,l}) \\
		\stackrel{(a)}{=} & (n-z) [(1-\lambda)B_{z,l}+\lambda B_{z,l+1}-B_{z,l}] \\
		&+z[(1-\lambda)B_{z-1,l}+\lambda B_{z-1,l+1}-B_{z-1,l}]+(B_{z,l+1}-B_{z,l}) \\
		=& (n-z) \lambda (B_{z,l+1}-B_{z,l}) +z\lambda(B_{z-1,l+1}-B_{z-1,l})+(B_{z,l+1}-B_{z,l})
		\stackrel{(b)}{\ge} 0 ,
	\end{split}
\end{align}
Here $(a)$ follows from
the basic relation $B_{z+1,l}=(1-\lambda)B_{z,l-1}+\lambda B_{z,l}$,
and $(b)$  follows from \lref{lem:Bzkmono}.

\noindent \textbf{Step 3-3:} Proof of \eref{eq:slopwithk}.
To prove \eref{eq:slopwithk}, we shall consider three cases depending on the value of $z$.
First, for $z=l$,
we have $h_{z+1}(l+1,n,\lambda)=h_z(l,n,\lambda)=1$ and $g_{z+1}(l+1,n,\lambda)\leq g_z(l,n,\lambda)$, so \eref{eq:slopwithk} holds.
Second, for $z=n$, the LHS of \eqref{eq:slopwithk} is positive, and the RHS of \eqref{eq:slopwithk} equals 0, so \eref{eq:slopwithk} holds.

Lastly, we consider the case $z\in\{l+1,\dots,n-1\}$.
To prove \eref{eq:slopwithk} in this case, it suffices to show
\begin{align}
	\label{eq:invslopwithk}
	\frac{h_{z+1}(l,n,\lambda)-h_z(l,n,\lambda)}{g_{z+1}(l,n,\lambda)-g_z(l,n,\lambda)}-1
	\leq
	\frac{h_{z+1}(l,n,\lambda)-h_{z+1}(l+1,n,\lambda)}{g_{z+1}(l,n,\lambda)-g_{z+1}(l+1,n,\lambda)}-1
\end{align}
for $z=l,l+1,\dots,n$.
Note that
\begin{align}\label{eq:LHSF14}
	\begin{split}
		\text{LHS of \eqref{eq:invslopwithk}}
		=&\frac{z(B_{z-1,l} -B_{z,l})-B_{z,l}}{(n-z)(B_{z,l} -B_{z+1,l}) +B_{z,l}}
		\leq \frac{z(B_{z-1,l} -B_{z,l})}{(n-z)(B_{z,l} -B_{z+1,l})} \\
		=&\frac{z\Delta_{z-1,l}}{(n-z)\Delta_{z,l}}
		=\frac{z-l}{(n-z)\lambda}.
	\end{split}
\end{align}
where the last equality follows because
\begin{align}
	\frac{\Delta_{z-1,l}}{ \Delta_{z,l}}
	=\frac{\binom{z-1}{l}}{\binom{z}{l}\lambda}
	=\frac{z-l}{z\lambda}.
\end{align}
In addition,
\begin{align}\label{eq:RHSF14}
	\begin{split}
		\text{RHS of \eqref{eq:invslopwithk}}
		=&\frac{(z+1)(B_{z,l+1}-B_{z,l})}{(n-z)(B_{z+1,l+1}-B_{z+1,l})}
		=\frac{(z+1)b_{z,l+1}}{(n-z)b_{z+1,l+1}} \\
		=&\frac{z+1}{n-z} \cdot \frac{z-l}{\lambda(z+1)}
		=\frac{z-l}{(n-z)\lambda}.
	\end{split}
\end{align}
Then \eqsref{eq:LHSF14}{eq:RHSF14} together confirm \eref{eq:invslopwithk}, and completes the proof of \eref{eq:slopwithk}.

\section{Proof of \lref{cor:zeta2Bound2} \main}\Label{app:ProofZeta2Asympt}
\begin{proof}[Proof of \lref{cor:zeta2Bound2} \main]
	When $0<\delta\leq B_{n,l}$, we have $z^*\geq n$ and $\overline{\epsilon}^{\rm c}_\lambda(l,n,\delta) =0$.
	So Eqs.~\eqref{eq:zeta2UB}-\eqref{eq:zeta2LB} \main hold obviously.
	In the following, we consider the case $B_{n,l}<\delta<1$.
	
	\noindent \textbf{Step 1:} Proof of \eref{eq:zeta2UB} \main.
	When $B_{n,l}<\delta<1$, we have
	\begin{align}
		\begin{split}
			\overline{\epsilon}^{\rm c}_\lambda(l,n,\delta)
			&\leq \frac{g_{z_*}(l,n,\lambda)}{h_{z_*}(l,n,\lambda)}
			= \frac{1}{1+\frac{z_*}{n-z_*+1}\cdot\frac{B_{z_*-1,l}}{B_{z_*,l}}}\\
			&\leq \frac{1}{1+\frac{z_*}{n-z_*+1}\cdot\frac{z_*-l}{\lambda z_*}}
			=\frac{\lambda(n-z_*+1)}{\lambda(n-z_*+1)+z_*-l },
		\end{split}
	\end{align}
	%which confirms \eref{eq:zeta2UB}.
	where the first inequality follows from \coref{cor:zeta2Bound1} \main and the fact that $\overline{\epsilon}^{\rm c}_\lambda(l,n,\delta) >0$ when $B_{n,l}<\delta<1$;
	and the last inequality follows from \lref{lem:Bzkz}.
	
	\noindent \textbf{Step 2:} Proof of \eref{eq:zeta2LB1} \main.
	When $B_{n,l}<\delta<1$, we have
	\begin{align}\Label{eq:zeta2LBproof}
		\overline{\epsilon}^{\rm c}_\lambda(l,n,\delta)
		\geq \frac{g_{z^*+1}(l,n,\lambda)}{h_{z^*+1}(l,n,\lambda)}
		= \frac{1}{1+\frac{z^*+1}{n-z^*}\cdot\frac{B_{z^*,l}}{B_{z^*+1,l}}}
		\geq \frac{1}{1+\frac{z^*+1}{n-z^*}\cdot\frac{1}{\lambda}}
		=\frac{\lambda(n-z^*)}{\lambda(n-z^*)+z^*+1},
	\end{align}
	%which confirms \eref{eq:zeta2LB1}.
	where the first inequality follows from \coref{cor:zeta2Bound1} \main;
	and the last inequality follows from \lref{lem:Bzkz}.

	\noindent \textbf{Step 3:} Proof of \eref{eq:zeta2LB} \main.
	When $B_{n,l}<\delta\leq1/2$, we have $l/\nu\leq z^*\leq n$ according to \coref{cor:z*k/nu}, and
	\begin{align}
		\begin{split}
			\overline{\epsilon}^{\rm c}_\lambda(l,n,\delta)
			&\stackrel{(a)}{\geq} \frac{g_{z^*+1}(l,n,\lambda)}{h_{z^*+1}(l,n,\lambda)}
			= \frac{1}{1+\frac{z^*+1}{n-z^*}\cdot\frac{B_{z^*,l}}{B_{z^*+1,l}}}
			\stackrel{(b)}{\geq} \frac{1}{1+\frac{z^*+1}{n-z^*}\cdot\frac{z^*-l+1+\sqrt{\lambda l}}{(z^*+1)\lambda}}\\
			&=\frac{\lambda(n-z^*)}{\lambda(n-z^*)+z^*-l+1+\sqrt{\lambda l} }
			=1-\frac{z^*-l+1+\sqrt{\lambda l}}{\lambda n+(\nu z^*-l+1+\sqrt{\lambda l}) } \\
			&\stackrel{(c)}{\geq} 1-\frac{z^*-l+1+\sqrt{\lambda l}}{\lambda n }.
		\end{split}
	\end{align}
	%which confirms \eref{eq:zeta2LB}.
	where $(a)$ follows from \coref{cor:zeta2Bound1} \main;
	$(b)$ follows from \lref{lem:Bzkz} %in Appendix~\ref{app:Bzklambda}
	and that $z^*\geq l/\nu$;
	and $(c)$ follows because $(\nu z^*-l+1+\sqrt{\lambda l}) \geq0$.
\end{proof}

\end{appendix}

\begin{acks}[Acknowledgments]
HZ and ZL are also affiliated to Institute for Nanoelectronic Devices and Quantum Computing, Fudan University and Center for Field Theory and Particle Physics, Fudan University.
MH is also affiliated to International Quantum Academy (SIQA) and Graduate School of Mathematics, Nagoya University.
\end{acks}

%%%%%%%%%%%%%%%%%%%%%%%%%%%%%%%%%%%%%%%%%%%%%%
%% Funding information, if any,             %%
%% should be provided in the                %%
%% funding section.                         %%
%%%%%%%%%%%%%%%%%%%%%%%%%%%%%%%%%%%%%%%%%%%%%%
\begin{funding}
The work at Fudan is  supported by   the National Natural Science Foundation of China (Grants No.~11875110 and No.~92165109) and  Shanghai Municipal Science and Technology Major Project (Grant No.~2019SHZDZX01).
MH is supported in part by the National Natural Science Foundation of China (Grants No. 62171212 and No.~11875110) and
Guangdong Provincial Key Laboratory (Grant No. 2019B121203002).
\end{funding}

\bibliographystyle{imsart-number}
\bibliography{all_references}

\end{document}